\titleformat{\chapter}[display]   
{\normalfont\huge\bfseries}{\chaptertitlename\ \thechapter}{20pt}{\Huge}   
\titlespacing*{\chapter}{0pt}{-30pt}{40pt}
\theoremstyle{plain}
\newtheorem{thm}{Theorem}[section]
\newtheorem*{clm*}{Claim}
\newtheorem{lem}[thm]{Lemma}
\newtheorem*{lem*}{Lemma}
\newtheorem{cor}[thm]{Corollary}
\newtheorem*{cor*}{Corollary}
\newtheorem{prop}[thm]{Proposition}
\newtheorem*{prop*}{Proposition}
\newtheorem{conj}[thm]{Conjecture}
\newtheorem{rmk}{Remark}[section]
\newtheorem*{rmk*}{Remark}
\newtheorem*{note*}{Note}
\theoremstyle{definition}
\newtheorem{defn}{Definition}[section]
\newtheorem{exmp}{Example}[section]
\newcommand{\R}{\mathbb{R}}
\newcommand{\N}{\mathbb{N}}
\newcommand{\NN}{\mathcal{N}}
\newcommand{\Z}{\mathbb{Z}}
\newcommand{\C}{\mathbb{C}}
\newcommand{\T}{\mathbb{T}}
\newcommand{\TT}{\mathcal{T}}
\newcommand{\I}{\mathbb{I}}
\newcommand{\PP}{\mathbb{P}}
\newcommand{\Pcal}{\mathcal{P}}
\newcommand{\e}{\varepsilon}
\newcommand{\w}{\omega}
\newcommand{\1}{\mathbbm{1}}
\newcommand{\Scal}{\mathcal{S}}
\newcommand{\dd}{\mathrm{d}}
\newcommand{\nc}{\newcommand}
\nc{\on}{\operatorname}
\newcommand{\defeq}{\vcentcolon=}
\newcommand{\eqdef}{=\vcentcolon}
\DeclarePairedDelimiter\ceil{\lceil}{\rceil}
\DeclarePairedDelimiter\floor{\lfloor}{\rfloor}
\DeclarePairedDelimiter\abs{\lvert}{\rvert}
\DeclarePairedDelimiter\norm{\lVert}{\rVert}
\DeclarePairedDelimiter\inner{\langle}{\rangle}
\let\oldabs\abs
\def\abs{\@ifstar{\oldabs}{\oldabs*}}
\let\oldnorm\norm
\def\norm{\@ifstar{\oldnorm}{\oldnorm*}}
\let\oldinner\inner
\def\inner{\@ifstar{\oldinner}{\oldinner*}}
\title{A study guide for\\
the $\ell^2$ decoupling theorem for the paraboloid}
\author{Ataleshvara Bhargava, Tiklung Chan, Zi Li Lim, Yixuan Pang}
\date{}
\begin{document}

\maketitle
\begin{abstract}
    This article serves as a study guide for the $\ell^2$ decoupling theorem for the paraboloid originally proved by Bourgain and Demeter \cite{bourgain_proof_2015}. Given its popularity and importance, many expositions about the $\ell^2$ decoupling theorem already exist. Our study guide is intended to complement and combine these existing resources in order to provide a more gentle introduction to the subject.
\end{abstract}
\thispagestyle{empty}

\tableofcontents
\thispagestyle{empty}

\chapter{Introduction}\label{ch 1}
\setcounter{page}{1}
Decoupling inequalities were first introduced by Wolff in his work \cite{wolff_local_2000} on the local smoothing conjecture. There, he was able to make significant progress on the local smoothing conjecture via a non-sharp decoupling inequality for the cone. In a groundbreaking paper \cite{bourgain_proof_2015} published in 2015, Bourgain and Demeter proved the $l^2$ decoupling conjecture - the subject of this study guide - which asked for sharp decoupling inequalities for any compact $C^2$-hypersurface with positive definite second fundamental form. Their proof was quite surprising as it used only the classical tools of the field (induction on scales, multilinear Kakeya, etc.), combined together in a very intricate and clever way. 

Since then, decoupling has rapidly grown into an extremely active field of research which has found exciting applications in a wide variety of fields, such as Fourier restriction theory, partial differential equations, analytic number theory, and geometric measure theory. Most notably, the resolution of the main conjecture of the Vinogradov mean value theorem \cite{bourgain_proof_2016} followed from proving a sharp decoupling inequality for the moment curve (see also \cite{wooley_nested_2019,pierce2017vinogradov} for more on the connections between decoupling and exponential sum estimates in analytic number theory).

Given its popularity and importance, many expositions about the $l^2$ decoupling theorem already exist, including a fantastic one by the authors themselves \cite{bourgain_study_2016} (and a study guide for this study guide by Yang \cite{yang_notes}). Our study guide is intended to complement these existing resources by providing a more gentle introduction to the subject. The main goal is to provide a resource for beginners to the subject who may find the original proof to be daunting and perhaps unintuitive. This study guide combines the presentations from several sources, most notably the original paper and aforementioned study guide by the authors, Demeter's book \cite{demeter_fourier_2020}, and Guth's notes \cite{guth_notes}.

We begin by describing general facts about decoupling and proving some useful properties of decoupling inequalities. We then present in detail a complete proof of the $l^2$ decoupling theorem in the two-dimensional setting, and the most important additional steps in the higher-dimensional setting. Finally, in the last chapter, we provide an alternative proof due to Guth. In the appendices, we provide an overview of the wave packet decomposition for the reader's convenience and we outline a computation of a decoupling constant to supplement a comment in Chapter \ref{ch 3}.

\hfill

\noindent\textit{Acknowledgements.}
We would like to thank the organizers of Study Guide Writing Workshop 2023 at the University of Pennsylvania for organizing a fantastic event. In particular, we would like to thank Hong Wang, our mentor for the workshop, for helpful discussions about decoupling.

\section{What is decoupling?}\label{ch 1 sec 1}
Decoupling can be thought of as a form of almost orthogonality in the non-Hilbert space setting. In particular, we ask to what extent orthogonality in $L^2$ implies almost orthgonality in $L^p$ for $p\neq 2$.

The basic set-up for decoupling is as follows. Throughout the study guide, we let $\Pcal_U$ denote the Fourier projection operator onto the set $U\subseteq\R^n$, i.e., $\widehat{\Pcal_U F}(\xi)=\widehat{F}(\xi)\1_U(\xi)$. 

\begin{defn}
Let $\mathcal{S}=\{S_i\}_i$ be a family of pairwise disjoint measurable sets of $\R^n$. Then the \textit{decoupling constant} $\on{D}(\mathcal{S},p)$ is defined to be the smallest constant such that:
\begin{align}\label{def decoupling}
    \norm{F}_{L^p(\R^n)}\leq\on{D}(\mathcal{S},p)\left(\sum_i\norm{\Pcal_{S_i}F}_{L^p(\R^n)}^2\right)^{1/2}
\end{align}
for all $F:\R^n\rightarrow\C$ with $\on{supp}(\widehat{F})\subseteq\bigcup_i S_i$.
\end{defn}
\begin{rmk}
The decoupling inequality is often (equivalently) formulated using the Fourier extension operator due to the natural connections between decoupling and Fourier restriction theory. Here we follow the convention in \cite{demeter_fourier_2020} and use the projection operator instead.
\end{rmk}
\begin{rmk}\label{rmk basic decoupling}
Here are some immediate consequences of the definition:
\begin{enumerate}
    \item When $p=2$, Plancherel's theorem immediately tells us $\on{D}(\mathcal{S},2)=1$, which is the best possible constant.
    \item For $p\geq 1$, by the triangle inequality and the Cauchy-Schwarz inequality, we have:
    \begin{align*}
        \norm{F}_{L^p(\R^n)} = \norm{\sum_i \Pcal_{S_i}F}_{L^p(\R^n)} \leq 
        \sum_i \norm{\Pcal_{S_i}F}_{L^p(\R^n)}
        \leq \abs{\mathcal{S}}^{1/2}\left(\sum_i\norm{\Pcal_{S_i}F}_{L^p(\R^n)}^2\right)^{1/2}.
    \end{align*}
    Therefore, $\on{D}(\mathcal{S},p)\leq\abs{\mathcal{S}}^{1/2}$. Here $\abs{\mathcal{S}}$ denotes the cardinality of $\mathcal{S}$.
\end{enumerate}
\end{rmk}
The natural question is then: when can we do better than the generic bound $\abs{\mathcal{S}}^{1/2}$?

\begin{exmp}[Necessity of $p\geq 2$; Exercise 9.8 in \cite{demeter_fourier_2020}]\label{necessity}
Let $p<2$. Let $\{U_j\}_{j=1}^N$ be pairwise disjoint sets in $\R^n$ and let $\{f_j\}_{j=1}^N$ be Schwartz functions such that $\on{supp}(\widehat{f}_j)\subseteq U_j$. Normalize each of their $L^p$ norms so that they are all equal, say $\norm{f_j}_p=1$.

By modulating the $\widehat{f}_j$'s, we are able to translate the $f_j$'s on the spatial side without changing their Fourier supports. In particular, we can translate the $f_j$'s such that they are essentially concentrated on pairwise disjoint sets on the spatial side. Then
\begin{align*}
    \norm{\sum_{j=1}^N f_j}_p&\approx\left(\sum_{j=1}^N\norm{f_j}_p^p\right)^{1/p}\\
    &=N^{1/p}\left(\frac{1}{N}\sum_{j=1}^N\norm{f_j}_p^p\right)^{1/p}\\
    &= N^{1/p}\left(\frac{1}{N}\sum_{j=1}^N\norm{f_j}_p^2\right)^{1/2}\\
    &=N^{\frac{1}{p}-\frac{1}{2}}\left(\sum_{j=1}^N\norm{f_j}_p^2\right)^{1/2}
\end{align*}
where in the first line we used the fact that the $f_j$'s have essentially disjoint spatial supports and in the third line we used the normalization hypothesis of the $L^p$ norms.

Thus, we see that when $p<2$ we cannot hope for anything better than polynomial growth for the decoupling constant.
\end{exmp}

\begin{exmp}[Littlewood-Paley decomposition]\label{exmp littlewood paley}
If the $S_i$ are the dyadic annuli $S_i\defeq\{\xi\in\R^n\mid 2^i\leq\abs{\xi}<2^{i+1}\}$ then the Littlewood-Paley theorem says
\begin{align*}
    \norm{F}_{L^p(\R^n)}\sim\norm{\left(\sum_i \abs{\Pcal_{S_i}F}^2\right)^{1/2}}_{L^p(\R^n)}
\end{align*}
for all $1<p<\infty$. When $p\geq 2$, this result combined with Minkowski's inequality yields $\on{D}(\mathcal{S},p)\lesssim 1$.

The key property used in the proof of this theorem is the lacunarity of the $S_i$'s, which introduces a kind of orthogonality into the problem.
\end{exmp}

It is a deep and interesting fact that \textit{curvature} can be used in place of lacunarity to obtain similarly powerful inequalities. Typically, $\mathcal{S}$ will be a partition of a small neighborhood of a curved manifold $\mathcal{M}$. In this case, $\abs{\mathcal{S}}$ is roughly inverse to the size of the partitioned pieces, i.e., the \textit{scale} of the partition. Ideally, we would like to obtain estimates of the form $\on{D}(\mathcal{S},p)\lesssim_\e\abs{\mathcal{S}}^\e$ for all $\e>0$ (or possibly even better, like $(\log\abs{\mathcal{S}})^C$ for some $C>0$). Typically, such an estimate holds in a range $2\leq p\leq p_c$ for some critical exponent $p_c$. The general decoupling problem is then to figure out what the exact value of $p_c$ is, depending on the manifold $\mathcal{M}$ and the partition $\mathcal{S}$ in question.

The set-up for \textit{the truncated paraboloid} $\PP^{n-1}\defeq\{(\xi,\abs{\xi}^2)\mid \xi\in[0,1]^{n-1}\}$, which will be our main focus, is as follows:
\begin{defn}[Decoupling constant]\label{decoupling constant}
For $0<\delta<1$ dyadic, partition $[0,1]^{n-1}$ into dyadic subcubes $\tau$ with side length $\delta^{1/2}$. Let $\mathcal{N}(\delta)\defeq \{(\xi,\abs{\xi}^2+t)\mid \xi\in[0,1]^{n-1}, t\in[0,\delta]\}$ be the \textit{vertical $\delta$-neighborhood} of $\PP^{n-1}$. Let $\Theta(\delta)$ be the partition of $\mathcal{N}(\delta)$ into almost rectangular boxes $\theta\defeq\{(\xi,\abs{\xi}^2+t)\mid \xi\in\tau, t\in[0,\delta]\}$. Then the \textit{decoupling constant} $\on{D}_n(\delta,p)$ is defined to be the smallest constant such that
\begin{align}\label{decoupling definition}
    \norm{F}_{L^p(\R^n)}\leq\on{D}_n(\delta,p)\left(\sum_{\theta\in\Theta(\delta)}\norm{\Pcal_\theta F}_{L^p(\R^n)}^2\right)^{1/2}
\end{align}
for all $F:\R^n\rightarrow\C$ with $\on{supp}(\widehat{F})\subseteq\mathcal{N}(\delta)$. When the dimension $n$ is clear from the context, we may omit this subscript for simplicity.
\end{defn}

\begin{rmk}\label{rmk_dec_scale}
Some flexibility is allowed in the above definition. For example, one can define $D_n(\delta,p)$ for non-dyadic $\delta$ by allowing the $\tau$'s to be finitely overlapping convex sets comparable to cubes with side length $\delta^{1/2}$. We choose to primarily work with dyadic scales $\delta$ and genuine subcubes $\tau$ to avoid any possible ambiguity in the definition of $\Theta(\delta)$. These modifications in our formulation do not affect the essence of the problem. In particular, it suffices to only consider those dyadic $\delta$'s, as the non-dyadic cases can be approximated by the dyadic ones.
\end{rmk}

\section{Main theorem and examples}\label{ch 1 sec 2}
The main result of the study guide is the following sharp decoupling inequality for the paraboloid due to Bourgain and Demeter:
\begin{thm}[\cite{bourgain_proof_2015}]\label{main thm original}
For all $\e>0$ the following results hold:
\begin{itemize}
    \item If $2\leq p\leq\frac{2(n+1)}{n-1}$ then we have $\on{D}_n(\delta,p)\lesssim_\e \delta^{-\e}$.
    \item If $p>\frac{2(n+1)}{n-1}$ then we have $\on{D}_n(\delta,p)\lesssim_\e \delta^{-\e+\frac{n+1}{2p}-\frac{n-1}{4}}$.
\end{itemize}
\end{thm}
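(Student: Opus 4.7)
\noindent\textit{Proof proposal.}

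The plan is to establish the theorem in two phases: first, prove the sharp bound $\on{D}_n(\delta,p_c)\lesssim_\e \delta^{-\e}$ at the critical exponent $p_c\defeq\frac{2(n+1)}{n-1}$; second, derive the full range via interpolation. For $2\leq p\leq p_c$, we interpolate between the critical estimate and the trivial Plancherel bound $\on{D}_n(\delta,2)=1$ from Remark~\ref{rmk basic decoupling}, using a logarithmic-convexity property of the decoupling constant in $1/p$. For $p>p_c$, we interpolate between the critical estimate and the trivial bound $\on{D}_n(\delta,\infty)\lesssim\abs{\Theta(\delta)}^{1/2}\sim\delta^{-(n-1)/4}$; a short computation confirms that Riesz--Thorin then produces exactly the exponent $-\e+\frac{n+1}{2p}-\frac{n-1}{4}$ appearing in the second bullet.

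The real work lies at the critical exponent. The key idea is to reduce the linear decoupling inequality to a \emph{multilinear} variant in which one controls a geometric mean $\prod_{i=1}^n\abs{\Pcal_{\theta_i}F}^{1/n}$ associated to slabs $\theta_1,\dots,\theta_n$ lying over $\nu$-transverse caps $\tau_1,\dots,\tau_n$ on $\PP^{n-1}$ (i.e.~the paraboloid normals at the cap centers span $\R^n$ with the volume of the associated parallelepiped bounded below by some absolute $\nu>0$). The passage back from multilinear to linear is handled by a Bourgain--Guth broad/narrow dichotomy: on each localising ball at a given scale, one pigeonholes according to whether $\abs{F}$ is dominated by contributions from a few mutually non-transverse caps (the ``narrow'' case, which recurses via parabolic rescaling onto a decoupling problem at a finer scale) or whether $n$ transverse caps all contribute comparably (the ``broad'' case, where the multilinear estimate applies directly).

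The multilinear decoupling is in turn proven by induction on scales, with the multilinear Kakeya inequality as the principal analytic input. Each $\Pcal_{\theta_i}F$ is decomposed into wave packets supported on tubes oriented along the paraboloid normal over $\tau_i$, so that transversality of the caps translates into transversality of tube families. Multilinear Kakeya then controls the product of tube-indicator sums at the exponent dictated by the paraboloid's geometry, which, combined with the inductive hypothesis applied at an intermediate scale (typically $\delta^{1/2}$), closes the recursion; after iteration, any putative bound of the form $\on{D}_n(\delta,p_c)\lesssim \delta^{-\alpha}$ can be upgraded to a strictly better $\delta^{-\alpha'}$, forcing the optimal exponent down to $0$ and producing $\delta^{-\e}$ losses only.

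The principal obstacle is the delicate bookkeeping required so that the $\e$-losses incurred at each stage of the induction do not accumulate catastrophically: the choice of the intermediate scale, the constants produced by the multilinear-to-linear reduction, the Kakeya step, and the parabolic rescaling must all be choreographed so as to produce a net gain at each iteration rather than a net loss. The specific value $p_c=\frac{2(n+1)}{n-1}$ is determined precisely by this balance --- it is the unique exponent at which the wave-packet geometry associated to the paraboloid makes the multilinear Kakeya input sharp --- and pinpointing where this number arises is a useful organising principle for the entire argument. The subsequent chapters will develop each of these components in detail, starting with the $n=2$ case where the geometry is most transparent.
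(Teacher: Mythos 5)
Your proposal follows exactly the route taken in the paper: reduction to the critical exponent $p_c=\frac{2(n+1)}{n-1}$ via a log-convexity interpolation (Section~\ref{ch 2 sec 4}) using the endpoint bounds $\on{D}(\delta,2)=1$ and $\on{D}(\delta,\infty)\lesssim\delta^{-(n-1)/4}$; a Bourgain--Guth broad/narrow dichotomy to reduce to the multilinear decoupling constant (Section~\ref{ch 3 sec 2} for $n=2$, Chapter~\ref{ch 4} for $n\geq 3$); an induction on scales powered by multilinear Kakeya via ball inflation (Section~\ref{ch 3 sec 3.2}); and a bootstrapping argument to extract the $\delta^{-\e}$ loss (Sections~\ref{ch 3 sec 4}--\ref{ch 3 sec 5}). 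One caveat: the interpolation step cannot be dispatched by a naive appeal to Riesz--Thorin, because the decoupling inequality is not an operator bound in the usual sense --- the right-hand side is an $\ell^2$ sum over frequency pieces --- so the log-convexity in $1/p$ that you invoke must be established separately, either via the wave-packet decomposition argument of Lemma~\ref{lem interpolation} or via vector-valued (Stein) complex interpolation, as the paper notes in a footnote.
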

\begin{rmk}
Note that when $p = \frac{2(n+1)}{n-1}$, we have $\delta^{-\e} = \delta^{-\e+\frac{n+1}{2p}-\frac{n-1}{4}}$. We say $p$ is ``subcritical'' if $2\leq p < \frac{2(n+1)}{n-1}$, ``critical'' if $p = \frac{2(n+1)}{n-1}$, and ``supercritical'' if $p > \frac{2(n+1)}{n-1}$.
\end{rmk}

When $p\geq\frac{2(n+1)}{n-1}$, the exponent of $\delta$ is sharp, except for the $\e$-loss, as shown by the following example:

\begin{exmp}\label{sharp exmaple}
Let $\widehat{F}=\sum_{\theta\in\Theta(\delta)}\psi_\theta$ where $\psi_\theta$ is a smooth approximation of $\1_\theta$ supported in $\theta$, so that $\widehat{\Pcal_\theta F}=\psi_\theta$. Then we have:
\begin{align*}
    \left(\sum_{\theta\in\Theta(\delta)}\norm{\Pcal_\theta F}_{L^p(\R^n)}^2\right)^{1/2}&=\left(\sum_{\theta\in\Theta(\delta)}\norm{\widehat{\psi_\theta}}_{L^p(\R^n)}^2\right)^{1/2}\\
    (\text{Hausdorff-Young})&\leq\left(\sum_{\theta\in\Theta(\delta)}\norm{\psi_\theta}_{L^{p'}(\R^n)}^2\right)^{1/2}\\
    &\lesssim\left(\sum_{\theta\in\Theta(\delta)}\delta^{\frac{n+1}{p'}}\right)^{1/2}\\
    &= \delta^{-\frac{n-1}{4}}\delta^{\frac{n+1}{2p'}}
\end{align*}
where in the last line we used the fact that $\abs{\Theta(\delta)} = \delta^{-\frac{n-1}{2}}$.

On the other hand, for $\xi\in\mathcal{N}(\delta)$, $x \in \R^{n}$ with $\abs{x}\leq \frac{1}{100\sqrt{n}}$, we have
\begin{align*}
    \abs{\xi\cdot x} &\leq \abs{\xi}\abs{x}\leq 2\sqrt{n}\cdot\frac{1}{100\sqrt{n}}\leq \frac{1}{50}.
\end{align*}
So $e(\xi\cdot x)$ (see Section \ref{notation} for notation) is very close to $1$, and we can estimate by the triangle inequality:
\begin{align*}
    \abs{F(x)} &= \abs{\int \widehat{F}(\xi)e(\xi\cdot x)\mathrm{d}\xi}\\
    &\geq\int\widehat{F}(\xi)\mathrm{d}\xi-\int\widehat{F}(\xi)\abs{e(\xi\cdot x)-1}\mathrm{d}\xi\\
    &\geq\left(1-\frac{2\pi}{50}\right)\int\widehat{F}(\xi)\mathrm{d}\xi\\
    &\gtrsim \abs{\mathcal{N}(\delta)} = \delta.
\end{align*}
Therefore $\norm{F}_{L^p(\R^n)}\gtrsim \delta$.

Plugging all these estimates into (\ref{decoupling definition}) forces $\on{D}(\delta,p)\gtrsim \delta^{\frac{n+1}{2p}-\frac{n-1}{4}}$, which means that $\on{D}(\delta,p)\lesssim_\e \delta^{-\e+\frac{n+1}{2p}-\frac{n-1}{4}}$ is sharp, apart from the $\e$-loss.
\end{exmp}

\begin{rmk}
Using tools from analytic number theory, one can show that $D(\delta,p)$ at the critical exponent $p=\frac{2(n+1)}{n-1}$ must grow at least logarithmically in $\delta^{-1}$ (see \cite[Chapter 13]{demeter_fourier_2020}). In the two-dimensional case, this is known to be sharp up to the exact exponent on the logarithm \cite{guth_improved_2022}. The analogous result in higher dimensions is still open. It is also an open problem to determine the precise behavior of $D(\delta,p)$ in the subcritical regime, where it is conjectured that there should be no loss in $\delta$ at all.
\end{rmk}

\begin{rmk}\label{overlap intuition}
Example \ref{sharp exmaple} means that for $p \geq \frac{2(n+1)}{n-1}$, the ``bad behavior" occurs when the $P_\theta F$'s converge and resonate (it is not necessary for resonance to happen near the origin, since we can modulate $\psi_\theta$'s by a uniform factor). In contrast, Example \ref{necessity} means that for $p<2$, the ``bad behavior" occurs when the $P_\theta F$'s deviate from each other. The intuition behind such difference is that large $p$ captures the constructive interference of waves, while small $p$ is more sensitive to the case when waves are spread out.
\end{rmk}

\begin{rmk}\label{interesting phenomenon}
 Note that in the subcritical regime $2\leq p < \frac{2(n+1)}{n-1}$, Example \ref{sharp exmaple} no longer yields a satisfactory lower bound for $D(\delta,p)$. This is a very interesting phenomenon and we illustrate the two-dimensional case in Figure \ref{subcritical issue}. Imagine that each $\psi_\theta$ is essentially supported in a blue rectangle by the ``uncertainty principle''. The orange circle includes the resonant part $\abs{x}\leq \frac{1}{100\sqrt{n}}$ where $\abs{F(x)}\gtrsim \delta$, while the purple circle excludes the spread-out parts. The key point is that the resonant part no longer dominates $\norm{F}_{L^p(\R^n)}$ for subcritical $p$, so we can't crudely throw away the spread-out parts. In fact, taking a single $\psi_\theta$ as $F$ yields $D(\delta,p) \geq 1$, which immediately verifies the sharpness of the $\delta^{-\e}$ upper bound, apart from the $\e$-loss.
    \begin{figure}[H]
        \centering
        \includegraphics[height=0.5\textwidth]{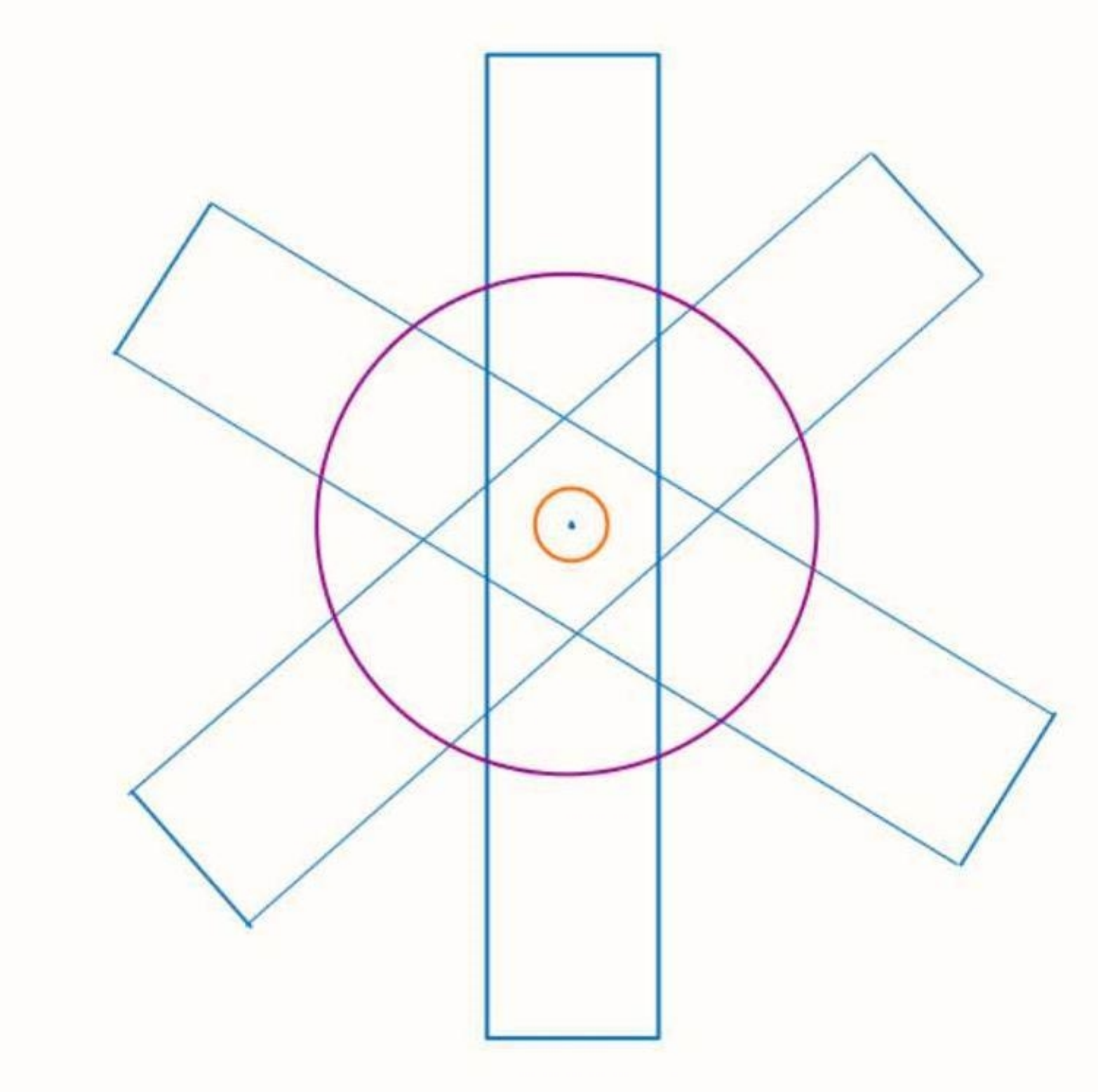}
        \caption{Subcritical case}
        \label{subcritical issue}
    \end{figure}
\end{rmk}

\section{Notation}\label{notation}
We will use $C$ to denote constants whose exact values are unimportant and may depend on various parameters (except for the scale $\delta$ or $R$) which will be emphasized by using subscripts. Its value may change from line to line. We will write $X\lesssim_v Y$ to denote the fact that $X\leq CY$ for certain implicit constant $C$ depending on the parameter $v$. 

We will use $X\sim Y$ to indicate that $X$ and $Y$ are comparable, i.e., $X\lesssim Y$ and $X\gtrsim Y$. $X\approx Y$ will be used to indicate that $X$ and $Y$ are \textit{morally} equivalent, which typically means that they are comparable up to some rapidly decaying error terms. $X\lessapprox Y$ will be used to indicate logarithmic losses.

We will use the shorthand for the complex exponential function from \cite{demeter_fourier_2020}, $e(t)\defeq e^{2\pi i t}$ for $t\in \R$. For example, the Fourier transform is given by
\begin{align*}
    \widehat{f}(\xi)\defeq\int_{\R^n}f(x)e(-x\cdot\xi)\mathrm{d}x.
\end{align*}
And the corresponding inverse Fourier transform is given by
\begin{align*}
    f^{\vee}(\xi)\defeq\int_{\R^n}f(x)e(x\cdot\xi)\mathrm{d}x.
\end{align*}

For $k>0$ and a convex symmetric body $\Lambda$, the notation $k\Lambda$ means dilating $\Lambda$ by a ratio of $k$ with respect to its center.

For any set $A$, we use $\abs{A}$ to denote:
\begin{itemize}
    \item the cardinality of $A$ if $A$ is a finite set;
    \item the Lebesgue measure of $A$ if $A\subseteq\R^n$ is a measurable set.
\end{itemize}

For any measurable set $A\subseteq\R^n$, we use $\1_A$ to denote the characteristic function of $A$.

For $n$ vectors $\{v_i\}_{i=1}^n$ in $\R^n$, we use $\abs{v_1\wedge\cdots\wedge v_n}$ to denote the absolute value of the determinant of the matrix with columns $\{v_i\}_{i=1}^n$.

For any two sets $A,B\subseteq\R^n$, we will use $\on{dist}(A,B)$ to denote the distance between $A$ and $B$.

We now introduce a family of \textit{weights} to formalize the \textit{uncertainty principle} which arises repeatedly. However, as they are used for purely technical reasons, we recommend beginners to ignore them on a first read by assuming them to be the corresponding indicator functions. In fact, we will from time to time take the initiative to do so to highlight key steps in this study guide.
\begin{defn}
For a \textit{cube} $Q = \prod_{i=1}^n[c_i-\frac{R}{2},c_i+\frac{R}{2}]\subseteq\R^n$ with \textit{center} $c_Q\defeq(c_i)_i$ and \textit{side length} $R$, define the \textit{weight} adapted to $Q$ by
\begin{align*}
    w_Q(x)\defeq\frac{1}{\left(1+\frac{|x-c_Q|}{R}\right)^{100n}}.
\end{align*}
We use the same definition for $w_B$ adapted to a \textit{ball} $B$ with \textit{center} $c_B$ and \textit{radius} $R$:
\begin{align*}
    w_B(x)\defeq\frac{1}{\left(1+\frac{|x-c_B|}{R}\right)^{100n}}.
\end{align*}
\end{defn}
We sometimes write $Q_R$,$B_R$,$Q(c_Q,R)$,$B(c_B,R)$ to emphasize certain parameters.

\begin{rmk}
For the standard partition of a cube $Q$ into smaller subcubes $Q'$, one can verify the following useful inequality:
\begin{align}\label{cube weight property}
    \1_Q\lesssim\sum_{Q'}w_{Q'}\lesssim w_Q.
\end{align}
Similarly, for a finitely overlapping covering of a ball $B$ by smaller balls $B'$, we have
\begin{align}\label{ball weight property}
    \1_B\lesssim\sum_{B'}w_{B'}\lesssim w_B.
\end{align}
Also, for any finitely overlapping collection $\mathcal{F}$ of cubes/balls $\{\Delta\}$ of the same size, we have
\begin{align}\label{weight property}
    \sum_{\Delta\in\mathcal{F}} w_\Delta \lesssim 1.
\end{align}
For the detailed proofs of these facts, see Proposition 3.1 in \cite{yang_notes}.
\end{rmk}

Given any function $h$ on $\R^n$ and a cube/ball $\Delta$ defined as above, we will use the rescaled version $h_\Delta(x)\defeq h\left(\frac{x-c_\Delta}{R}\right)$.

We technically distinguish between \textit{local} and \textit{weighted} versions of $L^p$ norms:
\begin{defn}
For any cube/ball $\Delta$ and weight $w_\Delta$, let
\begin{align*}
    \norm{F}_{L^p(\Delta)}&\defeq\left(\int_\Delta\abs{F}^p\right)^{1/p} 
    &&(\text{local } L^p \text{ norm})\\
    \norm{F}_{L^p_\#(\Delta)}&\defeq\left(\frac{1}{\abs{\Delta}}\int_\Delta\abs{F}^p\right)^{1/p}  &&(\text{local } L^p \text{ average})\\
    \norm{F}_{L^p(w_\Delta)}&\defeq\left(\int\abs{F}^p w_\Delta\right)^{1/p} 
    &&(\text{weighted } L^p \text{ norm})\\
    \norm{F}_{L^p_\#(w_\Delta)}&\defeq\left(\frac{1}{\abs{\Delta}}\int\abs{F}^p w_\Delta\right)^{1/p}
     &&(\text{weighted } L^p \text{ average}).
\end{align*}
\end{defn}

\chapter{Decoupling Properties}\label{sec 2}
Before starting the proof of Theorem \ref{main thm original}, we first discuss some properties of decoupling inequalities which will be used frequently (both implicitly and explicitly) in the rest of the study guide. The topics covered here are essentially the same as those in \cite[Chapter 9]{demeter_fourier_2020}.

\section{Inductive structure}\label{ch 2 sec 1}
A key feature of the decoupling inequality (\ref{def decoupling}) is that it is well-suited for iteration, which enable us to carry out induction on scales arguments more easily. Indeed, the core idea of the proof of decoupling inequalities is to combine information from many different scales, known as a multiscale analysis.

\begin{prop}\label{iterability}
Let $\Theta_1$ be a collection of pairwise disjoint sets $\theta$. For each $\theta$, let $\Theta_2(\theta)$ be a partition of $\theta$ into subsets $\theta'$. Let $\Theta$ be the collection of all the $\theta'$'s. If we know
\begin{align*}
    \norm{F}_{L^p(\R^n)}\leq\on{D}_1\left(\sum_{\theta\in\Theta_1}\norm{\Pcal_\theta F}_{L^p(\R^n)}^2\right)^{1/2}
\end{align*}
and know for every $\theta\in\Theta_1$
\begin{align*}
    \norm{\Pcal_\theta F}_{L^p(\R^n)}\leq\on{D}_2\left(\sum_{\theta'\in\Theta_2(\theta)}\norm{\Pcal_{\theta'} F}_{L^p(\R^n)}^2\right)^{1/2},
\end{align*}
then we have:
\begin{align*}
    \norm{F}_{L^p(\R^n)}\leq\on{D}_1\on{D}_2\left(\sum_{\theta'\in\Theta}\norm{\Pcal_{\theta'} F}_{L^p(\R^n)}^2\right)^{1/2}
\end{align*}
\end{prop}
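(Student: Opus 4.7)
The plan is to apply the two given hypotheses in sequence. First, invoke the outer decoupling on $F$ itself to obtain
\begin{align*}
    \norm{F}_{L^p(\R^n)} \leq \on{D}_1 \left(\sum_{\theta\in\Theta_1} \norm{\Pcal_\theta F}_{L^p(\R^n)}^2\right)^{1/2}.
\end{align*}

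Next, for each $\theta \in \Theta_1$ apply the inner hypothesis to bound $\norm{\Pcal_\theta F}_{L^p(\R^n)}$. Squaring both sides and summing over $\theta\in\Theta_1$, the double sum collapses to a single sum over $\Theta$ because $\{\Theta_2(\theta)\}_{\theta\in\Theta_1}$ partitions $\Theta$ by construction:
\begin{align*}
    \sum_{\theta\in\Theta_1}\norm{\Pcal_\theta F}_{L^p(\R^n)}^2 &\leq \on{D}_2^2 \sum_{\theta\in\Theta_1} \sum_{\theta'\in\Theta_2(\theta)} \norm{\Pcal_{\theta'}F}_{L^p(\R^n)}^2\\
    &= \on{D}_2^2 \sum_{\theta'\in\Theta}\norm{\Pcal_{\theta'}F}_{L^p(\R^n)}^2.
\end{align*}
Taking square roots and substituting into the first display yields the desired inequality with constant $\on{D}_1\on{D}_2$.

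There is essentially no obstacle here: the proof is a formal chaining of the two hypotheses. The only subtlety worth flagging for later use of the proposition is that the inner hypothesis is already phrased in a form tailored to $\Pcal_\theta F$, so no additional verification is needed before invoking it. Had the inner hypothesis instead been stated for arbitrary Schwartz functions with Fourier support in $\theta$, one would separately observe that $\Pcal_\theta F$ has Fourier support in $\theta$ and that $\Pcal_{\theta'}\Pcal_\theta F = \Pcal_{\theta'} F$ whenever $\theta' \subseteq \theta$, which is immediate since Fourier multipliers compose via pointwise multiplication of their symbols.
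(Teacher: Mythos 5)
Your proof is correct and is essentially the same chaining argument as the paper's: both substitute the inner decoupling into the outer one and collapse the double sum over $\theta'\in\Theta_2(\theta)$, $\theta\in\Theta_1$ into a single sum over $\Theta$. The closing remark about $\Pcal_{\theta'}\Pcal_\theta = \Pcal_{\theta'}$ when $\theta'\subseteq\theta$ is a reasonable clarification but not needed for the argument as stated.
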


\begin{proof}
    We simply plug the smaller scale inequality into the large scale inequality as follows:
    \begin{align*}
        \norm{F}_{L^p(\R^n)}^2 &\leq \on{D}_1^2 \sum_{\theta\in\Theta_1}\norm{\Pcal_\theta F}_{L^p(\R^n)}^2\\
        &\leq \on{D}_1^2 \sum_{\theta\in\Theta_1} 
        \left[ \on{D}_2^2 \sum_{\theta'\in\Theta_2(\theta)}\norm{\Pcal_{\theta'} F}_{L^p(\R^n)}^2 \right]\\
        & = (\on{D}_1\on{D}_2)^2 \sum_{\theta'\in\Theta}\norm{\Pcal_{\theta'} F}_{L^p(\R^n)}^2.
    \end{align*}
    Here $D_2$ is uniform for all $\theta\in\Theta_1$.
\end{proof}

It is instructive to compare decoupling inequalities with the so-called \textit{reverse square function estimates}:
\begin{align}\label{def square function}
    \norm{F}_{L^p(\R^n)}\leq \on{C}\norm{\left(\sum_{\theta\in\Theta(\delta)}\abs{\Pcal_{\theta}F}^2\right)^{1/2}}_{L^p(\R^n)}.
\end{align}
By Minkowski's inequality, reverse square function estimates imply decoupling inequalities when $p\geq 2$. However, there is no iterative structure like Proposition \ref{iterability} for (\ref{def square function}). For more on reverse square function estimates, see \cite{guth2020sharp} for the sharp reverse square function estimate for the cone in $\R^3$, and \cite{guth2023sharp} for other geometric objects such as the moment curve. In both papers, more sophisticated ideas are necessary to make the induction on scales work. 

However, the following conjecture for the truncated paraboloid $\PP^{n-1}$ is still open in all dimensions $n\geq 3$:
\begin{conj}[Reverse square function conjecture]\label{conjecture}
For all $F:\R^n\rightarrow\C$ with $\on{supp}(\widehat{F})\subseteq\mathcal{N}(\delta)$, we have:
\begin{align*}
    \norm{F}_{L^p(\R^n)}\lessapprox\norm{\left(\sum_{\theta\in\Theta(\delta)}\abs{\Pcal_\theta F}^2\right)^{1/2}}_{L^p(\R^n)}
\end{align*}
for $2\leq p \leq \frac{2n}{n-1}$. Here $\lessapprox$ denotes a logarithmic loss in $\delta$.
\end{conj}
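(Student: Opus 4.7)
The plan is to mirror the high-level strategy behind the $\ell^2$ decoupling theorem: perform a broad/narrow reduction to a multilinear reverse square function estimate, then attack the multilinear version with a transverse Kakeya-type input on the physical side. In the first step, I would split $\R^n$ into a \emph{broad} part, where $\abs{F(x)}$ is dominated by the contributions of $n$ caps $\theta_1, \ldots, \theta_n$ whose normals to $\PP^{n-1}$ are quantitatively linearly independent, and a \emph{narrow} part, where the contributions concentrate in a single subcube of side $K\delta^{1/2}$ of the $\theta$-partition (for some large parameter $K$). The narrow piece should be absorbed by an induction on scales after parabolic rescaling, exactly as in Bourgain-Demeter.

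For the broad part, I would apply a wave packet decomposition to replace each $\abs{\Pcal_{\theta_i} F}$ by a sum $\sum_{T\in\TT_i} a_T \1_T$ over tubes $T$ dual to $\theta_i$. The multilinear reverse square function estimate then reduces (morally) to a sharp transverse Kakeya-type bound of the form
\begin{align*}
    \norm{\prod_{i=1}^n \Bigl(\sum_{T\in\TT_i} a_T \1_T\Bigr)^{1/2}}_{L^{p/n}(B_R)} \lesssim (\text{geometric input}),
\end{align*}
at the critical exponent $p = \frac{2n}{n-1}$. This would play, for the square function, the role that the multilinear Kakeya inequality played for decoupling in Bourgain-Demeter.

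The main obstacle is exactly what is flagged in Section \ref{ch 2 sec 1}: unlike the decoupling constant, the reverse square function does \emph{not} iterate cleanly across scales, because one cannot replace $\abs{\Pcal_\theta F}^2$ by $\sum_{\theta'\subset\theta} \abs{\Pcal_{\theta'} F}^2$ without losing cross terms. Consequently the induction on scales step needed to close the narrow case cannot be executed naively, and information must instead be transported between scales via wave packet structure, plausibly combined with polynomial partitioning in the style of Guth-Wang-Zhang's resolution of the cone case in $\R^3$ \cite{guth2020sharp}. Compounding this, the sharp transverse Kakeya-type estimate for the $1/2$-powered square function above is, to my knowledge, not currently known in dimensions $n \geq 3$. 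I would therefore expect a plan along these lines to yield at best partial progress — say, a version of the conjecture with an additional $\delta^{-\e}$ loss or over a strict subrange of $p$ — and a full proof in dimensions $n \geq 3$ appears to require a genuinely new ingredient, which is precisely why the conjecture remains open.
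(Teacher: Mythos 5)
The statement you were asked to prove is labelled a \textbf{Conjecture} in the paper and is explicitly stated to be open for all $n \geq 3$; the paper does not give (or claim) a proof, so there is no in-paper argument to compare yours against. Your write-up handles this honestly: you do not claim a proof, and the obstacles you flag are exactly the ones the paper itself emphasizes. In particular, the failure of the iterative structure of Proposition \ref{iterability} for the reverse square function inequality (\ref{def square function}) --- one cannot replace $\abs{\Pcal_\theta F}^2$ by $\sum_{\theta'\subset\theta}\abs{\Pcal_{\theta'} F}^2$ without cross-terms --- is precisely why the narrow/rescaling step in a Bourgain--Demeter-style broad-narrow scheme does not close, as the paper remarks immediately after (\ref{def square function}). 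Your appeal to a transverse Kakeya-type input for the half-powered indicator sums at $p = \frac{2n}{n-1}$, and to polynomial partitioning in the style of Guth--Wang--Zhang \cite{guth2020sharp} to transport information between scales, is also consistent with the known state of the art and with the paper's citation of that work.

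The one omission worth correcting: the paper points out that the $n=2$ case \emph{is} a theorem, via the C\'ordoba--Fefferman argument (\cite[Proposition 3.3]{demeter_fourier_2020}), and there the logarithmic loss can even be removed entirely. Your proposal reads as if the whole conjecture is open; you should have separated the known planar case from the genuinely open $n \geq 3$ cases, since the two-dimensional proof (local $L^4$ orthogonality plus an elementary biorthogonality/overlap count in frequency) is a good sanity check for any proposed higher-dimensional scheme, and it clarifies that the missing ingredient is specific to the geometry of $\PP^{n-1}$ for $n \geq 3$. With that caveat, your assessment that the conjecture requires a genuinely new idea beyond the decoupling toolkit matches the paper's own stance.
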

By Minkowski's inequality, this would imply the sharp decoupling inequality in the range $2\leq p\leq\frac{2n}{n-1}$ (with a logarithmic loss instead of a subpolynomial one). When $n=2$, Conjecture \ref{conjecture} can be proved by the Córdoba–Fefferman argument, and the logarithmic loss in $\delta$ can even be removed, see \cite[Proposition 3.3]{demeter_fourier_2020}.

\section{General decoupling results}
We record here a few useful results about decoupling, which may warm the readers up for more delicate arguments.

\begin{prop}[Parallel decoupling]\label{parallel decoupling}
Given any $p\geq 2$, function $g=\sum_j g_j$, and measures $\mu=\sum_i \mu_i$ and $\omega=\sum_i \omega_i$, suppose
\begin{align*}
    \norm{g}_{L^p(\mu_i)}\leq D\left(\sum_j\norm{g_j}_{L^p(\omega_i)}^2\right)^{1/2}
\end{align*}
for all $i$, then
\begin{align*}
    \norm{g}_{L^p(\mu)}\leq D\left(\sum_j\norm{g_j}_{L^p(\omega)}^2\right)^{1/2}.
\end{align*}
\end{prop}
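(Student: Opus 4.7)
The plan is to raise both sides to the $p$-th power and exploit the additivity of the measures to reduce the proposition to a combinatorial inequality comparing two mixed norms. The key analytic fact is that for $p \geq 2$, Minkowski's inequality in $\ell^{p/2}$ is available and goes in the direction we need.

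First I would use that $\mu = \sum_i \mu_i$ implies $\|g\|_{L^p(\mu)}^p = \sum_i \|g\|_{L^p(\mu_i)}^p$, and similarly $\|g_j\|_{L^p(\omega)}^p = \sum_i \|g_j\|_{L^p(\omega_i)}^p$ for each $j$. Applying the hypothesis pointwise in $i$ and summing gives
\begin{equation*}
\|g\|_{L^p(\mu)}^p \;\leq\; D^p \sum_i \left(\sum_j \|g_j\|_{L^p(\omega_i)}^2\right)^{p/2}.
\end{equation*}
The goal is then to show that the right-hand side is bounded by $D^p \bigl(\sum_j \|g_j\|_{L^p(\omega)}^2\bigr)^{p/2}$.

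Next, I would set $a_{ij} \defeq \|g_j\|_{L^p(\omega_i)}^2$ and rewrite the inner expression as the $\ell^{p/2}_i$ norm (to the $p/2$ power) of the sequence $\bigl(\sum_j a_{ij}\bigr)_i$. Since $p/2 \geq 1$, Minkowski's inequality in $\ell^{p/2}$ allows the sum over $j$ to be pulled outside:
\begin{equation*}
\left(\sum_i \Bigl(\sum_j a_{ij}\Bigr)^{p/2}\right)^{2/p} \;\leq\; \sum_j \left(\sum_i a_{ij}^{p/2}\right)^{2/p}.
\end{equation*}
Substituting back, each inner expression $\sum_i a_{ij}^{p/2} = \sum_i \|g_j\|_{L^p(\omega_i)}^p$ collapses to $\|g_j\|_{L^p(\omega)}^p$ by the additivity of $\omega$, and raising to the $2/p$ gives $\|g_j\|_{L^p(\omega)}^2$. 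Combining this with the previous display and taking $p$-th roots yields the claimed inequality.

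The only step that really uses anything nontrivial is the Minkowski inequality in $\ell^{p/2}$, which is exactly where the hypothesis $p \geq 2$ enters; everything else is bookkeeping with the additivity of $\mu$ and $\omega$. If $p$ were less than $2$ the inequality would reverse and the argument would break, which is consistent with the general principle (cf.\ Example \ref{necessity}) that decoupling is a $p \geq 2$ phenomenon.
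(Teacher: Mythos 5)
Your proof is correct and follows essentially the same route as the paper's: both expand $\|g\|_{L^p(\mu)}^p$ via additivity of $\mu$, apply the hypothesis termwise, invoke Minkowski's inequality in $\ell^{p/2}$ (valid since $p\geq 2$) to interchange the sums over $i$ and $j$, and collapse via additivity of $\omega$. The only cosmetic difference is that you raise to the $p$-th power and argue with the auxiliary array $a_{ij}$, whereas the paper writes the same chain of inequalities at the level of norms.
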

\begin{proof}
The proof essentially follows from Minkowski's inequality:
\begin{align*}
    \norm{g}_{L^p(\mu)}&=\left(\sum_i\norm{g}_{L^p(\mu_i)}^p\right)^{1/p}\\
    &\leq D\left(\sum_i\left(\sum_j\norm{g_j}_{L^p(\omega_i)}^2\right)^{p/2}\right)^{1/p}\\
    &\leq D\left(\sum_j\left(\sum_i\norm{g_j}_{L^p(\omega_i)}^p\right)^{2/p}\right)^{1/2}\\
    &=D\left(\sum_j\norm{g_j}_{L^p(\omega)}^2\right)^{1/2}.
\end{align*}
We call it \textit{parallel decoupling} because we do not use any curvature information above.
\end{proof}
Parallel decoupling allows us to glue together local decoupling estimates over smaller balls. This works well with induction on scales arguments, see Chapter \ref{ch 4}.
\hfill

\begin{prop}[{\cite[Exercise 9.9]{demeter_fourier_2020}}]\label{affine transformation decoupling}
Let $\Theta$ be a collection of sets in $\R^n$ and let $T:\R^n\rightarrow\R^n$ be a nonsingular affine map. Let $\Theta'=\{T(\theta)\}_{\theta\in\Theta}$. Then $\on{D}(\Theta)=\on{D}(\Theta')$.
\end{prop}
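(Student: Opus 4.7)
The plan is to show both inequalities $\on{D}(\Theta') \leq \on{D}(\Theta)$ and $\on{D}(\Theta) \leq \on{D}(\Theta')$; by symmetry (since $T$ is invertible with $T^{-1}$ also nonsingular affine), it suffices to establish one direction. Fix $p \geq 2$ and write $T\xi = A\xi + b$ with $A$ invertible. The strategy is to set up a bijection between functions $F$ with $\on{supp}(\widehat{F}) \subseteq \bigcup_{\theta \in \Theta} T(\theta)$ and functions $G$ with $\on{supp}(\widehat{G}) \subseteq \bigcup_{\theta \in \Theta} \theta$ such that the two sides of the decoupling inequality for $F$ with respect to $\Theta'$ and for $G$ with respect to $\Theta$ differ by a common multiplicative constant, which then cancels.

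The natural choice is to define $G$ via $\widehat{G}(\xi) \defeq \widehat{F}(T\xi)$, so that $\on{supp}(\widehat{G}) \subseteq T^{-1}(\on{supp}(\widehat{F})) \subseteq \bigcup_\theta \theta$. Next I would perform a change-of-variables computation on the Fourier inversion formula to express $G$ in terms of $F$ on the spatial side: one finds $G(x) = |\det A|^{-1} e(-x \cdot A^{-1}b) \, F(A^{-T}x)$. The modulation $e(-x \cdot A^{-1}b)$ has modulus one and so plays no role in $L^p$ norms, while the linear change of variables $x \mapsto A^{-T}x$ contributes a Jacobian factor. The net effect is
\begin{align*}
    \norm{G}_{L^p(\R^n)} = |\det A|^{-1/p'} \norm{F}_{L^p(\R^n)}.
\end{align*}

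The crucial observation is that the exact same relation holds piece by piece. Indeed, using $\1_{T(\theta)}(T\xi) = \1_\theta(\xi)$, one checks that $\widehat{\Pcal_\theta G}(\xi) = \widehat{\Pcal_{T(\theta)} F}(T\xi)$, so repeating the computation above yields $\norm{\Pcal_\theta G}_{L^p(\R^n)} = |\det A|^{-1/p'} \norm{\Pcal_{T(\theta)} F}_{L^p(\R^n)}$. Applying the decoupling inequality for $\Theta$ to $G$ and dividing both sides by $|\det A|^{-1/p'}$ gives the desired decoupling inequality for $F$ with respect to $\Theta'$, with the same constant. This shows $\on{D}(\Theta') \leq \on{D}(\Theta)$, and the reverse direction follows identically by applying the same argument with $T$ replaced by $T^{-1}$.

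The whole argument is essentially a bookkeeping exercise, so I do not anticipate a real obstacle. The only place requiring care is correctly tracking how the translation part $b$ and the Jacobian of $A$ interact under Fourier inversion; writing out the substitution $\eta = T\xi$ once and for all avoids sign errors. It is worth remarking that curvature or any structural hypothesis on the sets $\theta$ plays no role here, which is why the constant is preserved exactly rather than up to implicit factors.
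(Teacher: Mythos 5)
Your proof is correct and is essentially the same argument as the paper's: both define a companion function by precomposing the Fourier transform with $T$ or $T^{-1}$, track the Jacobian and the modulus-one modulation factor through the $L^p$ norm (yielding the $|\det A|^{-1/p'}$ factor, matching the paper's $\det(A^{-1})\det(A)^{1/p}$ up to signs), verify the same relation holds for each frequency piece, and then cite symmetry of $T \leftrightarrow T^{-1}$ for the reverse inequality. The only superficial difference is which of the two inequalities is carried out explicitly.
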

\begin{proof}
Let $T:\eta\mapsto A\eta+v$ where $A$ is a nonsingular matrix so that $T^{-1}:\eta\mapsto A^{-1}\eta-A^{-1}v$. It suffices to show that $\on{D}(\Theta)\leq\on{D}(\Theta')$ as we can simply run the same argument with $T^{-1}$ to get the other inequality in the same way.

Let $F$ be any Schwartz function. The proof is based on two identities. The first is a relation between $\theta$ and $T(\theta)$:
\begin{align*}
    \1_\theta(\xi)\widehat{F}(\xi)=\left(\1_{T(\theta)}\cdot(\widehat{F}\circ T^{-1})\right)\circ T(\xi).
\end{align*}
The second is an identity for affine transformations under the Fourier transform:
\begin{align*}
    (\widehat{F}\circ T)^\vee(x)=\frac{1}{\det(A)}F((A^{-1})^t x)e(-A^{-1}v\cdot x).
\end{align*}
Combining them together, we obtain
\begin{align*}
    \Pcal_\theta F(x)=\frac{1}{\det(A)}\left[\Pcal_{T(\theta)}\left((\widehat{F}\circ T^{-1})^\vee\right)\right] ((A^{-1})^t x)e(-A^{-1}v\cdot x).
\end{align*}
In particular,
\begin{align*}
    \norm{\Pcal_\theta F}_{L^p(\R^n)}=\frac{1}{\det(A)}\norm{\left[\Pcal_{T(\theta)}\left((\widehat{F}\circ T^{-1})^\vee\right)\right]((A^{-1})^t \cdot)}_{L^p(\R^n)}.
\end{align*}
By a change of variables this becomes
\begin{align}\label{transformation1}
    \norm{\Pcal_\theta F}_{L^p(\R^n)}=\frac{1}{\det(A)\det(A^{-1})^{1/p}}\norm{\Pcal_{T(\theta)}\left((\widehat{F}\circ T^{-1})^\vee\right)}_{L^p(\R^n)}.
\end{align}
By similar computations we also have
\begin{align}\label{transformation2}
    \norm{(\widehat{F}\circ T^{-1})^\vee}_{L^p(\R^n)}=\frac{1}{\det(A^{-1})\det(A)^{1/p}}\norm{F}_{L^p(\R^n)}.
\end{align}

Notice that (\ref{transformation1}) and (\ref{transformation2}) together yields
\begin{align*}
    \norm{F}_{L^p(\R^n)}&=\det(A^{-1})\det(A)^{1/p}\norm{(\widehat{F}\circ T^{-1})^\vee}_{L^p(\R^n)}\\
    &\leq\det(A^{-1})\det(A)^{1/p}\on{D}(\Theta')\left(\sum_{\theta\in\Theta}\norm{\Pcal_{T(\theta)}(\widehat{F}\circ T^{-1})^\vee}_{L^p(\R^n)}^2\right)^{1/2}\\
    &=\on{D}(\Theta')\left(\sum_{\theta\in\Theta}\norm{\Pcal_\theta F}_{L^p(\R^n)}^2\right)^{1/2}.
\end{align*}
This implies that $\on{D}(\Theta)\leq\on{D}(\Theta')$ by definition of the decoupling constant.
\end{proof}

\begin{prop}[Cylindrical decoupling, {\cite[Exercise 9.22]{demeter_fourier_2020}}]\label{cylindrical decoupling}
Let $\mathcal{S}_n$ be a collection of sets $S\subseteq\R^n$. Let $\mathcal{S}_{n+1}$ be the collection of sets $S'=S\times\R\subseteq\R^{n+1}$. Then $\on{D}(\mathcal{S}_n,p)=\on{D}(\mathcal{S}_{n+1},p)$ for $p\geq2$.
\end{prop}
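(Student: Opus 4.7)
The plan is to establish both inequalities $\on{D}(\mathcal{S}_n,p) \leq \on{D}(\mathcal{S}_{n+1},p)$ and $\on{D}(\mathcal{S}_{n+1},p) \leq \on{D}(\mathcal{S}_n,p)$ separately. The first direction is the tensor-product direction: given a nice function $f:\R^n\to\C$ with $\on{supp}(\widehat{f})\subseteq\bigcup_i S_i$, pick an arbitrary nonzero Schwartz $\phi:\R\to\C$ and form $F(x',x_{n+1})\defeq f(x')\phi(x_{n+1})$. Since $\widehat{F}(\xi',\xi_{n+1})=\widehat{f}(\xi')\widehat{\phi}(\xi_{n+1})$, the support of $\widehat{F}$ lies in $\bigcup_i (S_i\times\R)$, and the Fourier projection factors as $\mathcal{P}_{S_i\times\R}F=(\mathcal{P}_{S_i}f)\otimes\phi$. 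Applying the $(n+1)$-dimensional decoupling inequality to $F$ and using Fubini to write $\|F\|_{L^p(\R^{n+1})}=\|f\|_{L^p(\R^n)}\|\phi\|_{L^p(\R)}$ (and similarly for each summand), we can divide out $\|\phi\|_{L^p(\R)}$ to obtain the desired $n$-dimensional bound.

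The reverse direction is the slicing direction and is where the hypothesis $p\geq 2$ enters. Given $F:\R^{n+1}\to\C$ with $\on{supp}(\widehat{F})\subseteq \bigcup_i S_i\times\R$, I would fix $x_{n+1}\in\R$ and consider the slice $F_{x_{n+1}}(x')\defeq F(x',x_{n+1})$. A short Fourier-inversion computation shows that $\widehat{F_{x_{n+1}}}$ (as a function on $\R^n$) is supported in $\bigcup_i S_i$, and moreover that $\mathcal{P}_{S_i}F_{x_{n+1}}(x')=\mathcal{P}_{S_i\times\R}F(x',x_{n+1})$. Applying the $n$-dimensional decoupling inequality slicewise, raising to the $p$-th power and integrating in $x_{n+1}$, yields
\begin{align*}
\|F\|_{L^p(\R^{n+1})}^p \leq \on{D}(\mathcal{S}_n,p)^p\int_{\R}\left(\sum_i\|\mathcal{P}_{S_i\times\R}F(\cdot,x_{n+1})\|_{L^p(\R^n)}^2\right)^{p/2}dx_{n+1}.
\end{align*}
The key step is to pull the sum outside the integral; writing $b_i(x_{n+1})\defeq\|\mathcal{P}_{S_i\times\R}F(\cdot,x_{n+1})\|_{L^p(\R^n)}^2$, the integral is $\|\sum_i b_i\|_{L^{p/2}(\R)}^{p/2}$. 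Since $p\geq 2$ means $p/2\geq 1$, Minkowski's inequality in $L^{p/2}$ applies, giving $\|\sum_i b_i\|_{L^{p/2}(\R)}\leq \sum_i\|b_i\|_{L^{p/2}(\R)}$, which is exactly $\sum_i\|\mathcal{P}_{S_i\times\R}F\|_{L^p(\R^{n+1})}^2$ by Fubini.

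The main technical obstacle, such as it is, lies in this last swap: Minkowski in $L^{p/2}$ requires $p/2\geq 1$, which is precisely the hypothesis $p\geq 2$, and it is where the square-sum structure of the right-hand side is genuinely used. Everything else is bookkeeping, although one should take mild care that the class of functions with compactly-supported Fourier transform is dense enough for both directions to be meaningful — Schwartz functions (or finite linear combinations of $\mathcal{P}_{S_i}$-projections of Schwartz functions) suffice on both sides.
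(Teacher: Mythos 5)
Your proof is correct and follows essentially the same two-part strategy as the paper: slicing with Minkowski's inequality in $L^{p/2}$ (which is exactly where $p\geq 2$ is used) for $\on{D}(\mathcal{S}_{n+1},p)\leq\on{D}(\mathcal{S}_n,p)$, and tensoring with an auxiliary Schwartz bump plus Fubini for the reverse inequality. The only cosmetic difference is the order in which the two inequalities are established.
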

\begin{proof}
First, we show that $\on{D}(\mathcal{S}_{n+1},p)\leq\on{D}(\mathcal{S}_n,p)$. Let $F:\R^{n+1}\rightarrow\C$ be given and define $F_z(x')\defeq F(x',z)$ where $x'\in\R^n$. By definition, for each $z$,
\begin{align*}
    \norm{F_z}_{L^p(\R^n)}\leq\on{D}(\mathcal{S}_n,p)\left(\sum_{S\in\mathcal{S}_n}\norm{\Pcal_SF_z}_{L^p(\R^n)}^2\right)^{1/2}.
\end{align*}
Therefore, using Minkowski's inequality, we have
\begin{align*}
    \norm{F}_{L^p(\R^{n+1})}&=\left(\int\norm{F_z}_{L^p(\R^n)}^pdz\right)^{1/p}\\
    &\leq\on{D}(\mathcal{S}_n,p)\left(\int\left(\sum_{S\in\mathcal{S}_n}\norm{\Pcal_SF_z}_{L^p(\R^n)}^2\right)^{p/2}dz\right)^{1/p}\\
    &\leq\on{D}(\mathcal{S}_n,p)\left(\sum_{S\in\mathcal{S}_n}\left(\int\norm{\Pcal_SF_z}_{L^p(\R^n)}^pdz\right)^{2/p}\right)^{1/2}.
\end{align*}
Notice that $\Pcal_SF_z(x') = \Pcal_{S\times\R}F(x',z)$. One can verify this fact by first testing it on tensor products of Schwartz functions and then applying a density argument. Thus we get
\begin{align*}
    \norm{F}_{L^p(\R^{n+1})}\leq\on{D}(\mathcal{S}_n,p)\left(\sum_{S\in\mathcal{S}_n}\norm{\Pcal_{S\times\R}F}_{L^p(\R^{n+1})}^2\right)^{1/2}.
\end{align*}
and therefore $\on{D}(\mathcal{S}_{n+1},p)\leq\on{D}(\mathcal{S}_n,p)$.

Next, we show the reverse inequality $\on{D}_n(\mathcal{S}_{n+1})\geq\on{D}_n(\mathcal{S}_n)$. Let $F:\R^n\rightarrow\C$ be given and define $G(x,z)\defeq F(x)g(z)$ where $g(z)$ is a positive Schwartz function on $\R$. Thus
\begin{align*}
    \norm{F}_{L^p(\R^n)}&=\norm{g}_{L^p(\R^1)}^{-1}\norm{G}_{L^p(\R^{n+1})}\\
    &\leq \norm{g}_{L^p(\R^1)}^{-1}\on{D}(\mathcal{S}_{n+1},p)\left(\sum_{S\in\mathcal{S}_n}\norm{\Pcal_{S\times\R}G}_{L^p(\R^{n+1})}^2\right)^{1/2}\\
    & = \norm{g}_{L^p(\R^1)}^{-1}\on{D}(\mathcal{S}_{n+1},p)\left(\sum_{S\in\mathcal{S}_n}\norm{\Pcal_S F \cdot g}_{L^p(\R^{n+1})}^2\right)^{1/2}\\
    & = \on{D}(\mathcal{S}_{n+1},p)\left(\sum_{S\in\mathcal{S}_n}\norm{\Pcal_S F}_{L^p(\R^{n})}^2\right)^{1/2}.
\end{align*}
Therefore $\on{D}(\mathcal{S}_{n+1},p)\geq\on{D}(\mathcal{S}_n,p)$.
\end{proof}m

Finally, we record here the following \textit{reverse Hölder's inequality}.
\begin{prop}[Reverse Hölder's inequality]\label{reverse holder inequality}
For $q\geq p\geq 1$ and a function $F$ with Fourier support $\on{supp}(\widehat{F})$ contained in a set of diameter $\lesssim\frac{1}{R}$, we have
\begin{align*}
    \norm{F}_{L^q_\#(\Delta)}\lesssim\norm{F}_{L^p_\#(w_\Delta)}
\end{align*}
where $\Delta$ is a cube with side length $R$ or ball with radius $R$.
\end{prop}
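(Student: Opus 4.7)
The overall strategy is a two-step argument: first establish the pointwise bound $\norm{F}_{L^\infty(\Delta)}\lesssim \norm{F}_{L^p_\#(w_\Delta)}$ (a Bernstein-type estimate) via a reproducing-kernel argument, and then conclude by trivial interpolation between $L^p$ and $L^\infty$ on $\Delta$.

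\textbf{Reproducing kernel.} Since $\on{supp}(\widehat{F})$ has diameter $\lesssim 1/R$, after modulating $F$ by $e(-\xi_0\cdot x)$ for a suitable $\xi_0$ (which preserves $|F|$ and hence every norm in the statement), I may assume $\on{supp}(\widehat{F})\subseteq B(0,C/R)$. Fix once and for all a Schwartz function $\eta$ with $\widehat{\eta}\equiv 1$ on $B(0,C)$ and rapid decay $|\eta(y)|\lesssim (1+|y|)^{-100n}$, and set $\phi(x)\defeq R^{-n}\eta(x/R)$. Then $\widehat{\phi}\equiv 1$ on $B(0,C/R)$, $\norm{\phi}_{L^1(\R^n)}\sim 1$, and $|\phi(x)|\lesssim R^{-n}(1+|x|/R)^{-100n}$. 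Fourier inversion then gives the reproducing identity $F=F\ast\phi$.

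\textbf{Bernstein-type pointwise bound.} Hölder's inequality applied inside the convolution yields
\begin{equation*}
|F(x)|^p = |F\ast\phi(x)|^p \leq \norm{\phi}_{L^1(\R^n)}^{p-1}\,(|F|^p\ast|\phi|)(x) \lesssim R^{-n}\int|F(y)|^p\Bigl(1+\tfrac{|x-y|}{R}\Bigr)^{-100n}\,dy.
\end{equation*}
For $x\in\Delta$ we have $|x-c_\Delta|\lesssim R$, so the triangle inequality gives $1+|y-c_\Delta|/R\lesssim 1+|x-y|/R$, whence $(1+|x-y|/R)^{-100n}\lesssim w_\Delta(y)$. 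Since $|\Delta|\sim R^n$, dividing by $|\Delta|$ and taking the supremum over $x\in\Delta$ produces $\norm{F}_{L^\infty(\Delta)}\lesssim \norm{F}_{L^p_\#(w_\Delta)}$. The case $p=1$ follows the same way with Hölder replaced by the trivial bound $|F|\leq |F|\ast|\phi|$.

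\textbf{Interpolation.} Using $|F|^q\leq \norm{F}_{L^\infty(\Delta)}^{q-p}|F|^p$ pointwise on $\Delta$ together with $\1_\Delta\lesssim w_\Delta$ from (\ref{cube weight property})/(\ref{ball weight property}),
\begin{equation*}
\norm{F}_{L^q_\#(\Delta)}^q \leq \norm{F}_{L^\infty(\Delta)}^{q-p}\,\norm{F}_{L^p_\#(\Delta)}^p \lesssim \norm{F}_{L^p_\#(w_\Delta)}^{q-p}\,\norm{F}_{L^p_\#(w_\Delta)}^p = \norm{F}_{L^p_\#(w_\Delta)}^q,
\end{equation*}
and the conclusion follows by taking $q$-th roots. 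The argument is insensitive to whether $\Delta$ is a cube or a ball, so both cases are handled uniformly.

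The only technically delicate step is the transfer of decay from the convolution kernel (centered at the moving point $x$) to the weight $w_\Delta$ (anchored at the fixed center $c_\Delta$); this is the single place where the geometry of $\Delta$ is used, and it is a clean triangle-inequality move. The construction of the reproducing kernel $\phi$ and the Hölder/interpolation steps are otherwise routine.
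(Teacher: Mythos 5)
Your proof is correct, and it takes a genuinely different route from the paper's. The paper first multiplies $F$ by a smooth cutoff $\eta_\Delta$ to globalize the problem, observes that $\eta_\Delta F$ still has Fourier support in a ball of radius $\lesssim 1/R$, and then applies Young's convolution inequality against a bump $\gamma^\vee$ to pass directly from $L^p(\R^n)$ to $L^q(\R^n)$; the weight $w_\Delta$ only appears at the very end through $\eta_\Delta^p\lesssim w_\Delta$. Your approach instead establishes the $L^\infty$ endpoint first via the reproducing identity $F = F\ast\phi$ and a pointwise Hölder estimate inside the convolution, and then closes by the elementary interpolation $|F|^q\leq\norm{F}_{L^\infty(\Delta)}^{q-p}|F|^p$. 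Your route is slightly more elementary — it replaces Young's inequality with a single application of Hölder plus the observation that the kernel decay centered at $x\in\Delta$ dominates $w_\Delta$ via a triangle-inequality move — and it makes the locally constant heuristic completely explicit, since you actually prove the $L^\infty$-vs-$L^p$-average bound as a named intermediate step (precisely the quantitative statement the paper mentions in the remark following the proposition). One minor point: when $q=\infty$ your interpolation formula degenerates (taking $q$-th roots does not make sense), but that case is exactly your Bernstein step, so the argument is still complete; it would be worth saying so.
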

\begin{proof}
Let $\eta$ be a Schwartz function such that $\1_{[-1,1]^n}\leq\eta$ and $\on{supp}(\widehat{\eta})\subseteq [-1,1]^n$, and $\gamma$ be a Schwartz function which equals $1$ on $B(0,1)$. We trivially have
\begin{align*}
    \norm{F}_{L^q(\Delta)}\leq\norm{\eta_\Delta F}_{L^q(\R^n)}.
\end{align*}

By the dilation property of the Fourier transform, $\on{supp}(\widehat{\eta_\Delta})$ is contained in a set of diameter $\lesssim\frac{1}{R}$, and so is $\on{supp}(\widehat{\eta_\Delta F})$ by our assumption. 

Suppose $\on{supp}(\widehat{\eta_\Delta F})\subseteq B(x_0,\frac{C}{R})$. then we have
\begin{align*}
    \norm{\eta_\Delta F}_{L^q(\R^n)}=\norm{\eta_\Delta F\ast \left(\gamma_{B(x_0,\frac{C}{R})}\right)^\vee}_{L^q(\R^n)}\lesssim R^{-\frac{n}{r'}}\norm{\eta_\Delta F}_{L^p(\R^n)}.
\end{align*}
where we used Young's convolution inequality with $1+\frac{1}{q}=\frac{1}{p}+\frac{1}{r}$ for some $r\geq1$ and the fact that $\norm{\left(\gamma_{B(x_0,\frac{C}{R})}\right)^\vee}_{L^r(\R^n)}\lesssim R^{-\frac{n}{r'}}$. To conclude, we divide both sides by $R^{\frac{n}{q}}$ and note that $\eta_\Delta^p\lesssim w_\Delta$.
\end{proof}

\begin{rmk}
The reason for the name ``reverse Hölder's inequality" comes from the fact that we would normally expect $\norm{F}_{L^q(\Delta)}\lesssim\norm{F}_{L^p(\Delta)}$ to hold when $1\leq q\leq p$. This result shows that when we have this additional hypothesis on the Fourier support of $F$, we can reverse this inequality (up to the inclusion of the weight).
\end{rmk}

In the special case when $q=\infty$, we have $\norm{F}_{L^\infty(B)}\lesssim\norm{F}_{L^p_\#(w_B)}$ for all $p\geq 1$, i.e., the supremum of $F$ on $B$ is controlled by its \textit{weighted} average. This is one quantitative manifestation of the \textit{locally constant} heuristic. It is important to note that the spatial and frequency scales need to be inversely related for this heuristic to work: If the scale of $\Delta$ is much larger than $R$ (e.g., $R^2$), then Proposition \ref{reverse holder inequality} no longer holds in general.

\section{Local and weighted versions of decoupling}\label{ch 2 sec 2}
In Definition \ref{decoupling constant}, we have defined the decoupling constant $\on{D}(\delta,p)$, where $L^p$ norms are taken over all $\R^n$ - we refer to it as the \textit{global} decoupling constant. Now we introduce two localized versions.
\begin{defn}[\textit{Local} decoupling constant]
    Let $\on{D}_{local}(\delta, p)$ be the smallest constant such that 
    \begin{align*}
        \norm{F}_{L^p(Q)}\leq \on{D}_{local}(\delta,p)
        \left(\sum_{\theta\in\Theta(\delta)}\norm{\Pcal_{\theta}F}_{L^p(\w_Q)}^2\right)^{1/2}
    \end{align*}
    holds for each $F$ with $\widehat{F} \subseteq \mathcal{N}(\delta)$ and each \textit{cube} $Q$ of side length $\delta^{-1}$.
\end{defn}

\begin{defn}[\textit{Weighted} decoupling constant]
    Let $\on{D}_{weighted}(\delta, p)$ be the smallest constant such that 
    \begin{align*}
        \norm{F}_{L^p(\w_Q)}\leq \on{D}_{weighted}(\delta,p)
        \left(\sum_{\theta\in\Theta(\delta)}\norm{\Pcal_{\theta}F}_{L^p(\w_Q)}^2\right)^{1/2}
    \end{align*}
    holds for each $F$ with $\widehat{F}\subseteq\mathcal{N}(\delta)$ and each \textit{cube} $Q$ of side length $\delta^{-1}$.
\end{defn}

A useful fact is that all three decoupling constants are comparable to each other. Therefore, once the following proposition has been proved, we will not distinguish between them and will simply write $\on{D}(\delta,p)$.

\begin{prop}\label{equiv}
    The following equivalence holds with implicit constant independent of $\delta$:
    \[
    \on{D}(\delta,p)\sim
    \on{D}_{local}(\delta,p)\sim
    \on{D}_{weighted}(\delta,p)
    \]
\end{prop}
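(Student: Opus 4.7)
The plan is to establish a cycle of inequalities relating the three constants; two directions are routine and two require real arguments. The bound $\on{D}_{local}(\delta,p) \leq C\,\on{D}_{weighted}(\delta,p)$ is immediate from the pointwise comparison $\1_Q \lesssim w_Q$ (since $w_Q \geq 2^{-100n}$ on $Q$), which gives $\|F\|_{L^p(Q)} \lesssim \|F\|_{L^p(w_Q)}$. The bound $\on{D}(\delta,p) \lesssim \on{D}_{weighted}(\delta,p)$ follows from Proposition \ref{parallel decoupling}: partition $\R^n$ into disjoint $\delta^{-1}$-cubes $\{Q\}$ and take $\mu_Q = \omega_Q = w_Q\,dx$. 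The weighted decoupling inequality on each $Q$ is exactly the hypothesis of parallel decoupling, whose conclusion yields
\[
\|F\|_{L^p(\sum_Q w_Q)} \leq \on{D}_{weighted}\Big(\sum_\theta \|\Pcal_\theta F\|_{L^p(\sum_Q w_Q)}^2\Big)^{1/2}.
\]
Since $\sum_Q w_Q \sim 1$ (the upper bound is \eqref{weight property}, and each $x$ lies in some $Q$ where $w_Q(x) \geq 2^{-100n}$), both sides are comparable to the unweighted $L^p(\R^n)$ counterparts, giving $\on{D} \lesssim \on{D}_{weighted}$.

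The first nontrivial direction is $\on{D}_{weighted}(\delta,p) \lesssim \on{D}_{local}(\delta,p)$. Fix a cube $Q$ of side $\delta^{-1}$ and partition $\R^n$ into cubes $\{Q'\}$ of the same size. Because $w_Q$ varies on scale $\delta^{-1}$, it is essentially constant on each $Q'$: $w_Q(x) \sim w_Q(c_{Q'})$ for $x \in Q'$. Hence
\[
\|F\|_{L^p(w_Q)}^p \sim \sum_{Q'} w_Q(c_{Q'})\,\|F\|_{L^p(Q')}^p.
\]
Applying local decoupling on each $Q'$, then Minkowski's inequality in $\ell^{p/2}$ (valid for $p \geq 2$) to interchange the sums over $Q'$ and $\theta$, produces a double-weight sum that is estimated by the pointwise bound
\[
\sum_{Q'} w_Q(c_{Q'})\,w_{Q'}(x) \lesssim w_Q(x),
\]
itself a consequence of the factor inequality $1 + |x-c_Q|/R \leq (1+|x-c_{Q'}|/R)(1+|c_{Q'}-c_Q|/R)$ applied inside the weight, with the exponent split so that the resulting lattice sum is summable (the large decay exponent $100n$ provides the needed margin).

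The remaining direction $\on{D}_{local}(\delta,p) \lesssim \on{D}(\delta,p)$ closes the cycle via Schwartz-bump localization. Fix a Schwartz function $\eta$ with $\eta \geq \1_{[-1,1]^n}$ and $\widehat\eta$ supported in $[-1,1]^n$; set $\eta_Q(x) = \eta((x-c_Q)\delta)$. Then $\|F\|_{L^p(Q)} \leq \|F\eta_Q\|_{L^p(\R^n)}$, and $F\eta_Q$ has Fourier support in a slight enlargement $\mathcal{N}(C\delta)$ of $\mathcal{N}(\delta)$. Apply global decoupling at the comparable scale $C\delta$ (whose constant is $\lesssim \on{D}(\delta,p)$, since the scales differ only by a constant), obtaining a bound in terms of $\|\Pcal_{\theta^*}(F\eta_Q)\|_{L^p(\R^n)}$ for $\theta^* \in \Theta(C\delta)$. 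Since each $\theta^*$ overlaps with only $O(1)$ original $\theta \in \Theta(\delta)$, and Schwartz decay gives $|\eta_Q|^p \lesssim w_Q$ globally, one relates $\Pcal_{\theta^*}(F\eta_Q)$ to $(\Pcal_\theta F)\eta_Q$, yielding $\|\Pcal_{\theta^*}(F\eta_Q)\|_{L^p(\R^n)} \lesssim \sum_{\theta \sim \theta^*} \|\Pcal_\theta F\|_{L^p(w_Q)}$. \textbf{Main obstacle:} this last relation requires replacing the sharp symbol $\1_{\theta^*}$ (not an $L^p$-bounded multiplier for $p \neq 2$) by a smoothed cutoff, which is standard but requires careful bookkeeping of the bounded overlap between the partitions at scales $\delta$ and $C\delta$.
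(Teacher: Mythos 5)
Your proof is correct and follows essentially the same route as the paper: the trivial inequality $\on{D}_{local}\lesssim\on{D}_{weighted}$ from $\1_Q\lesssim w_Q$, the Schwartz-bump localization with a Fourier-support-thickening argument for $\on{D}_{local}\lesssim\on{D}$, and the weight decomposition/assembly bounds (\ref{decompose weight}), (\ref{assemble weight}) together with Minkowski's inequality for $\on{D}_{weighted}\lesssim\on{D}_{local}$. The one place you deviate is in closing the loop: you prove $\on{D}\lesssim\on{D}_{weighted}$ by invoking parallel decoupling (Proposition \ref{parallel decoupling}) together with the two-sided estimate $\sum_Q w_Q\sim 1$, whereas the paper proves $\on{D}\lesssim\on{D}_{local}$ directly by the same Minkowski computation plus only the upper bound $\sum_Q w_Q\lesssim 1$; the two are equivalent in content, and yours is arguably a cleaner way to reuse machinery already established. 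Two small points of imprecision that do not affect the substance: the numerical bound $w_Q\geq 2^{-100n}$ on $Q$ fails for $n\geq 5$ (the correct constant is $(1+\sqrt{n}/2)^{-100n}$, still $\gtrsim_n 1$), and the justification you sketch for $\sum_{Q'}w_Q(c_{Q'})w_{Q'}(x)\lesssim w_Q(x)$ via a fixed split of the exponent $100n$ only yields a weight with a smaller exponent on the right-hand side; to preserve the full exponent one should instead split the lattice sum into the ranges $|c_{Q'}-c_Q|\leq |x-c_Q|/2$ and $|c_{Q'}-c_Q|>|x-c_Q|/2$, using the rapid decay of one factor on each range.
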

\begin{proof}
    Throughout the proof, we let $R=\delta^{-1}$ and $F$ satisfy $\on{supp}(\widehat{F})\subseteq\NN(\theta)$.

    Let $\mathcal{Q}_R$ be a partition of $\R^n$ with cubes of side length $R$. The proof of $\on{D}(\delta,p)\lesssim\on{D}_{local}(\delta,p)$ is essentially an application of the Minkowski inequality:
    \begin{align*}
        \norm{F}_{L^p(\R^n)}
        & = \left(\sum_{Q_R\in\mathcal{Q}_{R}}\norm{F}_{L^p(Q_R)}^p\right)^{1/p}\\
        & \leq \on{D}_{local}(\delta,p)\left(\sum_{Q_R\in\mathcal{Q}_R}\left(\sum_{\theta\in\Theta(\delta)}\norm{\Pcal_\theta F}_{L^p(\w_{Q_R})}^2\right)^{p/2}\right)^{1/p}\\
        & \leq \on{D}_{local}(\delta,p)\left(\sum_{\theta\in\Theta(\delta)}\left(\sum_{Q_R\in\mathcal{Q}_R}\norm{\Pcal_\theta F}_{L^p(\w_{Q_R})}^p\right)^{2/p}\right)^{1/2}\\
        \left(\text{by } (\ref{weight property})\right) & \lesssim
        \on{D}_{local}(\delta,p)\left( 
        \sum_{\theta\in\Theta(\delta)}
        \norm{\Pcal_\theta F}_{L^p(\R^n)}^2
        \right)^{1/2}.
    \end{align*}

    To prove $\on{D}(\delta,p)\lesssim\on{D}_{local}(\delta,p)$, we take a Schwartz function $\eta$ with $\widehat{\eta}\subseteq B(0,1)$ and $\eta\geq1$ on $Q(0,1)$. For any 
    $Q_R$, by the definition of $\on{D}(\delta,p)$, we have
    \begin{align*}
        \norm{F}_{L^p(Q_R)}&\leq
        \norm{\left(\sum_{\theta\in\Theta(\delta)}\Pcal_{\theta}F\right)\eta_{Q_R}}_{L^p(\R^n)}\\
        &\lesssim
        \on{D}(\delta,p)
        \left(\sum_{\theta\in\Theta(\delta)}\norm{\Pcal_\theta F\eta_{Q_R}}_{L^p(\R^n)}^2\right)^{1/2}\\
        &\lesssim
        \on{D}(\delta,p)
        \left(\sum_{\theta\in\Theta(\delta)}\norm{\Pcal_\theta F}_{L^p(\w_{Q_R})}^2\right)^{1/2}.
    \end{align*}
    The second line relies on the fact that $\widehat{\Pcal_\theta F\eta_{Q_R}}\subseteq\theta+B(0,\delta)$ can be suitably covered by translated copies of $\theta$ to match the setting of global decoupling. See \cite[Proposition 9.15]{demeter_fourier_2020} for a rigorous justification for this step.

    On the other hand, $\on{D}_{local}(\delta,p)\lesssim\on{D}_{weighted}(\delta,p)$ is trivial in view of $\1_{Q_R}\lesssim\w_{Q_R}$. And the reverse inequality $\on{D}_{weighted}(\delta,p)\lesssim\on{D}_{local}(\delta,p)$ is a consequence of the following two properties of weights:
    \begin{align}\label{decompose weight}
        \w_{Q_R} \lesssim \sum_{Q'\in\mathcal{Q}_R}\w_{Q_R}(c_{Q'})\1_{Q'},
    \end{align} 
    \begin{align}\label{assemble weight}
        \sum_{Q'\in\mathcal{Q}_R}\w_{Q_R}(c_{Q'})\w_{Q'}\lesssim \w_{Q_R}.
    \end{align} 
    See \cite[Proposition 3.3]{yang_notes} for complete proofs of these two facts.
    \begin{align*}
        \norm{F}_{L^p(\w_{Q_R})}
        & \overset{(\ref{decompose weight})}{\lesssim}
        \left(\int \abs{F}^p\sum_{Q'\in\mathcal{Q}_R}\w_{Q_R}(c_{Q'})\1_{Q'}\right)^{1/p}\\
        & =
        \left(\sum_{Q'\in\mathcal{Q}_R}\w_{Q_R}(c_{Q'})\norm{F}_{L^p(Q')}^p\right)^{1/p}\\
        & \leq
        \on{D}_{local}(\delta,p) \left[\sum_{Q'\in\mathcal{Q}_R}\w_{Q_R}(c_{Q'})\left(\sum_{\theta\in\Theta(\delta)}\norm{\Pcal_\theta F}_{L^p(\w_{Q'})}^2\right)^{p/2}\right]^{1/p}\\
        (\text{Minkowski}) &\leq
        \on{D}_{local}(\delta,p) \left[\sum_{\theta\in\Theta(\delta)}\left(\sum_{Q'\in\mathcal{Q}_R}\w_{Q_R}(c_{Q'})\norm{\Pcal_\theta F}_{L^p(\w_{Q'})}^p\right)^{2/p}\right]^{1/2}\\
        & = 
        \on{D}_{local}(\delta,p) \left[\sum_{\theta\in\Theta(\delta)}
        \left(\int\abs{\Pcal_\theta F}^p\sum_{Q'\in\mathcal{Q}_R}\w_{Q_R}(c_{Q'})\w_{Q'}\right)^{2/p}\right]^{1/2}\\
        & \overset{(\ref{assemble weight})}{\lesssim}
        \on{D}_{local}(\delta,p) \left(\sum_{\theta\in\Theta(\delta)}
        \norm{\Pcal_\theta F}_{L^p(\w_{Q_R})}^2\right)^{1/2}.
    \end{align*}
    Thus we complete the proof.
\end{proof}
\begin{rmk}
    Similar results hold true if we substitute $Q$ and $w_Q$ (for cubes) by $B$ and $w_B$ (for balls) in the definition of local and weighted decoupling constant. The proof is the same, except that we work with a finite overlapping cover $\mathcal{B}_R$ of $\R^n$ instead of $\mathcal{Q}_R$. Also, by Proposition \ref{parallel decoupling}, (\ref{cube weight property}), (\ref{ball weight property}), we know that in all the definitions, the requirement $R = \delta^{-1}$ can be safely replaced by $R \geq \delta^{-1}$. Thus we know that all the possible definitions of decoupling constants are equivalent.
\end{rmk}

One major advantage of the localized versions of $D(\delta,p)$ is that they naturally introduce \textit{scales} on the physical side, and so are more compatible with induction on scales.

\section{Interpolation}\label{ch 2 sec 4} 
A nice fact is that decouplings can be interpolated, which reduces things to critical cases. We first prove a general lemma, and then apply it to our case of $\PP^{n-1}$.

\begin{lem}[{\cite[Exercise 9.21b]{demeter_fourier_2020}}]\label{lem interpolation}
Let $\Theta$ be a collection of congruent rectangular boxes $2\theta$ in $\mathbb{R}^n$ with dimensions in $[\delta, 1]$, with the property that the boxes $2\theta$ are pairwise disjoint. Let $\Theta' \defeq \{2\theta: \theta \in \Theta\}$ and $\Theta'' \defeq \{\frac{1}{2}\theta: \theta \in \Theta\}$. Then for each $1\leq p_1 < p < p_2 $ with $\frac{1}{p}=\frac{\alpha}{p_1}+\frac{1-\alpha}{p_2}$ and any $\e>0$, we have
\begin{align*}
    \on{D}(\Theta'',p)\lesssim_\e\delta^{-\e}\abs{\Theta}^\e\on{D}(\Theta',p_1)^\alpha\on{D}(\Theta',p_2)^{1-\alpha}.
\end{align*}
\end{lem}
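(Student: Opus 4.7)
My approach is Stein's complex interpolation theorem. I would construct an analytic family of functions $\{F_z\}_z$ on the strip $S = \{z \in \C : 0 \leq \on{Re}(z) \leq 1\}$ such that $F_{1-\alpha} = F$, and whose boundary $L^{p_i}$ norms on $\on{Re}(z) \in \{0,1\}$ are controlled using the decoupling bounds at $\Theta'$ with exponents $p_1$ and $p_2$. The three-lines lemma applied at the interior point $z = 1-\alpha$ then gives the $L^p$ bound. The thickening between $\Theta''$ and $\Theta'$ is essential: the pointwise manipulations required to normalize the boundary norms spread the Fourier supports of the pieces $f_\theta$, and the thickening supplies exactly the room needed to repair this spread using smooth Fourier multipliers.

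\textbf{Key steps.} Write $F = \sum_\theta f_\theta$ with $f_\theta := \Pcal_{\frac{1}{2}\theta} F$. By homogeneity assume $\sum_\theta \|f_\theta\|_p^2 = 1$, and via a dyadic pigeonhole step reduce to the case where all nonzero $\|f_\theta\|_p$ are comparable to a single value (at cost $\lesssim_\e \delta^{-\e}|\Theta|^\e$, coming from the $O(\log \delta^{-1})$ dyadic scales and from discarding negligible contributions). Set $\frac{1}{p(z)} := \frac{1-z}{p_1} + \frac{z}{p_2}$ and $h(z) := 1 - p/p(z)$, so that $p(1-\alpha) = p$ and $h(1-\alpha) = 0$. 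For each $\theta$, let $\mathcal{M}_\theta$ denote a smooth Fourier multiplier equal to $1$ on $\frac{1}{2}\theta$ and supported in $2\theta$, uniformly bounded on $L^q$ for $1 < q < \infty$ with a constant depending only on $n$ and $q$. Define the analytic family
\[
    F_z(x) \;:=\; \sum_\theta \|f_\theta\|_p^{h(z)} \cdot \mathcal{M}_\theta\!\bigl(|f_\theta|^{p/p(z)-1} f_\theta\bigr)(x).
\]
By construction $F_{1-\alpha} = F$ (since $\mathcal{M}_\theta = 1$ on $\on{supp}(\widehat{f_\theta})$), and $\widehat{F_z} \subseteq \bigcup_\theta 2\theta$ for every $z \in S$. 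A direct computation on $\on{Re}(z) = i$ using $\||f_\theta|^{p/p(z)-1} f_\theta\|_{p_i} = \|f_\theta\|_p^{p/p_i}$ together with the choice of $h(z)$ normalizes the $\ell^2(L^{p_i})$-norm of the summed sequence to a universal constant. Decoupling at $\Theta'$ with exponent $p_i$ then yields $\|F_z\|_{p_i} \lesssim \on{D}(\Theta', p_i)$ on each respective boundary.

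\textbf{Three-lines lemma and main obstacle.} Pair with a companion dual analytic family $G_z$ constructed from a test function $G$ with $\|G\|_{p'} = 1$ via the analogous Stein recipe (so that $\|G_z\|_{p_i'} \lesssim 1$ on the two boundaries), form the analytic function $\phi(z) := \int F_z \, \overline{G_z}$ on $S$, and apply the three-lines lemma to conclude
\[
    \|F\|_p = \sup_{\|G\|_{p'} = 1} |\phi(1-\alpha)| \;\lesssim\; \on{D}(\Theta', p_1)^\alpha \on{D}(\Theta', p_2)^{1-\alpha}
\]
up to the pigeonhole loss $\delta^{-\e}|\Theta|^\e$ and the universal multiplier constants. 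The main technical obstacle is the Fourier-support mismatch: the pointwise operation $|f_\theta|^{p/p(z)-1} f_\theta$ spreads $\widehat{f_\theta}$ beyond $\frac{1}{2}\theta$, which by itself would invalidate the direct use of decoupling at $\Theta'$. Inserting the smoothing multiplier $\mathcal{M}_\theta$ corrects this while keeping the identity $F_{1-\alpha} = F$ intact, and the thickening from $\Theta''$ (support in $\frac{1}{2}\theta$) to $\Theta'$ (support in $2\theta$) is precisely what makes such a smooth multiplier available. The bookkeeping for $h(z)$ (solving the two linear conditions ensuring $F_{1-\alpha} = F$ and the boundary normalizations) is the only nontrivial piece of algebra.
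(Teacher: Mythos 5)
Your argument is correct and takes a genuinely different route from the paper. The paper proves this lemma via the wave packet decomposition: it expands each $\Pcal_\theta F$ into wave packets, pigeonholes the tubes into dyadic classes by coefficient size and multiplicity per cap, observes that each such ``balanced'' piece $F_i$ has $\ell^2 L^p$ norms that interpolate between $p_1$ and $p_2$ automatically, discards negligible classes, keeps the dominant one, and then finishes with a somewhat delicate mixed local-global convolution step (which is the true reason for the shrinkage from $\theta$ to $\tfrac12\theta$ in the paper's version). Your proposal instead runs a Stein complex-interpolation argument: you build the standard analytic family $F_z$ but insert a smoothing Fourier multiplier $\mathcal{M}_\theta$ (equal to $1$ on $\tfrac12\theta$, supported in $2\theta$, uniformly bounded on $L^q$) to repair the Fourier spreading caused by the pointwise operation $|f_\theta|^{p/p(z)-1}f_\theta$, apply $\on{D}(\Theta',p_1)$ and $\on{D}(\Theta',p_2)$ on the two boundary lines, and close with the three-lines lemma. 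This is precisely the ``more concise proof via vector-valued complex interpolation'' that the paper mentions in a footnote but does not carry out, and your multiplier trick is the essential observation that makes it rigorous --- it also gives a clean conceptual explanation for why the lemma compares $\Theta''$ to $\Theta'$ (the gap is exactly the room needed for $\mathcal{M}_\theta$ to exist). Two small remarks: your dyadic pigeonholing step is in fact unnecessary here --- the boundary computation $\|f_\theta\|_p^{2\on{Re} h(z)}\cdot\|f_\theta\|_p^{2p/p_i}=\|f_\theta\|_p^2$ is exact for arbitrary sizes, so the $\delta^{-\e}|\Theta|^\e$ loss does not have to be incurred by this route; and the dual family should be set up so that $z\mapsto\int F_z\,\overline{G_z}$ is actually analytic (the usual fix is to put $\bar z$ into the exponent of $G_z$, or to pair without the conjugate), but this is a standard feature of Stein's recipe, not a defect of your argument.
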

\begin{proof}
Suppose $\on{supp}(\widehat{F})\subseteq \bigcup \theta$. For each $\theta\in\Theta$, we can apply the wave packet decomposition (Proposition \ref{second formulation} in Appendix \ref{appendix 1}) to $\Pcal_{\theta}F$ to expand it as
\begin{align*}
    \Pcal_{\theta}F=\sum_{T\in\TT_\theta}w_TW_T
\end{align*}
where each $W_T$ satisfies $\on{supp}(W_T)\subseteq 2\theta$. Note that by our construction, $\norm{W_T}_p\sim \abs{\theta}^{1/p'}$.

We can partition $\bigcup_\theta \TT_\theta$ into sets $\TT_i$ with dyadic parameters $\lambda_i$ and $N_i$ such that for each $i$:
\begin{itemize}
    \item For all $T\in \TT_i$ we have $\abs{w_T}\sim\lambda_i$;
    \item For all $\theta\in\Theta$ we either have $\abs{\TT_i\cap\TT_\theta}\sim N_i$ or $\TT_i\cap\TT_\theta=\varnothing$.
\end{itemize}
To achieve this decomposition, first decompose $\bigcup_\theta\TT_\theta$ based on the size for the coefficients $w_T$. Within each collection of tubes $T$ with comparable coefficients and for each $N$, form a collection $\TT_i$ satisfying the second condition. Now, define $F_i\defeq\sum_{T\in\TT_i}w_TW_T$, then
\begin{align*}
    F=\sum_{\theta\in\Theta}\Pcal_{\theta}F=\sum_{\theta\in\Theta}\sum_{T\in\TT_\theta}w_TW_T=\sum_iF_i.
\end{align*}

Consider some $i$ and some $\theta$ such that $\TT_i\cap\TT_\theta\neq\varnothing$. We can then estimate $\norm{\Pcal_{2\theta}F_i}_p$ using properties of the wave packet decomposition:
\begin{align*}
    \norm{\Pcal_{2\theta}F_i}_p=\norm{\sum_{T\in\TT_i\cap\TT_\theta}w_TW_T}_p\sim\lambda_i\left(\sum_{T\in\TT_i\cap\TT_\theta}\norm{W_T}_p^p\right)^{1/p}\sim\lambda_i N_i^{1/p}\abs{\theta}^{1/p'}.
\end{align*} 
Similar estimates holds with $p$ replaced by $p_1$ and $p_2$, and it follows that
\begin{align*}
    \left(\sum_{\theta\in\Theta}\norm{\Pcal_{2\theta}F_i}_p^2\right)^{\frac{1}{2}}
    \sim
    \left(\sum_{\theta\in\Theta}\norm{\Pcal_{2\theta}F_i}_{p_1}^2\right)^{\frac{\alpha}{2}}\left(\sum_{\theta\in\Theta}\norm{\Pcal_{2\theta}F_i}_{p_2}^2\right)^{\frac{1-\alpha}{2}}.
\end{align*}

The $F_i$'s are exactly the so-called \textit{balanced N-functions} introduced in \cite[Section 3]{bourgain_proof_2015}. And the above relation indicates that they are well behaved. In the remaining parts of the proof, we essentially pass the nice properties from the $F_i$'s to $F$.

Since the dimensions of $\theta$ are in $[\delta, 1]$, the same arguments as in Proposition \ref{equiv} allows us to use equivalent localized versions of $\on{D}(\Theta,p)$ freely. Let $B$ be an arbitrary ball of radius $\delta^{-1}$. Our goal is to control $\norm{F}_{L^p(B)}$. Note that 
\begin{align*}
    \norm{F}_{L^p(B)} = \norm{\sum_i\sum_{T\in\TT_i}w_T\1_BW_T}_p = \norm{\sum_{\theta\in\Theta}\sum_{T\in\TT_\theta}w_T\1_BW_T}_p.
\end{align*}

We first make a few reductions. By Proposition \ref{second formulation}, for $T\in\TT_\theta$, we have 
\begin{align*}
    \abs{w_T} = \abs{\inner{\Pcal_{\theta}F,W_T}}\leq \norm{\Pcal_{\theta}F}_p\norm{W_T}_{p'}\lesssim\norm{\Pcal_{\theta}F}_p\abs{\theta}^{1/p}.
\end{align*}
Let $\TT_{\theta,> d}$ be the subset of $T\in\TT_\theta$ with $\on{dist}(T,B)$ (the distance between $T$ and $B$) larger than $d>0$, then for any fixed $M\geq n$ we have
\begin{align*}
    \norm{\sum_{\theta\in\Theta} \sum_{T\in\TT_{\theta,> d}}w_T\1_BW_T}_p 
    &\leq \abs{\Theta}\max_{\theta\in\Theta} \sum_{T\in\TT_{\theta,> d}}\abs{w_T}\norm{\1_BW_T}_p\\ 
    &\lesssim_M \abs{\Theta}\max_{\theta\in\Theta} \sum_{T\in\TT_{\theta,> d}} \norm{\Pcal_{\theta}F}_p\abs{\theta}^{1/p} \cdot \delta^{-n/p}\abs{\theta}\on{dist}(T,B)^{-M}\\
    (\abs{\theta}\leq1)&\lesssim_M \abs{\Theta} \max_{\theta\in\Theta}\norm{\Pcal_{\theta}F}_p\delta^{-n/p}d^{-M+n-1}\\
    &\leq \abs{\Theta} \delta^{-n/p}d^{-M+n-1} \max_{\theta\in\Theta}\norm{\Pcal_{\theta}F}_p.
\end{align*}
Thus by simply taking $M = n$ and $d = \abs{\Theta}\delta^{-n}$, we obtain
\begin{align}\label{negligible_part1}
    \norm{\sum_{\theta\in\Theta} \sum_{T\in\TT_{\theta,> d}}w_T\1_BW_T}_p \lesssim \max_{\theta\in\Theta}\norm{\Pcal_{\theta}F}_p \leq \left(\sum_{\theta\in\Theta}\norm{\Pcal_{\theta}F}_{L^p(\R^n)}^2\right)^{\frac{1}{2}}
\end{align}
which is what we want. Let $\TT_{\theta,\leq d}\defeq\TT_{\theta}\setminus\TT_{\theta,> d}$, then we only need to focus on those $T\in\bigcup_{\theta} \TT_{\theta,\leq d}$. Note that $\abs{\TT_{\theta,\leq d}}\lesssim d^{n-1} = (\abs{\Theta}\delta^{-n})^{n-1}$.

Similarly, let $\TT_{\theta,< \lambda, < d}$ be the subset of $T\in \TT_{\theta,\leq d}$ with $\abs{w_T}< \lambda \norm{\Pcal_{\theta}F}_p\norm{W_T}_{p'}$, then
\begin{align*}
    \norm{\sum_{\theta\in\Theta} \sum_{T\in\TT_{\theta,< \lambda, \leq d}}w_T\1_BW_T}_p 
    &\leq \abs{\Theta}\max_{\theta\in\Theta} \sum_{T\in\TT_{\theta,< \lambda, \leq d}}\abs{w_T}\norm{\1_BW_T}_p\\ 
    &\lesssim_M \abs{\Theta}\max_{\theta\in\Theta} \sum_{T\in\TT_{\theta,\leq d}} \lambda\norm{\Pcal_{\theta}F}_p\abs{\theta}^{1/p} \cdot \abs{\theta}^{1/p'}\\
    (\abs{\theta}\leq 1)
    &\leq \abs{\Theta}\abs{\TT_{\theta,\leq d}} \lambda \max_{\theta\in\Theta} \norm{\Pcal_{\theta}F}_p\\
    &\lesssim \abs{\Theta}^n\delta^{-n(n-1)} \lambda \max_{\theta\in\Theta} \norm{\Pcal_{\theta}F}_p.
\end{align*}
Thus by taking $\lambda = \abs{\Theta}^{-n}\delta^{n(n-1)}$, we obtain
\begin{align}\label{negligible_part2}
    \norm{\sum_{\theta\in\Theta} \sum_{T\in\TT_{\theta,<\lambda, \leq d}}w_T\1_BW_T}_p \lesssim \max_{\theta\in\Theta}\norm{\Pcal_{\theta}F}_p \leq \left(\sum_{\theta\in\Theta}\norm{\Pcal_{\theta}F}_{L^p(\R^n)}^2\right)^{\frac{1}{2}}
\end{align}
which is what we want. Let $\TT_{\theta, \geq \lambda, \leq d}\defeq \TT_{\theta, \leq d}\setminus\TT_{\theta, < \lambda, \leq d}$, then we only need to focus on those $T\in\bigcup_\theta \TT_{\theta, \geq \lambda, \leq d}$. Note that we still have $\abs{\TT_{\theta, \geq \lambda, \leq d}}\lesssim (\abs{\Theta}\delta^{-n})^{n-1}$.

In view of (\ref{negligible_part1}) and (\ref{negligible_part2}), we can without loss of generality assume that our $\TT_i$'s only composed of $T\in\bigcup_\theta \TT_{\theta, \geq \lambda, \leq d}$, i.e., we can apply the previous arguments to partition $\bigcup_\theta \TT_{\theta, \geq \lambda, \leq d}$ into sets $\TT_i$ with dyadic parameters $\lambda_i$ and $N_i$. And since we have thrown away those negligible wave packets, now we must have $\abs{\Theta}^{-n}\delta^{n(n-1)}\norm{\Pcal_{\theta}F}_p\norm{W_T}_{p'} \leq \lambda_i \leq \norm{\Pcal_{\theta}F}_p\norm{W_T}_{p'}$ and $1\leq N_i\lesssim d^{n-1} = (\abs{\Theta}\delta^{-n})^{n-1}$. 

Since $\lambda_i$ and $N_i$ are dyadic parameters, we know that only $O(\log(\delta^{-1}\abs{\Theta}))$ many $\norm{F_i}_{L^p(B)}$'s contribute significantly. Let $\norm{F_{i^\ast}}_{L^p(B)}$ be one of them. Now for all $\e>0$,
\begin{align*}
    \norm{F}_{L^p(B)}&\leq\sum_i\norm{F_i}_{L^p(B)}\\
    &\lesssim_\e\delta^{-\e}\abs{\Theta}^\e\norm{F_{i^\ast}}_{L^p(B)}\\
    &\lesssim\delta^{-\e}\abs{\Theta}^\e\norm{F_{i^\ast}}_{L^{p_1}(B)}^\alpha\norm{F_{i^\ast}}_{L^{p_2}(B)}^{1-\alpha}\\
    &\leq\delta^{-\e}\abs{\Theta}^\e\on{D}(\Theta',p_1)^\alpha\on{D}(\Theta',p_2)^{1-\alpha}\left(\sum_{\theta\in\Theta}\norm{\Pcal_{2\theta}F_{i^\ast}}_{L^{p_1}(\R^n)}^2\right)^{\frac{\alpha}{2}}\left(\sum_{\theta\in\Theta}\norm{\Pcal_{2\theta}F_{i^\ast}}_{L^{p_2}(\R^n)}^2\right)^{\frac{1-\alpha}{2}}\\
    &\sim\delta^{-\e}\abs{\Theta}^\e\on{D}(\Theta',p_1)^\alpha\on{D}(\Theta',p_2)^{1-\alpha}\left(\sum_{\theta\in\Theta}\norm{\Pcal_{2\theta}F_{i^\ast}}_{L^p(\R^n)}^2\right)^{\frac{1}{2}}\\
    &\lesssim \delta^{-\e}\abs{\Theta}^\e\on{D}(\Theta',p_1)^\alpha\on{D}(\Theta',p_2)^{1-\alpha} \left(\sum_{\theta\in\Theta}\norm{\Pcal_{\theta}F}_{L^p(\R^n)}^2\right)^{\frac{1}{2}}.
\end{align*}
Note that the above estimate is uniform for all translated versions of $B$. For any fixed constant $C$, by covering $CB$ with finitely overlapping translated versions of $B$, we immediately get
\begin{align*}
    \norm{F}_{L^p(CB)}\lesssim \delta^{-\e}\abs{\Theta}^\e\on{D}(\Theta',p_1)^\alpha\on{D}(\Theta',p_2)^{1-\alpha} \left(\sum_{\theta\in\Theta}\norm{\Pcal_{\theta}F}_{L^p(\R^n)}^2\right)^{\frac{1}{2}}.
\end{align*}

Unfortunately, as the right hand side is global, we cannot directly borrow Proposition \ref{equiv}(or its arguments) to finish the proof. However, it is true that such \textit{mixed local-global decoupling} is still essentially equivalent to all those localized versions introduced before, except that we need to shrink $\theta$ further and apply a convolution trick to localize quantities. This is why we introduce $\frac{1}{2}\theta$ in the statement of our main result, and it's purely technical.

Fix a Schwartz function $\eta$ with $\widehat{\eta}\subseteq B(0,1)$ and $\eta\geq1$ on $Q(0,1)$. For any function $G$ with $\on{supp}(\widehat{G})\subseteq \bigcup \frac{1}{2}\theta$, we can take $F = G\cdot\eta_{CB}$ with $C$ large enough (e.g., $C=4$) so that $\Pcal_\theta F = \Pcal_{\frac{1}{2}\theta}G \cdot\eta_{CB}$. Thus
\begin{align*}
    \norm{G}_{L^p(CB)}\leq \norm{G\eta_{CB}}_{L^p(CB)} &= \norm{F}_{L^p(CB)}\\
    &\lesssim \delta^{-\e}\abs{\Theta}^\e\on{D}(\Theta',p_1)^\alpha\on{D}(\Theta',p_2)^{1-\alpha} \left(\sum_{\theta\in\Theta}\norm{\Pcal_{\theta}F}_{L^p(\R^n)}^2\right)^{\frac{1}{2}}\\
    & = \delta^{-\e}\abs{\Theta}^\e\on{D}(\Theta',p_1)^\alpha\on{D}(\Theta',p_2)^{1-\alpha} \left(\sum_{\theta\in\Theta}\norm{\Pcal_{\frac{1}{2}\theta}G \cdot\eta_{CB}}_{L^p(\R^n)}^2\right)^{\frac{1}{2}}\\
    & \lesssim
    \delta^{-\e}\abs{\Theta}^\e\on{D}(\Theta',p_1)^\alpha\on{D}(\Theta',p_2)^{1-\alpha} \left(\sum_{\theta\in\Theta}\norm{\Pcal_{\frac{1}{2}\theta}G}_{L^p(w_{CB})}^2\right)^{\frac{1}{2}}
\end{align*}
This exactly matches our definition of \textit{local} decoupling constants, and so we can conclude that
\begin{align*}
    \on{D}(\Theta'',p)\lesssim_\e\delta^{-\e}\abs{\Theta}^\e\on{D}(\Theta',p_1)^\alpha\on{D}(\Theta',p_2)^{1-\alpha}
\end{align*}
as desired.
\end{proof}

Lemma \ref{lem interpolation} gives rise to the following interpolation estimate.\footnote{We can also use vector-valued complex interpolation to give a more concise proof. But we choose to stick to wave packet decomposition, as such techniques are also useful in other settings.} Recall that $\on{D}(\delta,p)$ refers to the decoupling constant for the truncated paraboloid $\PP^{n-1}$. 
\begin{prop}[{\cite[Exercise 10.4]{demeter_fourier_2020}}]
Suppose $\frac{1}{p}=\frac{\alpha}{p_1}+\frac{1-\alpha}{p_2}$. Then we have:
\begin{align*}
    \on{D}(\delta, p) \lesssim_\e \delta^{-\e}\on{D}(O(\delta),p_1)^\alpha\on{D}(O(\delta),p_2)^{1-\alpha}.
\end{align*}
\end{prop}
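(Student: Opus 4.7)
The plan is to deduce the proposition from Lemma~\ref{lem interpolation} applied to the natural partition $\Theta(\delta)$ of $\NN(\delta)$. Each curved cap $\theta\in\Theta(\delta)$ is contained in a rectangular bounding box $\hat\theta$ of dimensions $\sim\delta^{1/2}\times\cdots\times\delta^{1/2}\times\delta$, aligned with the tangent/normal frame at the center of $\theta$; these dimensions lie in $[\delta,1]$ as the lemma requires. The doubled boxes $2\hat\theta$ are not pairwise disjoint, but by the transverse geometry of the paraboloid they have bounded overlap, so I would pigeonhole $\Theta(\delta)$ into $O_n(1)$ sub-collections $\{\Theta_k\}$ on each of which the doubles are pairwise disjoint (e.g., via a coloring argument based on the positions of the base cubes $\tau$ modulo a bounded lattice). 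By the triangle inequality $\norm{F}_{L^p}\leq\sum_k\norm{\sum_{\theta\in\Theta_k}\Pcal_\theta F}_{L^p}$, and since there are only $O(1)$ sub-collections, it suffices to prove the desired estimate with $\Theta(\delta)$ replaced by a single $\Theta_k$.

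Fixing such a sub-collection $\Theta=\Theta_k$ and applying Lemma~\ref{lem interpolation}, I obtain
\begin{align*}
    \on{D}(\Theta'',p)\lesssim_\e\delta^{-\e}\abs{\Theta}^\e\on{D}(\Theta',p_1)^\alpha\on{D}(\Theta',p_2)^{1-\alpha}
\end{align*}
with $\Theta'=\{2\hat\theta\}$ and $\Theta''=\{\tfrac{1}{2}\hat\theta\}$. To finish, I would identify both sides with decoupling constants for the paraboloid at scales comparable to $\delta$. On the right, each $2\hat\theta$ is essentially a bounding box of a cap from a partition at scale $O(\delta)$ contained in $\NN(O(\delta))$, so by Propositions~\ref{affine transformation decoupling} and~\ref{equiv} together with the scale flexibility of Remark~\ref{rmk_dec_scale}, $\on{D}(\Theta',p_i)\lesssim\on{D}(O(\delta),p_i)$. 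On the left, the half-boxes $\tfrac{1}{2}\hat\theta$ correspond (again up to a bounded change of scale) to a partition at scale $O(\delta)$ covering $\NN(\delta)$, so the same robustness yields $\on{D}(\delta,p)\lesssim\on{D}(\Theta'',p)$. The polynomial factor $\abs{\Theta}^\e\lesssim\delta^{-(n-1)\e/2}$ is then absorbed into $\delta^{-\e}$ after renaming $\e$.

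The step I expect to be the main obstacle is the scale bookkeeping: I must ensure that halving the bounding boxes still yields a cover of $\NN(\delta)$ (so that $\on{D}(\Theta'',p)$ dominates $\on{D}(\delta,p)$), and that doubling them stays within $\NN(O(\delta))$ (so that $\on{D}(\Theta',p_i)$ reduces to $\on{D}(O(\delta),p_i)$). Both adjustments can be handled by a judicious initial choice of starting scale $\delta'=O(\delta)$ together with the equivalence results from Section~\ref{ch 2 sec 2}; indeed, the ``$O(\delta)$'' slack on the right-hand side of the proposition is precisely what accommodates these constant-order distortions of scale.
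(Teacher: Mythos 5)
Your proposal is correct and takes essentially the same approach as the paper: split $\Theta(\delta)$ into $O(1)$ sub-collections satisfying the hypotheses of Lemma~\ref{lem interpolation} at some scale $O(\delta)$, apply the lemma, identify $\on{D}(\Theta'',p)$ and $\on{D}(\Theta',p_i)$ with paraboloid decoupling constants via the robustness results of Section~\ref{ch 2 sec 2}, and absorb $\abs{\Theta}^\e\sim\delta^{-(n-1)\e/2}$ into the final $\delta^{-\e}$. You supply the bookkeeping details (the coloring argument for pairwise disjointness, the scale adjustments so the halved boxes still cover $\NN(\delta)$ and the doubled boxes stay inside $\NN(O(\delta))$) that the paper compresses into a single sentence.
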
 
\begin{proof}
We can split $\Theta(\delta)$ into $O(1)$ many collections $\Theta_i(\delta)$ satisfying the hypotheses of Lemma \ref{lem interpolation} at some scale $O(\delta)$. We know that $\abs{\Theta(\delta)}\sim\delta^{-\frac{n-1}{2}}$, so we end up with
\begin{align*}
    \on{D}(\delta,p)\lesssim\delta^{-\e}\delta^{-\frac{n-1}{2}\e}\on{D}(O(\delta),p_1)^\alpha\on{D}(O(\delta),p_2)^{1-\alpha}\\
\end{align*}
as desired.
\end{proof}

Let us see how proving Theorem \ref{main thm original} at the critical exponent suffices to prove the full range of the theorem. By Remark \ref{rmk basic decoupling}, we have $\on{D}(\delta,2)=1$ and $\on{D}(\delta,\infty)\leq\delta^{-\frac{n-1}{4}}$. Suppose we know that $\on{D}(\delta,\frac{2(n+1)}{n-1})\lesssim_\e\delta^{-\e}$ for all $\e>0$. By this interpolation estimate we then have:
\begin{itemize}
    \item If $2<p<\frac{2(n+1)}{n-1}$ with $\frac{1}{p}=\frac{\alpha}{\frac{2(n+1)}{n-1}}+\frac{1-\alpha}{2}$ then we have for all $\e>0$:
    \begin{align*}
        \on{D}(\delta,p)\lesssim_\e\delta^{-\e}\delta^{-\e\alpha}\lesssim_\e\delta^{-\e}.
    \end{align*}
    \item If $\frac{2(n+1)}{n-1}<p<\infty$ with $\frac{1}{p}=\frac{\alpha}{\frac{2(n+1)}{n-1}}$, i.e. $\alpha=\frac{2(n+1)}{p(n-1)}$, then we have for all $\e>0$:
    \begin{align*}
        \on{D}(\delta,p)\lesssim_\e\delta^{-\e}\delta^{-\e\alpha}\delta^{-\frac{n-1}{4}(1-\alpha)}\lesssim_\e\delta^{-\e}\delta^{-\frac{n-1}{4}}\delta^{\frac{n+1}{2p}}.
    \end{align*}
\end{itemize}
Thus, we see that proving the result at the critical exponent is enough to obtain the full range of bounds.

\section{Parabolic rescaling}\label{ch 1 sec 3}
Next, we record here the very useful parabolic rescaling lemma which tells us how the decoupling constant changes when we move between different scales.
\begin{prop}\label{prop parabolic rescaling 1}
Let $\delta\leq\sigma\leq 1$ and let $Q$ be a cube with center $c$ and side length $\sigma^{1/2}$. Let $\Theta_Q(\delta)$ be a partition of $\mathcal{N}_Q(\delta)$ into a subfamily of sets $\theta\in\Theta(\delta)$. Then for all $F$ with $\on{supp}(\widehat{F})\subseteq\mathcal{N}_Q(\delta)$ we have:
\begin{align*}
    \norm{F}_{L^p(\R^n)}\lesssim\on{D}\left(\frac{\delta}{\sigma},p\right)\left(\sum_{\theta\in\Theta_Q(\delta)}\norm{P_\theta F}_{L^p(\R^n)}^2\right)^{1/2}
\end{align*}
\end{prop}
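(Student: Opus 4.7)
The plan is to realize parabolic rescaling as an affine change of variables on the frequency side and then reduce to the standard decoupling inequality at scale $\delta/\sigma$ by invoking the affine invariance of the decoupling constant (Proposition \ref{affine transformation decoupling}).

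First, I would write down the explicit rescaling map. Let $c_0$ denote the ``lower-left'' corner of $Q$, so that $\xi\in Q$ can be parameterized as $\xi = c_0+\sigma^{1/2}\eta$ with $\eta\in[0,1]^{n-1}$. Expanding
\[
    |\xi|^2 \;=\; |c_0|^2 + 2c_0\cdot(\xi-c_0) + \sigma|\eta|^2
\]
motivates defining the (nonsingular) affine map $T:\R^n\rightarrow\R^n$ by
\[
    T(\xi,\omega) \;\defeq\; \left(\frac{\xi-c_0}{\sigma^{1/2}},\; \frac{\omega-|c_0|^2-2c_0\cdot(\xi-c_0)}{\sigma}\right).
\]
A direct check shows $T(\xi,|\xi|^2+t)=(\eta,|\eta|^2+t/\sigma)$, so $T$ maps $\mathcal{N}_Q(\delta)$ bijectively onto $\mathcal{N}(\delta/\sigma)$. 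The linear part of $T$ is lower block-triangular with determinant $\sigma^{-(n+1)/2}\neq 0$, so $T$ is indeed nonsingular.

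Second, I would verify that $T$ sends the partition $\Theta_Q(\delta)$ to the canonical partition $\Theta(\delta/\sigma)$. Each $\theta\in\Theta_Q(\delta)$ sits above a $\delta^{1/2}$-subcube $\tau$ of $Q$; under $T$ the cube $\tau$ becomes a $(\delta/\sigma)^{1/2}$-cube inside $[0,1]^{n-1}$, and the vertical extent $[0,\delta]$ becomes $[0,\delta/\sigma]$. Hence $T(\theta)$ has exactly the form of a box in the canonical partition of $\mathcal{N}(\delta/\sigma)$, and $T(\Theta_Q(\delta))$ coincides with $\Theta(\delta/\sigma)$ up to at most a translation, which does not affect the decoupling constant.

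Finally, Proposition \ref{affine transformation decoupling} yields
\[
    \on{D}(\Theta_Q(\delta),p) \;=\; \on{D}(T(\Theta_Q(\delta)),p) \;=\; \on{D}(\delta/\sigma,p).
\]
Since $\widehat{F}$ is supported in $\mathcal{N}_Q(\delta)=\bigcup_{\theta\in\Theta_Q(\delta)}\theta$, applying the definition of the decoupling constant directly to the family $\Theta_Q(\delta)$ gives the claimed inequality. The only real obstacle is the bookkeeping: writing down the correct affine map, checking its nonsingularity, and verifying that the image partition aligns with the canonical one defining $\on{D}(\delta/\sigma,p)$. There is no substantive analytic difficulty beyond the already-proved affine invariance.
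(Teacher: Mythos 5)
Your proposal is correct and takes essentially the same approach as the paper: write down an explicit affine map sending $\mathcal{N}_Q(\delta)$ onto $\mathcal{N}(\delta/\sigma)$ and invoke the affine invariance of the decoupling constant (Proposition \ref{affine transformation decoupling}). The only cosmetic difference is that you normalize by the corner $c_0$ of $Q$ while the paper normalizes by the center $c$ (the paper's map $\Gamma:(\overline{\xi},\xi_n)\mapsto(\frac{\overline{\xi}-c}{\sigma^{1/2}},\frac{\xi_n-2\overline{\xi}\cdot c+|c|^2}{\sigma})$ differs from yours by a translation), and you fill in the identity $|\xi|^2+t\mapsto|\eta|^2+t/\sigma$ and the determinant computation that the paper leaves as ``one can easily check''; the one place to be slightly careful is that the image family $T(\Theta_Q(\delta))$ need only be comparable to, not literally equal to, the canonical partition $\Theta(\delta/\sigma)$ (alignment and dyadicity of $\delta/\sigma$ may fail), which is why the statement is a $\lesssim$ rather than an equality, consistent with Remark \ref{rmk_dec_scale}.
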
 
\begin{proof}
Let $\xi=(\overline{\xi},\xi_n)$ where $\overline{\xi}\in\R^{n-1}$. The proof is based on the affine transformation:
\begin{align*}
    \Gamma:(\overline{\xi},\xi_n)\mapsto\left(\frac{\overline{\xi}-c}{\sigma^{1/2}},\frac{\xi_n-2\overline{\xi}\cdot c+\abs{c}^2}{\sigma}\right)
\end{align*}
which maps the paraboloid to itself. In particular, one can easily check that $\delta$-caps $\theta\in\Theta_Q(\delta)$ are essentially mapped to $\frac{\delta}{\sigma}$-caps $\Gamma(\theta)$ in $\Theta(\frac{\delta}{\sigma})$. The result then follows from applying (the proof of) Lemma \ref{affine transformation decoupling}.
\end{proof}

\section{Multilinear decoupling}\label{ch 2 sec 3}
Finally, similar to other related problems like the Fourier restriction conjecture, there is a multilinear analogue of decoupling which is more tractable.

\begin{defn}\label{def transversality}
Let $Q_1, \cdots, Q_n$ be cubes in $[0, 1]^{n-1}$. Then $Q_1, \cdots, Q_n$ are said to be \textit{$\nu$-transverse} if for each choice of $P_i \in \mathbb{P}^{n-1}$ whose orthogonal projection onto $[0,1]^{n-1}$ lies in $Q_i$, the volume spanned by the unit normals $n(P_i)$ is larger than $\nu$.
\end{defn}

\begin{defn}\label{def multilinear decoupling}
Let $M=(\mathcal{M}_1,...,\mathcal{M}_m)$ with $\mathcal{M}_i\subseteq\R^n$ being transversal $d$-dimensional manifolds. For each $1\leq i\leq m$, let $\Theta_i$ be a partition of $\mathcal{M}_i$ into almost-boxes of diameter $\sim\delta$. Then the multilinear decoupling constant $\on{MD}(\delta,p)$ is defined to be the smallest constant such that
\begin{align*}
    \norm{\left(\prod_{i=1}^m F_i\right)^{1/m}}_{L^p(\R^n)}\leq\on{MD}(\delta,p)\prod_{i=1}^m\left(\sum_{\theta\in\Theta_i}\norm{\Pcal_\theta F_i}_{L^p(\R^n)}^2\right)^{\frac{1}{2m}}
\end{align*}
for all $F_i$ with $\on{supp}(\widehat{F_i})\subseteq\mathcal{N}_{\mathcal{M}_i}(\delta)$.
\end{defn}

The properties of the linear decoupling constant proved in this chapter apply to the multilinear decoupling constant too, with essentially the same proofs. For example, the global-local-weighted equivalence and parabolic rescaling. And we will freely use them.

Just as in multilinear restriction theory, tranversality can be used to make up for a lack of curvature. In fact, the multilinear restriction theorem can be used to prove the following multilinear decoupling inequality with only the assumption of transversality:
\begin{thm}[Multilinear restriction]
Let $M$ be as in Definition \ref{def multilinear decoupling}. Then for all $F_i$ with $\on{supp}(\widehat{F_i})\subseteq\mathcal{N}_{\mathcal{M}_i}(\delta)$ we have
\begin{align*}
    \norm{\left(\prod_{i=1}^m F_i\right)^{1/m}}_{L^p(\R^n)}\lesssim_\e\delta^{-\e+\frac{n-d}{2}}\prod_{i=1}^m\norm{\widehat{F_i}}_{L^2(\R^n)}^{\frac{1}{m}}
\end{align*}
for all $\e>0$.
\end{thm}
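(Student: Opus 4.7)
The plan is to derive this statement from the classical Bennett--Carbery--Tao multilinear restriction theorem for the $d$-dimensional manifolds $\mathcal{M}_i$ themselves, combined with a slicing argument in the $n-d$ normal directions. That slicing converts the surface $L^2$-norms appearing in the classical estimate into the full $\R^n$ $L^2$-norms of $\widehat{F_i}$ appearing in the statement, and the factor $\delta^{(n-d)/2}$ is precisely the cost of this conversion via Cauchy--Schwarz across the $n-d$ transverse dimensions.

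First I would recall Bennett--Carbery--Tao in its extension form: writing $E_i g_i(x) = \int_{\mathcal{M}_i} g_i(\xi)e(x\cdot\xi)\, d\sigma_i(\xi)$ for the extension operators, transversality of $\{\mathcal{M}_i\}$ yields
\begin{align*}
\norm{\prod_{i=1}^m E_i g_i}_{L^p(B_R)}^{1/m} \lesssim_\e R^{\e/m}\prod_{i=1}^m \norm{g_i}_{L^2(\mathcal{M}_i)}^{1/m}
\end{align*}
at the multilinear critical exponent $p=\tfrac{2m}{m-1}$, which extends to larger $p$ by interpolation with the trivial $L^\infty$ bound. In our setting I would take $R=\delta^{-1}$, so the $R^\e$ loss becomes the $\delta^{-\e}$ loss in the target inequality.

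Next, I would foliate $\mathcal{N}_{\mathcal{M}_i}(\delta)$ by translates of $\mathcal{M}_i$ parametrized by a normal displacement $s\in[0,\delta]^{n-d}$ and write
\begin{align*}
F_i(x) = \int_{[0,\delta]^{n-d}} (E_i g_i^{(s)})(x)\, ds,
\end{align*}
where $g_i^{(s)}$ denotes the restriction of $\widehat{F_i}$ to the $s$-translate of $\mathcal{M}_i$ pulled back to $\mathcal{M}_i$. Pulling the integrals outside the $L^p$ norm via Minkowski's inequality, applying multilinear restriction for each fixed tuple $(s_1,\ldots,s_m)$, and then applying Cauchy--Schwarz in $s$ converts an $L^1$ integration over a region of measure $\delta^{n-d}$ into an $L^2$ integration, at the cost of the factor $\delta^{(n-d)/2}$ per function. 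Finally Plancherel gives $\int_{[0,\delta]^{n-d}}\norm{g_i^{(s)}}_{L^2(\mathcal{M}_i)}^2\, ds \sim \norm{\widehat{F_i}}_{L^2(\R^n)}^2$, which reproduces the right-hand side of the theorem.

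The main obstacle will be ensuring that the Bennett--Carbery--Tao constant is uniform across the foliation parameter $s$. This holds because a small normal translation of each $\mathcal{M}_i$ shrinks the transversality parameter $\nu$ by at most a constant factor (for $\delta$ sufficiently small), and the theorem is frequency-translation-invariant. A secondary technical point is that the statement is written for an unspecified $p$: the clean $\delta^{-\e}$ loss is what one gets at the multilinear critical exponent $p = \tfrac{2m}{m-1}$, and the full range of exponents is recovered by interpolating with the pointwise bound $\norm{F_i}_{L^\infty(\R^n)} \lesssim \delta^{(n-d)/2}\norm{\widehat{F_i}}_{L^2(\R^n)}$, which itself is just Cauchy--Schwarz applied to the Fourier support.
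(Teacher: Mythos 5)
The slicing-plus-Cauchy--Schwarz strategy is the right one, and the $L^\infty$ anchor $\norm{F_i}_{L^\infty}\lesssim\delta^{(n-d)/2}\norm{\widehat{F_i}}_{L^2}$ you mention is correct, but the central technical step --- ``pulling the integrals outside the $L^p$ norm via Minkowski's inequality'' --- fails in the range where this theorem is actually used. After writing $\prod_i F_i=\int\cdots\int\prod_i E_ig_i^{(s_i)}\,ds_1\cdots ds_m$, the quantity to estimate is $\norm{(\prod_iF_i)^{1/m}}_{L^p}=\norm{\prod_iF_i}_{L^{p/m}}^{1/m}$, so Minkowski's integral inequality is being invoked in $L^{p/m}$ and therefore requires $p\ge m$. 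At the multilinear critical exponent $p=\tfrac{2m}{m-1}$ this forces $m\le3$, and at $p=\tfrac{2n}{d}$ with $m=n$, $d=n-1$ (the application made in the paper) it forces $n\le3$. For $n\ge4$ the step as stated breaks, and there is no usable $0<q<1$ substitute for Minkowski: concentrating the mass of the $s$-integral on a small subinterval defeats any putative inequality of the form $\bigl(\int G\,ds\bigr)^q\lesssim\int G^q\,ds$.

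The standard way to salvage exactly your plan is to keep the slicing and the Cauchy--Schwarz but replace the Minkowski step by a square-function/randomization argument. Apply Cauchy--Schwarz in $s$ \emph{pointwise first}, giving $|F_i(x)|\le\delta^{(n-d)/2}\bigl(\int_{[0,\delta]^{n-d}}|E_ig_i^{(s)}(x)|^2\,ds\bigr)^{1/2}$, so the problem reduces to bounding the $L^p(B_R)$ norm of $\prod_i\bigl(\int|E_ig_i^{(s_i)}|^2\,ds_i\bigr)^{1/(2m)}$. Discretize each $s_i$-integral, insert independent Rademacher signs $\epsilon_{i,j}$, and use Khintchine's inequality (together with Fubini and independence across the index $i$) to convert the $\ell^2$-sums into randomized sums $\sum_j\epsilon_{i,j}E_ig_i^{(s_{i,j})}$, to which Bennett--Carbery--Tao applies for each fixed sign pattern. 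Averaging the resulting right-hand side over the signs (Jensen if $p\le 2m$, Khintchine--Kahane in general) then recovers $\prod_i\norm{\widehat{F_i}}_2^{1/m}$ with the exact $\delta^{(n-d)/2}$ factor, and this works uniformly in $p$ and $m$. Two smaller points: the parametrization of $\mathcal{N}_{\mathcal{M}_i}(\delta)$ by $\mathcal{M}_i\times[0,\delta]^{n-d}$ only makes sense after fixing a local normal frame, so one should first reduce to a single coordinate patch of each $\mathcal{M}_i$; and since Bennett--Carbery--Tao is an estimate on $B_R$, passing to the global $L^p(\R^n)$ norm in the statement needs the usual weighted tiling argument, which your sketch leaves implicit.
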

\begin{thm}
Let $M$ be as in definition \ref{def multilinear decoupling}. Then for all $2\leq p\leq\frac{2n}{d}$ we have $\on{MD}(\delta,p)\lesssim_\e \delta^{-\e}$ for all $\e>0$.
\end{thm}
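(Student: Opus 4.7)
The plan is to prove the estimate at the critical exponent $p_c = 2n/d$ directly from the multilinear restriction theorem just stated, and then interpolate with the trivial $L^2$ case to cover the full subcritical range $2 \leq p \leq p_c$.

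At the critical exponent, since the global, local, and weighted multilinear decoupling constants are all comparable (the proof of Proposition \ref{equiv} carries over to the multilinear setting essentially verbatim, as the paper remarks), it suffices to work on a ball $B$ of radius $R = \delta^{-1}$. There, Plancherel and the (localized form of the) multilinear restriction theorem give
\[
\norm{\textstyle(\prod_i F_i)^{1/m}}_{L^{p_c}(B)} \lesssim_\e \delta^{-\e + (n-d)/2} \prod_i \norm{F_i}_{L^2(w_B)}^{1/m}.
\]
Since a smooth cutoff to $B$ only smears Fourier supports by $O(\delta)$ while distinct caps $\theta$ are essentially $\delta$-separated, the pieces $\Pcal_\theta F_i$ remain almost-orthogonal in $L^2(w_B)$, giving $\norm{F_i}_{L^2(w_B)}^2 \sim \sum_\theta \norm{\Pcal_\theta F_i}_{L^2(w_B)}^2$. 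On the other hand, standard Hölder's inequality applied to the weighted measure $w_B$ (whose total mass is $\sim \abs{B} = \delta^{-n}$) gives $\norm{\Pcal_\theta F_i}_{L^2(w_B)} \lesssim \abs{B}^{1/2 - 1/p_c} \norm{\Pcal_\theta F_i}_{L^{p_c}(w_B)}$. Combining these and using that $\abs{B}^{1/2 - 1/p_c} = \delta^{-(n-d)/2}$ exactly cancels the $\delta^{(n-d)/2}$ loss from multilinear restriction yields
\[
\norm{\textstyle(\prod_i F_i)^{1/m}}_{L^{p_c}(B)} \lesssim_\e \delta^{-\e} \prod_i \left(\sum_\theta \norm{\Pcal_\theta F_i}_{L^{p_c}(w_B)}^2\right)^{1/(2m)},
\]
which is the desired critical estimate $\on{MD}(\delta, p_c) \lesssim_\e \delta^{-\e}$.

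For $p \in [2, p_c)$, the result follows by interpolating between this critical bound and the trivial bound $\on{MD}(\delta, 2) = 1$ (immediate from Plancherel applied to the product): the multilinear analogue of Lemma \ref{lem interpolation} goes through with the same wave-packet proof, as the paper notes. The main obstacle is purely technical bookkeeping: verifying that the multilinear restriction theorem can be applied in a form producing weighted norms $\norm{F_i}_{L^2(w_B)}$ on the Fourier side (and not merely the global $\norm{\widehat{F_i}}_{L^2(\R^n)}$ appearing in the stated theorem), and checking that the smooth-cutoff error in the $L^2$-orthogonality step is genuinely harmless --- both are standard but delicate applications of the uncertainty principle.
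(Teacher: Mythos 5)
Your proposal follows the same route as the paper: prove the endpoint $p_c = 2n/d$ by combining the multilinear restriction theorem with $L^2$-orthogonality, then use Hölder on the weight $w_B$ (noting $\int w_B \sim |B| = \delta^{-n}$) to pass from $L^2(w_B)$ to $L^{p_c}(w_B)$, observing that $|B|^{1/2-1/p_c} = \delta^{-(n-d)/2}$ exactly cancels the restriction loss, and finally invoke local-global equivalence and interpolation with the trivial $p=2$ case. The paper's proof is identical in all these steps (writing the Hölder step with the exponent split $d/n + (n-d)/n = 1$), so your argument matches it essentially verbatim.
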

\begin{rmk}
The typical scenario to keep in mind is when $d=n-1$ and the $\mathcal{M}_i$ are sufficiently separated subsets of a curved hypersurface $\mathcal{M}\subset\R^n$. For example (as will be used in the proof of Theorem \ref{main thm original}), different caps on the paraboloid.
\end{rmk}
\begin{proof}
By an interpolation argument similar to the arguments of the previous section, it suffices to prove the result at the endpoint $\frac{2n}{d}$.

Using the multilinear restriction inequality and $L^2$-orthogonality we have
\begin{align*}
    \norm{\left(\prod_{i=1}^m F_i\right)^{1/m}}_{L^{2n/d}(B)}\lesssim_\e\delta^{-\e+\frac{n-d}{2}}\prod_{i=1}^m\norm{F_i}_{L^2(w_B)}^{\frac{1}{m}}=\delta^{-\e+\frac{n-d}{2}}\prod_{i=1}^m\left(\sum_{\theta\in\Theta_i}\norm{\Pcal_\theta F_i}_{L^2(w_B)}^2\right)^{\frac{1}{2m}}
\end{align*}
for all $\e>0$ and each $\delta^{-1}$-ball $B$.

Apply Hölder's inequality with $\frac{d}{n}+\frac{n-d}{n}=1$ to get
\begin{align*}
    \norm{\Pcal_\theta F_i}_{L^2(w_B)}^2\lesssim \norm{\Pcal_\theta F_i}_{L^{2n/d}(w_B)}^2\delta^{d-n}.
\end{align*}
Therefore
\begin{align*}
    \norm{\left(\prod_{i=1}^m F_i\right)^{1/m}}_{L^{2n/d}(B)}\lesssim_\e\delta^{-\e}\prod_{i=1}^m\left(\sum_{\theta\in\Theta_i}\norm{\Pcal_\theta F_i}_{L^{2n/d}(w_B)}^2\right)^{\frac{1}{2m}}.
\end{align*}
Using the same local-global equivalence for multilinear decoupling constants as in Proposition \ref{equiv} finishes the proof.
\end{proof}

As we will see later in sections \ref{ch 3 sec 2} and \ref{ch 4}, the multilinear and linear decoupling constants are in fact morally equivalent. As such, this theorem applied with $m=n$, $d=n-1$, and the $\mathcal{M}_i$'s all transverse subsets of $\mathcal{N}(\delta)$, together with the multilinear-to-linear reduction, implies the decoupling inequality in the range $2\leq p\leq\frac{2n}{n-1}$ (see also \cite{bourgain_moment_2013}).

\chapter{The Two-dimensional Proof}\label{ch 3}
In this chapter we present a detailed proof of the $\ell^2$-decoupling theorem in the two-dimensional case. Our presentation is essentially the same as the one in \cite[Chapter 10]{demeter_fourier_2020}, with some of the omitted details filled in. The only major difference is that we use the multiscale notation from \cite{guth_notes} which helps to clean up the iteration scheme in Section \ref{ch 3 sec 4}.

\section{Overview}\label{ch 3 sec 1}
Let $\PP^1\subseteq\R^2$ denote the parabola $\{(x,x^2)\mid x\in\R\}$. We restrict our attention to the part of the parabola above $[0,1]$ - the exact choice of the compact interval here is unimportant. Let $\NN_I(\delta)$ be the \textit{vertical $\delta$-neighborhood} (recall Definition \ref{decoupling constant}) of the parabola above a given interval $I$. When $I=[0,1]$, we abbreviate $\NN_I(\delta)$ to $\NN(\delta)$.

As mentioned in Remark \ref{rmk_dec_scale}, for simplicity, we will work only with dyadic scales. In particular, for $m\in \N$, let $\I_m(I)$ denote the collection of the $2^m$ dyadic subintervals of $I$ of length $2^{-m}$. When $I=[0,1]$, we abbreviate $\I_m(I)$ to $\I_m$. For a given interval $I$ and function $F$, let $\Pcal_I$ denote the Fourier projection operator onto $\NN_I$, i.e. $\widehat{\Pcal_IF}(\xi)\defeq \widehat{F}(\xi)\1_{I\times\R}(\xi)$. On the spatial side, we will also work with dyadic cubes which have the advantage of being able to be partitioned cleanly into smaller dyadic cubes.

\begin{defn}
For $p\geq 2$, define $\on{D}(n,p)$ to be the smallest constant such that
\begin{align}
    \norm{F}_{L^p(\R^2)}\leq \on{D}(n,p)\left(\sum_{I\in\I_n}\norm{\Pcal_IF}_{L^p(\R^2)}^2\right)^{1/2}
\end{align}
holds for all $F:\R^2\rightarrow\C$ with $\on{supp}(\widehat{F})\subseteq\mathcal{N}(4^{-n})$.
\end{defn}

\begin{rmk}
     We use $n$ to represent the frequency scales only in this chapter to help the readers keep track of various scales. In the remaining chapters, $n$ will always denote the dimension of the ambient space $\R^n$.
\end{rmk}

Theorem \ref{main thm original} in the two-dimensional case can be rephrased as follows:
\begin{thm}\label{main thm}
For all $\e>0$, we have:
\begin{itemize}
    \item If $2\leq p\leq 6$ then $\on{D}(n,p)\lesssim_\e 2^{n\e}$;
    \item If $6<p\leq\infty$ then $\on{D}(n,p)\lesssim_\e 2^{n(\frac{1}{4}-\frac{3}{2p}+\e)}$.
\end{itemize}
\end{thm}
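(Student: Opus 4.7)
By the interpolation estimate proved in Section \ref{ch 2 sec 4}, together with the trivial bounds $\on{D}(n,2)=1$ and $\on{D}(n,\infty) \leq 2^{n/2}$, the full theorem reduces to the critical case $p=6$: it suffices to show $\on{D}(n,6) \lesssim_\e 2^{n\e}$ for every $\e > 0$. I plan to prove this by strong induction on $n$. Fix $\e > 0$ and assume $\on{D}(m,6) \leq C_\e 2^{m\e}$ for all $m < n$, where $C_\e$ is a large constant to be fixed at the end; the goal is to propagate the same inequality to scale $n$.

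The heart of the argument is a broad/narrow (bilinear) dichotomy combined with parabolic rescaling. On each spatial ball $B$ of radius $2^n$, I will fix an intermediate scale $k < n$ and inspect how the $L^6$ mass of $F$ is distributed among the dyadic intervals $I \in \I_k$. In the narrow case, the mass concentrates within a single $I$, and applying parabolic rescaling (Proposition \ref{prop parabolic rescaling 1}) transforms the remaining decoupling problem inside $\NN_I$ into an $\on{D}(n-k, 6)$ problem, for which the induction hypothesis yields the factor $C_\e 2^{(n-k)\e}$. In the broad case, most of the mass comes from two well-separated intervals $I_1, I_2 \in \I_k$, which I will handle via a bilinear $L^6$ estimate of the form
\begin{equation*}
    \norm{\abs{\Pcal_{I_1} F \cdot \Pcal_{I_2} F}^{1/2}}_{L^6(B)} \lesssim \prod_{i=1}^2 \Bigg(\sum_{J \in \I_n(I_i)} \norm{\Pcal_J F}_{L^6(w_B)}^2\Bigg)^{1/4}.
\end{equation*}
The central ingredient in proving this inequality is the Córdoba--Fefferman argument for the parabola mentioned right after Conjecture \ref{conjecture}: transversality of the two caps $I_1, I_2$ forces the Fourier transform of $\Pcal_{J_1} F \cdot \Pcal_{J_2} F$ (for $J_i \in \I_n(I_i)$) to live inside essentially disjoint parallelograms indexed by $(J_1,J_2)$. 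Plancherel on the squares then gives an $L^4$-orthogonality statement from which the $L^6$ bilinear estimate follows by interpolation with the trivial $L^2$ bound.

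Combining the broad and narrow cases using Cauchy--Schwarz and parallel decoupling (Proposition \ref{parallel decoupling}) then yields a recursive inequality of the approximate shape
\begin{equation*}
    \on{D}(n,6) \leq C \cdot \on{D}(n-k, 6),
\end{equation*}
with a universal constant $C$ independent of $n$ (up to polylogarithmic factors that can be absorbed into $\e$). Choosing $k \sim \e n$ and iterating $O(1/\e)$ times reduces $n$ to $O(1)$ and closes the induction, provided $C_\e$ is chosen sufficiently large compared with $C$. Throughout the argument I rely on the local/weighted/global equivalence established in Section \ref{ch 2 sec 2} to interchange freely between the various formulations of $\on{D}(n,6)$.

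The main obstacle I anticipate is the careful multiscale bookkeeping: the intermediate scale $k$, the implicit constants from the broad/narrow dichotomy, and the constants coming out of parabolic rescaling all need to be compatible so that the final recursion actually closes with the same $C_\e$ on both sides. A subtler issue is rigorously justifying the broad/narrow dichotomy itself, in particular showing that the narrow portion is genuinely controlled by decoupling at scale $n-k$ and that the broad portion is controlled by the bilinear $L^6$ expression above; both are typically handled via the wave-packet decomposition of Appendix \ref{appendix 1}. Once these structural pieces are in place, the bilinear $L^6$ estimate powered by Córdoba--Fefferman is the only point where curvature of the parabola truly enters the proof.
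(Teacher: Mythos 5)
Your reduction to $p=6$ via interpolation, and the use of a broad/narrow dichotomy to pass from linear to bilinear decoupling, both match the paper's structure (Sections \ref{ch 2 sec 4} and \ref{ch 3 sec 2}). However, there is a genuine gap at the heart of your argument: the bilinear $L^6$ estimate you display,
\begin{equation*}
    \norm{\abs{\Pcal_{I_1} F \cdot \Pcal_{I_2} F}^{1/2}}_{L^6(B)} \lesssim \prod_{i=1}^2 \Bigg(\sum_{J \in \I_n(I_i)} \norm{\Pcal_J F}_{L^6(w_B)}^2\Bigg)^{1/4},
\end{equation*}
is \emph{not} a consequence of the C\'ordoba--Fefferman argument plus interpolation with the trivial $L^2$ bound. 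The parallelogram-overlap argument you describe yields a bilinear \emph{square function} estimate at $L^4$, equivalently bilinear decoupling in the range $2\leq p\leq 4$. Interpolating that with the $L^2$ bound only reproduces the range $[2,4]$; you cannot interpolate \emph{upward} to $p=6$. Reaching $p=6$ from $p=4$ via the trivial $L^\infty$ endpoint would reintroduce exactly the $2^{n/2}$-type loss you are trying to avoid. In fact, bilinear $L^6$ decoupling with only $\e$-loss is \emph{morally equivalent} to the full Bourgain--Demeter theorem at the critical exponent (this is the content of Theorem \ref{bilinear linear thm} together with $\on{BD}(n,p)\leq\on{D}(n,p)$), so it cannot be obtained by a classical two-line argument.

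This is precisely why the paper does not stop at the bilinear reduction but develops an entire machinery afterwards: $L^2$-decoupling to shrink the frequency scale (Section \ref{ch 3 sec 3.1}), ball inflation via bilinear Kakeya to enlarge the spatial scale (Section \ref{ch 3 sec 3.2}), the two-scale and multiscale inequalities that combine these across exponentially many scales (Section \ref{ch 3 sec 4}), and finally a bootstrapping argument in which $\on{D}(n,6)$ appears on both sides (Section \ref{ch 3 sec 5}). Your proposed recursion $\on{D}(n,6) \leq C\cdot\on{D}(n-k,6)$ also does not close on its own: with a constant $C>1$ independent of $k$, iterating $n/k$ times produces $C^{n/k}$, and you only control this by letting $k\to\infty$ with $\e$; but to do so you need the residual bilinear term to be bounded uniformly in $n$, which is exactly the unproved bilinear $L^6$ estimate. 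In short, you have correctly set up the outer shell of the argument (interpolation, bilinearization, parabolic rescaling), but the difficulty has been displaced into a single lemma that you are treating as elementary when it is in fact the main theorem in disguise.
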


\begin{rmk}
By Lemma \ref{lem interpolation}, it suffices to prove Theorem \ref{main thm} for $p=6$.
\end{rmk}

We record here the statement of \textit{parabolic rescaling} (Proposition \ref{prop parabolic rescaling 1}) written in our new notation.
\begin{prop}\label{2d parabolic rescaling}
Let $I$ be an interval of length $2^{-l}$ with $l<n$. Then for all $F$ with $\on{supp}(\widehat{F})\subseteq\mathcal{N}_I(4^{-n})$, we have
\begin{align*}
    \norm{\Pcal_IF}_{L^p(\R^2)}
    \lesssim
    \on{D}(n-l,p)\left(\sum_{J\in\I_n(I)}\norm{\Pcal_JF}_{L^p(\R^2)}^2\right)^{1/2}.
\end{align*}
\end{prop}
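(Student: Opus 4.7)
The plan is to recognize that this statement is exactly the two-dimensional case of Proposition \ref{prop parabolic rescaling 1} rewritten in the chapter's dyadic notation. Setting $\delta = 4^{-n}$ and $\sigma = 4^{-l}$ (so $\sigma^{1/2} = 2^{-l}$ matches the side length of $I$), the quantity $\delta/\sigma = 4^{-(n-l)}$ translates to the decoupling constant $\on{D}(n-l,p)$ at scale $n-l$, and the partition $\Theta_Q(\delta)$ in the general statement corresponds to $\{J \in \I_n(I)\}$ here. So in principle one simply quotes the earlier proposition.

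To actually carry out the argument from scratch, write $I = [c,c+2^{-l}]$ and introduce the parabolic rescaling affine transformation
\[
    \Gamma(\xi_1,\xi_2) \defeq \bigl(2^l(\xi_1 - c),\; 4^l(\xi_2 - 2c\xi_1 + c^2)\bigr).
\]
A direct computation shows $\Gamma$ maps the arc of $\PP^1$ over $I$ onto the arc of $\PP^1$ over $[0,1]$, and maps the vertical $4^{-n}$-neighborhood $\NN_I(4^{-n})$ onto $\NN(4^{-(n-l)})$. Crucially, because $\Gamma$ is exactly the affine shear that straightens the parabola, it also sends each $J \in \I_n(I)$-slab onto a corresponding $J' \in \I_{n-l}$-slab in the rescaled picture.

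Next, set $G$ to be the function whose Fourier transform is $\widehat{\Pcal_I F} \circ \Gamma^{-1}$, so that $\on{supp}(\widehat{G}) \subseteq \NN(4^{-(n-l)})$ and $\widehat{\Pcal_{J'} G} = \widehat{\Pcal_J F} \circ \Gamma^{-1}$ for the matched pair $J \leftrightarrow J'$. Applying the definition of $\on{D}(n-l,p)$ to $G$ gives
\[
    \norm{G}_{L^p(\R^2)} \leq \on{D}(n-l,p)\left(\sum_{J' \in \I_{n-l}} \norm{\Pcal_{J'}G}_{L^p(\R^2)}^2\right)^{1/2}.
\]
The change-of-variables computations already carried out in the proof of Proposition \ref{affine transformation decoupling} convert each $L^p$ norm above into the corresponding $L^p$ norm of $\Pcal_I F$ or $\Pcal_J F$, with Jacobian factors that match on both sides (this is why $\on{D}(\Theta) = \on{D}(\Theta')$ held without any scale-dependent loss, and the same cancellation works here).

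The only subtlety — and the step I would check carefully — is the matching of partitions: one has to verify that the image under $\Gamma$ of a $J \in \I_n(I)$-slab inside $\NN_I(4^{-n})$ really does coincide (up to a harmless bounded-overlap error, absorbed into the ``$\lesssim$'') with a $J' \in \I_{n-l}$-slab inside $\NN(4^{-(n-l)})$. This is where the specific choice $\delta = 4^{-n}$ with $\sigma^{1/2} = 2^{-l}$ matters: it ensures the vertical thickness rescales by the square of the horizontal rescaling factor, compensating for the curvature term $\xi_2 - 2c\xi_1 + c^2$ so that the image slabs are again vertical rectangles of the prescribed dimensions. Once this geometric check is done, the rest is bookkeeping.
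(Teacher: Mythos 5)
Your proposal is correct and takes the same route the paper does: the paper itself offers no proof of Proposition~\ref{2d parabolic rescaling} beyond the remark that it is ``the statement of parabolic rescaling (Proposition~\ref{prop parabolic rescaling 1}) written in our new notation,'' and Proposition~\ref{prop parabolic rescaling 1} in turn rests on the affine shear $\Gamma$ and the norm identities of Proposition~\ref{affine transformation decoupling}. You identify the dictionary $\delta = 4^{-n}$, $\sigma = 4^{-l}$ correctly, write down the same $\Gamma$ (specialized to $n=2$, with $\sigma^{1/2}=2^{-l}$), verify that it carries $\NN_I(4^{-n})$ onto $\NN(4^{-(n-l)})$ and $\I_n(I)$-slabs onto $\I_{n-l}$-slabs, and invoke the Jacobian cancellation from Proposition~\ref{affine transformation decoupling} — all of which the paper leaves to the reader. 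The one thing you flag as a ``subtlety to check carefully,'' that the image slabs land cleanly in $\I_{n-l}$, is in fact immediate in the dyadic setting you are working in (the endpoints of $I$ and of each $J$ are dyadic rationals, so $\Gamma$ sends the dyadic grid at scale $2^{-n}$ inside $I$ bijectively onto the dyadic grid at scale $2^{-(n-l)}$ inside $[0,1]$, with no bounded-overlap slack needed); this is precisely why the chapter restricts to dyadic scales, as noted in Remark~\ref{rmk_dec_scale}.
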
 

To motivate the actual proof of this theorem, let us first see why the naive approach of simple iteration fails.

Let $I\subseteq [0,1]$ be any dyadic interval with children $I_1$ and $I_2$. By Proposition \ref{2d parabolic rescaling}, we have
\begin{align}
    \norm{\Pcal_IF}_p
    \lesssim
    \on{D}(1,p)\left(\norm{\Pcal_{I_1}F}_p^2+\norm{\Pcal_{I_2}F}_p^2\right)^{1/2},
\end{align}
which is essentially sharp. Thus, it is natural to simply iterate it $n$ times, starting from $I=[0,1]$ and ending up with $\I_n$. This yields the estimate:
\begin{align}\label{naive}
    \norm{F}_p\lesssim\on{D}(1,p)^n\left(\sum_{I\in\I_n}\norm{\Pcal_IF}_{L^p(\R^2)}^2\right)^{1/2}.
\end{align}
Therefore, $\on{D}(n,p)\lesssim\on{D}(1,p)^n$ (with the implicit constant $\geq 1$). However, we can show that $\on{D}(1,p)>1$ (see Appendix \ref{appendix 2}), which means that this bound is much worse than that of Theorem \ref{main thm}.

There are three major problems with this multiscale approach:
\begin{enumerate}
    \item It made no real use of the \textit{geometry/curvature} of the parabola $\PP^1$. Indeed, Theorem \ref{main thm} is false if $\PP^1$ is replaced with a line, or even two transversal line segments.
    \item It made no real use of the disjointness of Fourier supports of the $\Pcal_i F$'s, i.e., \textit{$L^2$-orthogonality}. Indeed, for the base case $\on{D}(1,p)$ to be bounded, we only need to apply the triangle inequality.
    \item Even for the \textit{multiscale framework}, there is still a lot to be improved. Although $\on{D}(1,p)$ is a sharp constant when going from scale $m$ to $m+1$, there is no one function which simultaneously makes all of these inequalities sharp. But the above naive argument doesn't capture such phenomenon.
\end{enumerate}

Actually, the proof of Theorem \ref{main thm} is no more than a story of how to address all these problems. It is structured as follows:
\begin{enumerate}
    \item Establish a relationship between \textit{bilinear} and \textit{linear} decoupling constants (Section \ref{ch 3 sec 2}) which tells us that, up to an $\e$-loss, studying the linear decoupling problem is essentially equivalent to studying its bilinear analogue. The key advantage of working in the bilinear setting is that it allows us to use the bilinear Kakeya inequality which enables us to take advantage of the \textit{curvature} of $\PP^1$ in some sense.
    \item Prove several tools for the bilinear decoupling problem (Section \ref{ch 3 sec 3}), namely an \textit{$L^2$-decoupling} (Section \ref{ch 3 sec 3.1}), which allows us to decouple to the smallest scale allowed by the uncertainty principle, and a \textit{ball inflation} (Section \ref{ch 3 sec 3.2}) based on the bilinear Kakeya inequality which allows us to enlarge the size of the spatial scale.
    \item Combine these tools in a more intricate \textit{iteration scheme} (Section \ref{ch 3 sec 4}), which will allow us to efficiently move from scale $\frac{n}{2^s}$ to scale $n$ with a substantially smaller loss than in (\ref{naive}). The key idea is that we look at \textit{all scales} at the same time.
    \item Use a bootstrapping argument to conclude the proof (Section \ref{ch 3 sec 5}).
\end{enumerate}

\section{Bilinear-to-linear reduction}\label{ch 3 sec 2}
Fix $I_1$ and $I_2$ transverse (dyadic) subintervals of $[0,1]$, say $I_1=[0,\frac{1}{4}]$ and $I_2=[\frac{1}{2},1]$. The exact location of the intervals are not important as long as they are sufficiently separated since the \textit{curvature} of the parabola will then automatically guarantee the \textit{transversality} necessary for later arguments.

\begin{defn}[Bilinear decoupling constant]\label{bilinear decoupling constant}
    Define $\on{BD}(n,p)$ to be the smallest constant such that
    \begin{align*}
        \norm{\abs{F_1F_2}^{1/2}}_{L^p(\R^2)}\leq \on{BD}(n,p)\left(\sum_{I\in\I_n(I_1)}\norm{\Pcal_IF_1}_{L^p(\R^2)}^2\sum_{I\in\I_n(I_2)}\norm{\Pcal_IF_2}_{L^p(\R^2)}^2\right)^{1/4}
    \end{align*}
    for all $F_i:\R^2\rightarrow\C$ such that $\on{supp}(\widehat{F_i})\subseteq \mathcal{N}_{I_i}(4^{-n})$, $i=1,2$.
\end{defn}

By Hölder's inequality we trivially have $\on{BD}(n,p)\leq \on{D}(n,p)$. It turns out that the converse \textit{almost} holds, thereby giving us a \textit{pseudo-equivalence} between the linear and bilinear decoupling constants.

\begin{thm}[Bilinear-to-linear reduction]\label{bilinear linear thm}
    For all $\e>0$, we have
    \begin{align*}
        \on{D}(n,p)\lesssim_\e 2^{n\e}(1+\max_{m\leq n} \on{BD}(m,p)).
    \end{align*}
\end{thm}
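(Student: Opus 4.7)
The plan is to use a Bourgain--Guth broad-narrow decomposition at a single intermediate scale, followed by iteration. Fix an integer $k$ (to be chosen in terms of $\e$ at the end) and let $K=2^k$. Partition $[0,1]$ into the intervals $J_1,\ldots,J_K\in\I_k$, and for each $x\in\R^2$ set $M(x)=\max_i\abs{\Pcal_{J_i}F(x)}$, with $i^*$ realizing the max. The dichotomy is: either $\abs{\Pcal_{J_i}F(x)}<M(x)/K$ for every $i$ not adjacent to $i^*$ (the narrow case), whence $\abs{F(x)}\le\sum_i\abs{\Pcal_{J_i}F(x)}\le 3M(x)+(K-3)(M(x)/K)\le 4M(x)$; or there is some $i$ non-adjacent to $i^*$ with $\abs{\Pcal_{J_i}F(x)}\ge M(x)/K$ (the broad case), giving $\abs{F(x)}\le KM(x)\le K^{2}(\abs{\Pcal_{J_{i^*}}F(x)}\abs{\Pcal_{J_i}F(x)})^{1/2}$ with the two intervals non-adjacent. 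Combining both possibilities and raising to the $p$-th power yields pointwise
\[
\abs{F(x)}^p \lesssim \sum_i\abs{\Pcal_{J_i}F(x)}^p + K^{2p}\sum_{\substack{J_1,J_2\in\I_k\\ \text{non-adjacent}}}\abs{\Pcal_{J_1}F(x)\,\Pcal_{J_2}F(x)}^{p/2}.
\]

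Next I would integrate and estimate the two sums separately. For the narrow sum, parabolic rescaling (Proposition~\ref{2d parabolic rescaling}) gives $\norm{\Pcal_{J_i}F}_p\le\on{D}(n-k,p)\bigl(\sum_{I\in\I_n(J_i)}\norm{\Pcal_I F}_p^2\bigr)^{1/2}$; summing over $i$ and using super-additivity of $t\mapsto t^{p/2}$ on $[0,\infty)$ (valid since $p\ge 2$) collapses the narrow contribution to $\on{D}(n-k,p)^p\bigl(\sum_{I\in\I_n}\norm{\Pcal_I F}_p^2\bigr)^{p/2}$. For each broad pair $(J_1,J_2)$, the smallest dyadic interval containing both has some length $2^{-l}$ with $l<k$; applying parabolic rescaling to this parent interval, together with the affine invariance of the decoupling constant (Proposition~\ref{affine transformation decoupling}), identifies the problem with bilinear decoupling for the fixed transverse pair $([0,1/4],[1/2,1])$ of Definition~\ref{bilinear decoupling constant} at some smaller scale $m\le n$, giving
\[
\norm{\abs{\Pcal_{J_1}F\,\Pcal_{J_2}F}^{1/2}}_p \lesssim \on{BD}(m,p)\,\Big(\sum_{I\in\I_n(J_1)}\norm{\Pcal_I F}_p^2\sum_{I\in\I_n(J_2)}\norm{\Pcal_I F}_p^2\Big)^{1/4}.
\]
Bounding each product by $\sum_{I\in\I_n}\norm{\Pcal_I F}_p^2$ via AM-GM and summing over the $O(K^2)$ broad pairs, the two contributions assemble into the inductive inequality
\[
\on{D}(n,p) \le C_0\,\on{D}(n-k,p) + C_k\max_{m\le n}\on{BD}(m,p),
\]
in which $C_0$ is an absolute constant \emph{independent} of $k$, while $C_k$ is allowed to depend on $k$.

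To close, I would iterate this inequality $j=\lfloor n/k\rfloor$ times, using the trivial bound $\on{D}(r,p)=O(1)$ for bounded $r$. This produces
\[
\on{D}(n,p) \le C\,C_0^{n/k}\Big(1+C_k\max_{m\le n}\on{BD}(m,p)\Big).
\]
Finally, choose $k=k(\e)$ large enough that $C_0^{1/k}\le 2^\e$; then $C_0^{n/k}\le 2^{n\e}$ and the stated bound follows.

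The main obstacle is the broad-to-$\on{BD}$ reduction: one must carefully match an arbitrary non-adjacent pair $(J_1,J_2)$ at scale $2^{-k}$ to the fixed transverse configuration in Definition~\ref{bilinear decoupling constant} via parabolic rescaling, tracking how the frequency scale $4^{-n}$ is distorted in order to extract the correct $m$. An equally essential (and subtler) point is that the coefficient $C_0$ in the inductive step must not depend on $k$: if it did, iterating $n/k$ times would produce an uncontrolled multiplicative factor that defeats the desired $2^{n\e}$ bound.
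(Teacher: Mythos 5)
Your proof is correct and follows essentially the same route as the paper: a Bourgain--Guth broad-narrow pointwise dichotomy at scale $K=2^k$ (the paper's Lemma~\ref{bilinear linear elementary lemma}), parabolic rescaling for the narrow term, bilinear parabolic rescaling for non-adjacent pairs in the broad term, iteration of the resulting inequality $\on{D}(n,p)\le C_0\on{D}(n-k,p)+C_k\max_{m\le n}\on{BD}(m,p)$, and a choice of $k$ depending only on $\e$. The only cosmetic difference is that you carry out the estimate at the $\ell^p$ level (raising the pointwise bound to the $p$-th power and invoking superadditivity of $t\mapsto t^{p/2}$), whereas the paper stays at the $\ell^2$ level by completing the maxima to square sums and applying Minkowski's inequality; both variants are valid and yield the same $k$-independent $C_0$.
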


The proof of this theorem is based on an elementary but insightful observation regarding sums over cubes. Let $K\in 2^\N$ and let $\mathcal{C}_K$ be the partition of $[0,1]$ into subintervals $\alpha$ with length $\frac{1}{K}$ so that $\abs{\mathcal{C}_K}=K$. We will write $\alpha\sim\alpha'$ if $\alpha$ and $\alpha'$ are neighbors of each other, and $\alpha\not\sim\alpha'$ otherwise. Notice that each $\alpha$ has at most $3$ neighbors (including itself). The following lemma provides us with a mechanism to deal with the superposition of waves and may be regarded as a model example of the so-called \textit{broad-narrow analysis}.
\begin{lem}\label{bilinear linear elementary lemma}
There are constants $C$ (independent of $K$) and $C_K$ such that
\begin{align}\label{bilinear basic broad-narrow}
    \abs{\sum_{\alpha\in\mathcal{C}_K}z_\alpha}\leq C\max_{\alpha\in\mathcal{C}_K}\abs{z_\alpha}+C_{K}\max_{\alpha'\not\sim\alpha''}\abs{z_{\alpha'}z_{\alpha''}}^{1/2}
\end{align}
for any set of complex numbers $z_\alpha$ indexed by $\alpha\in\mathcal{C}_K$.
\end{lem}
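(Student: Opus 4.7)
The plan is to prove this by a simple dichotomy argument on whether a single term dominates the sum (the \emph{narrow} case) or two well-separated terms are comparably large (the \emph{broad} case). Let $M \defeq \max_{\alpha\in\mathcal{C}_K}\abs{z_\alpha}$ and fix $\alpha^*$ achieving this maximum. The key observation is that $\alpha^*$ has at most $3$ neighbors (including itself), so only a bounded number of terms can be close to $\alpha^*$, while the remaining $\alpha\not\sim\alpha^*$ contribute at most $K$ terms total but each can be controlled using the bilinear quantity paired with $z_{\alpha^*}$.

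First I would introduce a threshold parameter $\eta=\eta(K)$ (which will turn out to be something like $1/K$) and split into cases as follows. In the \emph{narrow} case, assume $\abs{z_\beta}\leq \eta M$ for every $\beta\not\sim \alpha^*$. Then by the triangle inequality,
\begin{align*}
    \abs[\Big]{\sum_{\alpha\in\mathcal{C}_K}z_\alpha}
    \leq \sum_{\alpha\sim\alpha^*}\abs{z_\alpha}+\sum_{\alpha\not\sim\alpha^*}\abs{z_\alpha}
    \leq 3M+K\eta M,
\end{align*}
so choosing $\eta=1/K$ yields the bound $4M=4\max_\alpha\abs{z_\alpha}$, giving the first term on the right with $C=4$.

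In the complementary \emph{broad} case, there exists some $\beta\not\sim \alpha^*$ with $\abs{z_\beta}>M/K$. Then pairing $\beta$ with $\alpha^*$ gives
\begin{align*}
    \max_{\alpha'\not\sim\alpha''}\abs{z_{\alpha'}z_{\alpha''}}^{1/2}
    \geq \abs{z_{\alpha^*}z_\beta}^{1/2}>\sqrt{M\cdot M/K}=M/\sqrt{K},
\end{align*}
while trivially $\abs{\sum_\alpha z_\alpha}\leq KM$. Combining these two estimates,
\begin{align*}
    \abs[\Big]{\sum_{\alpha\in\mathcal{C}_K}z_\alpha}\leq KM\leq K^{3/2}\max_{\alpha'\not\sim\alpha''}\abs{z_{\alpha'}z_{\alpha''}}^{1/2},
\end{align*}
so taking $C_K=K^{3/2}$ produces the second term on the right. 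Taking both bounds together, the inequality holds with $C=4$ and $C_K=K^{3/2}$.

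There is no serious obstacle here; the argument is essentially a case split and the only ``choice'' is the threshold $\eta=1/K$, which balances the two subcases. The role of $K$-dependence in $C_K$ versus $K$-independence in $C$ is what makes this lemma useful downstream: when it is later combined with $L^p$ estimates and iteration on scales, the $K$-independent constant in front of the linear (narrow) term is what prevents blow-up over many iterations, while the large constant $C_K$ in front of the bilinear (broad) term is harmless since the bilinear estimate will be invoked only once.
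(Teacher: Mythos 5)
Your proof is correct and takes essentially the same approach as the paper: both define the maximizing index $\alpha^*$, use the threshold $\abs{z_{\alpha^*}}/K$ to split into a narrow case (every large term neighbors $\alpha^*$, handled by the triangle inequality to get $C=4$) and a broad case (some large term is non-neighboring, paired with $\alpha^*$ to get $C_K = K^{3/2}$). The paper packages the case split via a set $S_{big}$, but the argument and constants are identical.
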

\begin{proof}
Let $\alpha^*$ be such that $\abs{z_{\alpha^*}}=\max_{\alpha\in\mathcal{C}_K}\abs{z_\alpha}$. Define
\begin{align*}
    S_{big}\defeq \{\alpha\in\mathcal{C}_K \mid \abs{z_\alpha} \geq \frac{\abs{z_{\alpha^*}}}{K} \}
\end{align*}
to pick out the \textit{significant} terms on the left hand side of (\ref{bilinear basic broad-narrow}).

First suppose there exists $\alpha_0\in S_{big}$ such that $\alpha_0\not\sim\alpha^*$, then by definition of $S_{big}$ and the triangle inequality, we have
\begin{align*}
    \abs{z_{\alpha_0}z_{\alpha^*}}^{1/2}\geq\frac{\abs{z_{\alpha^*}}}{K^{1/2}}\geq\frac{\abs{\sum_{\alpha\in\mathcal{C}_K}z_\alpha}}{K^{3/2}},
\end{align*} 
which immediately implies that
\begin{align*}
    \quad\abs{\sum_{\alpha\in\mathcal{C}_K}z_\alpha}\leq K^{3/2}\max_{\alpha'\not\sim\alpha''}\abs{z_{\alpha'}z_{\alpha''}}^{1/2}.
\end{align*}
Hence we can choose $C_{K}=K^{3/2}$ on the right hand side of (\ref{bilinear basic broad-narrow}) to complete the proof.

Otherwise, every cube in $S_{big}$ must lie in the neighbor of $\alpha^*$, so we can dominate the left hand side of (\ref{bilinear basic broad-narrow}) using the triangle inequality:
\begin{align*}
    \abs{\sum_{\alpha\in\mathcal{C}_K}z_\alpha}&\leq\sum_{\alpha\sim\alpha^*}\abs{z_\alpha}+\sum_{\alpha\not\sim\alpha^*}\abs{z_\alpha}\\
    &\leq 3\abs{z_{\alpha^*}}+K\frac{\abs{z_{\alpha^*}}}{K}\\
    &=4\max_{\alpha\in\mathcal{C}_K}\abs{z_{\alpha}}.
\end{align*}
Therefore, we can choose $C=4$ on the right hand side of (\ref{bilinear basic broad-narrow}) to complete the proof.

Hence, in both cases (\ref{bilinear basic broad-narrow}) holds.
\end{proof}

With the help of Lemma \ref{bilinear linear elementary lemma}, we can now prove the bilinear-to-linear reduction.
\begin{proof}[Proof of Theorem \ref{bilinear linear thm}]
We start by proving that for all $k<n$:
\begin{align}\label{bilinear linear eq 1}
    \on{D}(n,p)\leq C\on{D}(n-k,p)+C_k\max_{m\leq n}\on{BD}(m,p).
\end{align}
If $\on{supp}(\widehat{F})\subseteq\mathcal{N}(4^{-n})$, then we have
\begin{align*}
    F(x)=\sum_{I\in\I_k}\Pcal_IF(x).
\end{align*}
Therefore, by using Lemma \ref{bilinear linear elementary lemma} with $K=2^k$, $\mathcal{C}_K = \I_k$, and $z_I = \Pcal_I F(x)$, we have a pointwise inequality:
\begin{align*}
    \abs{F(x)} \leq 4\max_{I\in\I_k}\abs{\Pcal_IF(x)} + 2^{3k/2} \max_{\substack{J_1,J_2\in\I_k\\ J_1 \not\sim J_2}} \abs{\Pcal_{J_1}F(x)\Pcal_{J_2}F(x)}^{1/2}.
\end{align*}

Taking the $L^p$-norm of both sides on $\R^2$, using the triangle inequality,  completing the sums, and finally using Minkowski's integral inequality (recall that $p \geq 2$), we get:
\begin{align*}
    \norm{F}_{L^p(\R^2)} 
    &\leq 
    4\norm{ \max_{I\in\I_k}\abs{\Pcal_I F} }_{L^p(\R^2)} + 2^{3k/2} 
    \norm{ \max_{\substack{J_1,J_2\in\I_k\\ J_1 \not\sim J_2}} \abs{\Pcal_{J_1}F \Pcal_{J_2}F}^{1/2} }_{L^p(\R^2)}\\
    & \leq
    4\norm{\left(\sum_{I\in\I_k}\abs{\Pcal_I F}^2\right)^{1/2}}_{L^p(\R^2)}
    +
    2^{3k/2} 
    \norm{ \left[\sum_{\substack{J_1,J_2\in\I_k\\ J_1 \not\sim J_2}} \left(\abs{\Pcal_{J_1}F \Pcal_{J_2}F}^{1/2}\right)^2 \right]^{1/2}}_{L^p(\R^2)} \\
    & \leq
    4 \left(\sum_{I\in\I_k}\norm{\Pcal_IF}_{L^p(\R^2)}^2\right)^{1/2}
    +
    2^{3k/2} 
    \left(\sum_{\substack{J_1,J_2\in\I_k\\ J_1 \not\sim J_2}} \norm{\abs{\Pcal_{J_1}F(x)\Pcal_{J_2}F(x)}^{1/2}}_{L^p(\R^2)}^2 \right)^{1/2}.
\end{align*}

Now the strategy to bound both terms above is parabolic rescaling. For the first term, we can directly apply parabolic rescaling (Lemma \ref{2d parabolic rescaling}):
\begin{align*}
     \left(\sum_{I\in\I_k}\norm{\Pcal_IF}_{L^p(\R^2)}^2\right)^{1/2}
    & \lesssim
    \on{D}(n-k,p) \left( \sum_{I\in\I_k} \sum_{I'\in\I_n(I)} \norm{\Pcal_{I'} F}_{L^p(\R^2)}^2 \right)^{1/2}\\
    & = \on{D}(n-k,p)\left(\sum_{I'\in\I_n}\norm{\Pcal_{I'}F}_{L^p(\R^2)}^2\right)^{1/2}.
\end{align*}

For the second term, consider some pair of non-neighboring intervals $J_1,J_2\in\I_k$, say $J_1=[a,a+2^{-k}]$ and $J_2=[b,b+2^{-k}]$. Let $m\in\N^+$ be such that $2^{-m}\leq \abs{b-a}<2^{-(m-1)}$. The fact that they are non-neighboring implies $\abs{b-a}\geq 2^{-k+1}$ and therefore $m\leq k-1$. So there exists an affine transformation sending $J_1$ to a subinterval of $I_1$ and $J_2$ to a subinterval of $I_2$, say $T:\xi\mapsto\frac{\xi-a}{2^{-m+1}}$. (Recall that $I_1 = [0,\frac{1}{4}]$, $I_2 = [\frac{1}{2},1]$.) Therefore by the bilinear version of parabolic rescaling (Lemma \ref{2d parabolic rescaling}), we have that for this $m$:
\begin{align*}
    \norm{\abs{\Pcal_{J_1}F(x)\Pcal_{J_2}F(x)}^{1/2}}_{L^p(\R^2)}&\lesssim\on{BD}(n-m+1,p)\left(\sum_{I\in\I_n(J_1)}\norm{\Pcal_IF}_{L^p(\R^2)}^2\sum_{I\in\I_n(J_2)}\norm{\Pcal_IF}_{L^p(\R^2)}^2\right)^{1/4}\\
    &\leq\on{BD}(n-m+1,p)\left(\sum_{I\in\I_n}\norm{\Pcal_IF}_{L^p(\R^2)}^2\right)^{1/2}.
\end{align*}

Therefore, combining the estimates for both terms together, we obtain
\begin{align*}
    \norm{F}_{L^p(\R^2)}\leq \left(C\on{D}(n-k,p)+C2^{3k/2}2^{k}\max_{m\leq n}\on{BD}(m,p)\right)
    \left(\sum_{I\in\I_n}\norm{\Pcal_IF}_{L^p(\R^2)}^2\right)^{1/2}
\end{align*}
which immediately implies that for any $k<n$:
\begin{align*}
    D(n,p)\leq C\on{D}(n-k,p)+C_k\max_{m\leq n}\on{BD}(m,p).
\end{align*}

Now we iterate the above inequality. The first few steps of the iteration looks like:
\begin{align*}
     \on{D}(n,p)&\leq C\on{D}(n-k,p)+C_k\max_{m\leq n}\on{BD}(m,p)\\
     &\leq C\left(C\on{D}(n-2k,p) + C_k\max_{m\leq n-k}\on{BD}(m,p)\right) + C_k\max_{m\leq n}\on{BD}(m,p)\\
     &\leq C^2\on{D}(n-2k,p)+C_k(C+1)\max_{m\leq n}\on{BD}(m,p)
\end{align*}
If we repeat this $\frac{n}{k}$ times \footnote{Note that if $\frac{n}{k}$ is not an integer then we just iterate $\floor{\frac{n}{k}}$ many times instead; the point is just to apply induction on scales to reach approximately the unit scale $1$. See the induction on scales argument in Section \ref{ch 4 sec 3}, which is written in the general non-dyadic setting but can be easily adapted to this dyadic two-dimensional setting.}, then we would get
\begin{align*}
    \on{D}(n,p)&\leq C^{n/k}\on{D}(0,p)+C_k(C^{n/k-1}+...+C+1)\max_{m\leq n}\on{BD}(m,p)\\
    &\lesssim C^{n/k}\left(1+C_k\max_{m\leq n}\on{BD}(m,p)\right)
\end{align*}
where we used the fact that $\on{D}(0,p)=1$ and computed the sum of the geometric series.

Finally, to conclude the argument we argue as follows. Fix any $\e>0$. If $n > \frac{\log_2C}{\e}+1$, then taking $k=\ceil{\frac{\log_2C}{\e}}<n$ yields
\begin{align*}
    \on{D}(n,p)\lesssim 2^{n\e}\left(1+C_\e\max_{m\leq n}\on{BD}(m,p)\right)\lesssim_\e 2^{n\e}\left(1+\max_{m\leq n}\on{BD}(m,p)\right).
\end{align*}
On the other hand, if $n\leq\frac{\log_2C}{\e}+1$, then note that we always have the trivial estimate $\on{D}(n,p)\leq 2^{n/2}$ by the Cauchy-Schwarz inequality, and therefore
\begin{align*}
    \on{D}(n,p)\leq 2^{\frac{\log_2C}{2\e}+\frac{1}{2}}\lesssim_\e 1 \leq 2^{n\e}\left(1+\max_{m\leq n}\on{BD}(m,p)\right).
\end{align*}
Hence, Theorem \ref{bilinear linear thm} is true for all $n$, i.e.,
\begin{align*}
    \on{D}(n,p)\lesssim_\e 2^{n\e}\left(1+\max_{m\leq n}\on{BD}(m,p)\right).
\end{align*}
\end{proof}

Theorem \ref{bilinear linear thm} should be compared to the trilinear-to-linear reduction proved in Chapter \ref{ch 4}, along with its natural multilinear analogues.

\section{Bilinear decoupling estimates}\label{ch 3 sec 3}
By the result of the previous section, we can essentially reduce the proof of Theorem \ref{main thm} to bounding the bilinear decoupling constant $\on{BD}(n,p)$ instead. We start by introducing some useful notation for studying bilinear decoupling.

For $r\in\N$, let $Q^r\subseteq\R^2$ denote an arbitrary (spatial) square with side length $2^r$. In the local formulation of the decoupling problem, our goal is then to prove $L^p$-estimates over $Q^{2n}$. For $r\leq R$, let $\mathcal{Q}_r(Q^R)$ denote the partition of $Q^R$ into squares $Q^r$. Similarly, given a dyadic interval $I\subseteq\R$ with $\abs{I} = 2^{-l}$ and $\delta=2^{-k}$ with $k\geq l$, we denote by $\on{Part}_\delta(I)$ the partition of $I$ into subintervals of length $\delta$.

\subsection{Bilinear notation}
We now introduce the multilinear notation from \cite{guth_notes}, adapted to the two-dimensional dyadic setting that we are working in.

\begin{defn}
For a fixed $R$ and some square $Q^R$, we define
\begin{align}\label{core quantity}
    M_{p,q}(r,\sigma)\defeq\left[\frac{1}{\abs{\mathcal{Q}_r(Q^R)}}\sum_{Q^r\in\mathcal{Q}_r(Q^R)}\prod_{i=1}^2\left(\sum_{I\in\I_\sigma(I_i)}\norm{\Pcal_IF}_{L^q_\#(w_{Q^r})}^2\right)^{p/4}\right]^{1/p}.
\end{align}
\end{defn}

Intuitively, $M_{p,q}(r,\sigma)$ represents the average contribution at frequency scale $2^{-\sigma}$ to the squares at spatial scale $2^r$. In particular, $\norm{\Pcal_IF}_{L^q_\#(w_{Q^r})}$ is the average contribution of a particular frequency component of $F$ on the cube $Q^r$.

Note that in the local bilinear decoupling inequality, we would have $R=2n$. Let us first take note of two special cases.
\begin{itemize}
    \item If $r=\sigma$ and $q=2$ then the expression becomes
    \begin{align*}
        M_{p,2}(r,r)=\left[\frac{1}{\abs{\mathcal{Q}_r(Q^R)}}\sum_{Q\in\mathcal{Q}_r(Q^R)}\prod_{i=1}^2\left(\sum_{I\in\I_r(I_i)}\norm{\Pcal_IF}_{L^2_\#(w_{Q^r})}^2\right)^{p/4}\right]^{1/p}.
    \end{align*}
    As the spatial and frequency scales are inversely related, by the locally constant heuristic, this is essentially the left hand side of the local bilinear decoupling inequality. See (\ref{multilinear language eq}) for a rigorous version of this statement.
    \item If $r=R$ then the spatial average vanishes and the expression can be simplified as
    \begin{align*}
        M_{p,q}(R,\sigma)=\prod_{i=1}^2\left(\sum_{I\in\I_\sigma(I_i)}\norm{\Pcal_IF}_{L^q_\#(w_{Q^R})}^2\right)^{1/4}.
    \end{align*}
    In particular, the exponent $p$ vanishes and we are left with the right hand side of the local bilinear decoupling inequality at scale $\sigma$ (modulo the averages).
\end{itemize}

So, effectively, our goal is to prove that $M_{p,2}(r,r)\lesssim_\e 2^{n\e}M_{p,p}(2n,n)$ for $r$ sufficiently small. The tools we introduce in the rest of this section will allow us to adjust the exponents $p$ and $q$ and raise the scale parameters $r$ and $\sigma$ step by step.

We record below two tools for adjusting the exponents. They are proved by directly applying Hölder's inequality (while losing some irrelevant constants due to the presence of weights), and are great exercises for the reader to get familiar with the form of $M_{p,q}(r,\sigma)$.

\begin{lem}[H1]
If $q_1\leq q_2$ then $M_{p,q_1}(r,\sigma)\lesssim M_{p,q_2}(r,\sigma)$.
\end{lem}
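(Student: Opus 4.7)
The plan is to reduce (H1) to a term-by-term comparison of the inner weighted norms, then propagate the inequality through each monotone operation appearing in the definition of $M_{p,q}(r,\sigma)$.

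First I would establish the pointwise (in $Q^r$) bound
\[
\norm{\Pcal_I F}_{L^{q_1}_\#(w_{Q^r})} \lesssim \norm{\Pcal_I F}_{L^{q_2}_\#(w_{Q^r})},
\]
valid uniformly in $I$ and in $Q^r \in \mathcal{Q}_r(Q^R)$. This is Hölder's inequality applied to the finite measure $d\mu \defeq w_{Q^r}\, dx$: setting $\alpha \defeq \tfrac{1}{q_1} - \tfrac{1}{q_2} \geq 0$, one has
\[
\norm{\Pcal_I F}_{L^{q_1}(w_{Q^r})} \leq \mu(\R^2)^{\alpha}\, \norm{\Pcal_I F}_{L^{q_2}(w_{Q^r})}.
\]
Dividing through by $\abs{Q^r}^{1/q_1}$ converts both sides into $L^{\#}$ averages, and the overhead becomes $\bigl(\int w_{Q^r} / \abs{Q^r}\bigr)^{\alpha}$. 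The only non-cosmetic input is the elementary observation that $\int w_{Q^r} \lesssim \abs{Q^r}$, which follows from the rapid polynomial decay built into $w_{Q^r}$; this ratio is translation invariant, so the overhead is a dimensional constant, uniform in both the side length and the location of $Q^r$.

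Next I would propagate this pointwise comparison through each of the operations in (\ref{core quantity}): squaring, summing over $I \in \I_\sigma(I_i)$, raising to the $p/4$-th power (nondecreasing since $p \geq 2$), taking the product over $i \in \{1,2\}$, averaging over $Q^r \in \mathcal{Q}_r(Q^R)$, and extracting the outer $p$-th root. Each of these is monotone in its nonnegative inputs, so the uniform pointwise comparison carries through and yields $M_{p,q_1}(r,\sigma) \lesssim M_{p,q_2}(r,\sigma)$.

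There is no substantive obstacle here: the entire content of (H1) is Jensen's inequality for an approximate probability measure, packaged so that the Hölder constants are absorbed uniformly in $Q^r$. The paper's own warning about ``losing some irrelevant constants due to the presence of weights'' refers precisely to the factor $\bigl(\int w_{Q^r}/\abs{Q^r}\bigr)^{\alpha}$; under the heuristic of treating $w_{Q^r}$ as $\1_{Q^r}$ this overhead is literally $1$, which accounts for why the statement is only worth recording as a warm-up exercise.
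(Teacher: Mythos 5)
Your proof is correct and takes exactly the approach the paper indicates (the paper leaves (H1) as an exercise, noting only that it follows from H\"older's inequality with some harmless constant losses from the weights). The key step — $\norm{\Pcal_I F}_{L^{q_1}_\#(w_{Q^r})}\lesssim \norm{\Pcal_I F}_{L^{q_2}_\#(w_{Q^r})}$ via H\"older on the finite measure $w_{Q^r}\,dx$ with overhead $\bigl(\int w_{Q^r}/\abs{Q^r}\bigr)^{1/q_1-1/q_2}\lesssim 1$ — is the intended argument, and the propagation through the monotone operations in the definition of $M_{p,q}(r,\sigma)$ is routine and correctly handled.
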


\begin{lem}[H2]
If $\frac{1}{q}=\frac{\alpha}{q_1}+\frac{1-\alpha}{q_2}$ then $M_{p,q}(r,\sigma)\leq M_{p,q_1}(r,\sigma)^\alpha M_{p,q_2}(r,\sigma)^{1-\alpha}$.
\end{lem}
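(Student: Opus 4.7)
The plan is to prove the inequality by three successive applications of Hölder's inequality, peeling off one layer at a time starting from the innermost $L^q_\#$ norm and working outwards through the sum over $I$ and then the average over $Q^r$.

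First, I would invoke the classical log-convexity of $L^q$ norms (for the measure $w_{Q^r}\,dx$): since $\frac{1}{q} = \frac{\alpha}{q_1} + \frac{1-\alpha}{q_2}$, we have
\begin{align*}
\norm{\Pcal_I F}_{L^q(w_{Q^r})} \leq \norm{\Pcal_I F}_{L^{q_1}(w_{Q^r})}^{\alpha}\, \norm{\Pcal_I F}_{L^{q_2}(w_{Q^r})}^{1-\alpha},
\end{align*}
and dividing both sides by $\abs{Q^r}^{1/q} = \abs{Q^r}^{\alpha/q_1}\abs{Q^r}^{(1-\alpha)/q_2}$ gives the same bound for the averaged norms $L^q_\#(w_{Q^r})$. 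Squaring, I get $\norm{\Pcal_I F}_{L^q_\#(w_{Q^r})}^2 \leq a_I^{2\alpha} b_I^{2(1-\alpha)}$, where I abbreviate $a_I \defeq \norm{\Pcal_I F}_{L^{q_1}_\#(w_{Q^r})}$ and $b_I \defeq \norm{\Pcal_I F}_{L^{q_2}_\#(w_{Q^r})}$ (both depending on $Q^r$ but I suppress that dependence).

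Second, for each $i \in \{1,2\}$ I would apply Hölder's inequality in the sum over $I \in \I_\sigma(I_i)$ with conjugate exponents $1/\alpha$ and $1/(1-\alpha)$ to get
\begin{align*}
\sum_{I \in \I_\sigma(I_i)} a_I^{2\alpha} b_I^{2(1-\alpha)}
\leq \left(\sum_{I \in \I_\sigma(I_i)} a_I^2\right)^{\alpha}\left(\sum_{I \in \I_\sigma(I_i)} b_I^2\right)^{1-\alpha}.
\end{align*}
Raising to the power $p/4$ and multiplying the two resulting inequalities (one for $i=1$, one for $i=2$) yields, for each fixed $Q^r$,
\begin{align*}
\prod_{i=1}^2\!\left(\sum_{I \in \I_\sigma(I_i)}\!\!\norm{\Pcal_I F}_{L^q_\#(w_{Q^r})}^2\right)^{\!p/4}
\!\!\leq\! \left[\prod_{i=1}^2\!\left(\sum_I a_I^2\right)^{\!p/4}\right]^{\!\alpha}\!\!\left[\prod_{i=1}^2\!\left(\sum_I b_I^2\right)^{\!p/4}\right]^{\!1-\alpha}\!\!.
\end{align*}

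Third, I would apply Hölder's inequality one more time, this time to the average over $Q^r \in \mathcal{Q}_r(Q^R)$, again with exponents $1/\alpha$ and $1/(1-\alpha)$, to separate the two factors. Since the normalizing constant $\abs{\mathcal{Q}_r(Q^R)}^{-1}$ also factors as $\abs{\mathcal{Q}_r(Q^R)}^{-\alpha} \cdot \abs{\mathcal{Q}_r(Q^R)}^{-(1-\alpha)}$, the right-hand side becomes exactly $M_{p,q_1}(r,\sigma)^{p\alpha}\, M_{p,q_2}(r,\sigma)^{p(1-\alpha)}$. Taking $p$-th roots concludes the proof.

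There is no real obstacle here: the entire argument is a routine chain of Hölder applications that works cleanly because the exponents $\alpha$ and $1-\alpha$ sum to $1$ at every level where Hölder is invoked. The only small subtlety is ensuring that the normalizing powers of $\abs{Q^r}$ and $\abs{\mathcal{Q}_r(Q^R)}$ recombine correctly after interpolation, which they do precisely because of the identity $\frac{1}{q} = \frac{\alpha}{q_1} + \frac{1-\alpha}{q_2}$ (and the analogous identity $1 = \alpha + (1-\alpha)$ for the outer average).
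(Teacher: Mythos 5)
Your proof is correct and matches the route the paper has in mind: the paper leaves (H2) as an exercise to be "proved by directly applying Hölder's inequality," and your three-layer Hölder argument (log-convexity of the weighted $L^q_\#$ norm, then Hölder in the sum over $I$, then Hölder in the average over $Q^r$) is precisely that, with the exponents $\alpha,1-\alpha$ recombining cleanly at every layer so that no constant is lost.
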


\subsection{\texorpdfstring{$L^2$}{L2}-decoupling}\label{ch 3 sec 3.1}
\textit{$L^2$-decoupling} provides a way of shrinking the frequency scale to the inverse of the spatial scale (i.e., raising the scale parameter $\sigma$ in $M_{p,q}(r,\sigma)$), as long as we are working in $L^2$ (i.e., $q=2$). This is the best we can hope for in view of the uncertainty principle.

\begin{prop}
    For any dyadic interval $I$ with $\abs{I} \geq 2^{-m}$ and any square $Q^m\subseteq\R^2$, the following inequality holds for any function $F$ on $\R^2$:
    \begin{align*}
        \norm{\Pcal_IF}_{L^2(Q^m)}\lesssim\left(\sum_{J\in\I_m(I)}\norm{\Pcal_JF}_{L^2(w_{Q^m})}^2\right)^{1/2}.
    \end{align*}
\end{prop}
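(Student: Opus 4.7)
The plan is to exploit the uncertainty principle: at spatial scale $2^m$, the pieces $\Pcal_J F$ for $J \in \I_m(I)$ with $|J|=2^{-m}$ are essentially Fourier-orthogonal after localization to $Q^m$, because the length of each $J$ matches the frequency uncertainty $\sim 2^{-m}$ forced by a spatial restriction to $Q^m$. So Plancherel and almost-orthogonality should do the work once we smoothly cut off in space.

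First I would pass from the local $L^2$ norm on $Q^m$ to a global $L^2$ norm of a smooth truncation. Pick once and for all a Schwartz function $\eta$ with $\eta \geq \1_{Q(0,1)}$ and $\on{supp}(\widehat{\eta})\subseteq B(0,C)$, and set $\eta_{Q^m}(x) = \eta\!\left(\frac{x-c_{Q^m}}{2^m}\right)$, whose Fourier transform is supported in $B(0, C\,2^{-m})$. Then
\[
\|\Pcal_I F\|_{L^2(Q^m)} \leq \|\eta_{Q^m}\Pcal_I F\|_{L^2(\R^2)}.
\]
Next, decompose $\Pcal_I F = \sum_{J\in\I_m(I)}\Pcal_J F$ and observe that the Fourier support of each $\eta_{Q^m}\Pcal_J F$ lies in $(J + [-C2^{-m},C2^{-m}])\times\R$. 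Since the dyadic intervals $J$ have length $2^{-m}$, these fattened strips are $O(1)$-overlapping in $\xi$.

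I would then apply Plancherel together with the finite-overlap property (pointwise $|\sum_J \widehat{\eta_{Q^m}\Pcal_J F}(\xi)|^2 \lesssim \sum_J |\widehat{\eta_{Q^m}\Pcal_J F}(\xi)|^2$ by Cauchy--Schwarz over the $O(1)$ nonzero summands at each $\xi$) to obtain
\[
\|\eta_{Q^m}\Pcal_I F\|_{L^2(\R^2)}^2 \lesssim \sum_{J\in\I_m(I)}\|\eta_{Q^m}\Pcal_J F\|_{L^2(\R^2)}^2.
\]
Finally, the rapid decay of $\eta$ yields the pointwise bound $|\eta_{Q^m}|^2 \lesssim w_{Q^m}$, which converts each summand into $\|\Pcal_J F\|_{L^2(w_{Q^m})}^2$ and gives the claim.

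I do not expect a real obstacle here; the only subtlety is the matching of scales that guarantees finite overlap of the fattened Fourier supports, which follows directly from $|J|=2^{-m}$ coinciding with the Fourier width $\sim 2^{-m}$ of $\eta_{Q^m}$. Morally the proposition is just Plancherel plus the uncertainty principle, and the appearance of the weight $w_{Q^m}$ (rather than $\1_{Q^m}$) on the right-hand side is precisely the price one pays for smoothing the sharp cutoff on $Q^m$.
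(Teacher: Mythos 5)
Your proof is correct and matches the paper's argument essentially step for step: a smooth cutoff $\eta_{Q^m}$ band-limited at scale $2^{-m}$ converts the local $L^2$ norm on $Q^m$ into a global one, the Fourier supports of $\eta_{Q^m}\Pcal_J F$ become $O(1)$-overlapping fattened strips, Plancherel plus pointwise Cauchy--Schwarz gives almost-orthogonality, and $|\eta_{Q^m}|^2\lesssim w_{Q^m}$ absorbs the cutoff into the weight. This is the same approach the paper uses.
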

\begin{proof}
This follows from local $L^2$ orthogonality at scale $m$. In particular, we can write
\begin{align*}
    \Pcal_IF=\sum_{J\in\I_m(I)}\Pcal_JF
\end{align*}
so that
\begin{align*}
    \norm{\Pcal_IF}_{L^2(Q^m)}\lesssim\norm{\sum_{J\in\I_m(I)}\Pcal_JF\eta_{Q^m}}_{L^2(\R^n)}=\norm{\sum_{J\in\I_m(I)}\widehat{\Pcal_JF}\ast\widehat{\eta_{Q^m}}}_{L^2(\R^n)}
\end{align*}
where $\eta_{Q^m}$ is a Schwartz function adapted to $Q^m$ with $\widehat{\eta_{Q^m}}$ compactly supported in $B(0,2^{-m})$. We know that $\on{supp}(\widehat{\Pcal_JF}\ast\widehat{\eta_{Q^m}})\subseteq\on{supp}(\widehat{\Pcal_JF})+\on{supp}(\widehat{\eta_{Q^m}})$, which implies that the $\widehat{\Pcal_JF}\ast\widehat{\eta_{Q^m}}$ have finitely overlapping supports. So we have
\begin{align*}
    \norm{\Pcal_IF}_{L^2(Q^m)}\lesssim\left(\sum_{J\in\I_m(I)}\norm{\widehat{\Pcal_JF}\ast\widehat{\eta_{Q^m}}}_{L^2(\R^n)}^2\right)^{1/2}\lesssim\left(\sum_{J\in\I_m(I)}\norm{\Pcal_JF}_{L^2(w_{Q^m})}^2\right)^{1/2}
\end{align*}
by the pointwise Cauchy-Schwarz inequality and the Plancherel theorem.
\end{proof}

Applying $L^2$ decoupling to the innermost sum in $M_{p,2}(r,\sigma)$ and using the local-weighted decoupling equivalence, we prove the following lemma:
\begin{lem}[O]
If $\sigma\leq r$ then $M_{p,2}(r,\sigma)\lesssim M_{p,2}(r,r)$.
\end{lem}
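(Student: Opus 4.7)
The plan is to apply the preceding $L^2$-decoupling proposition to the innermost factor of $M_{p,2}(r,\sigma)$, then combine the sums using the fact that $\I_r(I_i)$ is obtained by refining $\I_\sigma(I_i)$. Since $\sigma \leq r$, each $I \in \I_\sigma(I_i)$ has length $2^{-\sigma} \geq 2^{-r}$, so $\I_r(I)$ is a well-defined partition of $I$.

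The first step is to upgrade the $L^2$-decoupling statement from its current local form $\|\Pcal_I F\|_{L^2(Q^r)} \lesssim (\sum_{J\in\I_r(I)} \|\Pcal_J F\|_{L^2(w_{Q^r})}^2)^{1/2}$ to its weighted form
\[
\norm{\Pcal_I F}_{L^2_\#(w_{Q^r})} \lesssim \Bigl(\sum_{J\in\I_r(I)}\norm{\Pcal_J F}_{L^2_\#(w_{Q^r})}^2\Bigr)^{1/2}.
\]
This is the weighted analogue of the proposition; it can be obtained either by inspecting the proof (the Schwartz function $\eta_{Q^r}$ bounds $w_{Q^r}^{1/2}$ up to harmless factors), or by the same kind of weight-decomposition/assembly trick used in Proposition \ref{equiv}, namely writing $w_{Q^r} \lesssim \sum_{Q' \in \mathcal{Q}_r} w_{Q^r}(c_{Q'}) \1_{Q'}$, applying the local version on each $Q'$, and then reassembling via $\sum_{Q'} w_{Q^r}(c_{Q'}) w_{Q'} \lesssim w_{Q^r}$.

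Next I would substitute this inequality into the definition of $M_{p,2}(r,\sigma)$. Squaring and summing over $I\in\I_\sigma(I_i)$ yields
\[
\sum_{I\in\I_\sigma(I_i)} \norm{\Pcal_I F}_{L^2_\#(w_{Q^r})}^2 \lesssim \sum_{I\in\I_\sigma(I_i)} \sum_{J\in\I_r(I)} \norm{\Pcal_J F}_{L^2_\#(w_{Q^r})}^2 = \sum_{J\in\I_r(I_i)} \norm{\Pcal_J F}_{L^2_\#(w_{Q^r})}^2,
\]
because $\{\I_r(I)\}_{I\in\I_\sigma(I_i)}$ partitions $\I_r(I_i)$. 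Raising to the power $p/4$, multiplying over $i=1,2$, averaging over $Q^r\in\mathcal{Q}_r(Q^R)$, and taking the $1/p$-th power delivers $M_{p,2}(r,\sigma) \lesssim M_{p,2}(r,r)$.

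No step is especially delicate. The only point that requires a little care is the passage from the local to the weighted $L^2$-decoupling inequality; but this is a routine adaptation and the implied constants do not depend on the scales, so the conclusion of the lemma holds uniformly in $\sigma$ and $r$.
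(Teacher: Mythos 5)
Your proof is correct and follows the same approach the paper takes: apply the $L^2$-decoupling proposition termwise, upgrade from the local to the weighted form via the same weight-decomposition/assembly trick used in Proposition \ref{equiv}, and then combine the resulting sums by noting that $\{\I_r(I)\}_{I\in\I_\sigma(I_i)}$ partitions $\I_r(I_i)$. The paper merely states this in one sentence (``Applying $L^2$ decoupling to the innermost sum in $M_{p,2}(r,\sigma)$ and using the local-weighted decoupling equivalence''), so your write-up supplies the intended details.
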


\subsection{Ball inflation}\label{ch 3 sec 3.2}
In contrast to $L^2$-decoupling, the so-called \textit{ball inflation} provides a way of enlarging the spatial scale to the inverse \textit{square} of the frequency scale (i.e., raising the scale parameter $r$ in $M_{p,q}(r,\sigma)$), as long as we are in the \textit{multilinear Kakeya} regime (i.e., $p=2q$).

Roughly speaking, ball inflation is not a decoupling itself as it doesn't change the frequency scale, but serve as the vital springboard for $L^2$-decoupling to be applied iteratively. The proof of it is based on the following bilinear Kakeya inequality:
\begin{thm}[Bilinear Kakeya inequality]
    Let $\T_1$ and $\T_2$ be two families of rectangles in $\R^2$ with the following properties:
    \begin{enumerate}[label=(\roman*)]
        \item Each rectangle $T$ has a short side of length $R^{1/2}$ and a long side of length $R$ pointing in the direction of a unit vector $\nu_T$.
        \item $\abs{\nu_{T_1}\wedge\nu_{T_2}}\gtrsim 1$ for each $T_1\in\T_1$ and $T_2\in\T_2$.
    \end{enumerate}
    Then we have
    \begin{align}\label{bilinear Kakeya}
        \int_{\R^2}\prod_{i=1}^2 h_i\lesssim\frac{1}{R^2}\prod_{i=1}^2\int_{\R^2}h_i
    \end{align}
    where the function $h_i$($i=1,2$) has the form
    \begin{align*}
    h_i=\sum_{T\in\T_i}c_T\1_T\qquad,c_T\geq 0.
    \end{align*}
\end{thm}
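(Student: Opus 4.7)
The plan is to exploit that we are in $\R^2$, where the bilinear Kakeya inequality reduces to an elementary geometric computation: each pair of transverse rectangles intersects in a set whose area is controlled directly by the transversality, and this single estimate yields the inequality after trivial manipulations. Since $h_i = \sum_{T \in \T_i} c_T \1_T$ with $c_T \geq 0$, Fubini-Tonelli gives
\begin{align*}
    \int_{\R^2} h_1 h_2 = \sum_{T_1 \in \T_1} \sum_{T_2 \in \T_2} c_{T_1} c_{T_2} \abs{T_1 \cap T_2},
\end{align*}
so the content of the proof reduces to establishing the uniform bound
\begin{align*}
    \abs{T_1 \cap T_2} \lesssim R \qquad \text{for all } T_1 \in \T_1,\ T_2 \in \T_2.
\end{align*}

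To prove this bound, I would observe that $T_i$ is contained in the ``narrow'' slab $S_i \defeq \{x \in \R^2 : \abs{(x - c_{T_i}) \cdot \nu_{T_i}^\perp} \leq R^{1/2}/2\}$ of thickness $R^{1/2}$ orthogonal to $\nu_{T_i}$. Changing variables via $(u_1, u_2) \defeq ((x - c_{T_1}) \cdot \nu_{T_1}^\perp,\ (x - c_{T_2}) \cdot \nu_{T_2}^\perp)$, whose Jacobian has absolute value $\abs{\nu_{T_1}^\perp \wedge \nu_{T_2}^\perp} = \abs{\nu_{T_1} \wedge \nu_{T_2}}$, one computes
\begin{align*}
    \abs{T_1 \cap T_2} \leq \abs{S_1 \cap S_2} = \frac{R^{1/2} \cdot R^{1/2}}{\abs{\nu_{T_1} \wedge \nu_{T_2}}} = \frac{R}{\abs{\nu_{T_1} \wedge \nu_{T_2}}} \lesssim R
\end{align*}
by the transversality hypothesis (ii).

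Plugging this back into the identity above and using $\abs{T} = R \cdot R^{1/2} = R^{3/2}$ for every $T \in \T_1 \cup \T_2$, we conclude
\begin{align*}
    \int_{\R^2} h_1 h_2 \lesssim R \sum_{T_1, T_2} c_{T_1} c_{T_2} = \frac{1}{R^2} \sum_{T_1, T_2} c_{T_1} c_{T_2} \abs{T_1} \abs{T_2} = \frac{1}{R^2} \prod_{i=1}^2 \int_{\R^2} h_i,
\end{align*}
as desired. I do not expect any serious obstacle in the planar setting: the whole proof is really the parallelogram-area computation above, which rests only on the transversality condition and on the common aspect ratio of the tubes. In higher dimensions the analogous multilinear Kakeya inequality of Bennett-Carbery-Tao is substantially deeper, but no such machinery is needed here in $\R^2$.
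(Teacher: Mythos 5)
Your proof is correct and complete. Note that the paper states the bilinear Kakeya inequality as a known tool and does not prove it, so there is no proof in the source to compare against; you have supplied the standard elementary argument that fills this gap. The reduction to estimating $\abs{T_1 \cap T_2}$ via Fubini--Tonelli is exactly right, and the slab-intersection computation is the heart of the matter: bounding $\abs{T_1 \cap T_2}$ by the area of the parallelogram $S_1 \cap S_2$, which by the linear change of variables $(u_1,u_2) = ((x - c_{T_1})\cdot\nu_{T_1}^\perp,\ (x-c_{T_2})\cdot\nu_{T_2}^\perp)$ is exactly $R/\abs{\nu_{T_1}\wedge\nu_{T_2}} \lesssim R$. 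The final bookkeeping with $\abs{T} = R^{3/2}$ produces precisely the stated $R^{-2}$ constant. Your closing remark is also apt: what makes this estimate elementary in the plane is that the incidence geometry of two transverse slabs is fully captured by a single Jacobian, whereas the $n$-linear Kakeya inequality for $n \geq 3$ (which the higher-dimensional decoupling proof needs, as in Lemma \ref{multi Kakeya}) requires the genuinely deeper Bennett--Carbery--Tao/Guth machinery.
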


Intuitively, the bilinear Kakeya inequality tells us that when considering incidences between transverse $R^{1/2}\times R$ tubes, their overlaps should occur at scale $R^{1/2}$. Back to the setting of decoupling, this suggests that we are able to efficiently jump from spatial scale $2^r$ to $2^{2r}$, which is exactly the content of ball inflation:

\begin{thm}[Ball inflation]\label{ball inflation}
    Let $p = 2q$, $q\geq2$, $0<\delta<1$ dyadic. Let $Q$ be a square in $\R^2$ with side length $\delta^{-2}$, and $\mathcal{Q}$ be the partition of $Q$ into squares $\Delta$ with side length $\delta^{-1}$. Suppose $\on{dist}(I_1,I_2)\gtrsim 1$. Then for all $\e>0$ and $F:\R^2\rightarrow\C$ with $\on{supp}(\widehat{F})\subseteq\mathcal{N}(\delta^2)$, we have
    \begin{align}\label{ball inflation estimate}
        \frac{1}{\abs{\mathcal{Q}}}\sum_{\Delta\in\mathcal{Q}}\left[\prod_{i=1}^2\left(\sum_{J_i\in\on{Part}_\delta(I_i)}\norm{\Pcal_{J_i}F}_{L^q_\#(w_\Delta)}^2\right)^{1/4}\right]^p\lesssim_\e\delta^{-\e}\left[\prod_{i=1}^2\left(\sum_{J_i\in\on{Part}_\delta(I_i)}\norm{\Pcal_{J_i}F}_{L^q_\#(w_Q)}^2\right)^{1/4}\right]^p.
    \end{align}
\end{thm}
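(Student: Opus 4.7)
The key ingredients are the bilinear Kakeya inequality at scale $R=\delta^{-2}$ (so that the relevant tubes have dimensions $\delta^{-1}\times\delta^{-2}$), and the locally-constant heuristic that trades spatial for Fourier resolution. For each cap $J_i\in\on{Part}_\delta(I_i)$, $\Pcal_{J_i}F$ has Fourier support in a slab of dimensions $\sim\delta\times\delta^2$ whose short side points in the normal direction $\nu_{J_i}$ to $\PP^1$ at $J_i$, so dually $\Pcal_{J_i}F$ is essentially constant on $\delta^{-1}\times\delta^{-2}$ tubes parallel to $\nu_{J_i}$. Quantitatively, applying Proposition \ref{reverse holder inequality} after an affine rescaling that sends this slab to a $\delta\times\delta$ cube yields
\[
\norm{\Pcal_{J_i}F}_{L^q_\#(w_\Delta)}\lesssim\norm{\Pcal_{J_i}F}_{L^q_\#(w_{T_{J_i}(\Delta)})},
\]
where $T_{J_i}(\Delta)$ is the $\delta^{-1}\times\delta^{-2}$ tube in direction $\nu_{J_i}$ containing $\Delta$.

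Next I would recast the left-hand side of (\ref{ball inflation estimate}) as an integral. Set
\[
g_i(x)\defeq\sum_{J_i\in\on{Part}_\delta(I_i)}\norm{\Pcal_{J_i}F}_{L^q_\#(w_{T_{J_i}(x)})}^2=\sum_{T\in\mathcal{T}_i}c_T\,\1_T(x),
\]
where $\mathcal{T}_i$ is, for each $J_i\subset I_i$, a tiling of $\R^2$ by $\delta^{-1}\times\delta^{-2}$ tubes in direction $\nu_{J_i}$, and $c_T\defeq\norm{\Pcal_{J_i(T)}F}_{L^q_\#(w_T)}^2$. Each $g_i$ is constant on its tubes, hence on every $\delta^{-1}$-cube $\Delta\in\mathcal{Q}$, so up to harmless constants the left-hand side equals $\frac{1}{\abs{Q}}\int_Q g_1(x)^{q/2}g_2(x)^{q/2}\,dx$. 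Because $\on{dist}(I_1,I_2)\gtrsim 1$, every pair of normals satisfies $\abs{\nu_{J_1}\wedge\nu_{J_2}}\gtrsim 1$, so $\mathcal{T}_1,\mathcal{T}_2$ form the transverse tube families that bilinear Kakeya requires at scale $R=\delta^{-2}$.

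It then remains to prove $\int_Q g_1^{q/2}g_2^{q/2}\lesssim_\e\delta^{-\e}\abs{Q}(B_1B_2)^{q/2}$ with $B_i\defeq\sum_{J_i}\norm{\Pcal_{J_i}F}_{L^q_\#(w_Q)}^2$. On the arithmetic side this is easy: for each fixed direction $\nu_{J_i}$, the $\sim\delta^{-1}$ relevant tubes tile $Q$ and Jensen's inequality (using $q\ge 2$) yields $\frac{1}{\abs{Q}}\int_Q g_i\lesssim B_i$. The main technical obstacle is the geometric side: bilinear Kakeya as stated controls $\int g_1g_2$, whereas what is needed is $\int(g_1g_2)^{q/2}$. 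I would handle this by a two-level pigeonhole—dyadically decomposing both the tube coefficients $c_T$ and the level sets of the $g_i$ into $O(\log\delta^{-1})$ classes each—so that on every surviving pair of classes the integrand reduces to a constant multiple of the product of two honest indicator-tube sums, to which bilinear Kakeya applies directly. The $O((\log\delta^{-1})^{O(1)})$ combinatorial loss is then absorbed into the admissible $\delta^{-\e}$ factor, completing the argument.
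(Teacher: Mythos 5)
Your high-level outline is right: locally-constant heuristic to pass to tube-indicator profiles, bilinear Kakeya for the transverse families, and pigeonholing to absorb the losses. You also correctly identify the central obstruction --- bilinear Kakeya controls $\int g_1 g_2$, not $\int (g_1g_2)^{q/2}$. But the mechanism you propose for bridging that gap is not the one the paper uses, and as written it does not close.

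The paper's proof never forms the quantity $\int (g_1g_2)^{q/2}$. Instead it first applies H\"older \emph{inside} the $\ell^2$ sum over caps (at each fixed $\Delta$), passing from $\bigl(\sum_{J_i}\|\Pcal_{J_i}F\|_{L^q_\#(w_\Delta)}^2\bigr)^{q/2}$ to $N_i^{q/2-1}\sum_{J_i}\|\Pcal_{J_i}F\|_{L^q_\#(w_\Delta)}^q$. With $p=2q$, the resulting product $\prod_{i}\sum_{J_i}\|\Pcal_{J_i}F\|_{L^q_\#(w_\Delta)}^q$ is already a product of tube-indicator sums to the \emph{first} power, so $L^1$ bilinear Kakeya applies directly with no level-set decomposition. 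The factor $\prod_i N_i^{q/2-1}$ introduced by H\"older is then cancelled \emph{exactly} by a reverse H\"older on the right-hand side, and that reverse H\"older is tight precisely because the initial pigeonhole has made the cap norms $\|\Pcal_{J_i}F\|_{L^q_\#(w_Q)}$ mutually comparable. The cancellation is structural and the role of the pigeonhole is to make it exact.

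In your scheme that cancellation mechanism is missing. After level-set pigeonholing you arrive at a bound of the form
\begin{align*}
\frac{1}{|Q|}\int_Q (g_1 g_2)^{q/2} \lesssim_\e \delta^{-\e}\,(\lambda_1\lambda_2)^{q/2-1}\,\delta^2\,(\mu_1\mu_2)^{q/2}\,|\TT_1^*|\,|\TT_2^*|,
\end{align*}
where $\lambda_i$ is the dominant tube-count level and $|\TT_i^*|$ is the number of surviving tubes, and you need this $\lesssim (B_1B_2)^{q/2}$, i.e.\ $\lambda_i^{q/2-1}\sum_J n_J \lesssim \bigl(\sum_J n_J^{2/q}\bigr)^{q/2}$ for each $i$, with $n_J$ the number of surviving tubes in direction $J$. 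By Jensen (concavity of $x\mapsto x^{2/q}$) one has $\bigl(\sum_J n_J^{2/q}\bigr)^{q/2}\leq M_i^{q/2-1}\sum_J n_J$ where $M_i$ is the number of active caps; so the required inequality runs \emph{opposite} to Jensen and, since $\lambda_i$ can reach $M_i$ at a bush point, it genuinely fails when the $n_J$ are non-uniform --- a situation your pigeonhole on the individual $c_T$'s does not exclude. One can show the dominant level in such configurations tends to be well below $M_i$, which saves the examples I tried, but that observation is not part of your argument, and making it precise amounts to a separate combinatorial estimate that you have not supplied. In short: the two-level pigeonhole (coefficients $c_T$ and level sets of $g_i$) is not the same as, and does not obviously imply, the comparability of $\|\Pcal_{J_i}F\|_{L^q_\#(w_Q)}$ across caps that the paper's proof crucially exploits; without something playing the role of the H\"older/reverse-H\"older pair, the factor $(\lambda_1\lambda_2)^{q/2-1}$ is left uncontrolled. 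I'd suggest either adding the H\"older step before invoking Kakeya (which is the paper's route) or, if you want to keep the $\ell^2$ coefficients, also pigeonholing the per-cap multiplicities $n_J$ and verifying that the surviving configuration makes the Jensen step tight.

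One smaller imprecision: on a joint level set the integrand $(g_1g_2)^{q/2}$ is a constant, not ``a constant multiple of a product of indicator-tube sums''; what one actually does is bound $\1_{E}\leq g_1 g_2/(2^{k_1+k_2})$ and feed that into Kakeya. This is what you mean, but it is worth stating, because it is exactly here that the factor $(2^{k_1+k_2})^{q/2-1}$ appears and has to be balanced.
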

The idea of the proof is to utilize the fact that the $\abs{\Pcal_{J_i}F}$'s are \textit{essentially constant} on the dual $\delta^{-1}\times\delta^{-2}$ rectangles. If this were the case, then the left hand side of (\ref{ball inflation estimate}) would exactly match the left hand side of (\ref{bilinear Kakeya}).

With this in mind, we will prove Theorem \ref{ball inflation} under the assumption that the \textit{locally constant} heuristic is indeed true in order to simplify the argument and present the key points more clearly. A fully rigorous version of the argument can be found in \cite[Theorem 9.2]{bourgain_study_2016}.
\begin{proof}[Proof of Theorem \ref{ball inflation}]
First, as we are allowing logarithmic losses in this estimate, we can reduce (\ref{ball inflation estimate}) to the case when all the $\norm{\Pcal_{J_i}F}_{L^q_\#(w_Q)}$'s are comparable. For $i \in \{1,2\}$, let $S_{i,small}$ be the collection of intervals $J_i'$ in $\on{Part}_\delta(I_i)$ satisfying
\begin{align*}
    \norm{\Pcal_{J_i'}F}_{L^q_\#(w_Q)} \leq \delta^C \max_{J_i\in\on{Part}_\delta(I_i)}\norm{\Pcal_{J_i}F}_{L^q_\#(w_Q)}
\end{align*}
for a sufficiently large constant $C$, and let $S_{i,big} \defeq \on{Part}_\delta(I_i)\setminus S_{i,small}$. Then for any $J_i'\in S_{i,small}$, we have
\begin{align*}
    \max_{\Delta\in\mathcal{Q}}\norm{\Pcal_{J_i'}F}_{L^q_\#(w_\Delta)}
    \lesssim
    \delta^{-2/q}\norm{\Pcal_{J_i'}F}_{L^q_\#(w_Q)}
    \leq
    \delta^C\max_{J_i\in\on{Part}_\delta(I_i)}\norm{\Pcal_{J_i}F}_{L^q_\#(w_Q)}.
\end{align*}
Therefore we can crudely bound the contribution from $S_{i,small}$ on the left hand side of (\ref{ball inflation estimate}) by the triangle inequality:
\begin{align*}
    \left(\sum_{J_i'\in S_{i,small}}\norm{\Pcal_{J_i'}F}_{L^q_\#(w_\Delta)}^2\right)^{1/2}\lesssim \delta^{C}\max_{J_i\in\on{Part}_\delta(I_i)}\norm{\Pcal_{J_i}F}_{L^q_\#(w_Q)}\leq\delta^{C}\left(\sum_{J_i\in\on{Part}_\delta(I_i)}\norm{\Pcal_{J_i}F}_{L^q_\#(w_Q)}^2\right)^{1/2}.
\end{align*}
Note that as $\delta<1$, the constant $\delta^{C}$ is much smaller than the desired $\delta^{-\e}$.

We can then decompose the left hand side of (\ref{ball inflation estimate}) as
\begin{align*}
\frac{1}{\abs{\mathcal{Q}}}\sum_{\Delta\in\mathcal{Q}}&\left[\prod_{i=1}^2\left(\sum_{J_i\in\on{Part}_\delta(I_i)}\norm{\Pcal_{J_i}F}_{L^q_\#(w_\Delta)}^2\right)^{1/4}\right]^p\\
&=\frac{1}{\abs{\mathcal{Q}}}\sum_{\Delta\in\mathcal{Q}}\left[\prod_{i=1}^2\left(\sum_{S_{i,small}}\norm{\Pcal_{J_i}F}_{L^q_\#(w_\Delta)}^2+\sum_{S_{i,big}}\norm{\Pcal_{J_i}F}_{L^q_\#(w_\Delta)}^2\right)^{1/4}\right]^p\\
& \lesssim \frac{1}{\abs{\mathcal{Q}}} \sum_{\Delta\in\mathcal{Q}}
\prod_{i=1}^2\left[\left(\sum_{S_{i,small}}\norm{\Pcal_{J_i}F}_{L^q_\#(w_\Delta)}^2\right)^{p/4} + \left(\sum_{S_{i,big}}\norm{\Pcal_{J_i}F}_{L^q_\#(w_\Delta)}^2\right)^{p/4}\right].
\end{align*}
When we expand the product inside, there are three cases. For the \textit{small-small} product, the crude estimate above proves (\ref{ball inflation estimate}) with an even more favorable constant $\delta^{C}$. For the \textit{small-big} and \textit{big-small} products, we can trivially control the sum over $S_{i,big}$ with some $\delta^{-M}$ loss, and then take $C$ in the bound of $S_{i,small}$ large enough to still obtain an estimate with some constant $\delta^{C'}$ with $C'$ large enough. Therefore, it suffices to focus on the \textit{big-big} product. 

Note that $\norm{\Pcal_{J_i}F}_{L_\#^q(w_Q)}\geq\delta^C\max_{J_i\in\on{Part}_\delta(I_i)}\norm{\Pcal_{J_i}F}_{L^q_\#(w_Q)}$ for any $J_i\in S_{i,big}$, so by pigeonholing, we can further subdivide the sum over $S_{i,big}$ into $\abs{\log(\delta)}$ many groups, and the terms within each group are comparable up to a factor of $2$. The product of these sums can then be analyzed by observing the following two facts. First, as we split the sum inside the product, we may lose a factor of $\abs{\log(\delta)}^{p/4-1}$ when $p>4$. Second, when we expand the product, there will be $\abs{\log(\delta)}^2$ many terms in the end. However, as these are logarithmic losses, they do not affect our final estimate which allows a sub-polynomial loss $\delta^{-\e}$. Hence, without loss of generality,  we can always assume that for each $i\in\{1,2\}$, all the $\norm{\Pcal_{J_i}F}_{L^q_\#(w_Q)}$'s are comparable.

Now we start proving the estimate with the additional assumption of comparability. Suppose we now sum over $N_i$ many subintervals $J_i\in \on{Part}_\delta(I_i)$. By Hölder's inequality, we have
\begin{align*}
    \frac{1}{\abs{\mathcal{Q}}} \sum_{\Delta\in\mathcal{Q}} \left[\prod_{i=1}^2\left(\sum_{J_i}\norm{\Pcal_{J_i}F}_{L^q_\#(w_\Delta)}^2\right)^{1/4}\right]^p &\leq
    \left(\prod_{i=1}^2N_i^{1/2-1/q}\right)^{p/2} \frac{1}{\abs{\mathcal{Q}}} \sum_{\Delta\in\mathcal{Q}} \prod_{i=1}^2 \left(\sum_{J_i}\norm{\Pcal_{J_i}F}_{L^q_\#(w_\Delta)}^q\right)^{p/2q}\\
    (\text{recall that } p = 2q) & = \left(\prod_{i=1}^2N_i^{1/2-1/q}\right)^{p/2} \frac{1}{\abs{\mathcal{Q}}} \sum_{\Delta\in\mathcal{Q}} \prod_{i=1}^2 \sum_{J_i}\norm{\Pcal_{J_i}F}_{L^q_\#(w_\Delta)}^q.
\end{align*}

For each $J_i$, let $T_{J_i}$ be a $\delta^{-1}\times\delta^{-2}$ rectangle which is dual to $\mathcal{N}_{J_i}(\delta^2)$, and let $\mathcal{F}_{J_i}$ denote a tiling of $Q$ by these rectangles. By the \textit{locally constant} heuristic, we will assume that $\abs{\Pcal_{J_i}F}$ is constant on each $T\in\mathcal{F}_{J_i}$. If we define $g_{J_i}\defeq \sum_{\Delta\in\mathcal{Q}}\norm{\Pcal_{J_i}F}_{L_\#^q(w_\Delta)}^q\1_{\Delta}$, then $g_{J_i}$ will also be constant on $T_{J_i}$. Let $g_i\defeq \sum_{J_i}g_{J_i}$, which is of the form $\sum_{T\in\T_i}c_T\1_T$. Since $I_1$ and $I_2$ are separated, we can apply the bilinear Kakeya inequality (\ref{bilinear Kakeya}) with $R=\delta^{-2}$ and $h_i = g_i$:
\begin{align*}
    \frac{1}{\abs{\mathcal{Q}}}\sum_{\Delta\in\mathcal{Q}}\prod_{i=1}^2 \sum_{J_i}\norm{\Pcal_{J_i}F}_{L^p_\#(w_\Delta)}^p & \approx \frac{1}{\abs{\mathcal{Q}}}\sum_{\Delta\in\mathcal{Q}} \prod_{i=1}^2 g_i(c_\Delta)\\
    &\approx \frac{1}{\abs{Q}}\int_Q \prod_{i=1}^2 g_i\\
    (\text{bilinear Kakeya})&\lesssim \prod_{i=1}^2\frac{1}{\abs{Q}}\int_Q g_i\\
    & = \prod_{i=1}^2 \frac{1}{\abs{Q}}\int_Q \sum_{J_i}\sum_{\Delta\in\mathcal{Q}}\norm{\Pcal_{J_i}F}_{L_\#^q(w_\Delta)}^q\1_{\Delta}\\
    & = \prod_{i=1}^2  \sum_{J_i}\sum_{\Delta\in\mathcal{Q}} \frac{1}{\abs{Q}} \int_\Delta \norm{\Pcal_{J_i}F}_{L_\#^q(w_\Delta)}^q\\
    & = \prod_{i=1}^2  \sum_{J_i} \frac{\sum_{\Delta\in\mathcal{Q}}\abs{\Delta}}{\abs{Q}} \norm{\Pcal_{J_i}F}_{L_\#^q(w_\Delta)}^q\\
    &=\prod_{i=1}^2\sum_{J_i}\norm{\Pcal_{J_i}F}_{L^q_\#(w_Q)}^q
\end{align*}

Hence, we can put this altogether to get:
\begin{align*}m
    \frac{1}{\abs{\mathcal{Q}}}\sum_{\Delta\in\mathcal{Q}}\left[\prod_{i=1}^2\left(\sum_{J_i}\norm{\Pcal_{J_i}F}_{L^q_\#(w_\Delta)}^2\right)^{1/4}\right]^p
    &\leq
    \left(\prod_{i=1}^2N_i^{1/2-1/q}\right)^{p/2}\frac{1}{\abs{\mathcal{Q}}}\sum_{\Delta\in\mathcal{Q}}\prod_{i=1}^2\sum_{J_i}\norm{\Pcal_{J_i}F}_{L^q_\#(w_\Delta)}^q\\
    &\lesssim\left(\prod_{i=1}^2N_i^{1/2-1/q}\right)^{p/2}\prod_{i=1}^2\sum_{J_i}\norm{\Pcal_{J_i}F}_{L^q_\#(w_Q)}^q\\
    (\text{recall that } p = 2q)&=\prod_{i=1}^2 \left[N_i^{1/2-1/q} \left(\sum_{J_i}\norm{\Pcal_{J_i}F}_{L^q_\#(w_Q)}^q\right)^{1/q}\right]^{p/2}\\
    &\lesssim \prod_{i=1}^2\left(\sum_{J_i}\norm{\Pcal_{J_i}F}_{L^q_\#(w_Q)}^2\right)^{p/2}
\end{align*}
where in the last step we used the fact that for 
$i\in\{1,2\}$, $\norm{\Pcal_{J_i}F}_{L^q_\#(w_Q)}$ are all comparable to each other, which justifies the reverse Hölder's inequality.

Recall that we repeat this argument for all of the $\sim\log(\delta^{-C})$ possible ranges of comparable norms. Hence, we get the desired estimate with a logarithmic loss.
\end{proof}

We can rephrase Theorem \ref{ball inflation} by language of $M_{p,q}(r,\sigma)$ as follows:
\begin{lem}[BK]
If $p\geq 4$ and $R=2m$ then for all $\e>0$ $M_{p,\frac{p}{2}}(m,m)\lesssim_\e 2^{m\e}M_{p,\frac{p}{2}}(2m,m)$.
\end{lem}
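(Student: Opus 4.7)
The plan is to recognize that this lemma is essentially a direct repackaging of Theorem \ref{ball inflation} in the language of $M_{p,q}(r,\sigma)$, so the proof reduces to matching parameters and unwinding definitions.

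First, I would take $\delta = 2^{-m}$ in Theorem \ref{ball inflation}, so that the outer square in the ball inflation statement has side length $\delta^{-2} = 2^{2m}$ and each cell of the partition has side length $\delta^{-1} = 2^m$. This matches exactly the partition $\mathcal{Q}_m(Q^{2m})$ appearing in $M_{p,p/2}(m,m)$, and the partition $\on{Part}_\delta(I_i)$ into subintervals of length $2^{-m}$ coincides with $\I_m(I_i)$. I would then set $q = p/2$; the hypothesis $q \geq 2$ from ball inflation is satisfied precisely because $p \geq 4$, and the relation $p = 2q$ holds by construction. The Fourier support assumption $\on{supp}(\widehat{F}) \subseteq \NN(\delta^2) = \NN(4^{-m})$ is exactly what is already assumed in the definition of $M_{p,q}(r,\sigma)$ at frequency scale $m$.

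Next, I would directly apply the conclusion of Theorem \ref{ball inflation} with these parameters. The left-hand side of the ball inflation inequality becomes
$$\frac{1}{\abs{\mathcal{Q}_m(Q^{2m})}} \sum_{Q^m \in \mathcal{Q}_m(Q^{2m})} \prod_{i=1}^2 \left(\sum_{I \in \I_m(I_i)} \norm{\Pcal_I F}_{L^{p/2}_\#(w_{Q^m})}^2\right)^{p/4},$$
which is exactly $M_{p,p/2}(m,m)^p$, and the right-hand side becomes $2^{m\e}\, M_{p,p/2}(2m,m)^p$. Taking $p$-th roots yields $M_{p,p/2}(m,m) \lesssim_\e 2^{m\e/p}\, M_{p,p/2}(2m,m)$. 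Since $\e > 0$ is arbitrary and $p$ is a fixed exponent, we may relabel $\e/p \mapsto \e$ to absorb the factor $1/p$ into the exponent and obtain the stated bound.

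There is essentially no obstacle in this argument: all of the analytic content — the bilinear Kakeya inequality, the locally constant heuristic on the dual $\delta^{-1} \times \delta^{-2}$ rectangles, and the pigeonholing reduction to comparable wave packet norms — is already packaged in the proof of Theorem \ref{ball inflation}. The only work remaining is notational bookkeeping: confirming that the ambient squares, subcubes, frequency partitions, weight functions, and exponents line up correctly between the two formulations.
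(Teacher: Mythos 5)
Your proposal is correct and matches the paper's intended argument: the paper does not give a separate proof of (BK) but simply asserts it as a rephrasing of Theorem \ref{ball inflation}, and your unwinding of the parameters ($\delta = 2^{-m}$, $q = p/2$, the identification of the partitions $\mathcal{Q}_m(Q^{2m})$ and $\I_m(I_i)$) is exactly the check that justifies this. The only minor imprecision is your claim that the Fourier support hypothesis is ``already assumed in the definition of $M_{p,q}$''; strictly, the standing assumption in Chapter 3 is $\on{supp}(\widehat F)\subseteq\NN(4^{-n})$ for the ambient scale $n\ge m$, which is \emph{stronger} than the $\NN(4^{-m})$ that ball inflation requires, so the application is valid, but the containment runs the way that matters rather than being an identity.
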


\section{Iteration scheme}\label{ch 3 sec 4}
In this section, we will show how to go from scale $2^{n/2^s}$ to $2^n$ via $s$ jumps, by iteratively using (H1), (H2), (O), and (BK).

\begin{thm}[Two-scale inequality]\label{two-scale}
    Suppose $\on{dist}(I_1,I_2)\gtrsim 1$. Then for $p\geq 4$, $R=2r$, $m\leq r$, we have
\begin{align*}
    M_{p,2}(m,m)\lesssim_\e 2^{m\e}M_{p,2}(2m,2m)^{1-\kappa_p}M_{p,p}(2r,m)^{\kappa_p}
\end{align*}
where $\kappa_p=\frac{p-4}{p-2}$.
\end{thm}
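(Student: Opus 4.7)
The plan is to take $M_{p,2}(m,m)$ and run it through the tools (H1), (BK), (H2), (O) from Section~\ref{ch 3 sec 3} in turn, each of which modifies either the inner integrability exponent or a scale parameter, so as to land on the two factors on the right. The $(1-\kappa_p,\kappa_p)$ weighting will ultimately come from a single application of (H2) interpolating $L^{p/2}$ between $L^2$ and $L^p$; the role of (H1) and (BK) is to first move us into the $L^{p/2}$ regime at the inflated spatial scale $2m$ where that interpolation can pay off.

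Concretely, since $p/2\geq 2$, (H1) gives $M_{p,2}(m,m)\lesssim M_{p,p/2}(m,m)$. Next, although (BK) is stated with $R=2m$, we apply it inside each sub-square $Q^{2m}\subseteq Q^{2r}$ (treating that sub-square as the outer cube of (BK)) and sum up via Fubini to obtain
\[
M_{p,p/2}(m,m)\lesssim_\e 2^{m\e}\,M_{p,p/2}(2m,m).
\]
Now (H2) with $\tfrac{2}{p}=\tfrac{1-\kappa_p}{2}+\tfrac{\kappa_p}{p}$, where $1-\kappa_p=\tfrac{2}{p-2}$, yields
\[
M_{p,p/2}(2m,m)\leq M_{p,2}(2m,m)^{1-\kappa_p}\,M_{p,p}(2m,m)^{\kappa_p}.
\]
To the first factor we apply (O), valid because the frequency scale $m$ does not exceed the spatial scale $2m$, producing $M_{p,2}(2m,m)\lesssim M_{p,2}(2m,2m)$.

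For the remaining factor, the plan is the spatial-scale extension $M_{p,p}(2m,m)\lesssim M_{p,p}(2r,m)$ via a direct calculation. Starting from the definition, apply Cauchy--Schwarz to split the product over $i\in\{1,2\}$, then Minkowski's integral inequality (valid since $p\geq 2$) to swap the outer $\ell^p$-sum over $Q^{2m}$ with the inner $\ell^2$-sum over the frequency pieces $J_i$. The sum $\sum_{Q^{2m}\subseteq Q^{2r}}\|\Pcal_{J_i}F\|_{L^p_\#(w_{Q^{2m}})}^p$ then collapses, via the pointwise inequality $\sum_{Q^{2m}}w_{Q^{2m}}\lesssim w_{Q^{2r}}$ from~\eqref{ball weight property} brought inside the integral, to $|\mathcal{Q}_{2m}|\,\|\Pcal_{J_i}F\|_{L^p_\#(w_{Q^{2r}})}^p$, and the counting factor $|\mathcal{Q}_{2m}|$ cancels cleanly against the $1/|\mathcal{Q}_{2m}|$ normalisation outside. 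Chaining the five estimates produces the claimed inequality.

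I expect this last spatial-scale extension to be the main technical obstacle. The other four steps are essentially one-line invocations of the stated tools; here one must orchestrate Cauchy--Schwarz, Minkowski, and the weight bound in exactly the right order so that the counting factor $|\mathcal{Q}_{2m}|$ cancels and no lossy power of $N_i\sim 2^m$ (the number of frequency pieces per $I_i$) creeps in, which is what makes this step non-trivial rather than a tautology.
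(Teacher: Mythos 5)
Your proof is correct and follows essentially the same route as the paper's: the chain (H1) $\to$ (BK) $\to$ (H2) $\to$ (O) followed by the spatial-scale extension $M_{p,p}(2m,m)\lesssim M_{p,p}(2r,m)$ via Cauchy--Schwarz, Minkowski, and the weight bound $\sum_{Q^{2m}}w_{Q^{2m}}\lesssim w_{Q^{2r}}$. The only cosmetic difference is that the paper first establishes the $r=m$ case entirely at scale $2m$ and then lifts the whole chain to scale $2r$ via one H\"older step over the $Q^{2m}$'s, whereas you apply (BK) square-by-square and then invoke (H2), (O), and the spatial extension directly at the outer scale $2r$ --- the same computation, reordered.
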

\begin{proof}
When $r=m$, we can estimate with the lemmas from the previous sections:
\begin{align*}
    M_{p,2}(m,m)&\overset{(\text{H1})}{\lesssim}M_{p,\frac{p}{2}}(m,m)\\
    &\overset{(\text{BK})}{\lesssim} 2^{m\e}M_{p,\frac{p}{2}}(2m,m)\\
    &\overset{(\text{H2})}{\lesssim} 2^{m\e}M_{p,2}(2m,m)^{1-\kappa_p}M_{p,p}(2m,m)^{\kappa_p}\\
    &\overset{(\text{O)}}{\lesssim} 2^{m\e}M_{p,2}(2m,2m)^{1-\kappa_p}M_{p,p}(2m,m)^{\kappa_p}
\end{align*}
where $\kappa_p = \frac{p-4}{p-2}$ satisfies $\frac{1}{p/2} = \frac{1-\kappa_p}{2} + \frac{\kappa_p}{p}$.

Now suppose $r>m$. For this argument, due to the different spatial scale $2^R=2^{2r}$, we introduce the notation $M_{p,q}^{(R)}(r,\sigma)$ to emphasize the overall scale $2^R$. We expand the definition of $M_{p,2}^{(2r)}(m,m)$, plug in the estimate for $r=m$, and use Hölder's inequality to write:
\begin{align*}
    M_{p,2}^{(2r)}(m,m)^p&=\frac{\abs{\mathcal{Q}_m(Q^{2m})}}{\abs{\mathcal{Q}_m(Q^{2r})}}\sum_{Q^{2m}\in\mathcal{Q}_{2m}(Q^{2r})}M_{p,2}^{(2m)}(m,m)^p\\
    &\lesssim_\e \frac{1}{\abs{\mathcal{Q}_{2m}(Q^{2r})}}\sum_{Q^{2m}\in\mathcal{Q}_{2m}(Q^{2r})} \left(2^{m\e}M_{p,2}^{(2m)}(2m,2m)^{1-\kappa_p}M_{p,p}^{(2m)}(2m,m)^{\kappa_p}\right)^p\\
    (\text{Hölder})& \leq 
    2^{m\e p}\left( \frac{1}{\abs{\mathcal{Q}_{2m}(Q^{2r})}}\sum_{Q^{2m}\in\mathcal{Q}_{2m}(Q^{2r})} M_{p,2}^{(2m)}(2m,2m)^p \right)^{1-\kappa_p}\cdot\\
    & \hspace{8em} \left( \frac{1}{\abs{\mathcal{Q}_{2m}(Q^{2r})}}\sum_{Q^{2m}\in\mathcal{Q}_{2m}(Q^{2r})} M_{p,p}^{(2m)}(2m,m)^p \right)^{\kappa_p}\\
    & = 2^{m\e p} M_{p,2}^{(2r)}(2m,2m)^{(1-\kappa_p)p} \cdot \\
    & \hspace{3em} \left[ \frac{1}{\abs{\mathcal{Q}_{2m}(Q^{2r})}}\sum_{Q^{2m}\in\mathcal{Q}_{2m}(Q^{2r})} \prod_{i=1}^2 \left( \sum_{I\in\I_m(I_i)} \norm{\Pcal_I F}_{L_\#^p(w_{Q^{2m}})}^2 \right)^{p/4} \right]^{\kappa_p}.
\end{align*}
We now focus on the second factor, which we will denote by $A$ for notational convenience:
\begin{align*}
    A\defeq\left[ \frac{1}{\abs{\mathcal{Q}_{2m}(Q^{2r})}}\sum_{Q^{2m}\in\mathcal{Q}_{2m}(Q^{2r})} \prod_{i=1}^2 \left( \sum_{I\in\I_m(I_i)} \norm{\Pcal_I F}_{L_\#^p(w_{Q^{2m}})}^2 \right)^{p/4} \right]^{\kappa_p}.
\end{align*}.
We can use the Cauchy-Schwarz inequality to bring out the product and Minkowski's inequality to switch the order of summation:
\begin{align*}
    A&\leq\prod_{i=1}^2 \left[ \frac{1}{\abs{\mathcal{Q}_{2m}(Q^{2r})}}\sum_{Q^{2m}\in\mathcal{Q}_{2m}(Q^{2r})} \left( \sum_{I\in\I_m(I_i)} \norm{\Pcal_I F}_{L_\#^p(w_{Q^{2m}})}^2 \right)^{p/2} \right]^{\kappa_p/2}\\
    &\leq\prod_{i=1}^2 \left[ \sum_{I\in\I_m(I_i)} \left( \frac{1}{\abs{\mathcal{Q}_{2m}(Q^{2r})}}\sum_{Q^{2m}\in\mathcal{Q}_{2m}(Q^{2r})} \norm{\Pcal_I F}_{L_\#^p(w_{Q^{2m}})}^p \right)^{2/p} \right]^{p\cdot\kappa_p/4}.
\end{align*}
As we now have a $\ell^pL^p$ norm, we can use the following inequality for weights (recall (\ref{cube weight property}))
\begin{align*}
    \frac{1}{\abs{\mathcal{Q}_{2m}(Q^{2r})}}\sum_{Q^{2m}\in\mathcal{Q}_{2m}(Q^{2r})} \frac{1}{\abs{Q^{2m}}} w_{Q^{2m}}
    \lesssim
    \frac{1}{\abs{Q^{2r}}}w_{Q^{2r}}.
\end{align*}
to pass the inner sum into the integral and then simplify the resulting expression to obtain:
\begin{align*}
    A&\lesssim\prod_{i=1}^2 \left[ \sum_{I\in\I_m(I_i)} \left( \norm{\Pcal_I F}_{L_\#^p(w_{Q^{2r}})}^p \right)^{2/p} \right]^{p\cdot\kappa_p/4}\\
    &=\prod_{i=1}^2 \left( \sum_{I\in\I_m(I_i)} \norm{\Pcal_I F}_{L_\#^p(w_{Q^{2r}})}^2 \right)^{p\cdot\kappa_p/4}\\
    &=M_{p,p}^{(2r)}(2r,m)^{p\cdot\kappa_p}.
\end{align*}

Hence, putting things altogether, we conclude that
\begin{align*}
    M_{p,2}^{(2r)}(m,m)\lesssim_\e 2^{m\e}M_{p,2}^{(2r)}(2m,2m)^{1-\kappa_p}M_{p,p}^{(2r)}(2r,m)^{\kappa_p},
\end{align*}
which is what we want.
\end{proof}

Now, we iterate the two-scale inequality to obtain a multiscale inequality.
\begin{thm}[Multiscale inequality]\label{multiscale inequality}
Suppose $\on{dist}(I_1,I_2)\gtrsim 1$. Then for 
$R = 2n$, $n=2^u$, $s\leq u$, we have
\begin{align*}
    M_{p,2}\left(\frac{n}{2^s},\frac{n}{2^s}\right)\lesssim_{s,\e}2^{sn\e}M_{p,p}(2n,n)\prod_{l=1}^s\on{D}\left(n-\frac{n}{2^l},p\right)^{\kappa_p(1-\kappa_p)^{s-l}}.
\end{align*}
\end{thm}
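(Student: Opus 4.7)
The plan is to iterate the two-scale inequality (Theorem~\ref{two-scale}) exactly $s$ times, and then clean up with parabolic rescaling. Fix the global spatial scale $R = 2n$, and set $a_k \defeq M_{p,2}(n/2^k, n/2^k)$ at this scale for $0 \leq k \leq s$. For each $1 \leq k \leq s$, apply Theorem~\ref{two-scale} with $m = n/2^k$ and $r = n$ (valid since $m \leq r$ and $R = 2r$), obtaining
$$a_k \lesssim_\e 2^{(n/2^k)\e}\, a_{k-1}^{1-\kappa_p}\, M_{p,p}(2n, n/2^k)^{\kappa_p}.$$
Substituting recursively from $k = s$ down to $k = 1$: each substitution raises the exponent on $a_{k-1}$ by the factor $(1-\kappa_p)$, and the factor $M_{p,p}(2n, n/2^l)^{\kappa_p}$ introduced at step $l$ is raised to the power $(1-\kappa_p)^{s-l}$ by the $s-l$ subsequent substitutions. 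Collecting prefactors yields
$$a_s \lesssim_{s,\e} 2^{\e \sum_{l=1}^s (n/2^l)(1-\kappa_p)^{s-l}}\, a_0^{(1-\kappa_p)^s} \prod_{l=1}^s M_{p,p}(2n, n/2^l)^{\kappa_p(1-\kappa_p)^{s-l}},$$
and since $\sum_{l=1}^s (n/2^l)(1-\kappa_p)^{s-l} \leq n$, the prefactor is absorbed into $2^{sn\e}$.

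Two finishing estimates put this into the claimed form. The first converts each $M_{p,p}(2n, n/2^l)$ into $\on{D}(n - n/2^l, p) \cdot M_{p,p}(2n, n)$. For $I \in \I_{n/2^l}(I_i)$, the Fourier support of $\Pcal_I F$ lies in $\mathcal{N}_I(4^{-n})$; parabolic rescaling (Proposition~\ref{2d parabolic rescaling}) together with the local--weighted equivalence of Proposition~\ref{equiv} applied at spatial scale $2^{2n} \geq 4^n$ gives
$$\norm{\Pcal_I F}_{L^p_\#(w_{Q^{2n}})} \lesssim \on{D}(n - n/2^l, p) \left(\sum_{J \in \I_n(I)} \norm{\Pcal_J F}_{L^p_\#(w_{Q^{2n}})}^2\right)^{1/2}.$$
Squaring, summing over $I \in \I_{n/2^l}(I_i)$, and multiplying over $i = 1, 2$ yields the desired comparison $M_{p,p}(2n, n/2^l) \lesssim \on{D}(n - n/2^l, p) \cdot M_{p,p}(2n, n)$.

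The second estimate is $a_0 = M_{p,2}(n, n) \lesssim M_{p,p}(2n, n)$. Since $p \geq 2$, H\"older on each weighted normalized average gives $M_{p,2}(n, n) \lesssim M_{p,p}(n, n)$. To raise the spatial scale from $2^n$ to $2^{2n}$, mimic the final manipulations in the proof of Theorem~\ref{two-scale}: Cauchy--Schwarz pulls the product over $i = 1, 2$ outside the spatial average, Minkowski (valid because $p \geq 2$) interchanges the $\ell^2$ sum over $I \in \I_n(I_i)$ with the $L^p$ average over $\mathcal{Q}_n(Q^{2n})$, and the weight inequality $\frac{1}{|\mathcal{Q}_n(Q^{2n})|}\sum_{Q^n} \frac{1}{|Q^n|} w_{Q^n} \lesssim \frac{1}{|Q^{2n}|} w_{Q^{2n}}$ converts the $L^p_\#(w_{Q^n})$ averages into $L^p_\#(w_{Q^{2n}})$ averages; what remains is exactly $M_{p,p}(2n, n)$.

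Finally, the exponents close up: $(1-\kappa_p)^s + \sum_{l=1}^s \kappa_p(1-\kappa_p)^{s-l} = 1$ by the geometric sum formula, so the $M_{p,p}(2n, n)$ contributions from both finishing estimates consolidate into a single factor $M_{p,p}(2n, n)^1$, matching the target. The main obstacle is the bookkeeping---tracking the exponents through the iteration and verifying that the geometric sum closes to exactly $1$---rather than any individual computation being genuinely difficult; each sub-step is a direct combination of Theorem~\ref{two-scale}, parabolic rescaling, and standard weight manipulations already used repeatedly in the chapter.
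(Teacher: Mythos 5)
Your proposal is correct and follows the same route as the paper's proof: iterate the two-scale inequality $s$ times with $r=n$, use parabolic rescaling to pull each $M_{p,p}(2n,n/2^l)$ back to the common scale via a factor of $\on{D}(n-n/2^l,p)$, and control $M_{p,2}(n,n)$ by $M_{p,p}(2n,n)$ using (H1), Cauchy--Schwarz, Minkowski, and the weight aggregation inequality. Your explicit verification that $(1-\kappa_p)^s + \sum_{l=1}^s \kappa_p(1-\kappa_p)^{s-l} = 1$ closes the exponents on $M_{p,p}(2n,n)$ is a nice bookkeeping detail that the paper leaves implicit, but otherwise the arguments are the same.
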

\begin{proof}
Iterating the two-scale inequality (Theorem \ref{two-scale}) $s$ times with $r=n$ yields
\begin{align*}
    M_{p,2}\left(\frac{n}{2^s},\frac{n}{2^s}\right)\lesssim_{s,\e}2^{sn\e}M_{p,2}(n,n)^{(1-\kappa_p)^s}\prod_{l=1}^s M_{p,p}\left(2n,\frac{n}{2^l}\right)^{\kappa_p(1-\kappa_p)^{s-l}}.
\end{align*}
By parabolic rescaling (Lemma \ref{2d parabolic rescaling}), for all $1\leq l\leq s$, we have
\begin{align*}
    \sum_{I\in\I_{\frac{n}{2^l}}(I_i)}\norm{\Pcal_I F}_{L^p_\#(w_{Q^{2n}})}^2&\lesssim\on{D}\left(n-\frac{n}{2^l},p\right)^2\sum_{I\in\I_{\frac{n}{2^l}}(I_i)}\sum_{J\in\I_n(I)}\norm{\Pcal_J F}_{L^p_\#(w_{Q^{2n}})}^2\\
    &=\on{D}\left(n-\frac{n}{2^l},p\right)^2\sum_{I\in\I_n(I_i)}\norm{\Pcal_I F}_{L^p_\#(w_{Q^{2n}})}^2
\end{align*}
which then implies that
\begin{align*}
    M_{p,p}(2n,\frac{n}{2^l})\lesssim \on{D}\left(n-\frac{n}{2^l},p\right)M_{p,p}(2n,n).
\end{align*}
Also, as in the $r>m$ case of the proof of Theorem \ref{two-scale},, by applying (H1), and then using the Cauchy-Schwarz inequality and Minkowski's inequality, we have
\begin{align*}
    M_{p,2}(n,n) \overset{(\text{H1})}{\lesssim} M_{p,p}(n,n) & = \left[ \frac{1}{\abs{\mathcal{Q}_{n}(Q^{2n})}}\sum_{Q^{n}\in\mathcal{Q}_{n}(Q^{2n})} \prod_{i=1}^2 \left( \sum_{I\in\I_n(I_i)} \norm{\Pcal_I F}_{L_\#^p(w_{Q^{n}})}^2 \right)^{p/4} \right]^{1/p}\\
    & \leq \prod_{i=1}^2 \left[ \frac{1}{\abs{\mathcal{Q}_{n}(Q^{2n})}}\sum_{Q^{n}\in\mathcal{Q}_{n}(Q^{2n})}  \left( \sum_{I\in\I_n(I_i)} \norm{\Pcal_I F}_{L_\#^p(w_{Q^{n}})}^2 \right)^{p/2} \right]^{1/2\cdot1/p}\\
    & \leq \prod_{i=1}^2 \left[ \sum_{I\in\I_n(I_i)} \left( \frac{1}{\abs{\mathcal{Q}_{n}(Q^{2n})}}  \sum_{Q^{n}\in\mathcal{Q}_{n}(Q^{2n})} \norm{\Pcal_I F}_{L_\#^p(w_{Q^{n}})}^p \right)^{2/p} \right]^{1/4}\\
    & \lesssim \prod_{i=1}^2 \left[ \sum_{I\in\I_n(I_i)} \left( \norm{\Pcal_I F}_{L_\#^p(w_{Q^{2n}})}^p \right)^{2/p} \right]^{1/4}\\
    & = M_{p,p}(2n,n).
\end{align*}
Putting things together, we prove the desired multiscale inequality.
\end{proof}

\section{Conclusion}\label{ch 3 sec 5}
Recall that $R=2n$ and $\on{dist}(I_1,I_2)\gtrsim 1$. Let $F_i\defeq \Pcal_{I_i}F$. To apply the multiscale inequality (Theorem \ref{multiscale inequality}), we need to bound $\norm{\abs{F_1F_2}^{1/2}}_{L^p_\#(Q^{2n})}$ by $M_{p,2}\left(\frac{n}{2^s},\frac{n}{2^s}\right)$ first. We first write:
\begin{align*}
    \norm{\abs{F_1F_2}^{1/2}}_{L^p_\#(Q^{2n})}=\left(\frac{1}{\abs{\mathcal{Q}_{\frac{n}{2^s}}(Q^{2n})}}\sum_{Q\in\mathcal{Q}_{\frac{n}{2^s}}(Q^{2n})}\norm{\abs{F_1F_2}^{1/2}}_{L^p_\#(Q)}^p\right)^{1/p}.
\end{align*}
By the Cauchy-Schwarz inequality, for each $Q\in\mathcal{Q}_{\frac{n}{2^s}}(Q^{2n})$, we have
\begin{align*}
    \norm{\abs{F_1F_2}^{1/2}}_{L^p_\#(Q)}\leq\norm{F_1}_{L^p_\#(Q)}^{1/2}\norm{F_2}_{L^p_\#(Q)}^{1/2}
\end{align*}
and
\begin{align*}
    \norm{F_i}_{L^p_\#(Q)}\leq\sum_{I\in\I_{\frac{n}{2^s}}(I_i)}\norm{\Pcal_IF}_{L^p_\#(Q)}\lesssim 2^{\frac{n}{2^{s+1}}}\left(\sum_{I\in\I_{\frac{n}{2^s}}(I_i)}\norm{\Pcal_IF}_{L^p_\#(Q)}^2\right)^{1/2}.
\end{align*}
Thus, putting these together yields:
\begin{align*}
    \norm{\abs{F_1F_2}^{1/2}}_{L^p_\#(Q^{2n})}\lesssim 2^{\frac{n}{2^{s+1}}}\left(\frac{1}{\abs{\mathcal{Q}_{\frac{n}{2^s}}(Q^{2n})}}\sum_{Q\in\mathcal{Q}_{\frac{n}{2^s}}(Q^{2n})}\prod_{i=1}^2\left(\sum_{I\in\I_{\frac{n}{2^s}}(I_i)}\norm{\Pcal_IF}_{L^p_\#(Q)}^2\right)^{p/4}\right)^{1/p}.
\end{align*}
Using the reverse Hölder's inequality (Proposition \ref{reverse holder inequality}) for $\norm{P_IF}_{L^p_\#(Q)}$, we have 
\begin{align}\label{multilinear language eq}
    \norm{\abs{F_1F_2}^{1/2}}_{L^p_\#(Q^{2n})}\lesssim 2^{\frac{n}{2^{s+1}}}M_{p,2}\left(\frac{n}{2^s},\frac{n}{2^s}\right).
\end{align}

Now we are able to apply the multiscale inequality (Theorem \ref{multiscale inequality}) to get
\begin{align*}
    \norm{\abs{F_1F_2}^{1/2}}_{L^p_\#(Q^{2n})}\lesssim_{s,\e}2^{\frac{n}{2^{s+1}}}2^{\e sn}M_{p,p}(2n,n)\prod_{l=1}^s\on{D}\left(n-\frac{n}{2^l},p\right)^{\kappa_p(1-\kappa_p)^{s-l}}.
\end{align*}
By definition of the bilinear decoupling constant, this exactly means that
\begin{align*}
    \on{BD}(n,p)\lesssim_{s,\e}2^{\frac{n}{2^{s+1}}}2^{\e sn}\prod_{l=1}^s\on{D}\left(n-\frac{n}{2^l},p\right)^{\kappa_p(1-\kappa_p)^{s-l}}.
\end{align*}

We now specialize to the critical case $p=6$ where $\kappa_6=\frac{1}{2}$.

We will prove Theorem \ref{main thm} via a bootstrapping argument. Let $\mathcal{A} \defeq \{ A>0\mid \on{D}(n,6)\lesssim 2^{nA}, \forall\, n \}$ and let $A_0\defeq\inf\mathcal{A}$. Note that $\mathcal{A}\neq\emptyset$ as, for example, $\frac{1}{2}\in\mathcal{A}$ by the Cauchy-Schwarz inequality. We will prove $A_0=0$.

Let $A\in\mathcal{A}$ and take $\e = \frac{1}{s2^{s+1}}$ to simplify the expression. We then have for all $n$:
\begin{align*}
    \on{BD}(n,6)\lesssim_s 2^{\frac{n}{2^s}}\prod_{l=1}^s 2^{nA \left(1-\frac{1}{2^l}\right) \frac{1}{2^{s-l+1}}}=2^{\frac{n}{2^s}}2^{nA(1-\frac{s+2}{2^{s+1}})}.
\end{align*}

On the other hand, application of the bilinear-to-linear reduction (Theorem \ref{bilinear linear thm} with $\e=\frac{1}{2^s}$) yields
\begin{align*}
    \on{D(n,6)}\lesssim_s 2^{\frac{n}{2^s}}
    (1+\max_{m\leq n} \on{BD}(m,p))
\end{align*}

Combining the two estimates above, we get
\begin{align}\label{bootstrapping eq}
    \on{D(n,6)}\lesssim_s 2^{\frac{n}{2^{s-1}}}2^{nA(1-\frac{s+2}{2^{s+1}})}.
\end{align}
We now prove $A_0=0$ by contradiction. Suppose $A_0>0$, then things split into two cases.

If $A_0\in\mathcal{A}$, then for $s$ sufficiently large, we have
\begin{align*}
    A_0\left(1-\frac{s+2}{2^{s+1}}\right)+\frac{1}{2^{s-1}}<A_0,
\end{align*}
which contradicts the minimality of $A_0$.

If $A_0\not\in\mathcal{A}$, then consider $A_0+\mu$ for some small $\mu>0$ instead. We want to show
\begin{align*}
    (A_0+\mu)\left(1-\frac{s+2}m{2^{s+1}}\right)+\frac{1}{2^{s-1}}<A_0\quad\Longleftrightarrow\quad \mu<\frac{A_0\frac{s+2}{2^{s+1}}-\frac{1}{2^{s-1}}}{1-\frac{s+2}{2^{s+1}}}\eqdef f(s).
\end{align*}
Note that $f(s)$ is positive for sufficiently large $s$ and tends to $0$ as $s\rightarrow\infty$, so there must be some $s_0$ that maximize $f(s)$. So by taking $\mu=\frac{1}{2}f(s_0)$ and $s=s_0$, we again arrive at a contradiction.

Hence, $A_0=0$ and so Theorem \ref{main thm} holds at $p=6$, and so also holds for all the other $p$'s by interpolation.

\chapter{The Higher-dimensional Proof}\label{ch 4}
Most of the proof of the higher-dimensional case of Theorem \ref{main thm original} runs in essentially the same way as the two-dimensional case (with natural adjustments made to the Lebesgue exponents and various other parameters in the iteration scheme). We refer the interested reader to \cite{guth_notes} for the $n$-dimensional analogues of Sections \ref{ch 3 sec 3} and \ref{ch 3 sec 4}. However, there is a major difference in the multilinear-to-linear reduction, which will be our sole focus for this chapter. We will present the three-dimensional case as this suffices to show all of the additional difficulties that arise in higher dimensions.

Our presentation is a mix of those in \cite[Section 5]{bourgain_proof_2015}, \cite[Section 10.3]{demeter_fourier_2020}, and \cite{guth_notes}.

\section{Broad-narrow decomposition}\label{ch 4 sec 1}
In this subsection, we state and prove the three-dimensional \textit{broad-narrow decomposition}. We have organized the argument similarly to the bilinear-to-linear reduction (Section \ref{ch 3 sec 2}) so that they are easier to compare. In particular, just like in the bilinear-to-linear reduction, we will use a so-called \textit{broad-narrow analysis} and induction on scales to derive a trilinear-to-linear reduction. However, a new difficulty arises  as the narrow case can now contain nontrivial lower-dimensional contributions (i.e., not $0$-dimensional), which will ultimately be handled by using lower-dimensional decoupling.

Let us first introduce some notation and the general idea. Fix an arbitrary spatial ball $B_R$. Let $K$ be a large constant whose exact value will be determined at the end of the argument and cover $B_R$ with balls $B_{K^2}$. It will suffice to prove local estimates on each $B_{K^2}$ as parallel decoupling (Lemma \ref{parallel decoupling}) will allow us to add up all of these estimates and obtain a local decoupling inequality on $B_R$. Let $\mathcal{C}_K(Q)$ be the partition of $Q$ into squares of side length $K^{-1}$. When $Q=[0,1]^2$ we will just write $\mathcal{C}_K$. For $\alpha\subseteq\R^2$, we will use $\Pcal_\alpha$ to denote the Fourier projection operator in $\R^3$ onto $\alpha\times\R$. 

Let $\alpha^*$ be the cube in $\mathcal{C}_K$ such that $\norm{\Pcal_{\alpha^*} F}_{L^p(B)}=\max_{\alpha\in\mathcal{C}_K}\norm{\Pcal_\alpha F}_{L^p(B)}$. Define:
\begin{align*}
    S_{big}\defeq\left\{\alpha\in\mathcal{C}_K\mid \norm{\Pcal_\alpha F}_{L^p(B)}\geq\frac{1}{100K^2}\norm{F}_{L^p(B)}\right\}.
\end{align*}

We know that $S_{big}\neq\emptyset$, because by the triangle inequality:
\begin{align*}
    \norm{\sum_{\alpha\not\in S_{big}}\Pcal_\alpha F}_{L^p(B)}\leq\abs{\mathcal{C}_K}\frac{1}{100K^2}\norm{F}_{L^p(B)}=\frac{1}{100}\norm{F}_{L^p(B)}.
\end{align*}
Intuitively, this means that the contributions to the $L^p$-norm from cubes outside of $S_{big}$ are negligible. In other words, we have:
\begin{align}\label{norm_equiv}
    \norm{F}_{L^p(B)}\sim\norm{\sum_{\alpha\in S_{big}}\Pcal_\alpha F}_{L^p(B)}.
\end{align}

\begin{prop}\label{trichotomy}
For each $B=B_{K^2}$, at least one of the following three cases holds:
\begin{enumerate}[label=(\roman*)]
    \item We have:
    \begin{align*}
        \norm{F}_{L^p(B)}\lesssim\max_{\alpha\in\mathcal{C}_K}\norm{\Pcal_\alpha F}_{L^p(B)}.
    \end{align*}
    \item There exists a line $L$ such that:
    \begin{align*}
        \norm{F}_{L^p(B)}\lesssim\norm{\sum_{\substack{\alpha\in S_{big}\\\alpha\subseteq S_L}}\Pcal_\alpha F}_{L^p(B)}.
    \end{align*}
    Here, $S_L\defeq\{\xi\in\R^2\mid \on{dist}(\xi,L)\leq 100K^{-1}\}$.
    \item There are three $K^{-2}$-transverse cubes $\alpha_1$, $\alpha_2$, and $\alpha_3$ such that:
    \begin{align*}
        \norm{F}_{L^p(B)}\lesssim K^2\left(\prod_{i=1}^3\norm{\Pcal_{\alpha_i}F}_{L^p(B)}\right)^{1/3}.
    \end{align*}
    \end{enumerate}
\end{prop}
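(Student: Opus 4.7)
The plan is to perform a case analysis on the geometric structure of the cubes in $S_{big}$, echoing the broad-narrow dichotomy behind Lemma \ref{bilinear linear elementary lemma} but now with an extra ``narrow'' case that reflects the genuinely new phenomenon in three dimensions: the significant frequencies may be spread along a $1$-dimensional set rather than concentrated at a point. Throughout, I work from (\ref{norm_equiv}), which reduces the problem to controlling $\norm{\sum_{\alpha \in S_{big}} \Pcal_\alpha F}_{L^p(B)}$, and I split on whether $S_{big}$ admits a $K^{-2}$-transverse triple.

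Case A (a transverse triple exists): pick any $\alpha_1,\alpha_2,\alpha_3 \in S_{big}$ that are $K^{-2}$-transverse. By the definition of $S_{big}$, $\norm{\Pcal_{\alpha_i}F}_{L^p(B)} \geq \frac{1}{100K^2}\norm{F}_{L^p(B)}$ for each $i$, so the same lower bound holds for the geometric mean of the three norms, and rearranging immediately yields case (iii).

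Case B (no transverse triple): first I translate the transversality condition, stated via unit normals to $\PP^2$, into a planar condition on the cubes themselves. Since the unit normal at $(\xi,\abs{\xi}^2) \in \PP^2$ is proportional to $(-2\xi,1)$, a one-line row reduction shows that $\abs{\nu_1 \wedge \nu_2 \wedge \nu_3}$ is comparable, uniformly for $\xi_i \in [0,1]^2$, to the area of the planar triangle with vertices $\xi_1,\xi_2,\xi_3$. Thus in Case B every three centers of cubes in $S_{big}$ span a triangle of area $\lesssim K^{-2}$. Now pick $\alpha_1, \alpha_2 \in S_{big}$ maximizing $\on{dist}(c_{\alpha_1}, c_{\alpha_2})$. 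If this distance is $\lesssim K^{-1}$, then $S_{big}$ fits inside an $O(1)$-bounded number of $K^{-1}$-cubes, so by the triangle inequality $\norm{F}_{L^p(B)} \lesssim \max_{\alpha \in \mathcal{C}_K} \norm{\Pcal_\alpha F}_{L^p(B)}$, which is case (i). Otherwise $\on{dist}(c_{\alpha_1}, c_{\alpha_2}) \gtrsim K^{-1}$; letting $L$ be the line through $c_{\alpha_1}$ and $c_{\alpha_2}$, the area-equals-base-times-height formula applied to any third center $c_{\alpha_3} \in S_{big}$ forces its distance to $L$ to be $O(K^{-1})$, which for a suitable choice of constants puts every cube of $S_{big}$ inside $S_L$. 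Combined with (\ref{norm_equiv}) this gives case (ii).

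The main obstacle I foresee is bookkeeping the numerical constants. The $100$'s in the definitions of $S_{big}$ and $S_L$ are tuned precisely so that (a) the contribution of $S_{big}^c$ in (\ref{norm_equiv}) is genuinely negligible and (b) the $O(K^{-1})$ height bound produced in Case B actually fits inside $100K^{-1}$; getting (b) cleanly may require starting the argument with a slightly sharper transversality threshold $cK^{-2}$ for a small absolute constant $c$, so that the implicit constants in the area-triangle inequality come out in the right direction. Beyond this constant-chasing, the only new geometric ingredient relative to the 2D bilinear-to-linear reduction is the determinant-area identification, which is a one-line calculation.
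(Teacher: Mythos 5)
Your argument is correct and is essentially the paper's own proof with the case analysis reorganized: you test for a $K^{-2}$-transverse triple first and then split on the diameter of $S_{big}$, whereas the paper first tests whether $S_{big}$ is concentrated near $\alpha^\ast$, then whether it lies in a strip $S_L$ (with $L$ through $c_{\alpha^\ast}$ and the farthest center $c_{\alpha^{\ast\ast}}$), and extracts the transverse triple only in the remaining case — both rest on the same identification of the transversality volume with the planar area of the triangle through the centers, which you make explicit where the paper merely asserts it. Your flagged caveat is the right one: non-transversality only gives points $\xi_i\in\alpha_i$ spanning area $\lesssim K^{-2}$, and the centers can span area up to $K^{-2}+O(K^{-1}\cdot\on{diam})$ once the $\xi_i$ are perturbed to $c_{\alpha_i}$, but since the maximal-distance choice of $\alpha_1,\alpha_2$ bounds the diameter by the base length, the base-times-height formula still yields the $O(K^{-1})$ height bound needed for the narrow case.
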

\begin{proof}
We analyze the distribution of cubes in $S_{big}$.

First, if every $\alpha\in S_{big}$ satisfies $\on{dist}(\alpha,\alpha^\ast)\leq 100K^{-1}$ so that $\abs{S_{big}}\sim 1$, then we are in case (i) in view of (\ref{norm_equiv}) and the triangle inequality.

Conversely, if there exists $\alpha\in S_{big}$ such that $\on{dist}(\alpha,\alpha^{\ast})>100K^{-1}$, then let $\alpha^{\ast\ast}$ be the cube in $S_{big}$ such that $\on{dist}(\alpha^{**},\alpha^*) = \max_{\alpha\in\mathcal{C}_K}\on{dist}(\alpha,\alpha^*)$, and let $L$ be the line connecting the centers of $\alpha^{\ast}$ and $\alpha^{\ast\ast}$. And there will be two subcases:

If every $\alpha\in S_{big}$ satisfies $\alpha\subseteq S_L$, then we are in case (ii) in view of (\ref{norm_equiv}).

Otherwise, if there exists $\alpha\in S_{big}$ such that $\alpha\not\subseteq S_L$ then we claim that we are in case (iii). The transversality of $(\alpha,\alpha^\ast,\alpha^{\ast\ast})$ can be seen by noting that the area of the triangle formed by their centers is at least $CK^{-2}$. For notational convenience we denote these three cubes by $\alpha_i$, $i=1,2,3$. The desired estimate then follows by noting that $\norm{F}_{L^p(B)}\lesssim K^2\norm{\Pcal_{\alpha_i}F}_{L^p(B)}$ for each $i$ and so taking the geometric mean yields:
\begin{align*}
    \norm{F}_{L^p(B)}\lesssim K^2\left(\prod_{i=1}^3\norm{\Pcal_{\alpha_i}F}_{L^p(B)}\right)^{1/3}.
\end{align*}
\end{proof}

\begin{rmk}
There is some flexibility in the size of the spatial balls in this broad-narrow decomposition. Here, we choose to use spatial balls of size $K^2$ and frequency cubes of size $K^{-1}$ so as to facilitate a more straightforward narrow analysis. This comes at the cost of having a more complicated broad analysis due to the fact that the spatial and frequency scales are not inverse to each other. One could write this argument with spatial balls of size $K$ to fix this but that would then complicate the narrow analysis (This is the case in \cite{bourgain_proof_2015}, where they introduce an additional family of small strips with length $K^{-1/2}$ to perform lower-dimensional decoupling). So unfortunately, you have to pick your poison here.

Also, \cite{bourgain_proof_2015} and \cite{demeter_fourier_2020} use a pointwise broad-narrow decomposition (as we did in Section \ref{ch 3 sec 2}) instead of the $L^p$-norm based one here, which also changes the details of the upcoming broad-narrow analysis.
\end{rmk}

From now on, we will call the three cases in Proposition \ref{trichotomy} as the \textit{concentrated}, \textit{narrow}, and \textit{broad} case respectively. The concentrated case is favorable for us as it means that we have reduced the estimate for $F$ to an estimate for just one particular cube $\alpha^\ast$. The narrow case is new compared to the bilinear-to-linear reduction and will be handled by applying two-dimensional decoupling to the narrow strip. For the broad case, we will bring into play multilinear decoupling as before.

\begin{rmk}
In \cite{guth_notes}, the trichotomy is replaced with a simple broad-narrow dichotomy. In particular, the concentrated case can be absorbed into the narrow case.
\end{rmk}

\begin{defn}[Trilinear decoupling constant]
Let $\{\alpha_i\}_{i=1}^3$ be $\nu$-transversal squares in $\R^2$, and $\Theta_i$ be the partition of $\alpha_i$ into $\delta^{1/2}$-squares. Define $\on{TD}(\delta,p,\nu)$ to be the smallest constant such that
\begin{align*}
    \norm{\left(\prod_{i=1}^3 F_i\right)^{1/3}}_{L^p(\R^3)}\leq\on{TD}(\delta,p,\nu)\prod_{i=1}^3\left(\sum_{\theta\in\Theta_i}\norm{\Pcal_\theta F_i}_{L^p(\R^3)}^2\right)^{1/6}
\end{align*}
for all $F_i$ with $\on{supp}(\widehat{F_i})\subseteq\mathcal{N}_{\alpha_i}(\delta)$ and all balls $B = B_{\delta^{-1}}$.
\end{defn}

When $p$ and $\nu$ are clear from context, we will abbreviate $\on{TD}(\delta,p,\nu)$ to $\on{TD}(\delta)$. One may want to track the dependence on $\nu$ or other parameters to be entirely rigorous, but we choose to suppress such technical issues to highlight the main ideas.

\begin{rmk}
Compared with Definition \ref{bilinear decoupling constant}, the above definition involves an additional parameter $\nu$ and all possible $\nu$-transversal $\{\alpha_i\}_{i=1}^3$. This is technically necessary because in the two-dimensional case any pair of transversal intervals can be fitted into some fixed $I_1$ and $I_2$ through a simple affine transformation, while in the higher-dimensional case the pattern of $\{\alpha_i\}_{i=1}^3$ can be much more complex.
\end{rmk}

\section{Broad-narrow analysis}\label{ch 4 sec 2}

From now on, let $\delta = R^{-1}$.
\begin{thm}\label{thm trilinear linear reduction induction}
For $0 < \delta < K^{-2}$, we have the following relation:
\begin{align}\label{three_dim_induct}
    \on{D}_3(\delta)\lesssim K^C\on{TD}(\delta) + \left(1+\on{D}_2(K^{-2})\right)\on{D}_3(\delta K^2).
\end{align}
\end{thm}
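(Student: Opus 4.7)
The plan is to work in the local formulation (justified by Proposition \ref{equiv}), fix a ball $B_R$ of radius $R=\delta^{-1}$, and cover it by a finitely overlapping family of balls $B=B_{K^2}$. Since $p\geq 2$, we have $\norm{F}_{L^p(B_R)}^p\lesssim\sum_B\norm{F}_{L^p(B)}^p$, and we apply Proposition \ref{trichotomy} to sort every $B$ into one of three classes $\mathcal{B}_{c},\mathcal{B}_{n},\mathcal{B}_{b}$ corresponding to the concentrated, narrow, and broad cases. I expect $\mathcal{B}_c$ and $\mathcal{B}_n$ to together produce the $(1+\on{D}_2(K^{-2}))\on{D}_3(\delta K^2)$ term, while $\mathcal{B}_b$ will yield the $K^C\on{TD}(\delta)$ term.

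For $B\in\mathcal{B}_c$, I would start from $\norm{F}_{L^p(B)}\lesssim\max_\alpha\norm{\Pcal_\alpha F}_{L^p(B)}\leq\bigl(\sum_\alpha\norm{\Pcal_\alpha F}_{L^p(B)}^2\bigr)^{1/2}$ (the last inequality uses monotonicity of $\ell^q$-norms for $p\geq 2$), then sum over $B$ via Minkowski's inequality or parallel decoupling (Proposition \ref{parallel decoupling}) to swap the order of summation, and finally apply parabolic rescaling (Proposition \ref{prop parabolic rescaling 1}) to each $K^{-1}$-cube $\alpha$ at scale $\sigma=K^{-2}$ to produce the factor $\on{D}_3(\delta/\sigma)=\on{D}_3(\delta K^2)$. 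This is the ``$1$'' contribution in $(1+\on{D}_2(K^{-2}))$.

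For $B\in\mathcal{B}_b$, Proposition \ref{trichotomy}(iii) provides a $K^{-2}$-transverse triple $(\alpha_1,\alpha_2,\alpha_3)$ depending on $B$. I would sum over the $O(K^6)$ possible transverse triples of $K^{-1}$-cubes at a $K^C$ cost, regroup by triple, and for each fixed triple apply H\"older's inequality in $B$ to combine the local $L^p(B)$ estimates into a global $L^p(B_R)$ one. A short argument (using reverse H\"older together with the locally constant heuristic at the dual scale $K$ of the $K^{-1}$-caps) bridges the gap between $\prod_i\norm{\Pcal_{\alpha_i}F}_{L^p}^{1/3}$ and $\norm{(\prod_i\Pcal_{\alpha_i}F)^{1/3}}_{L^p}$ up to a further $K^C$ loss, after which one invokes the trilinear decoupling constant $\on{TD}(\delta)$. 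The product $\prod_i\bigl(\sum_{\theta\subseteq\alpha_i}\norm{\Pcal_\theta F}_{L^p}^2\bigr)^{1/6}$ produced by trilinear decoupling is then dominated by the full square sum $\bigl(\sum_\theta\norm{\Pcal_\theta F}_{L^p}^2\bigr)^{1/2}$ because each partial sum is bounded by the total, giving the $K^C\on{TD}(\delta)$ contribution.

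The narrow case is the main technical obstacle. For $B\in\mathcal{B}_n$ with associated line $L$, after an affine change of variables sending $L$ to a coordinate axis (justified by Proposition \ref{affine transformation decoupling}), the paraboloid over the strip $S_L$ takes the form $\xi_3=\xi_1^2+2c\xi_2+\xi_2^2$ with $\xi_2\in[0,O(K^{-1})]$. Modulo the affine piece $2c\xi_2$, this differs from the cylinder $\xi_3=\xi_1^2$ over the 1D parabola by the quadratic perturbation $\xi_2^2\in[0,O(K^{-2})]$. Crucially, the hypothesis $\delta<K^{-2}$ ensures this perturbation is swallowed by a vertical $K^{-2}$-neighborhood of that cylinder. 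Combining cylindrical decoupling (Proposition \ref{cylindrical decoupling}) with 1D parabola decoupling at scale $K^{-2}$ reduces $\norm{\sum_{\alpha\subseteq S_L}\Pcal_\alpha F}_{L^p(B)}$ to a square sum over the $K^{-1}$-cubes $\alpha\subseteq S_L$, producing the factor $\on{D}_2(K^{-2})$. Parabolic rescaling on each such $\alpha$ as in the concentrated case then yields $\on{D}_3(\delta K^2)$, and summing over $B\in\mathcal{B}_n$ via parallel decoupling gives the combined $\on{D}_2(K^{-2})\on{D}_3(\delta K^2)$ contribution, completing the proof.
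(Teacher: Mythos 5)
Your overall structure is the same as the paper's: cover $B_R$ by $K^2$-balls, sort them by the broad--narrow trichotomy of Proposition \ref{trichotomy}, handle the concentrated and narrow balls via parabolic rescaling (plus lower-dimensional decoupling for the narrow case) to get the $(1+\on{D}_2(K^{-2}))\on{D}_3(\delta K^2)$ term, and handle the broad balls via $\on{TD}(\delta)$. The concentrated and narrow cases in your proposal are essentially the paper's argument (the narrow-case sketch via affine change of variables, cylindrical decoupling, and 1D parabola decoupling is exactly the content of Proposition \ref{lower_dim_decoupling}).

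There is a genuine gap in your broad case. You claim a ``short argument (using reverse H\"older together with the locally constant heuristic at the dual scale $K$)'' bridges
\begin{align*}
\prod_{i=1}^3\norm{\Pcal_{\alpha_i}F}_{L^p(B)}^{1/3}
\quad\text{and}\quad
\norm*{\Bigl(\prod_{i=1}^3\abs{\Pcal_{\alpha_i}F}\Bigr)^{1/3}}_{L^p(B)}
\end{align*}
on $B=B_{K^2}$ up to a $K^C$ loss. This step cannot work as stated. The locally constant heuristic at the dual scale of the $K^{-1}$-caps gives constancy of $\abs{\Pcal_{\alpha_i}F}$ on balls of radius $K$, not $K^2$; and $B_{K^2}$ contains $\sim K^{3}$ such $K$-balls. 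The three projections can therefore be concentrated on pairwise essentially-disjoint $B_K$'s inside $B_{K^2}$ (their wave packets are $K\times K\times K^2$ tubes in three transverse directions), in which case the left-hand side is of unit order but the right-hand side is rapidly decaying. No fixed power $K^C$ can repair a pointwise inequality like this. The paper handles this obstruction with an additional idea you have not reproduced: the probabilistic Lemma \ref{prob_lem}, which introduces \emph{independent random translations} $F_{i,v_i}(x)=F_i(x-v_i)$ and shows that
\begin{align*}
\prod_{i=1}^3\norm{F_i}_{L^p(B)}^{1/3}\lesssim K^C\,\mathbb{E}_v\norm*{\prod_{i=1}^3\abs{F_{i,v_i}}^{1/3}}_{L^p(w_B)},
\end{align*}
because with probability $\gtrsim K^{-C}$ the translations place the most significant wave packets of all three $F_i$ in a common $K$-ball where the locally constant property finally applies. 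Since translations preserve Fourier supports, commute with $\Pcal_\theta$, and preserve $L^p$ norms, the expectation over $v$ can be discharged after applying $\on{TD}(\delta)$ to each translated triple. You would need to incorporate this (or an equivalent device) to make your broad-case estimate valid.
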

As in the two-dimensional case, once (\ref{three_dim_induct}) is established, it can be iterated to prove the trilinear-to-linear reduction (Theorem \ref{thm trilinear linear reduction}). The proof of (\ref{three_dim_induct}) will be based on the broad-narrow decomposition - we will treat the concentrated, narrow, and broad cases individually and then put things together. For this purpose, let $Y_1$/$Y_2$/$Y_3$ be the union of all balls $B = B_{K^2}$ in the concentrated/narrow/broad case, respectively. Note that $B_R\subseteq Y_1\cup Y_2 \cup Y_3$.

First, in the concentrated case, by completing the sum, we have
\begin{align*}
    \norm{F}_{L^p(B)} \lesssim \norm{\Pcal_{\alpha^\ast}F}_{L^p(B)} \leq \left(\sum_{\alpha\in\mathcal{C}_{K}}\norm{\Pcal_\alpha F}_{L^p(w_{B})}^2\right)^{1/2}.
\end{align*}

Thus by taking $l^p$-norm over $B\subseteq Y_1$ on both sides and using Minkowski's inequality, we get
\begin{align*}
    \norm{F}_{L^p(Y_1)} \lesssim
    \left(\sum_{\alpha\in\mathcal{C}_{K}}\norm{\Pcal_\alpha F}_{L^p(w_{B_R})}^2\right)^{1/2}.
\end{align*}

Now we can apply parabolic rescaling (Proposition \ref{prop parabolic rescaling 1}) to obtain the estimate for the concentrated case:
\begin{align}\label{eq concentrated estimate}
    \norm{F}_{L^p(Y_1)} \lesssim \on{D}_3(\delta K^2)\left(\sum_{\theta\in\mathcal{C}_{\delta^{-1/2}}}\norm{\Pcal_\theta F}_{L^p(w_{B_R})}^2\right)^{1/2}.
\end{align}

Next, in the narrow case, we use the following general lower dimensional decoupling estimate:
\begin{prop}[Lower dimensional decoupling, {\cite[Lemma 10.26]{demeter_fourier_2020}}]\label{lower_dim_decoupling}
Let $L$ be a line in $\R^2$ and let $\mathcal{F}$ be a collection of squares $\beta\in\mathcal{C}_K$ with $\beta\subseteq S_L$. Then for each $F$ with $\on{supp}(\widehat{F})\subseteq\bigcup_{\beta\in\mathcal{F}}\mathcal{N}_\beta(K^{-2})$ and each ball $B=B_{K^2}$, we have
\begin{align*}
    \norm{F}_{L^p(B)}\lesssim\on{D}_2(K^{-2})\left(\sum_{\beta\in\mathcal{F}}\norm{\Pcal_\beta F}_{L^p(w_B)}^2\right)^{1/2}.
\end{align*}
\end{prop}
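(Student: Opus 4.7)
The key insight is that when the Fourier support of $F$ is confined to the tube $S_L \times \R$ above a line $L$, the paraboloid's curvature in the direction perpendicular to $L$ becomes negligible --- of order $K^{-2}$, which is already absorbed into the vertical thickness of $\mathcal{N}(K^{-2})$. In this regime, the paraboloid behaves like a cylinder over a 2D parabola, so my plan is to reduce the problem to 2D decoupling via cylindrical decoupling.

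First, I would use Proposition \ref{affine transformation decoupling} to apply a rotation in the $(\xi_1, \xi_2)$-plane (lifted trivially to $\R^3$) so that $L$ becomes the $\xi_1$-axis. This rotation preserves the paraboloid $\{\xi_3 = \xi_1^2 + \xi_2^2\}$ and sends $S_L$ onto the horizontal strip $[0,1] \times [-CK^{-1}, CK^{-1}]$; the mild distortion of the $K^{-1}$-squares $\beta$ into rotated $K^{-1}$-rectangles is absorbed by a standard finite-overlap covering. Next, I would check the geometric containment: for any $\beta = \beta_1 \times \beta_2 \in \mathcal{F}$ and any $(\xi_1, \xi_2, \xi_3) \in \mathcal{N}_\beta(K^{-2})$, the bound $|\xi_2| \lesssim K^{-1}$ yields $|\xi_3 - \xi_1^2| \lesssim K^{-2}$, so after permuting the coordinates to $(\xi_1, \xi_3, \xi_2)$,
\[ \on{supp}(\widehat{F}) \subseteq \Omega \times [-CK^{-1}, CK^{-1}], \]
where $\Omega \defeq \{(\xi_1, \xi_3) : \xi_1 \in [0,1],\ |\xi_3 - \xi_1^2| \lesssim K^{-2}\}$ is essentially the vertical $K^{-2}$-neighborhood of the 2D parabola.

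Finally, I would invoke cylindrical decoupling (Proposition \ref{cylindrical decoupling}) in the $\xi_2$-direction together with 2D decoupling at frequency scale $K^{-2}$: this decouples $\Omega \times \R$ into the slabs $\beta_1 \times \R$ with constant $\on{D}_2(K^{-2})$, and then a triangle inequality over the bounded number of $\beta_2$-strips refines each slab into the individual $\mathcal{N}_\beta(K^{-2})$ pieces, producing the desired estimate.

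The main obstacle --- or rather, the delicate bookkeeping --- is twofold. First, $\on{supp}(\widehat{F})$ is not exactly a cylinder over the 2D parabola but only lies within $K^{-2}$ of one; because this matches the thickness of $\Omega$, the approximation is free. Second, the conclusion is local on $B = B_{K^2}$ with a weight $w_B$ on the right-hand side, so I would run the entire argument using the local/weighted versions of all the decoupling constants and invoke the equivalence from Proposition \ref{equiv} (together with its natural extension compatible with cylindrical decoupling) to chain everything together cleanly.
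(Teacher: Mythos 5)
Your proposal is essentially the paper's argument: approximate the piece of $\PP^2$ above $S_L$ by a cylinder over the two-dimensional parabola, then invoke affine invariance (Proposition \ref{affine transformation decoupling}) and cylindrical decoupling (Proposition \ref{cylindrical decoupling}) to import $\on{D}_2(K^{-2})$. The only stylistic difference is the order of operations: the paper constructs the tangent cylinder $\mathcal{M}$ over $L^*$ directly in the original coordinates and applies the affine map at the very end, while you change coordinates first so that the cylinder becomes $\{\xi_3=\xi_1^2\}$. Both viewpoints are equivalent and valid.

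Two technical points in your write-up deserve care. First, a rotation about the $\xi_3$-axis alone cannot send $L$ to the $\xi_1$-axis unless $L$ already passes through the origin; you also need the parabolic translation $(\bar\xi,\xi_3)\mapsto(\bar\xi-c,\xi_3-2\bar\xi\cdot c+|c|^2)$ to move a point of $L$ to the origin while keeping $\PP^2$ fixed (otherwise your bound $|\xi_2|\lesssim K^{-1}$, and hence $|\xi_3-\xi_1^2|\lesssim K^{-2}$, fails). Second, after the rotation the squares $\beta\in\mathcal{C}_K$ no longer factor as $\beta_1\times\beta_2$ in the new coordinates, so the final passage from the $\beta_1$-slabs to the individual caps is not a straightforward ``triangle inequality over $\beta_2$-strips.'' A rotated $\beta$ typically straddles two consecutive slabs, and $\Pcal_{\text{slab}}\Pcal_\beta F$ is not $\Pcal_\beta F$; your ``standard finite-overlap covering'' gloss hides this. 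The paper explicitly flags this mismatch in the remark following the proof and resolves it by a divide-and-conquer scheme: split $\mathcal{F}$ into two subfamilies each coverable by pairwise-separated $CK^{-1}$-cubes aligned with $L$, decouple to those larger cubes, then finish with trivial decoupling to the $K^{-1}$-caps.
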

\begin{proof}
By the assumption, we may replace $\on{supp}(\widehat{F})$ $\subseteq\bigcup_{\beta\in\mathcal{F}}\mathcal{N}_\beta(K^{-2})$ with the superficially weaker hypothesis $\on{supp}(\widehat{F})\subseteq\mathcal{N}_{S_L}(K^{-2})$.
    
The idea of the proof is to approximate $\mathcal{N}_{S_L}(K^{-2})$ by a cylindrical surface which looks like (an affine image of) $\PP^1$ so that we can somehow apply \textit{two-dimensional} decoupling, which has been established in Chapter \ref{ch 3}. Let $L\subseteq\R^2$ be a line defined by the equation $A\xi_1+B\xi_2+C=0$, and $L^*$ be the part of $\PP^2$ lying above $L\cap[0,1]^2$. Notice that the vector $\inner{A,B,-2C}$ is tangent to $\PP^2$ at each point on $L^*$. Therefore, consider the cylindrical surface $\mathcal{M}$ with directrix given by $L^*$ and generatrix parallel to $\inner{A,B,-2C}$ - see Figure \ref{lower-dim decouple}. A simple computation shows that within a $K^{-1}$ neighborhood of $L$, the paraboloid deviates from this cylindrical surface by at most $CK^{-2}$. In particular, we may approximate $\mathcal{N}_{S_L}(K^{-2})$ by the $CK^{-2}$-vertical neighborhood of $\mathcal{M}$.

Let $\on{D}_3^{\mathcal{M}}$ denote the decoupling constant for $\mathcal{M}$, then we have:
\begin{align*}
    \norm{F}_{L^p(B)}\lesssim\on{D}_3^{\mathcal{M}}(CK^{-2})\left(\sum_{\beta\in\mathcal{F}}\norm{\Pcal_\beta G}_{L^p(w_{B})}^2\right)^{1/2}.
\end{align*}
Note that $\mathcal{M}$ is indeed an affine image of $\PP^1$ times an interval, so we can apply Proposition \ref{cylindrical decoupling} and \ref{affine transformation decoupling} to argue that $\on{D}_3^{\mathcal{M}}(CK^{-2}) \sim \on{D}_2(CK^{-2}) \lesssim \on{D}_2(C)\on{D}_2(K^{-2}) \sim \on{D}_2(K^{-2})$. Hence, the lower-dimensional decoupling estimate holds.
\end{proof}

\begin{rmk}
There are several annoying technical issues in this argument which we have skimmed over. For example, $\mathcal{F}$ is parallel to the coordinate axes, while $L$ can be tilted, which means that associated partition of $\on{D}_3^\mathcal{M}(CK^{-2})$ can also be tilted, so we will encounter some zigzags if we try to directly apply $\on{D}_3^\mathcal{M}(CK^{-2})$. In other words, we might destroy the original lattice structure of $\mathcal{F}$ but will never be able to get it back! 

Fortunately, such a technical obstacle can be overcome through a divide-and-conquer strategy. More precisely, we first divide $\mathcal{F}$ into two groups, each of which is well-separated, i.e. can be fully covered by a tiling of $CK^{-1}$-cubes parallel to $L$ but which never touch the boundary of these $CK^{-1}$-cubes. We can then safely apply $\on{D}_3^\mathcal{M}(CK^{-2})$ to each of the two groups and put them together by completing the sum. Now we have successfully decoupled everything to the scale $CK^{-1}$ without affecting the lattice structure of $\mathcal{F}$, and we just need to further use the triangle inequality/trivial decoupling to reach scale $K^{-1}$, i.e., $\mathcal{F}$.

As far as we know, no previous literature clarifies this technical issue in detail when presenting lower-dimensional decoupling. This argument we proposed should also be helpful in justifying other technicalities in decoupling theory, such as passing from dyadic scales to non-dyadic scales.
\end{rmk}

\begin{figure}
    \centering
    \includegraphics[height=0.6\textwidth]{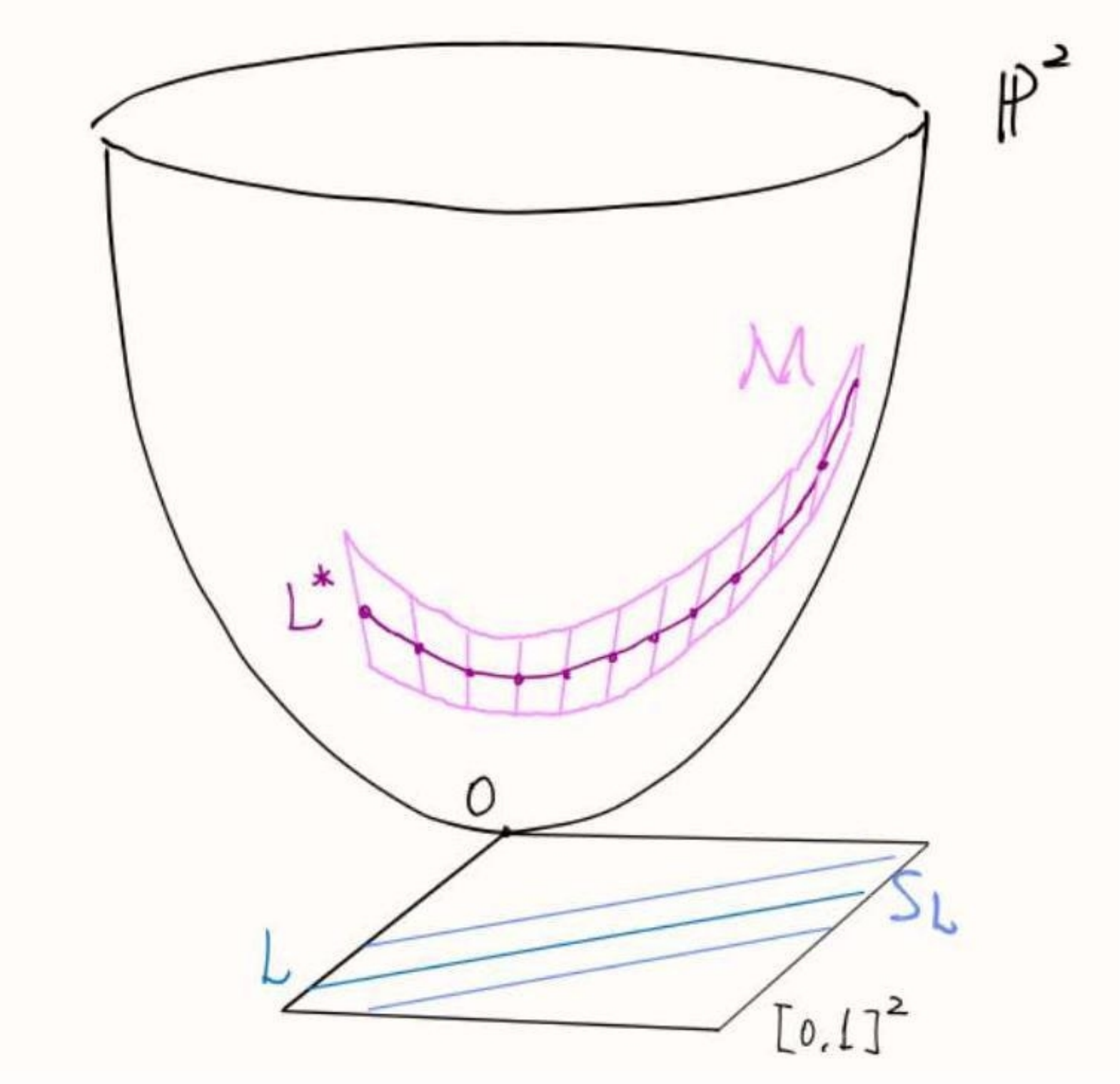}
    \caption{Lower dimensional decoupling}
    \label{lower-dim decouple}
\end{figure}

Clearly $\sum_{\substack{\alpha\in S_{big}\\\alpha\subseteq S_L}}\Pcal_\alpha F$ satisfies the hypotheses of Proposition \ref{lower_dim_decoupling}, so we have
\begin{align*}
    \norm{\sum_{\substack{\alpha\in S_{big}\\\alpha\subseteq S_L}}\Pcal_\alpha F}_{L^p(B)}&\lesssim\on{D}_2(K^{-2})\left(\sum_{\beta\in\mathcal{F}}\norm{\Pcal_\beta \sum_{\substack{\alpha\in S_{big}\\\alpha\subseteq S_L}}\Pcal_\alpha F}_{L^p(w_B)}^2\right)^{1/2}\\
    &=\on{D}_2(K^{-2})\left(\sum_{\substack{\alpha\in S_{big}\\\alpha\subseteq S_L}}\norm{\Pcal_\alpha F}_{L^p(w_B)}^2\right)^{1/2}.
\end{align*}

Recall we are in the narrow case and complete the sum, then this implies
\begin{align*}
    \norm{F}_{L^p(B)} \lesssim \on{D}_3(\delta K^2)\left(\sum_{\alpha\in\mathcal{C}_{K}}\norm{\Pcal_\alpha F}_{L^p(w_{B})}^2\right)^{1/2}.
\end{align*}

Thus by taking $l^p$-norm over $B\subseteq Y_2$ on both sides and using Minkowski's inequality, we get
\begin{align*}
    \norm{F}_{L^p(Y_2)} \lesssim
    \on{D}_3(\delta K^2)\left(\sum_{\alpha\in\mathcal{C}_{K}}\norm{\Pcal_\alpha F}_{L^p(w_{B_R})}^2\right)^{1/2}.
\end{align*}

Now we can apply parabolic rescaling (Proposition \ref{prop parabolic rescaling 1}) to obtain the estimate for the narrow case:
\begin{align}\label{eq narrow estimate}
    \norm{F}_{L^p(Y_2)} \lesssim\on{D}_2(K^{-2})\on{D}_3(\delta K^2)\left(\sum_{\theta\in \mathcal{C}_{\delta^{-1/2}}}\norm{\Pcal_\theta F}_{L^p(w_{B_R})}^2\right)^{1/2}.
\end{align}

Finally, we handle the broad term. Intuitively, by the locally constant heuristic, if we were integrating over a ball of radius $K$ instead of $K^2$ then we wouldn't need to do much as we would be able to estimate:
\begin{align*}
    \left(\prod_{i=1}^3 \norm{\Pcal_{\alpha_i}F}_{L^p(B_K)}\right)^{1/3}&\leq\left(\prod_{i=1}^3 \norm{\Pcal_{\alpha_i}F}_{L^p(w_{B_K})}\right)^{1/3}\\
    &\approx\norm{\left(\prod_{i=1}^3 \Pcal_{\alpha_i}F\right)^{1/3}}_{L^p(w_{B_K})}\\
    &\leq\on{TD}(\delta)\prod_{i=1}^3\left(\sum_{\theta\in\mathcal{C}_{\delta^{-1/2}}(\alpha_i)}\norm{\Pcal_\theta F}_{L^p(w_{B_K})}^2\right)^{1/6}\\
    &\leq\on{TD}(\delta)\left(\sum_{\theta\in\mathcal{C}_{\delta^{-1/2}}}\norm{\Pcal_\theta F}_{L^p(w_{B_K})}^2\right)^{1/2}.
\end{align*}

However, we are working over $B_{K^2}$, where the locally constant heuristic may fail. Fortunately, we are saved by the fact that induction on scales allows us to lose a factor of $K^C$. The basic idea is to use a probabilistic argument. In particular, by applying random translations to the $\Pcal_{\alpha_i}F$, with probability $\sim K^{-C}$ we will land in a ``good scenario" where we can use the locally constant heuristic.

Let us formalize this idea. For convenience, let $F_i\defeq \Pcal_{\alpha_i}F$. It is helpful to think in terms of wave packets. In particular, recall that the wave packet decomposition of $F_i$ on the spatial side looks like an array of parallel tubes of width $K$ and length $K^2$. Due to the rapid decay property of wave packets, for the upcoming estimates we may consider only those wave packets spatially localized in $B_{2K^2}$. Let $v_i$ be a randomly chosen vector in $B$, and $F_{i,{v_i}}$ be the translation of $F_i$ by $v_i$, i.e. $F_{i,{v_i}}(x)=F_i(x-v_i)$. We apply independent random translations to each $F_i$ and denote by $\mathbb{E}_v$ the expectation over the total probability space $(v_i)_i$. The ``good scenario" we are looking for will be achieved when the $F_i$'s are translated in such a way that their most significant wave packets all overlap in some ball $B_K$, whence we can use the locally constant property to obtain the desired estimate. This leads to the following lemma:

\begin{lem}\label{prob_lem}
We have the estimate:
\begin{align*}
    \prod_{i=1}^3\norm{F_i}_{L^p(B)}^{1/3}\lesssim K^C\mathbb{E}_v\norm{\prod_{i=1}^3\abs{F_{i,v_i}}^{1/3}}_{L^p(w_B)}.
\end{align*}
\end{lem}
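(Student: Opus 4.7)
The plan is to construct a ``good event'' $E \subseteq B^3$ of probability $\gtrsim K^{-C}$ on which the right-hand side already dominates the left (up to a $K^C$ factor); the claim then follows from the trivial bound $\mathbb{E}_v X \ge \mathbb{P}(X \ge A)\cdot A$. This realizes the heuristic ``with probability $\gtrsim K^{-C}$ the three random translations pile the significant wave packets of all three $F_i$'s onto a common $K$-ball.''

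I begin with the wave packet decomposition of each $F_i$ at frequency scale $K^{-1}$ (Proposition \ref{second formulation}), obtaining tubes of dimensions $\sim K \times K \times K^2$ oriented along the normal to the paraboloid at $\alpha_i$. Rapid decay lets me discard tubes outside a slight fattening of $B$, and a dyadic pigeonhole in the amplitudes $|a_T|$ (a $\log K$-loss absorbed into $K^C$) reduces to the case where the surviving coefficients within each $F_i$ are all comparable to a single value $\lambda_i$. In parallel, since $\on{supp}(\widehat{F_i})$ lies in the $\delta \le K^{-2}$ vertical neighborhood of a $K^{-1}$-cap, the reverse Hölder inequality (Proposition \ref{reverse holder inequality}) gives $\sup_{B_K}|F_i| \lesssim \norm{F_i}_{L^p_\#(w_{B_K})}$ on every $K$-ball $B_K$; combined with the wave packet normalization this upgrades to a two-sided comparison of $|F_i|$ with its $L^p$-average on each such ball (the ``locally constant heuristic'').

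Now partition $B$ into its grid of $\sim K^3$ many $K$-balls and pigeonhole (loss $K^3$) to pick for each $i$ a ``heavy'' $K$-ball $q_i \subseteq B$ with $\int_{q_i}|F_i|^p \gtrsim K^{-3}\norm{F_i}_{L^p(B)}^p$. Fix any target $K$-ball $q \subseteq B$. The set of $v_i \in B$ for which $q_i + v_i$ coincides with $q$ has measure $\gtrsim|q| \sim K^3$, hence probability $\gtrsim K^{-3}$ in the uniform measure on $B$, and three independent translations land simultaneously with probability $\gtrsim K^{-9}$. On this good event the locally constant step transfers each heavy $L^p$-average on $q_i$ to a matching average on $q$ for $|F_{i,v_i}|$, yielding
\begin{align*}
\int_q \prod_{i=1}^{3} |F_{i,v_i}|^{p/3}\,dx \ \gtrsim\ \prod_{i=1}^{3}\Bigl(\int_{q_i}|F_i|^p\Bigr)^{1/3} \ \gtrsim\ K^{-3}\prod_{i=1}^{3}\norm{F_i}_{L^p(B)}^{p/3}.
\end{align*}
Since $w_B \gtrsim \mathbbm{1}_q$, taking $p$-th roots gives $\norm{\prod_i|F_{i,v_i}|^{1/3}}_{L^p(w_B)} \gtrsim K^{-3/p}\prod_i\norm{F_i}_{L^p(B)}^{1/3}$ on the good event, and combining with the $K^{-9}$ probability lower bound proves the lemma with $C \sim 9 + 3/p$.

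The main obstacle is the locally constant upgrade: reverse Hölder alone only provides $\sup \lesssim$ weighted $L^p$-average, not the two-sided pointwise comparison needed for the transfer step. Making rigorous the statement that $|F_{i,v_i}|$ is pointwise comparable to $(\int_{q_i}|F_i|^p/|q_i|)^{1/p}$ on $q$ requires the wave packet amplitude pigeonhole to rule out cancellations between the three translated factors, and keeping every pigeonhole loss polynomial in $K$ is what produces the polynomial factor $K^C$ on the right-hand side of the lemma.
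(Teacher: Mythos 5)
Your overall strategy is the same as the paper's: construct a ``good event'' of probability $\gtrsim K^{-C}$ on which the right-hand side dominates, then use $\mathbb{E}_v X\ge \mathbb{P}(X\ge A)\cdot A$. However, your mechanism for producing the good event has a genuine gap, which you partially flag at the end but do not resolve, and the displayed inequality
\begin{align*}
\int_q \prod_{i=1}^{3} |F_{i,v_i}|^{p/3}\,dx \ \gtrsim\ \prod_{i=1}^{3}\Bigl(\int_{q_i}|F_i|^p\Bigr)^{1/3}
\end{align*}
is exactly a reverse H\"older inequality for three functions: H\"older gives the \emph{upper} bound $\int_q\prod_i|F_{i,v_i}|^{p/3}\le\prod_i(\int_q|F_{i,v_i}|^p)^{1/3}$, so the asserted $\gtrsim$ has no a priori content and must come from a pointwise lower bound on each factor. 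Choosing $q_i$ to be $L^p$-heavy does not furnish such a pointwise bound. Since $\widehat{F_i}$ has diameter $\lesssim K^{-1}$, the correct rigorous statement is that after demodulating by the centre frequency one has $|\nabla F_i|\lesssim K^{-1}\norm{F_i}_{L^\infty(B)}$, i.e.\ $F_i$ can change by a constant fraction of its \emph{global} sup over a ball of radius $cK$. On a $K$-ball $q_i$ where $\sup_{q_i}|F_i|$ is small compared with $\norm{F_i}_{L^\infty(B)}$, the function can still vary from its local maximum down to zero inside $q_i$; so $|F_i|$ is not pointwise comparable to its $L^p$-average there. Reverse H\"older (Proposition \ref{reverse holder inequality}) gives the inequality $\sup\lesssim$ weighted average only, and the wave-packet amplitude pigeonhole does not ``rule out cancellations'': wave packets with comparable amplitudes still interfere destructively inside a single $K$-ball, so $|F_i|$ can vanish on a positive fraction of any $L^p$-heavy $q_i$.

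The paper avoids this precisely by centering the good ball at the point $x_i$ where $|F_i|$ attains its supremum on $B$. Around the global maximum the derivative bound does yield a two-sided estimate: $|F_i(x)|\ge\frac12\norm{F_i}_{L^\infty(B)}$ for $x\in B(x_i,cK)$, since the variation over the ball is bounded by $c\norm{F_i}_{L^\infty(B)}$. Then the good event is that all three shifted maxima land in a common $\tfrac12 cK$-ball (probability $\sim K^{-9}$), whence the translated balls share a common $cK/2$-ball of volume $\gtrsim K^3$ on which each $|F_{i,v_i}|$ is pointwise $\gtrsim\norm{F_i}_{L^\infty(B)}$. The final H\"older step $\norm{F_i}_{L^\infty(B)}\ge |B|^{-1/p}\norm{F_i}_{L^p(B)}$ converts the sup bound back to the $L^p$ quantity in the lemma at the cost of a power of $K$, exactly the sort of loss that $K^C$ absorbs. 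Your proof would close if you replaced the ``heavy $L^p$ ball'' selection with the ``ball around the sup'' selection and dropped the unjustified two-sided locally-constant upgrade on arbitrary $K$-balls; as written, the transfer step fails.
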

\begin{proof}
For each $F_i$, let $x_i\in B$ be the point at which the supremum of $F_i$ is achieved, i.e. $\sup_{x\in B} \abs{F_i(x)}=\abs{F_i(x_i)}$. By the the locally constant property of wave packets, there exists a constant $c\gtrsim 1$ such that for all $x\in B(x_i,cK)$ we have $\abs{F_i(x)}\geq\frac{1}{2}\abs{F_i(x_i)}$. Importantly, we maintain the dependence on $K$ in the radius of this ball.

Fix some ball $B(x_0,\frac{1}{2}cK)\subseteq B$. Notice that if $x_i+v_i\in B(x_0,\frac{1}{2}cK)$ for all $i$ then the three balls $B(x_i,cK)$ must intersect nontrivially. In particular, the intersection will necessarily contain $B(x_0,\frac{1}{2}cK)$, which means:
\begin{align*}
    \abs{\bigcap_{i=1}^3 B(x_i+v_i,cK)}\gtrsim K^3.
\end{align*}
This is an example of the good scenario that we are looking for because in such a scenario we would have the estimate:
\begin{align*}
    \norm{\prod_{i=1}^3\abs{F_{i,v_i}}^{1/3}}_{L^p(w_B)}&\geq\left(\int_{\bigcap_{i=1}^3 B(x_i+v_i,cK)}\prod_{i=1}^3\abs{F_{i,v_i}(x)}^{p/3}w_B(x)dx\right)^{1/p}\\
    &\gtrsim\left(\int_{\bigcap_{i=1}^3 B(x_i,cK)}\prod_{i=1}^3\abs{F_i(x_i)}^{p/3}dx\right)^{1/p}\\
    &\gtrsim K^{3/p}\prod_{i=1}^3\abs{F_i(x_i)}^{1/3}\\
    &=     K^{3/p}\prod_{i=1}^3\norm{F_i}_{L^\infty(B)}^{1/3}\\
    (\text{Hölder})&\geq \prod_{i=1}^3\norm{F_i}_{L^p(B)}^{1/3}.
\end{align*}

Note that such good scenario occurs with probability $\sim K^{-C}$ as the probability for all three vectors $v_i$ to land in any particular ball $B(x_0,\frac{1}{2}cK)$ is $\sim K^{-9}$. So we have
\begin{align*}
    \mathbb{E}_v\norm{\prod_{i=1}^3\abs{F_{i,v_i}}^{1/3}}_{L^p(w_B)}&\geq\int_{\text{good scenario}}\norm{\prod_{i=1}^3\abs{F_{i,v_i}}^{1/3}}_{L^p(w_B)}dv\\
    &\gtrsim\int_{\text{good scenario}}\prod_{i=1}^3\norm{F_i}_{L^p(B)}^{1/3}dv\\
    &\sim K^{-C}\prod_{i=1}^3\norm{F_i}_{L^p(B)}^{1/3}
\end{align*}
as desired.
\end{proof}

Recall that we are in the broad case. Thus by Lemma \ref{prob_lem}, we have
\begin{align*}
    \norm{F}_{L^p(B)}\lesssim K^2\left(\prod_{i=1}^3 \norm{\Pcal_{\alpha_i}F}_{L^p(B)}\right)^{1/3} = 
    K^2 \prod_{i=1}^3\norm{F_i}_{L^p(B)}^{1/3}
    \lesssim K^C\mathbb{E}_v\norm{\prod_{i=1}^3\abs{F_{i,v_i}}^{1/3}}_{L^p(w_B)}.
\end{align*}

Then by taking $l^p$-norm over $B\subseteq Y_3$ on both sides and using Minkowski's inequality, we get
\begin{align*}
    \norm{F}_{L^p(Y_3)}\lesssim K^C\mathbb{E}_v\norm{\prod_{i=1}^3\abs{F_{i,v_i}}^{1/3}}_{L^p(w_{B_R})}\leq K^C\mathbb{E}_v\norm{\prod_{i=1}^3\abs{F_{i,v_i}}^{1/3}}_{L^p(\R^3)}.
\end{align*}

Now by the definition of $\on{TD}(\delta)$, we can apply trilinear decoupling to obtain
\begin{align*}
    \norm{F}_{L^p(Y_3)}&\lesssim K^C \on{TD}(\delta)\mathbb{E}_v\prod_{i=1}^3\left(\sum_{\theta\in\mathcal{C}_{\delta^{-1/2}}(\alpha_i)}\norm{(\Pcal_\theta F)_{v_i}}_{L^p(\R^3)}^2\right)^{1/6}\\
    &\leq K^C\on{TD}(\delta)\mathbb{E}_v\left(\sum_{\theta\in\mathcal{C}_{\delta^{-1/2}}}\norm{\Pcal_\theta F}_{L^p(\R^3)}^2\right)^{1/2}.
\end{align*}
Note that we used the basic facts that translations don't affect the Fourier support of a function, they commute with the projection operator $\Pcal_\theta$, and they don't affect the $L^p$ norm. As the dependence on $v$ has been removed, we are left with the final inequality:
\begin{align}\label{eq broad estimate}
    \norm{F}_{L^p(Y_3)}
    \lesssim K^C\on{TD}(\delta)\left(\sum_{\theta\in\mathcal{C}_{\delta^{-1/2}}}\norm{\Pcal_\theta F}_{L^p(\R^3)}^2\right)^{1/2}.
\end{align}

Theorem \ref{thm trilinear linear reduction induction} follows by combining the three estimates (\ref{eq concentrated estimate}), (\ref{eq narrow estimate}), and (\ref{eq broad estimate}).

\begin{rmk}
For $n>3$, one can run the previous arguments without any difficulty, except that all possible lower-dimensional contributions should be taken into account. The cylindrical decoupling and parabolic rescaling argument still works for such narrow cases. The broad case remains the same, except that we apply general multilinear decoupling (see Section \ref{ch 2 sec 4}).
\end{rmk}

\section{Trilinear-to-linear reduction}\label{ch 4 sec 3}
We end this chapter by showing how to iterate Theorem \ref{thm trilinear linear reduction induction} to prove the following trilinear-to-linear reduction:

\begin{thm}[Trilinear-to-linear reduction]\label{thm trilinear linear reduction}
For all $\e>0$ we have:
\begin{align*}
    \on{D}_3(\delta)\lesssim_\e \delta^{-\e}\left(1+\sup_{\delta\leq\delta'\leq 1}\on{TD}(\delta')\right)
\end{align*}
\end{thm}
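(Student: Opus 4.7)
The plan is to iterate Theorem \ref{thm trilinear linear reduction induction} until the frequency scale reaches a constant, mirroring the 2D bilinear-to-linear reduction (Theorem \ref{bilinear linear thm}). Writing the inductive inequality as
\begin{align*}
    \on{D}_3(\delta) \leq C_1 K^C\,\on{TD}(\delta) + B\,\on{D}_3(\delta K^2),
\end{align*}
where $C_1, C$ are absolute constants and $B \defeq C_2(1+\on{D}_2(K^{-2}))$, and setting $T \defeq \sup_{\delta \leq \delta' \leq 1}\on{TD}(\delta')$, I would iterate $N$ times, with $N$ the smallest positive integer such that $\delta K^{2N} \geq 1$, to obtain
\begin{align*}
    \on{D}_3(\delta) \leq K^C\,T\sum_{j=0}^{N-1} B^j + B^N\,\on{D}_3(\delta K^{2N}).
\end{align*}
At the terminating scale $\delta K^{2N}$ is comparable to $1$, so $\on{D}_3(\delta K^{2N}) \lesssim 1$ by Remark \ref{rmk basic decoupling}; note also that $N \sim \log(\delta^{-1})/(2\log K)$.

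The crucial external ingredient will be the two-dimensional case of Theorem \ref{main thm original}, already established in Chapter \ref{ch 3}, which supplies $\on{D}_2(K^{-2}) \lesssim_{\e'} K^{\e'}$ for every $\e' > 0$, hence $B \leq C'_{\e'}\,K^{\e'}$. This yields
\begin{align*}
    B^N \leq (C'_{\e'})^N\,K^{N\e'} = \delta^{-(\log C'_{\e'})/(2\log K) - \e'/2}.
\end{align*}
Given $\e > 0$, first I would fix $\e' = \e/2$, then choose $K$ large enough (depending only on $\e'$, hence on $\e$) so that $(\log C'_{\e'})/(2\log K) \leq \e'/2$; this will force $B^N \lesssim \delta^{-\e'}$. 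The geometric sum is then bounded by $N B^{N-1} \lesssim \log(\delta^{-1})\,\delta^{-\e'}$. Substituting everything back, with $K^C$ now an $\e$-dependent constant, gives $\on{D}_3(\delta) \lesssim_\e \log(\delta^{-1})\,\delta^{-\e/2}\,(1+T) \lesssim_\e \delta^{-\e}\,(1+T)$, the logarithmic factor being absorbed into the extra $\delta^{-\e/2}$ slack.

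The only real delicacy, and the step requiring genuine care, is the order of parameter selection: the per-iteration multiplicative constant $B$ depends on $K$, while $K$ itself must be chosen in terms of $\e$. The resolution is to fix $\e'$ first, then $K$ large in terms of $\e'$ to absorb the compounded constants $(C'_{\e'})^N$, and only at the end to let $N$ be determined by $\delta$. This is precisely the parameter-selection pattern used in the proof of Theorem \ref{bilinear linear thm}; once the ordering is fixed, the rest of the argument is a routine geometric-sum calculation, essentially identical to the 2D case (with the added wrinkle that the 2D decoupling theorem is being invoked as a black box to control the narrow-case loss).
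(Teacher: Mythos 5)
Your proposal is correct and follows essentially the same route as the paper's proof: iterate Theorem \ref{thm trilinear linear reduction induction}, control the per-step multiplicative loss $B = C(1+\on{D}_2(K^{-2}))$ via the already-established two-dimensional decoupling, and choose $K$ large in terms of $\e$ so that the compounded loss $B^N$ with $N\sim \log(\delta^{-1})/\log K$ stays below $\delta^{-\e}$. The only cosmetic difference is that you track the geometric sum as $NB^{N-1}$ and absorb the resulting $\log(\delta^{-1})$ factor into the remaining $\delta^{-\e/2}$ slack, whereas the paper simply bounds the sum by a constant multiple of its largest term; your parameter-selection order (fix $\e$, then $\e'$, then $K$, then let $N$ be dictated by $\delta$) is exactly the one used in the paper.
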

\begin{proof}
For any large $K$ (to be chosen at the end of the argument) and $\delta < K^{-2}$, Theorem \ref{thm trilinear linear reduction induction} tells us:
\begin{align*}
    \on{D}_3(\delta)\leq CK^C\on{TD}(\delta)+C\left(1+\on{D}_2(K^{-2})\right)\on{D}_3(\delta K^2).
\end{align*}
We will iterate this inequality $N$ times. The first iteration looks like:
\begin{align*}
    \on{D}_3(\delta)&\leq CK^C\on{TD}(\delta)+C(1+\on{D}_2(K^{-2})) \left[CK^C\on{TD}(\delta K^2)+C\left(1+\on{D}_2(K^{-2})\right)\on{D}_3(\delta K^4)\right]\\
    &\leq CK^C\left[1+C(1+\on{D}_2(K^{-2}))\right] \sup_{\delta\leq\delta'\leq 1}\on{TD}(\delta') + C^2\left(1+D_2(K^{-2})\right)^2\on{D}_3(\delta K^4).
\end{align*}
After $N$ iterations like this, we get the inequality:
\begin{align*}
    \on{D}_3(\delta)\leq CK^C\sum_{j=0}^{N-1}C^j(1+\on{D}_2(K^{-2}))^j\sup_{\delta\leq\delta'\leq 1}\on{TD}(\delta')+C^N(1+\on{D}_2(K^{-2}))^N\on{D}_3(\delta K^{2N})
\end{align*}
We can bound the geometric series by its $N^{\text{th}}$ term and we have $\on{D}_3(\delta K^{2N})\lesssim 1$ as long as we assume $N$ satisfies $\delta K^{2N}\sim 1$. Thus we obtain
\begin{align*}
    \on{D}_3(\delta)\lesssim C^N(1+\on{D}_2(K^{-2}))^N\left(1+K^C\sup_{\delta\leq\delta'\leq 1}\on{TD}(\delta')\right).
\end{align*}
By two-dimensional decoupling, we know that $\on{D}_2(K^{-2})\leq C_\e K^{2\e}$ for all $\e>0$,  therefore $C^N(1+\on{D}_2(K^{-2}))^N\leq (CC_\e)^N K^{2N\e} \sim_\e \delta^{-\frac{\log (CC_\e)}{2\log K}-\e}$ since $\delta K^{2N}\sim 1$.

Therefore, for any fixed $\e>0$, if we choose $K$ large enough such that $\frac{\log(CC_\e)}{2\log(K)}<\e$, then we have $C^N(1+\on{D}_2(K^{-2}))^N \lesssim_\e \delta^{-2\e}$. Such choice of $K$ only depends on $\e$, so the $K^C$ factor can be absorbed into the constant. Finally, we are left with
\begin{align*}
    \on{D}_3(\delta)\lesssim_\e \delta^{-2\e}\left(1+\sup_{\delta\leq\delta'\leq 1}\on{TD}(\delta')\right),
\end{align*}
which is the desired inequality as $\e>0$ is arbitrary.
\end{proof}

The same argument from this chapter proves the general multilinear-to-linear reduction in all dimensions. The only adjustment that needs to be made is that one needs to take into account all lower-dimensional contributions, but this just takes the form of more narrow terms (or the argument can be organized like in \cite{guth_notes} with just one broad and one narrow term) and an additional induction on the dimension.

\chapter{An Alternative Proof}\label{ch 5}
In this chapter we present an alternative proof of the theorem due to Guth which approaches the problem using wave packets and incidence geometry explicitly. We believe this will provide additional insights into the original proof for three reasons:
\begin{enumerate}
    \item The roles played by multilinear Kakeya, $L^2$-decoupling and the multiscale framework are translated into simpler counting problems, which may be more intuitive than in the original proof. 
    \item We are able to directly prove the theorem for all $\frac{2n}{n-1}\leq p \leq \infty$ without any interpolation scheme and we can explicitly see how the exponent $p$ affects the induction process. This helps to explain why $\frac{2(n+1)}{n-1}$ and $\frac{2n}{n-1}$ are both important exponents in the problem and why the latter is much easier.
    \item We can can explicitly see the existence of \textit{a good scale} (or possibly many good scales) for any given function, which is essential but implicit in the original proof. The intuition is that tubes can't be too concentrated at all scales.
\end{enumerate}
For the sake of conciseness, we will use several technical simplifications without affecting the core ideas of the proof.

Our presentation is essentially the same as those in \cite{guth_talk} and \cite[Section 10.4]{demeter_fourier_2020} but just written in more generality and with some omitted details filled in. The reader may also consult \cite[Section 4]{guth_decoupling_2022} for an argument from the point of view of superlevel set estimates, which provides another perspective on how the basic ideas in the proof of decoupling are assembled.

\section{Overview}\label{ch 5 sec 1}
First, some notation. Fix the spatial scale $R=2^{2^s}$ with $s\in\N$ - we use $R$ instead of $\delta$ (which we used in previous chapters of this study guide) so that it is easier to compare with the arguments in the aforementioned sources. For $1\leq i\leq s$, we denote by $\Theta_i$ the partition of $\NN(R^{-2^{-i+1}})$ into almost rectangular boxes $\theta_i$ with dimensions $\sim R^{-2^{-i}}$ and $\sim R^{-2^{-i+1}}$. As before, we let $\mathcal{P}_\theta F$ be the Fourier projection of $F$ onto $\theta$.

For completeness, we restate the definitions of the linear and multilinear decoupling constants. We define $\on{D}(R)=\on{D}_n(R)$ to be the smallest constant such that for each $F: \R^n\rightarrow \C$  with $\on{supp}(\widehat{F})\subseteq\NN(R^{-1})$, we have
\begin{align*}
    \norm{F}_{L^p([-R, R]^n)}\leq \on{D}(R)\left(\sum_{\theta_1\in\Theta_1}\norm{\mathcal{P}_{\theta_1}F}_{L^p([-R, R]^n)}^2\right)^{1/2}.
\end{align*}
Fix $n$ sets $I_j\subseteq [-1,1]^n$ such that the corresponding subsets of the paraboloid $\{(x,\abs{x}^2)\mid x\in I_j\}$ are transversal (recall Definition \ref{def transversality}). Denote by $\Theta_i(I_j)$ those $\theta_i\in \Theta_i$ that lie inside $\NN_{I_j}(R^{-2^{-i+1}})$. We then define $\on{MD}(R)$ to be the smallest constant such that for each $F_j:\R^n\rightarrow\C$ with $\on{supp}(\widehat{F_j})\subseteq\NN_{I_j}(R^{-1})$, we have
\begin{align*}
    \norm{\prod_{j=1}^n F_j^{\frac{1}{n}}}_{L^p([-R, R]^n)}\leq \on{MD}(R) \prod_{j=1}^n\left(\sum_{\theta_{j,1}\in\Theta_1(I_j)}\norm{\mathcal{P}_{\theta_{j,1}}F_j}_{L^p([-R,R]^n)}^2\right)^{\frac{1}{2n}}.
\end{align*}
Note that we are ignoring the weights in these local decoupling inequalities for simplicity.

Given the multilinear-to-linear reduction from Section \ref{ch 4 sec 3}, we can focus on bounding $\on{MD}(R)$. As such, we fix $F_j$ with $\on{supp}(\widehat{F_j})\subseteq\NN_{I_j}(R^{-1})$, write $F\defeq\sum_{j=1}^n F_j$, and define:
\begin{align*}
    Q_{k,R,p} \defeq \frac{\norm{\prod_{j=1}^n F_j^{\frac{1}{n}}}_{L^p([-R,R]^n)}}{\prod_{j=1}^n\left(\sum_{\theta_{j,k}\in\Theta_k(I_j)}\norm{\mathcal{P}_{\theta_{j,k}}F_j}_{L^p([-R,R]^n)}^2\right)^{\frac{1}{2n}}}.
\end{align*}
Then our goal is essentially to obtain a bound of the form
\begin{align*}
    Q_{1,R,p}\lesssim_\e R^{C+\e}
\end{align*}
where $C$ is some constant depending on $p$. From now on, we will fix an exponent $p\geq 2$ and omit it from the notation, i.e., write $Q_{1,R,p}$ as $Q_{1,R}$.

Based on the wave packet decomposition (see Appendix \ref{appendix 1}), Guth's proof translates the problem into estimating the incidences of tubes at various scales. The same basic tools of multilinear Kakeya, $L^2$-orthogonality, and parabolic rescaling seen in Chapter \ref{ch 3} are also at the core of this proof, but are organized differently. The multilinear Kakeya inequality is used to control the tube incidences at each scale, $L^2$-decoupling is used to relate each pair of consecutive scales (i.e. $R$ and $R^{1/2}$) through local orthogonality of the tubes, and parabolic rescaling is used to translate all of these estimates back to one common scale (just like in the original proof).

Pigeonholing will be used to reduce the number of tubes that need to be considered at each scale and gain some uniformity in their properties. Unfortunately, this means the argument will involve many parameters at many scales, which can be difficult to follow. So we will record numbers to remind the reader of which scale we are working at during the proof. Since pigeonholing procedures are purely technical, we omit the proofs to be more concentrated on incidence geometry in the argument - interested readers can check \cite[Section 10.4]{demeter_fourier_2020} for outlines of the pigeonholing procedures.

Finally, as mentioned previously, we will use some technical simplifications in order to emphasize the geometric aspects of the problem and avoid technical details which don't provide much insight into the problem. In particular, we will assume that the wave packets are perfectly localized in space, i.e., $W_T(x)\approx\1_T(x)e(\xi_T\cdot x)$. This allows us to ignore tail effects of the wave packets and instead focus entirely on where the most significant interactions are occurring.

\section{The multiscale decomposition}\label{ch 5 sec 2}
In this section, we adopt uniformization assumptions based on pigeonholing to prune wave packets at each scale, so that some nice geometric structures emerge. To guide the reader step by step to our final result, we first present the procedure at scale $R$, then at scale $R^{\frac{1}{2}}$, and finally at a general scale.

\subsubsection{Scale $R$}
From now on, let us denote $[-R, R]^n$ by $S_1$ and let $\mathcal{S}_2=\mathcal{S}_2(S_1)$ be the partition of $S_1$ into cubes $S_2$ of side length $R^{1/2}$. For $j$ we use a wave packet decomposition of $F_j$ at scale $R$:
\begin{align*}
    F_j=\sum_{T_{j,1}\in\TT_{j,1}}w_{T_{j,1}}W_{T_{j,1}}.
\end{align*}
Here $\TT_{j,1}$ denotes the family of tubes with dimensions $R^{1/2}\times\cdots\times R^{1/2}\times R$ which is dual to caps $\theta_{j,1}\in\Theta_1(I_j)$. The wavepacket $W_{T_{j,1}}$ has Fourier transform supported in some $\theta_{j,1}$ and, given our technical simplification, is spatially localized to $T_{j,1}$.

To start, let $\TT_{j,1,S_1}$ denote the set of tubes in $\TT_{j,1}$ intersecting $S_1$, and define:
\begin{align}
    F_{j,S_1}^{(0)}=\sum_{T_{j,1}\in\TT_{j,1,S_1}}w_{T_{j,1}}W_{T_{j,1}}.
\end{align}

\begin{prop}[Pigeonholing at scale $R$]\label{pig R}
    There are  $M_{j,1}$,$U_{j,1}$,$\beta_{j,1}\in\N^+$, $h_{j,1}\in\R^+$, a collection $\Scal_2^\ast\subseteq\Scal_2$ of cubes $S_2$ with side length $R^{1/2}$, and families of tubes $\TT_{j,1, S_1}^\ast\subseteq\TT_{j,1,S_1}$ such that
    \begin{enumerate}
        \item (uniform weight) For each $T_{j,1}\in\TT_{j,1,S_1}^\ast$ we have $\abs{w_{T_{j,1}}}\sim h_{j,1}$.
        \item (uniform number of tubes per direction) There is a set of $\sim M_{j,1}$ caps in $\Theta_1(I_j)$ such that each $T_{j,1}\in \TT_{j,1,S_1}^\ast$ is dual to some $\theta_{j,1}$ in this set, with $\sim U_{j,1}$ tubes for each such cap $\theta_{j,1}$. In particular, the size of $\TT_{j,1, S_1}^\ast$ is $\sim M_{j,1}U_{j,1}$.
        \item (uniform number of tubes per cube) Each $S_2\in \Scal_2^\ast$ intersects $\sim M_{j,1}/\beta_{j,1}$ tubes from $\TT_{j,1,S_1}^\ast$. 
    \end{enumerate}
    
    Moreover,
    \begin{align*}
        \norm{\prod_{j=1}^n F_j^{\frac{1}{n}}}_{L^p(S_1)}\lessapprox \norm{\prod_{j=1}^n {F_j^{(1)}}^{\frac{1}{n}}}_{L^p\left(\cup_{S_2\in\Scal_2^\ast}S_2\right)},
    \end{align*}
    where for $1\leq j\leq n$,
    \begin{align*}
        F_j^{(1)}\defeq F_{j,S_1}^{(1)}=\sum_{T_{j,1}\in\TT_{j,1,S_1}^\ast}w_{T_{j,1}}W_{T_{j,1}}.
    \end{align*}
\end{prop}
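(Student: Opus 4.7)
The plan is a sequence of dyadic pigeonhole reductions, one to extract each uniformity statement, followed by a final pigeonhole over spatial cubes. Every quantity I need to sort (weights, tube counts per cap, tube counts per cube) admits a priori polynomial-in-$R$ upper bounds, and after discarding a polynomially small tail also a lower bound of the form $R^{-C}\cdot(\text{max})$. Hence at each step there are only $O(\log R)$ dyadic classes to consider, and keeping a dominant class costs only a polylog factor, which is exactly what the $\lessapprox$ in the conclusion allows.

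Fix $j\in\{1,\dots,n\}$ and proceed in three rounds. \emph{Round 1 (weights).} First throw away every $T_{j,1}\in\TT_{j,1,S_1}$ with $\abs{w_{T_{j,1}}}\leq R^{-K}\max_{T'}\abs{w_{T'}}$ for a large enough $K$; by the triangle inequality their aggregate contribution to $F_j$ on $S_1$ is $R^{-\Omega(K)}$-small and can be absorbed. Sort the surviving tubes into $O(\log R)$ dyadic classes by $\abs{w_T}$ and retain one class contributing the majority of the relevant $L^p$ mass; call its weight scale $h_{j,1}$. \emph{Round 2 (tubes per direction).} Count, for each cap $\theta_{j,1}\in\Theta_1(I_j)$, the number of surviving tubes dual to it, pigeonhole caps into $O(\log R)$ dyadic classes, and keep a dominant class to obtain $\sim M_{j,1}$ caps with $\sim U_{j,1}$ tubes each. \emph{Round 3 (tubes per cube).} Count $\#\{T\in\TT_{j,1,S_1}^{(2)}:T\cap S_2\neq\varnothing\}$ for each $S_2\in\Scal_2$, pigeonhole again dyadically, and keep the dominant class to produce a uniform count $\sim M_{j,1}/\beta_{j,1}$. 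Here $\TT_{j,1,S_1}^{(2)}$ denotes the family after Rounds 1 and 2.

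To make these reductions consistent across all $n$ factors, the plan is to carry out Rounds 1 and 2 for each $j$ independently (they touch only the wave packet side of $F_j^{(1)}$), and then do a joint Round 3. For the latter I would write
\[
\norm{\prod_{j=1}^n F_j^{1/n}}_{L^p(S_1)}^p=\sum_{S_2\in\Scal_2}\norm{\prod_{j=1}^n F_j^{1/n}}_{L^p(S_2)}^p,
\]
and dyadically pigeonhole the cubes $S_2$ according to the $n$-tuple of incidence counts $(\#\{T\in\TT_{j,1,S_1}^{(2)}:T\cap S_2\neq\varnothing\})_{j=1}^n$. This partitions $\Scal_2$ into at most $(\log R)^n$ classes, and a dominant class $\Scal_2^*$ realizes the three uniformity properties for every $j$ simultaneously while losing only polylog factors on the left-hand norm.

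The main thing that will need care is bookkeeping rather than any serious obstacle: I must check that each reduction preserves the uniformity gained in earlier rounds, and that the dominant dyadic class really captures a $(\log R)^{-O(1)}$ fraction of the original $L^p$ norm restricted to the retained cubes. Both points work because (i) discarding a subclass of tubes only decreases the relevant count and so cannot break an upper dyadic pigeonhole, (ii) the crude upper/lower bounds on the counts (polynomial in $R$) survive every restriction, and (iii) at every step one has an honest identity/triangle-inequality decomposition of the $L^p$ mass into $O(\log R)$ pieces, so the maximum is at least the average.
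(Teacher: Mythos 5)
Your plan is the standard dyadic pigeonholing argument, and it is the right one; the paper itself explicitly omits the proof of this proposition, deferring to \cite[Section 10.4]{demeter_fourier_2020}, so there is no paper-internal proof to compare against, but your outline matches the references. The sequence of reductions (discard polynomially small tails, dyadically bin weights, then caps, then cube counts, keep a dominant class each time) is exactly what is needed, and your observation that $\Scal_2^*$ should be obtained by binning cubes by the full $n$-tuple of incidence counts is correct and important.

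The one place where I would push you to be more careful is the phrase ``carry out Rounds 1 and 2 for each $j$ independently'' together with ``an honest identity/triangle-inequality decomposition of the $L^p$ mass.'' Decomposing $F_j=\sum_i F_{j,i}$ is an identity for $F_j$ but \emph{not} for the quantity you are estimating, which is $\norm{\prod_j \abs{F_j}^{1/n}}_{L^p}$, and a naive triangle inequality in $L^p$ applied to $F_j$ alone does not directly pigeonhole the product. The clean way to make your Rounds 1 and 2 rigorous is a \emph{pointwise} expansion: for each $x$ you have $\abs{F_j(x)}^{1/n}=\abs{\sum_i F_{j,i}(x)}^{1/n}\leq\big(\sum_i\abs{F_{j,i}(x)}\big)^{1/n}\leq\sum_i\abs{F_{j,i}(x)}^{1/n}$ (subadditivity of $t\mapsto t^{1/n}$), so
\begin{align*}
\prod_{j=1}^n\abs{F_j(x)}^{1/n}\leq\sum_{(i_1,\dots,i_n)}\prod_{j=1}^n\abs{F_{j,i_j}(x)}^{1/n},
\end{align*}
a sum of $O((\log R)^n)$ nonnegative terms. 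Taking $L^p(S_1)$ norms and applying the triangle inequality to this sum, the maximal summand captures a $(\log R)^{-O(n)}$ fraction of the left side, and \emph{that} single summand is where you read off the dominant weight class $h_{j,1}$ (and then the dominant cap class $M_{j,1},U_{j,1}$ in Round 2). In other words, the binning of all $n$ factors should be done jointly at the level of the product, not ``independently'' for each $j$; equivalently, you may do it one $j$ at a time, but at each step the $L^p$ mass being pigeonholed must be the norm of the full $n$-fold product with the already-pruned factors frozen, not the norm of $F_j$ alone. With that single clarification, and the usual remark that throwing away the small-weight tail is harmless because one may assume at the outset that $\norm{\prod_j\abs{F_j}^{1/n}}_{L^p(S_1)}$ exceeds some fixed negative power of $R$ (otherwise the target estimate is trivial), your proposal becomes a complete proof.
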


The uniformity properties allow us to obtain the following lower bound for the denominator of $Q_{1,R}$:
\begin{prop}\label{denom}
    We have
    \begin{align*}
        \prod_{j=1}^n\left(\sum_{\theta_{j,1}\in\Theta_1(I_j)}\norm{\Pcal_{\theta_{j,1}}F_j}_{L^p(S_1)}^2\right)^{\frac{1}{2n}}
        \gtrsim 
        R^{\frac{n+1}{2p}}\left(\prod_{j=1}^nM_{j,1}\right)^{\frac{1}{2n}}\left(\prod_{j=1}^nh_{j,1}\right)^{\frac{1}{n}}\left(\prod_{j=1}^nU_{j,1}\right)^{\frac{1}{np}}
    \end{align*}
\end{prop}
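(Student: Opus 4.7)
The plan is to evaluate each $j$-factor of the product on the left-hand side separately using the uniform structure furnished by Proposition \ref{pig R}. Since every term in each sum is non-negative, I can lower bound by restricting attention to the $\sim M_{j,1}$ ``good'' caps $\theta_{j,1}\in\Theta_1(I_j)$ that carry the pigeonholed tubes in $\TT_{j,1,S_1}^\ast$, simply discarding the others.

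For each such good cap, the wave packets $W_{T_{j,1}}$ with Fourier support in $\theta_{j,1}$ are adapted to parallel tubes of dimensions $R^{1/2}\times\cdots\times R^{1/2}\times R$ that tile $\R^n$, so their spatial supports are essentially disjoint. Under the simplification $W_T(x)\approx\1_T(x)e(\xi_T\cdot x)$ adopted at the end of Section \ref{ch 5 sec 1}, this translates into the heuristic identity
\begin{align*}
\norm{\Pcal_{\theta_{j,1}}F_j}_{L^p(S_1)}^p\approx\sum_{T_{j,1}\text{ dual to }\theta_{j,1}}\abs{w_{T_{j,1}}}^p\,\abs{T_{j,1}}.
\end{align*}
Plugging in $\abs{w_{T_{j,1}}}\sim h_{j,1}$, the count $\sim U_{j,1}$ of significant tubes per good cap, and the volume $\abs{T_{j,1}}=R^{(n+1)/2}$ yields
\begin{align*}
\norm{\Pcal_{\theta_{j,1}}F_j}_{L^p(S_1)}\gtrsim U_{j,1}^{1/p}\,h_{j,1}\,R^{(n+1)/(2p)}.
\end{align*}

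Squaring and summing over the $\sim M_{j,1}$ good caps then gives
\begin{align*}
\sum_{\theta_{j,1}\in\Theta_1(I_j)}\norm{\Pcal_{\theta_{j,1}}F_j}_{L^p(S_1)}^2\gtrsim M_{j,1}\,U_{j,1}^{2/p}\,h_{j,1}^2\,R^{(n+1)/p},
\end{align*}
and raising to the $\tfrac{1}{2n}$-power and multiplying over $j=1,\ldots,n$ assembles precisely the exponents claimed in Proposition \ref{denom}.

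The main subtlety will be that $\Pcal_{\theta_{j,1}}F_j$ \emph{a priori} involves all wave packets dual to $\theta_{j,1}$, not only those retained in $\TT_{j,1,S_1}^\ast$. However, the pigeonholing in Proposition \ref{pig R} is designed precisely to discard only wave packets of negligible coefficient or that miss $S_1$, so the pigeonholed family captures, up to losses absorbed by $\lessapprox$, the full projection; the one-sided bound above is then immediate. The ``almost disjointness'' of parallel wave packets in a single cap is routine under the simplification $W_T\approx\1_T e(\xi_T\cdot x)$, which is the only place where the technical localization assumption enters this estimate.
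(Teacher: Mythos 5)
Your proposal is correct and takes the same route as the paper: for each of the $\sim M_{j,1}$ good caps, use the spatial near-disjointness of the parallel tubes (under the $W_T\approx\1_T e(\xi_T\cdot x)$ simplification) to turn the $L^p$ norm into an $\ell^p$ sum, lower bound by keeping only the $\sim U_{j,1}$ pigeonholed tubes of height $\sim h_{j,1}$ and volume $R^{(n+1)/2}$, then square, sum over caps, and take geometric means over $j$. The only caveat is in your closing explanation: the one-sided bound does not need the pigeonholed family to ``capture the full projection'' --- it follows simply because, once the wave packets for a fixed cap are spatially disjoint, discarding terms from the non-negative $\ell^p$ sum $\sum_T\abs{w_T}^p\abs{T}$ can only decrease the right-hand side, which is all a lower bound requires.
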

\begin{proof}
    There are $\sim M_{j,1}$ caps $\theta_{j,1}\in\Theta_1(I_j)$ which each contributes $\sim U_{j,1}$ wave packets of magnitude $\sim h_{j,1}$. Thus, for each such $\theta_{j,1}$ we may write by spatial almost orthogonality:
    \begin{align*}
        \norm{\Pcal_{\theta_{j,1}}F_j}_{L^p(S_1)}\gtrsim R^{\frac{n+1}{2p}}h_{j,1}U_{j,1}^{\frac{1}{p}}.
    \end{align*}
    Hence:
    \begin{align*}
        \sum_{\theta_{j,1}\in\Theta_1(I_j)}
        \norm{\Pcal_{\theta_{j,1}}F_j}_{L^p(S_1)}^2
        \gtrsim R^{\frac{n+1}{p}}M_{j,1}h_{j,1}^2 U_{j,1}^{\frac{2}{p}}.
    \end{align*}
    Combine all estimates for $1\leq j\leq n$ to conclude.
\end{proof}

Unfortunately, we can't obtain a strong upper bound for the numerator of $Q_{1,R}$ yet. We only know from the pigeonholing that most of the mass of $\prod_{j=1}^n F_j^{\frac{1}{n}}$ is concentrated inside the smaller region covered by the squares from $\Scal_2^*$. Multilinear Kakeya allows us to estimate the number of squares in $\Scal_2^*$. For completeness, we restate the (endpoint) multilinear Kakeya inequality here.
\begin{lem}[Rescaled multilinear Kakeya, \cite{guth_endpoint_2010}]\label{multi Kakeya}
Suppose $\{\TT_j\}$ are $n$ transversal families of tubes with dimensions $\sim R^{1/2}\times\cdots\times R^{1/2}\times R$, then we have
\begin{align*}
    \norm{\prod_{j=1}^n\left(\sum_{T_j\in\TT_j}\1_{T_j}\right)}_{L^{\frac{q}{n}}(\R^n)}\lesssim R^{\frac{n^2}{2q}}\prod_{j=1}^n\abs{\TT_j}
\end{align*}
for all $q\geq \frac{n}{n-1}$.
\end{lem}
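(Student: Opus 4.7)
The plan is to reduce the claim to the endpoint multilinear Kakeya inequality of Guth, stated for tubes of unit cross-section, and then interpolate against the trivial $L^\infty$ estimate to cover the full range $q\geq n/(n-1)$. Throughout, write $G(x) \defeq \prod_{j=1}^n \sum_{T_j\in\TT_j}\1_{T_j}(x)$. Recall Guth's endpoint theorem \cite{guth_endpoint_2010}: if $\{\widetilde\TT_j\}_{j=1}^n$ are transversal families of tubes of unit cross-section in $\R^n$ (of arbitrary length), then
\begin{align*}
    \int_{\R^n}\prod_{j=1}^n\Bigl(\sum_{\widetilde T_j\in\widetilde\TT_j}\1_{\widetilde T_j}(y)\Bigr)^{\frac{1}{n-1}}dy \lesssim \prod_{j=1}^n|\widetilde\TT_j|^{\frac{1}{n-1}}.
\end{align*}

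The first step is an isotropic rescaling. Apply the dilation $\phi(x)\defeq R^{-1/2}x$, which sends each $T_j\in\TT_j$ to a tube $\widetilde T_j\defeq\phi(T_j)$ with $1\times\cdots\times 1$ cross-section and length $R^{1/2}$; since transversality is invariant under nonsingular linear maps, the resulting families $\widetilde\TT_j$ remain transversal with a comparable constant. Changing variables $y=R^{-1/2}x$ (contributing a Jacobian $R^{n/2}$) and then invoking Guth's endpoint inequality gives
\begin{align*}
    \norm{G}_{L^{1/(n-1)}(\R^n)}^{\frac{1}{n-1}}
    = R^{n/2}\int_{\R^n}\prod_{j=1}^n\Bigl(\sum_{\widetilde T_j}\1_{\widetilde T_j}(y)\Bigr)^{\frac{1}{n-1}}dy
    \lesssim R^{n/2}\prod_{j=1}^n|\TT_j|^{\frac{1}{n-1}}.
\end{align*}
Raising both sides to the $(n-1)$st power yields the claimed inequality at the endpoint $q=n/(n-1)$, since there $q/n=1/(n-1)$ and $n^2/(2q)=n(n-1)/2$.

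For $q>n/(n-1)$, interpolate between this endpoint bound and the trivial pointwise estimate $G(x)\leq\prod_j|\TT_j|$, which gives $\norm{G}_{L^\infty(\R^n)}\leq\prod_j|\TT_j|$. Choosing $\alpha\defeq n/(q(n-1))\in(0,1]$ so that $n/q=\alpha(n-1)$, log-convexity of the $L^p$ norms gives
\begin{align*}
    \norm{G}_{L^{q/n}(\R^n)}
    \leq \norm{G}_{L^{1/(n-1)}(\R^n)}^{\alpha}\norm{G}_{L^\infty(\R^n)}^{1-\alpha}
    \lesssim R^{\alpha\cdot n(n-1)/2}\prod_{j=1}^n|\TT_j|
    = R^{n^2/(2q)}\prod_{j=1}^n|\TT_j|,
\end{align*}
and the constraint $\alpha\leq 1$ is exactly $q\geq n/(n-1)$. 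The only nontrivial input is Guth's endpoint theorem; the rescaling and interpolation are routine bookkeeping. The one point that requires a moment's thought is checking that the endpoint inequality applies with tubes of finite length $R^{1/2}$, which follows by monotonicity from the standard formulation in terms of infinite tubes.
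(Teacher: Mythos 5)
Your argument is correct and is exactly the intended derivation: the paper itself does not prove this lemma but simply cites Guth's endpoint theorem and labels the statement ``Rescaled multilinear Kakeya,'' which is precisely what your isotropic-dilation-plus-interpolation argument unpacks. The endpoint computation, the Jacobian bookkeeping, the choice $\alpha = n/(q(n-1))$, and the verification that $\alpha\cdot n(n-1)/2 = n^2/(2q)$ all check out, and the trivial pointwise bound $G \leq \prod_j |\TT_j|$ correctly supplies the $L^\infty$ endpoint. One minor imprecision worth fixing: the assertion that ``transversality is invariant under nonsingular linear maps'' is false in general (a shear can collapse a transversal configuration). What you actually need here is much weaker and obviously true: the specific map $x\mapsto R^{-1/2}x$ is an isotropic dilation, which leaves every tube direction unchanged, so the transversality constant of the rescaled families $\{\widetilde\TT_j\}$ is identical to that of $\{\TT_j\}$. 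You should state it that way. It is also worth being explicit about which version of Guth's endpoint theorem you are invoking: the length-independent form for unit-cross-section tubes, $\int_{\R^n}\prod_j(\sum_a\1_{T_{j,a}})^{1/(n-1)}\lesssim \prod_j|\TT_j|^{1/(n-1)}$, with implicit constant depending only on $n$ and the transversality constant. This is what Guth proves (and what the single-tube-per-family example shows is sharp, with no loss in the tube length), and it is the form that makes your rescaling land exactly on $R^{n(n-1)/2}$ rather than something worse.
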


\begin{prop}[Counting cubes using multilinear Kakeya]\label{counting}
    We have
    \begin{align*}
        \abs{\Scal_2^*}^{n-1} \lesssim \left(\prod_{j=1}^n\beta_{j,1}\right)\left(\prod_{j=1}^nU_{j,1}\right).
    \end{align*}
\end{prop}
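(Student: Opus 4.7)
The plan is to apply the multilinear Kakeya inequality (Lemma \ref{multi Kakeya}) to the families $\mathbb{T}_{j,1,S_1}^{\ast}$ at the endpoint exponent $q=n/(n-1)$. This is the only choice that will produce the $(n-1)$-st power on $|\mathcal{S}_2^{\ast}|$: using $L^{q/n}=L^{1/(n-1)}$ on the left of Kakeya, the summation over $S_2\in\mathcal{S}_2^{\ast}$ contributes a factor of $|\mathcal{S}_2^{\ast}|$ which, after taking $(n-1)$-st powers to undo the quasi-norm, becomes $|\mathcal{S}_2^{\ast}|^{n-1}$.

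Before doing this, a harmless geometric reduction is needed to convert property (3) of Proposition \ref{pig R} (which says $S_2$ \emph{intersects} $\sim M_{j,1}/\beta_{j,1}$ tubes) into a \emph{pointwise} lower bound on the indicator sum. Because $S_2$ has side length $R^{1/2}$, the same as the short dimensions of each $T_{j,1}$, there is a constant $c=c(n)$ such that whenever $T_{j,1}\cap S_2\neq\varnothing$, the $c$-dilate $\tilde T_{j,1}$ about the axis of $T_{j,1}$ contains $S_2$ entirely. In particular, for every $S_2\in\mathcal{S}_2^{\ast}$ and every $x\in S_2$,
\begin{align*}
    \sum_{T_{j,1}\in\mathbb{T}_{j,1,S_1}^{\ast}}\mathbf{1}_{\tilde T_{j,1}}(x)\gtrsim \frac{M_{j,1}}{\beta_{j,1}}, \qquad 1\leq j\leq n.
\end{align*}
The enlarged families $\{\tilde T_{j,1}\}$ still have tube dimensions comparable to $R^{1/2}\times\cdots\times R^{1/2}\times R$ and are still $n$-transversal, so they satisfy the hypotheses of Lemma \ref{multi Kakeya}.

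Taking the product of the lower bounds over $j$, raising to the power $1/(n-1)$, integrating over $\bigcup_{S_2\in\mathcal{S}_2^{\ast}} S_2$, and using $|S_2|=R^{n/2}$ gives
\begin{align*}
    \int_{\R^n}\prod_{j=1}^n\Bigl(\sum_{T_{j,1}}\mathbf{1}_{\tilde T_{j,1}}\Bigr)^{1/(n-1)}
    \gtrsim |\mathcal{S}_2^{\ast}|\,R^{n/2}\prod_{j=1}^n\Bigl(\frac{M_{j,1}}{\beta_{j,1}}\Bigr)^{1/(n-1)}.
\end{align*}
On the other hand, Lemma \ref{multi Kakeya} at $q=n/(n-1)$ applied to the $\tilde{\mathbb{T}}_{j,1,S_1}^{\ast}$, and then raised to the $(n-1)$-st power, yields
\begin{align*}
    \Bigl(\int_{\R^n}\prod_{j=1}^n\Bigl(\sum_{T_{j,1}}\mathbf{1}_{\tilde T_{j,1}}\Bigr)^{1/(n-1)}\Bigr)^{n-1}
    \lesssim R^{n(n-1)/2}\prod_{j=1}^n |\mathbb{T}_{j,1,S_1}^{\ast}|
    \sim R^{n(n-1)/2}\prod_{j=1}^n M_{j,1}U_{j,1}.
\end{align*}
Combining the two displays, the factors $R^{n(n-1)/2}$ and $\prod_j M_{j,1}$ cancel cleanly, and we are left with $|\mathcal{S}_2^{\ast}|^{n-1}\lesssim\prod_{j=1}^n\beta_{j,1}\prod_{j=1}^nU_{j,1}$.

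The main obstacle is conceptual rather than technical: one must recognize that the correct Kakeya exponent is the endpoint $q=n/(n-1)$ and that the $(n-1)$-st power on the cube count arises naturally from inverting the $L^{1/(n-1)}$ quasi-norm. The only mild technicality, the discrepancy between "intersects" and "contains", is dispatched by the $O(1)$ dilation of each tube, which affects neither the multilinear Kakeya bound nor the transversality hypothesis.
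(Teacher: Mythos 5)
Your proof is correct and follows essentially the same approach as the paper: apply the endpoint multilinear Kakeya inequality at $q=n/(n-1)$ to $O(1)$-dilated tubes so that ``intersects $S_2$'' becomes ``contains $S_2$'', obtain a pointwise lower bound $\gtrsim M_{j,1}/\beta_{j,1}$ on each $S_2\in\Scal_2^*$, and cancel the $R^{n(n-1)/2}$ and $\prod_j M_{j,1}$ factors.
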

\begin{proof}
    Using Lemma \ref{multi Kakeya} by taking $q$ to be the strongest endpoint index $\frac{n}{n-1}$, $\TT_j$ to be $\TT_{j,1,S_1}^*$, $T_j$ to be $T_{j,1}$, and $\1_{T_j}$ to be $\1_{C_n T_j,1}$, where $C_n$ is chosen such that if $T_{j,1}$ intersects some $S_2\in\Scal_2^*$, then $C_n T_{j,1}$ covers $S_2$. It is not difficult to see that the function in the $L^{\frac{q}{n}}$-norm is at least $\prod_{j=1}^n(M_{j,1}/\beta_{j,1})$ on all $S_2\in\Scal_2^*$, so we have
    \begin{align*}
        \left(\abs{\Scal_2^*}\cdot R^{\frac{n}{2}}\right)^{\frac{n}{q}}\cdot\left(\prod_{j=1}^n\frac{M_{j,1}}{\beta_{j,1}}\right)
        \lesssim
        R^{\frac{n^2}{2q}}\prod_{j=1}^n\abs{\TT_{j,1,S_1}^*}
        =R^{\frac{n^2}{2q}}\left(\prod_{j=1}^n M_{j,1}U_{j,1}\right).
    \end{align*}
    Remember $q=\frac{n}{n-1}$ and eliminate common factors from both sides, then the result immediately follows.
\end{proof}

\subsubsection{Scale $R^{1/2}$}

To eventually achieve a setting at scale $\sim 1$, we will repeat the previous pigeonholing at scales smaller than $R$. In this subsection, we discuss the scale $R^{1/2}$. This is the natural next scale to study given the dimensions of the tubes in the previous wave packet decomposition, which is in turn inherited from the curvature of the paraboloid.

Recall that for $1\leq j\leq n$, the function $F_j^{(1)}$ is a sum of wave packets at scale $R$. Note also that $F_j^{(1)}$ has its Fourier transform supported on $\NN(R^{-1})$ and thus also in $\NN(R^{-1/2})$. Let us consider the wave packet decomposition of $F_j^{(1)}$ at scale $R^{1/2}$:
\begin{align*}
    F_j^{(1)}=\sum_{T_{j,2}\in\TT_{j,2}} w_{T_{j,2}}W_{T_{j,2}}.
\end{align*}

As before, here $\TT_{j,2}$ denotes the family of tubes with dimensions $R^{1/4}\times\cdots\times R^{1/4}\times R^{1/2}$ which is dual to caps $\theta_{j,2}\in\Theta_2(I_j)$. The wavepacket $W_{T_{j,2}}$ has Fourier transform supported in some $\theta_{j,2}$ and, given our technical simplification, is spatially localized to $T_{j,2}$.

For each $S_2\in \Scal_2^*$, let $\TT_{j,2,S_2}$ denote the set of tubes in $\TT_{j,2}$ intersecting $S_2$. For $x\in S_2$, we may adopt the approximation:
\begin{align}
    F_j^{(1)}(x) \approx \sum_{T_{j,2}\in\TT_{j,2,S_2}}w_{T_{j,2}}W_{T_{j,2}}(x).
\end{align}

\begin{prop}[Pigeonholing at scale $R^{1/2}$]
    We refine the collection $\Scal_2^*$ to get a smaller collection $\Scal_2^{**}$, so that for each $S_2\in\Scal_2^{**}$ the following hold:
    There are $M_{j,2}$, $U_{j,2}$, $\beta_{j,2}\in\N^+$, $h_{j,2}\in\R^+$ (all independent of $S_2$), a collection $\Scal_3^*(S_2)$ of cubes $S_3$ with side length $R^{1/4}$ inside $S_2$, and families of tubes $\TT_{j,2,S_2}^*\subseteq \TT_{j,2,S_2}$ such that
    \begin{enumerate}
        \item (uniform weight) For each $T_{j,2}\in\TT_{j,2,S_2}^*$ we have $\abs{w_{T_{j,2}}}\sim h_{j,2}$.
        \item (uniform number of tubes per direction) There is a set of $\sim M_{j,2}$ caps $\theta_{j,2}$ in $\Theta_2(I_j)$ such that each $T_{j,2}\in\TT_{j,2,S_2}$ is dual to some $\theta_{j,2}$ in this set, with $\sim U_{j,2}$ tubes for each such $\theta_{j,2}$. In particular, the size of $\TT_{j,2,S_2}$ is $\sim M_{j,2}U_{j,2}$.
        \item (uniform number of tubes per cube) Each $S_3\in\Scal_3^*(S_2)$ intersects $\sim M_{j,2}/\beta_{j,2}$ tubes from $\TT_{j,2,S_2}^*$.
    \end{enumerate}
    
    Moreover,
    \begin{align*}
        \norm{\prod_{j=1}^nF_j^{\frac{1}{n}}}_{L^p(S_1)}\lesssim
        \norm{\prod_{j=1}^n{F_j^{(2)}}^{\frac{1}{n}}}_{L^p(\cup_{S_3\in\Scal_3^*}S_3)},
    \end{align*}
    where for $1\leq j\leq n$,
    \begin{align*}
        F_j^{(2)}\defeq\sum_{S_2\in\Scal_2^{**}}\sum_{T_{j,2}\in\TT_{j,2,S_2}^*}w_{T_{j,2}}W_{T_{j,2}}
    \end{align*}
    and
    \begin{align*}
        \Scal_3^*\defeq\{S_3\in\Scal_3^*(S_2):S_2\in\Scal_2^{**}\}.
    \end{align*}
\end{prop}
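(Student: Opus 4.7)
The plan is to mimic the proof of the scale-$R$ pigeonholing (which we can treat as a black box produced for an arbitrary cube at an arbitrary scale), apply it individually inside every cube $S_2 \in \Scal_2^*$, and then perform a second round of pigeonholing across $\Scal_2^*$ to make the resulting parameters uniform in $S_2$.

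First I would freeze $S_2 \in \Scal_2^*$ and work with the localized wave packet expansion $F_j^{(1)}(x) \approx \sum_{T_{j,2} \in \TT_{j,2,S_2}} w_{T_{j,2}} W_{T_{j,2}}(x)$ valid for $x \in S_2$. The same three-step recipe used at scale $R$ now applies verbatim inside the cube $S_2$ (which plays the role of $S_1$), with the partition $\Scal_3(S_2)$ of $S_2$ into cubes of side length $R^{1/4}$ playing the role of $\Scal_2$: one pigeonholes the coefficients $|w_{T_{j,2}}|$ into dyadic bins to produce a common magnitude $h_{j,2}(S_2)$; one pigeonholes the caps $\theta_{j,2} \in \Theta_2(I_j)$ to produce $\sim M_{j,2}(S_2)$ directions each carrying $\sim U_{j,2}(S_2)$ tubes; and one pigeonholes $\Scal_3(S_2)$ to produce a subcollection $\Scal_3^*(S_2)$ of cubes each meeting $\sim M_{j,2}(S_2)/\beta_{j,2}(S_2)$ of the retained tubes. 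At the end of this step we obtain, for each $S_2$, a set $\TT_{j,2,S_2}^*$ and a polylogarithmic lower bound
\[
\norm{\prod_{j=1}^n (F_j^{(1)})^{1/n}}_{L^p(S_2)} \lessapprox \norm{\prod_{j=1}^n \Big(\sum_{T_{j,2} \in \TT_{j,2,S_2}^*} w_{T_{j,2}} W_{T_{j,2}}\Big)^{1/n}}_{L^p(\cup_{S_3 \in \Scal_3^*(S_2)} S_3)}.
\]

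The parameters $(M_{j,2}(S_2), U_{j,2}(S_2), \beta_{j,2}(S_2), h_{j,2}(S_2))$ still depend on $S_2$, so the next step is a global pigeonholing over $\Scal_2^*$. Each of these four parameters (for each $1 \le j \le n$) is a dyadic quantity taking at most $O(\log R)$ values; hence the total number of possible parameter tuples is at most $(\log R)^{C_n}$. Raising the above cube-by-cube inequality to the $p$-th power and summing over $S_2 \in \Scal_2^*$ produces, by the pigeonhole principle, a subcollection $\Scal_2^{**} \subseteq \Scal_2^*$ on which all four tuples of parameters are simultaneously constant (up to a factor of $2$) and which captures at least a $(\log R)^{-C_n}$-fraction of the $L^p$-mass. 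Combining this with the input bound from the scale-$R$ pigeonholing proposition gives
\[
\norm{\prod_{j=1}^n F_j^{1/n}}_{L^p(S_1)} \lessapprox \norm{\prod_{j=1}^n (F_j^{(2)})^{1/n}}_{L^p(\cup_{S_3 \in \Scal_3^*} S_3)},
\]
where $F_j^{(2)}$ and $\Scal_3^*$ are defined as in the statement.

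The main obstacle is purely bookkeeping: we must verify that restricting the wave packet sum from $\TT_{j,2,S_2}$ to $\TT_{j,2,S_2}^*$ and then removing the cubes $S_2 \notin \Scal_2^{**}$ can be done inside the product $\prod_j (\,\cdot\,)^{1/n}$ without losing more than a polylogarithmic factor, which is handled (as at scale $R$) by the standard observation that discarding wave packets below a threshold $\sim R^{-100}$ times the maximum can be absorbed into the $\lessapprox$ loss, and that the total number of dyadic pigeonholes is only $O(\log R)^{C_n}$. Once this is in place the final displayed estimate follows by inserting the cube-by-cube bound into the $\ell^p$ sum over $\Scal_2^{**}$ and reversing the order of summation with Minkowski. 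With $F_j^{(2)}$ and $\Scal_3^*$ in hand, we are then set up to iterate this procedure at scales $R^{1/4}, R^{1/8}, \ldots$, producing the full multiscale decomposition announced in the overview.
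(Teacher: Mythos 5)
The paper itself omits the proof of this proposition, remarking that pigeonholing procedures are purely technical and pointing the reader to \cite[Section 10.4]{demeter_fourier_2020}, so there is no in-text argument to compare against; your two-stage approach---apply the scale-$R$ pigeonholing recipe locally inside each $S_2\in\Scal_2^*$ (with $S_2$ playing the role of $S_1$ and $\Scal_3(S_2)$ the role of $\Scal_2$), then perform a second dyadic pigeonhole over $\Scal_2^*$ to uniformize the resulting $S_2$-dependent parameters at the cost of another $(\log R)^{O(1)}$ factor---is exactly the intended argument and is correct. The only point worth emphasizing, which you already flag, is that both pigeonholing rounds hinge on first discarding wave packets with coefficients below $R^{-O(1)}$ times the maximum (so that only $O(\log R)$ dyadic bins matter) and on the disjointness of the $S_2$'s so that the $p$-th powers of the local norms add exactly; the accumulated $(\log R)^{O(1)}$ loss per scale is then consistent with the $(\log R)^{O(\log\log R)}$ loss recorded in the summary inequality (\ref{r4}).
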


We now collect three estimates at this scale. The first two are analogous to the ones in Propositions \ref{denom} and \ref{counting}.

\begin{figure}[!ht]
        \centering
        \includegraphics[height=0.7\textwidth]{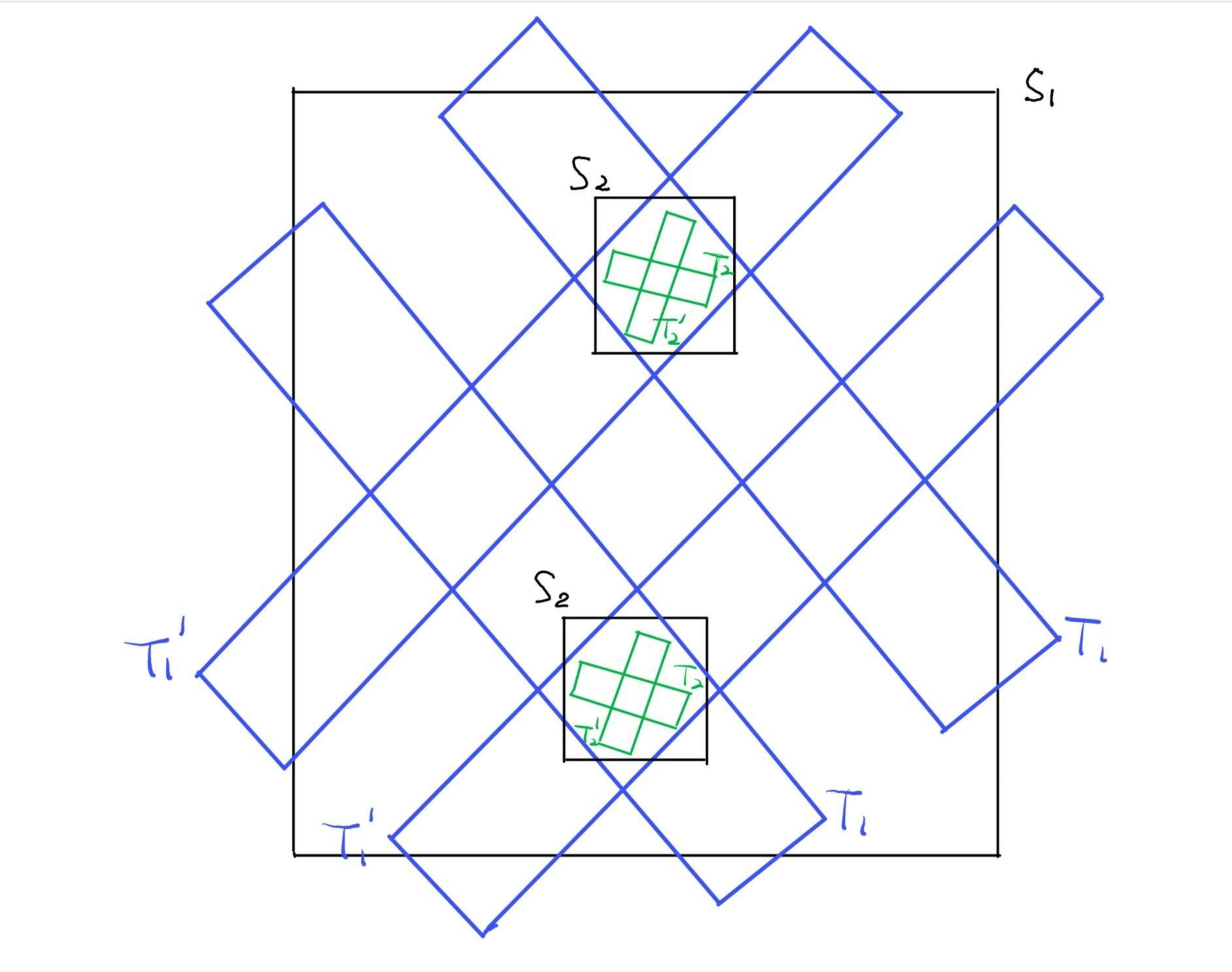}
        \caption{Transverse wave packets at scales $R$ and $R^{1/2}$}
        \label{two scales}
    \end{figure}

\begin{prop}
    We have
    \begin{align*}
        \prod_{j=1}^n\left(\sum_{\theta_{j,2}\in\Theta_2(I_j)}\norm{\Pcal_{\theta_{j,2}}F_j}_{L^p(S_1)}^2\right)^{\frac{1}{2n}}
        \gtrsim
        R^{\frac{n+1}{2p}\cdot\frac{1}{2}}\abs{\Scal_2^{**}}^{\frac{1}{p}}\left(\prod_{j=1}^nM_{j,2}\right)^{\frac{1}{2n}}\left(\prod_{j=1}^nh_{j,2}\right)^{\frac{1}{n}}\left(\prod_{j=1}^nU_{j,2}\right)^{\frac{1}{np}}.
    \end{align*}
\end{prop}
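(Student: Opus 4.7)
The plan is to mimic the tube-counting argument of Proposition \ref{denom}, but now performed inside each cube $S_2\in\Scal_2^{\ast\ast}$ and then assembled by summing $p$-th powers across the disjoint $S_2$. Under the standing simplification $W_{T_{j,2}}\approx\1_{T_{j,2}}e(\xi_{T_{j,2}}\cdot x)$, the wave packets dual to a common cap $\theta_{j,2}$ live on disjoint tubes, so $\abs{\Pcal_{\theta_{j,2}}F_j}$ agrees pointwise with the (uniform-modulus) indicator-function superposition of these tubes. Fixing $j$, a cube $S_2\in\Scal_2^{\ast\ast}$, and a cap $\theta_{j,2}$ in the active set attached to $S_2$, the pigeonholing furnishes $\sim U_{j,2}$ disjoint tubes dual to $\theta_{j,2}$ and localized to $S_2$, each of weight $\sim h_{j,2}$ and volume $R^{(n+1)/4}$, so that
\begin{align*}
\norm{\Pcal_{\theta_{j,2}}F_j}_{L^p(S_2)}^p \gtrsim U_{j,2}\,h_{j,2}^p\,R^{\frac{n+1}{4}}.
\end{align*}

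Next I would carry out one further pigeonholing step (losing only logarithmic factors absorbed into $\lessapprox$) to arrange that the $\sim M_{j,2}$ active caps form a single fixed set common to every $S_2\in\Scal_2^{\ast\ast}$. Since the $S_2$ are pairwise disjoint the $L^p(S_2)^p$ contributions simply add, yielding for each such common cap
\begin{align*}
\norm{\Pcal_{\theta_{j,2}}F_j}_{L^p(S_1)}^p \gtrsim \abs{\Scal_2^{\ast\ast}}\,U_{j,2}\,h_{j,2}^p\,R^{\frac{n+1}{4}}.
\end{align*}
Squaring and summing over the $\sim M_{j,2}$ common active caps then gives, for each $j$,
\begin{align*}
\sum_{\theta_{j,2}\in\Theta_2(I_j)} \norm{\Pcal_{\theta_{j,2}}F_j}_{L^p(S_1)}^2 \gtrsim M_{j,2}\bigl(\abs{\Scal_2^{\ast\ast}}\,U_{j,2}\,h_{j,2}^p\,R^{\frac{n+1}{4}}\bigr)^{2/p},
\end{align*}
and raising to the $1/(2n)$-th power and multiplying over $j=1,\dots,n$ reproduces the claimed lower bound. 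The $R$ exponent assembles as $\tfrac{n+1}{4}\cdot\tfrac{2}{p}\cdot\tfrac{1}{2n}\cdot n=\tfrac{n+1}{2p}\cdot\tfrac{1}{2}$; the analogous bookkeeping yields $\abs{\Scal_2^{\ast\ast}}^{1/p}$, $(\prod_jM_{j,2})^{1/(2n)}$, $(\prod_jh_{j,2})^{1/n}$, and $(\prod_jU_{j,2})^{1/(np)}$.

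The main technical obstacle is to carry out the two passages cleanly: (i) wave packets dual to a fixed cap but coming from distinct cubes $S_2$ must sit in essentially disjoint spatial regions so that $p$-th powers add, which leans on each tube having length $R^{1/2}$ comparable to the side length of $S_2$ together with our working assumption that $W_{T_{j,2}}\approx\1_{T_{j,2}}e(\xi_{T_{j,2}}\cdot x)$; and (ii) the set of $\sim M_{j,2}$ active caps must be made uniform across $S_2\in\Scal_2^{\ast\ast}$. Both are routine pigeonholing moves, but they deserve explicit mention because without them the tube counting collapses to the weaker exponent $M_{j,2}^{1/(np)}$ in place of $M_{j,2}^{1/(2n)}$, which fails to match the statement whenever $p>2$.
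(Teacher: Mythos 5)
Your first move is right and matches the paper: on each $S_2\in\Scal_2^{\ast\ast}$ and each cap $\theta_{j,2}$ in its active set, the $\sim U_{j,2}$ weight-$\sim h_{j,2}$ wave packets of spatial volume $\sim R^{(n+1)/4}$ give $\norm{\Pcal_{\theta_{j,2}}F_j}_{L^p(S_2)}^p\gtrsim R^{(n+1)/4}h_{j,2}^pU_{j,2}$, and the contributions from distinct cubes add since the $S_2$ are disjoint. But the crucial intermediate step --- ``carry out one further pigeonholing step to arrange that the $\sim M_{j,2}$ active caps form a single fixed set common to every $S_2\in\Scal_2^{\ast\ast}$'' --- is not a valid reduction. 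There is no guarantee that any cap is active in more than a single $S_2$: the active cap sets attached to distinct cubes can be entirely disjoint, in which case there are $\sim M_{j,2}\abs{\Scal_2^{\ast\ast}}$ caps each appearing in exactly one cube, and no pigeonholing produces a common family of size $\sim M_{j,2}$. The object on which one can legitimately pigeonhole is the \emph{multiplicity} $c(\theta)\defeq\abs{\{S_2\in\Scal_2^{\ast\ast}: \theta\sim S_2\}}$, and even then pigeonholing only isolates a dominant dyadic level $2^{m_0}$; you land in your ``common set'' scenario only when $2^{m_0}\sim\abs{\Scal_2^{\ast\ast}}$.

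What the paper does instead avoids this reduction entirely. It partitions the caps dyadically by $c(\theta)$, sums $\norm{\Pcal_{\theta}F_j}_{L^p(S_1)}^2\gtrsim\bigl(c(\theta)R^{(n+1)/4}h_{j,2}^pU_{j,2}\bigr)^{2/p}$ over \emph{all} levels (no pigeonholing to a single level, hence no logarithmic loss and the statement can be $\gtrsim$ rather than $\gtrapprox$), and then uses the inequality $c(\theta)^{2/p}\geq c(\theta)\,\abs{\Scal_2^{\ast\ast}}^{2/p-1}$, valid since $c(\theta)\leq\abs{\Scal_2^{\ast\ast}}$ and $p\geq 2$, together with the double-counting identity $\sum_\theta c(\theta)\sim M_{j,2}\abs{\Scal_2^{\ast\ast}}$. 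This yields $\sum_\theta c(\theta)^{2/p}\gtrsim M_{j,2}\abs{\Scal_2^{\ast\ast}}^{2/p}$ for an \emph{arbitrary} distribution of multiplicities. In particular your remark that without the uniformization the count ``collapses to $M_{j,2}^{1/(np)}$'' is not correct: the $p\geq 2$ power-mean step recovers the full $M_{j,2}^{1/(2n)}$ without ever forcing the active cap sets to coincide. To repair your argument, replace the ``common set'' reduction by the multiplicity function $c(\theta)$ and the elementary estimate $\sum_\theta c(\theta)^{2/p}\geq\abs{\Scal_2^{\ast\ast}}^{2/p-1}\sum_\theta c(\theta)$.
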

\begin{proof}
    We write $\theta_{j,2}\sim S_2$ if $\theta_{j,2}$ is one of the $\sim M_{j,2}$ caps contributing to $S_2$. Recall that each contributes $\sim U_2$ wave packets. For each $1\leq 2^m\leq \abs{\Scal_2^{**}}$, we let $\Theta_2(I_j,m)$ consist of those $\theta_{j,2}$ such that:
    \begin{align*}
        2^m\leq\abs{\{S_2\in\Scal_2^{**}:\theta_{j,2}\sim S_2\}} < 2^{m+1}.
    \end{align*}
    In the same way as the proof of Proposition \ref{denom}, we can write for each $S_2\in\Scal_2^{**}$ and each $\theta_{j,2}\sim S_2$:
    \begin{align*}
        \norm{\Pcal_{\theta_{j,2}}F_j}_{L^p(S_2)}^p\gtrsim R^{\frac{n+1}{2}\cdot\frac{1}{2}}h_{j,2}^pU_{j,2}.
    \end{align*}.
    Thus, if $\theta_{j,2}\in\Theta_2(I_1,m)$, then:
    \begin{align*}
        \norm{\Pcal_{\theta_{j,2}}F_j}_{L^p(S_1)}^p\gtrsim 2^mR^{\frac{n+1}{2}\cdot\frac{1}{2}}h_{j,2}^pU_{j,2}.
    \end{align*}
    It follows that:
    \begin{align*}
        \sum_{\theta_{j,2}\in\Theta_2(I_j)}\norm{\Pcal_{\theta_{j,2}}F_j}_{L^p(S_1)}^2
        &\gtrsim \sum_{\theta_{j,2}\in\Theta_2(I_j)} 2^{m\cdot\frac{2}{p}}R^{\frac{n+1}{p}\cdot\frac{1}{2}}h_{j,2}^2U_{j,2}^{\frac{2}{p}}\\
        &= R^{\frac{n+1}{p}\cdot\frac{1}{2}}h_{j,2}^2U_{j,2}^{\frac{2}{p}}\sum_{1\leq 2^m\leq\abs{\Scal_2^{**}}}\sum_{\theta_{j,2}\in\Theta_2(I_j,m)}2^{m\cdot\frac{2}{p}}\\
        &= R^{\frac{n+1}{p}\cdot\frac{1}{2}}h_{j,2}^2U_{j,2}^{\frac{2}{p}}\sum_{1\leq 2^m\leq\abs{\Scal_2^{**}}}2^{m\cdot\frac{2}{p}}\abs{\Theta_2(I_j,m)}\\
        (p\geq2)&\geq R^{\frac{n+1}{p}\cdot\frac{1}{2}}h_{j,2}^2U_{j,2}^{\frac{2}{p}}\cdot
        \abs{\Scal_2^{**}}^{-(1-\frac{2}{p})} \sum_{1\leq 2^m\leq\abs{\Scal_2^{**}}}2^m\abs{\Theta_2(I_j,m)}\\
        (\text{count twice})&\sim R^{\frac{n+1}{p}\cdot\frac{1}{2}}h_{j,2}^2U_{j,2}^{\frac{2}{p}}
        \cdot \abs{\Scal_2^{**}}^{-(1-\frac{2}{p})}\cdot M_{j,2}\abs{\Scal_2^{**}}\\
        &= R^{\frac{n+1}{p}\cdot\frac{1}{2}} \abs{\Scal_2^{**}}^{\frac{2}{p}} M_{j,2} h_{j,2}^2U_{j,2}^{\frac{2}{p}}.
    \end{align*}
    Combine all these estimates to conclude.
\end{proof}

\begin{prop}
    We have
    \begin{align*}
        \abs{\Scal_3^*}^{n-1}\lesssim\abs{\Scal_2^{**}}^{n-1}\left(\prod_{j=1}^n\beta_{j,2}\right)\left(\prod_{j=1}^nU_{j,2}\right).
    \end{align*}
\end{prop}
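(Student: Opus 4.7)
The plan is to mimic the proof of Proposition \ref{counting} at the smaller spatial scale $R^{1/2}$, but now localized inside each cube $S_2\in\Scal_2^{**}$, and then to aggregate the resulting local estimates into a global bound on $\abs{\Scal_3^*}$. The per-$S_2$ setup is exactly the same as the one-scale setup of Proposition \ref{counting}, with $R$ replaced by $R^{1/2}$: the pigeonholed tubes $\TT_{j,2,S_2}^*$ have dimensions $R^{1/4}\times\cdots\times R^{1/4}\times R^{1/2}$, their directions are still $n$-fold transversal because the caps $\theta_{j,2}$ lie inside the original transversal sets $I_j$, and each cube $S_3\in\Scal_3^*(S_2)$ meets $\sim M_{j,2}/\beta_{j,2}$ tubes from the $j$-th family.

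Concretely, I would fix $S_2\in\Scal_2^{**}$ and apply Lemma \ref{multi Kakeya} at the endpoint $q=\frac{n}{n-1}$ to the families $\{C_n T_{j,2}: T_{j,2}\in\TT_{j,2,S_2}^*\}$, where the constant $C_n$ is chosen so that every fattened tube that meets an $S_3\in\Scal_3^*(S_2)$ fully contains it. Since the integrand $\prod_{j=1}^n\sum_{T_{j,2}}\1_{C_nT_{j,2}}$ is $\gtrsim \prod_{j=1}^n M_{j,2}/\beta_{j,2}$ on each $S_3$, this gives
\begin{align*}
\Big(\prod_{j=1}^n\frac{M_{j,2}}{\beta_{j,2}}\Big)\bigl(\abs{\Scal_3^*(S_2)}\,R^{n/4}\bigr)^{n-1}
\lesssim R^{n(n-1)/4}\prod_{j=1}^n M_{j,2}U_{j,2},
\end{align*}
which, after the factor $R^{n(n-1)/4}$ cancels, simplifies to
\begin{align*}
\abs{\Scal_3^*(S_2)}^{n-1}\lesssim \prod_{j=1}^n\beta_{j,2}\prod_{j=1}^n U_{j,2}.
\end{align*}
This is the exact scale-$R^{1/2}$ analogue of the conclusion of Proposition \ref{counting}.

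The main (minor) obstacle is then to convert this per-$S_2$ inequality into a bound on the total $\abs{\Scal_3^*}=\sum_{S_2\in\Scal_2^{**}}\abs{\Scal_3^*(S_2)}$, since raising to the $(n-1)$-power and summing naively loses a power of $\abs{\Scal_2^{**}}$. The right tool is Jensen's inequality (equivalently, Hölder with exponents $(n-1,(n-1)/(n-2))$): for any nonnegative $a_{S_2}$,
\begin{align*}
\Big(\sum_{S_2\in\Scal_2^{**}}a_{S_2}\Big)^{n-1}\leq \abs{\Scal_2^{**}}^{n-2}\sum_{S_2\in\Scal_2^{**}}a_{S_2}^{n-1}.
\end{align*}
Applying this with $a_{S_2}=\abs{\Scal_3^*(S_2)}$ and inserting the local Kakeya estimate yields
\begin{align*}
\abs{\Scal_3^*}^{n-1}\leq \abs{\Scal_2^{**}}^{n-2}\sum_{S_2\in\Scal_2^{**}}\abs{\Scal_3^*(S_2)}^{n-1}\lesssim \abs{\Scal_2^{**}}^{n-1}\prod_{j=1}^n\beta_{j,2}\prod_{j=1}^n U_{j,2},
\end{align*}
which is the stated inequality. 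The same template clearly iterates to any finer scale, with the $\abs{\Scal_k^{**}}^{n-1}$ factor propagating through each application of Jensen.
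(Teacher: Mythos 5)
Your proof is correct and follows essentially the same approach as the paper: apply the rescaled multilinear Kakeya inequality on each $S_2\in\Scal_2^{**}$ exactly as in Proposition \ref{counting} to get the per-cube bound $\abs{\Scal_3^*(S_2)}^{n-1}\lesssim\prod_j\beta_{j,2}\prod_j U_{j,2}$, and then aggregate over $S_2$. The only cosmetic difference is in the aggregation: you invoke Jensen/power-mean explicitly, while the paper simply takes the $\ell^{1/(n-1)}$-quasinorm of both sides (which is a shorter route to the same end, since the right-hand side is a constant independent of $S_2$ by the uniformization from pigeonholing).
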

\begin{proof}
    Using Lemma \ref{multi Kakeya} in the same way as in the proof of Proposition \ref{counting} shows that for each $S_2\in\Scal_2^{**}$ we have:
    \begin{align*}
        \abs{\Scal_3^*(S_2)}^{n-1}\lesssim\left(\prod_{j=1}^n\beta_{j,2}\right)\left(\prod_{j=1}^nU_{j,2}\right).
    \end{align*}
    Taking the $\ell^{\frac{1}{n-1}}$-norm of both sides over all $S_2\in\Scal_2^{**}$ yields the result.
\end{proof}

The third estimate relates the parameters from scales $R$ and $R^{\frac{1}{2}}$ through local $L^2$ orthogonality.
\begin{prop}[Local orthogonality]\label{local_orthogonality}
    We have
    \begin{align*}
        \frac{\prod_{j=1}^n h_{j,2}}{\prod_{j=1}^n h_{j,1}}\lesssim
        R^{\frac{n(n-1)}{8}}\frac{1}{\left(\prod_{j=1}^n\beta_{j,1}\right)^{\frac{1}{2}}} \frac{1}{\left(\prod_{j=1}^n U_{j,2}\right)^{\frac{1}{2}}}\left(\frac{\prod_{j=1}^nM_{j,1}}{\prod_{j=1}^nM_{j,2}}\right)^{\frac{1}{2}}.
    \end{align*}
\end{prop}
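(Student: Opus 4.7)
The plan is to apply local $L^2$ orthogonality at scale $R^{1/2}$ to the function $F_j^{(1)}$ on each cube $S_2\in\Scal_2^{**}$ and then extract the claim by counting wave packets using the uniformity already built in by the two rounds of pigeonholing. Since $\on{supp}(\widehat{F_j^{(1)}})\subseteq\NN_{I_j}(R^{-1})\subseteq\NN_{I_j}(R^{-1/2})$, the caps $\theta_{j,2}$ are pairwise $\gtrsim R^{-1/4}$-separated, and $S_2$ has side $R^{1/2}$, the standard Bessel/uncertainty-principle bound yields
\[
\sum_{\theta_{j,2}}\norm{\Pcal_{\theta_{j,2}}F_j^{(1)}}_{L^2(S_2)}^2 \;\lesssim\; \norm{F_j^{(1)}}_{L^2(w_{S_2})}^2.
\]

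To bound the right-hand side from above I would use Proposition \ref{pig R}(3): the cube $S_2$ meets $\sim M_{j,1}/\beta_{j,1}$ tubes from $\TT_{j,1,S_1}^*$, and parallel tubes at scale $R$ are spatially separated, so these tubes all lie in distinct directions $\theta_{j,1}$. Because the $\theta_{j,1}$-caps are $R^{-1/2}$-separated and $S_2$ is precisely at the critical dual scale $R^{1/2}$, wave packets in different directions are essentially $L^2$-orthogonal on $S_2$. Combined with $\abs{w_{T_{j,1}}}\sim h_{j,1}$ and $\abs{T_{j,1}\cap S_2}\sim R^{n/2}$ (our localization simplification), this gives
\[
\norm{F_j^{(1)}}_{L^2(w_{S_2})}^2 \;\lesssim\; \frac{M_{j,1}}{\beta_{j,1}}\,h_{j,1}^2\,R^{n/2}.
\]
For the left-hand side I would bound from below using the scale-$R^{1/2}$ pigeonholing: for each of the $\sim M_{j,2}$ active caps $\theta_{j,2}$ there are $\sim U_{j,2}$ tubes $T_{j,2}\in\TT_{j,2,S_2}^*$ dual to it, each with amplitude $\sim h_{j,2}$ and $\abs{T_{j,2}\cap S_2}\sim R^{(n+1)/4}$. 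Parallel wave packets in a common $\theta_{j,2}$ tile $S_2$ and are orthogonal by construction, so
\[
\sum_{\theta_{j,2}}\norm{\Pcal_{\theta_{j,2}}F_j^{(1)}}_{L^2(S_2)}^2 \;\gtrsim\; M_{j,2}\,U_{j,2}\,h_{j,2}^2\,R^{(n+1)/4}.
\]

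Plugging the last two estimates into the orthogonality inequality and rearranging gives $h_{j,2}^2/h_{j,1}^2\lesssim \bigl(M_{j,1}/(\beta_{j,1}M_{j,2}U_{j,2})\bigr)R^{(n-1)/4}$; taking the geometric mean over $j=1,\ldots,n$ produces exactly the stated inequality with its $R^{n(n-1)/8}$ factor. The main technical obstacle is the simultaneous use of local $L^2$ orthogonality at two different scales on the same cube: one must check that $S_2$ of side $R^{1/2}$ is large enough to produce orthogonality for the $\theta_{j,1}$-partition (which it barely is, being at the critical scale) and for the $\theta_{j,2}$-partition (which it is comfortably), and that passing between $\norm{\cdot}_{L^2(S_2)}$ and $\norm{\cdot}_{L^2(w_{S_2})}$ only costs constants because the weight $w_{S_2}$ decays rapidly. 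Minor logarithmic losses inherited from earlier pigeonholings get absorbed into the $\lesssim$ and do not disturb the final bookkeeping.
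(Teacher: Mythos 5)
Your proof is correct and follows essentially the same approach as the paper: compute $\norm{F_j^{(1)}}_{L^2}$ on each $S_2$ two ways (via scale-$R$ wave packets for an upper bound and scale-$R^{1/2}$ wave packets for a lower bound), then compare and multiply over $j$. The only cosmetic difference is that you route the lower bound through $\sum_{\theta_{j,2}}\norm{\Pcal_{\theta_{j,2}}F_j^{(1)}}_{L^2(S_2)}^2$ and an explicit Bessel inequality, whereas the paper writes the two-sided approximate orthogonality for $\norm{F_j^{(1)}}_{L^2(S_2)}$ directly; the content, scalings, and final algebra are identical.
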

\begin{proof}
    Recall that on each $S_2\in\Scal_2^{**}$, we can represent $F_j^{(1)}$ either using wave packets at scale $R$:
    \begin{align}\label{the first sum}
        F_j^{(1)}(x)\approx \sum_{\stackrel{T_{j,1}\in\TT_{j,1,S_1}^*}{T_{j,1}\cap S_2\neq \varnothing}}w_{T_{j,1}}W_{T_{j,1}}(x),\quad x\in S_2,
    \end{align}
    or using wave packets at scale $R^{1/2}$:
    \begin{align}\label{the second sum}
        F_j^{(1)}(x)\approx \sum_{T_{j,2}\in\TT_{j,2,S_2}}w_{T_{j,2}}W_{T_{j,2}}(x), \quad x\in S_2.
    \end{align}
    The wave packets in (\ref{the first sum}) are almost orthogonal on $S_2$. To show this, let $\eta$ be a smooth approximation of $\1_{S_2}$ with $\on{supp}(\eta) \subseteq B(0,1/(10R^{1/2}))$. Then the support of $\widehat{\eta W_{T_{j,1}}}$ is only slightly larger than the support of $\widehat{W_{T_{j,1}}}$ and therefore we can write:
    \begin{align*}
        \norm{F_j^{(1)}}_{L^2(S_2)}\approx 
        \left(\sum_{\stackrel{T_{j,1}\in\TT_{j,1,S_1}^*}{T_{j,1}\cap S_2\neq \varnothing}}R^{\frac{n}{2}}\abs{w_{T_{j,1}}}^2\right)^{\frac{1}{2}}\approx R^{\frac{1}{2}\cdot\frac{n}{2}}h_{j,1}\left(\frac{M_{j,1}}{\beta_{j,1}}\right)^{\frac{1}{2}}.
    \end{align*}
    The wave packets $W_{T_{j,2}}$ in (\ref{the second sum}) are almost orthogonal on $S_2$ by construction, so we also have:
    \begin{align*}
        \norm{F_j^{(1)}}_{L^2(S_2)}&\approx \left(\sum_{T_{j,2}\in\TT_{j,2,S_2}}R^{\frac{n+1}{2}\cdot\frac{1}{2}}\abs{w_{T_{j,2}}}^2\right)^{\frac{1}{2}}\\
        &\geq R^{\frac{1}{2}\cdot(\frac{n+1}{2}\cdot\frac{1}{2})} \left(\sum_{T_{j,2}\in\TT_{j,2,S_2}^*}\abs{w_{T_{j,2}}}^2\right)^{\frac{1}{2}}\\
        &\approx R^{\frac{1}{2}\cdot(\frac{n+1}{2}\cdot\frac{1}{2})} h_{j,2}(M_{j,2}U_{j,2})^{\frac{1}{2}}.
    \end{align*}
    Combine all these estimates to conclude.
\end{proof}

\subsubsection{The multiscale decomposition}
All that remains is to iterate this procedure at smaller scales. The following result summarizes the process and the relevant estimates at each step.

\begin{prop}
    For each $1\leq k\leq s$, there are $M_{j,k}$, $U_{j,k}$, $\beta_{j,k}\in\N^+$, $h_{j,k}\in\R^+$, two collections $\Scal_k^{**}\subseteq\Scal_k^*$ of cubes $S_k\subseteq S_1=[-R,R]^n$ with side length $\sim R^{2^{-k+1}}$(we let $\Scal_1^{**}=\Scal_1^*=\{S_1\}$ and also include a collection $\Scal_{s+1}^*$ for convenience), and families $\TT_{j,k}$ of tubes with dimensions $R^{2^{-k}}\times\cdots\times R^{2^{-k}}\times R^{2^{-k+1}}$ which is dual to caps $\theta_{j,k}\in\Theta_k$ such that
    \begin{equation}\label{r1}
        \prod_{j=1}^n\left(\sum_{\theta_{j,k}\in\Theta_k(I_j)}\norm{\Pcal_{\theta_{j,k}}F_j}_{L^p(S_1)}^2\right)^{\frac{1}{2n}}
        \gtrsim R^{\frac{n+1}{2^kp}}\abs{\Scal_k^{**}}^{\frac{1}{p}}\left(\prod_{j=1}^n M_{j,k}\right)^{\frac{1}{2n}}\left(\prod_{j=1}^nh_{j,k}\right)^{\frac{1}{n}}\left(\prod_{j=1}^nU_{j,k}\right)^{\frac{1}{np}}
    \end{equation}
    \begin{equation}\label{r2}
        \abs{\Scal_{k+1}^*}^{n-1}\lesssim \abs{\Scal_k^*}^{n-1}\left(\prod_{j=1}^n\beta_{j,k}\right)\left(\prod_{j=1}^nU_{j,k}\right)
    \end{equation}
    \begin{equation}\label{r3}
        \frac{\prod_{j=1}^nh_{j,k}}{\prod_{j=1}^nh_{j,k-1}}
        \lesssim R^{\frac{n(n-1)}{2^{k+1}}}
        \frac{1}{\left(\prod_{j=1}^n\beta_{j,k-1}\right)^{\frac{1}{2}}} \frac{1}{\left(\prod_{j=1}^n U_{j,k}\right)^{\frac{1}{2}}}\left(\frac{\prod_{j=1}^nM_{j,k-1}}{\prod_{j=1}^nM_{j,k}}\right)^{\frac{1}{2}}
    \end{equation}
    \begin{equation}\label{r4}
        \norm{\prod_{j=1}^nF_j^{\frac{1}{n}}}_{L^p(S_1)}\lesssim (\log R)^{O(\log\log R)}
        \norm{\prod_{j=1}^n{F_j^{(k)}}^{\frac{1}{n}}}_{L^p(\cup_{S_{k+1}\in\Scal_{k+1}^*}S_{k+1})}.
    \end{equation}
    Also, on each $S_{k+1}\in\Scal_{k+1}^*$, we have 
    \begin{align*}
        F_j^{(k)}\approx \sum_{T_{j,k}\in\TT_{j,k}}w_{T_{j,k}}W_{T_{j,k}},
    \end{align*}
    where the sum contains $\sim M_{j,k}/\beta_{j,k}$ tubes $T_{j,k}$ and $\abs{w_{T_{j,k}}}\sim h_{j,k}$.
\end{prop}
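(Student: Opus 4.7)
The proof proceeds by induction on $k$, iterating the pigeonholing procedure carried out explicitly at scales $R$ and $R^{1/2}$ in the previous subsections. The base case $k = 1$ is exactly Proposition ``Pigeonholing at scale $R$'' together with Propositions \ref{denom} and \ref{counting}, which give (r1), (r2) and (r4) at scale $R$; estimate (r3) is vacuous for $k = 1$ if we set $h_{j,0} = M_{j,0} = 1$. The case $k = 2$ has been verified above, and (r3) is Proposition \ref{local_orthogonality}. The only thing to check is that the inductive step from $k$ to $k+1$ is a literal copy of the step from $1$ to $2$.

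For the inductive step, assume the statement at scale $k$. On each $S_{k+1} \in \Scal_{k+1}^{*}$, the function $F_j^{(k)}$ has Fourier support in $\NN_{I_j}(R^{-1}) \subseteq \NN_{I_j}(R^{-2^{-k}})$, so it admits a wave packet decomposition at the new scale $R^{2^{-k}}$ using tubes $T_{j,k+1} \in \TT_{j,k+1}$ of dimensions $R^{2^{-k-1}} \times \cdots \times R^{2^{-k-1}} \times R^{2^{-k}}$ dual to caps $\theta_{j,k+1} \in \Theta_{k+1}(I_j)$. Restricting to tubes meeting $S_{k+1}$ and performing the same dyadic pigeonholing as before produces parameters $M_{j,k+1}, U_{j,k+1}, \beta_{j,k+1}, h_{j,k+1}$, a refinement $\Scal_{k+1}^{**} \subseteq \Scal_{k+1}^{*}$, and a further collection $\Scal_{k+2}^{*}$ of cubes of side length $R^{2^{-k}}$ on which the tube counts are uniform. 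Estimates (r1) and (r2) at level $k+1$ are then verbatim repeats of the proofs given at scale $R^{1/2}$, since the only input is the uniformity of weights, the uniform number of tubes per direction, and the multilinear Kakeya inequality applied on each $S_{k+1}$ with the rescaled tube dimensions.

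Estimate (r3) is obtained from local $L^2$ orthogonality on each $S_{k+1} \in \Scal_{k+1}^{**}$, exactly as in Proposition \ref{local_orthogonality}: computing $\norm{F_j^{(k)}}_{L^2(S_{k+1})}$ in two ways, once via the scale-$k$ wave packet representation (which becomes almost orthogonal on $S_{k+1}$ after multiplying by a Schwartz bump whose Fourier support has radius $\ll R^{-2^{-k}}$) and once via the scale-$(k+1)$ representation (which is almost orthogonal by construction), then equating the two expressions in the uniform parameters. The factor $R^{n(n-1)/2^{k+1}}$ in (r3) arises from the difference $R^{(n+1) 2^{-k}/2}$ versus $R^{n \cdot 2^{-k+1}/2}$ in the spatial volumes of the two families of tubes, exactly as in the case $k = 1 \to 2$.

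Finally, (r4) follows by accumulating the loss across scales. Each pigeonholing step refines to functions supported on a smaller collection of cubes, losing at most a polylog factor: the four parameters $M_{j,k}, U_{j,k}, \beta_{j,k}, h_{j,k}$ each range over $O(\log R)$ dyadic values, so we lose a factor of $O((\log R)^C)$ at each of the $s = \log_2\log_2 R$ scales, giving the total bound $(\log R)^{O(\log\log R)}$. The main bookkeeping obstacle is verifying that the definition of $F_j^{(k)}$ can be updated coherently across scales — in particular, that when we pass from $F_j^{(k)}$ to $F_j^{(k+1)}$ by discarding wave packets outside $\Scal_{k+2}^{*}$, the scale-$k$ representation used in the $L^2$-orthogonality argument is not disturbed. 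This is handled by our technical simplification $W_T \approx \mathbbm{1}_T e(\xi_T \cdot x)$, which makes the wave packet localizations exact and decouples the refinements at different scales; the fully rigorous version with Schwartz tails requires only standard rapid-decay bookkeeping absorbed into the constant $C$.
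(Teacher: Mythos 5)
Your proposal is correct and matches the paper's own (largely implicit) proof: after doing the $k=1$ and $k=2$ cases explicitly, the paper states this proposition with only the remark ``All that remains is to iterate this procedure at smaller scales,'' and your induction is precisely that iteration written out, including the source of the $R^{n(n-1)/2^{k+1}}$ exponent in (r3) and the accumulation of $O((\log R)^{O(1)})$ losses over $s = O(\log\log R)$ steps to give (r4). The small convention you adopt to make (r3) formally meaningful at $k=1$ (setting $h_{j,0}=M_{j,0}=1$) is harmless, since the corollary only invokes (r3) at indices $\geq 2$.
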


Recall that $R=2^{2^s}$. Thus, when $k=s$, the tubes $T_{j,s}\in \TT_{j,s}$ are almost cubes with diameter $\sim 1$, and $\abs{\Theta_s(I_j)}\lesssim 1$ for $1\leq j\leq n$. Thus we can simply use the triangle inequality to obtain the (now essentially sharp) inequality:
\begin{equation}\label{trivial}
    \norm{F_j^{(s)}}_{L^\infty}\lesssim h_{j,s}.
\end{equation}
Using this, we can finally obtain a strong upper bound for $\norm{\prod_{j=1}^n F_j^{\frac{1}{n}}}_{L^p(S_1)}$, and thus also for $Q_{1,R}$.

\begin{cor}
    We have:
    \begin{align*}
        \norm{\prod_{j=1}^nF_j^{\frac{1}{n}}}_{L^p(S_1)}\lesssim (\log R)^{O(\log\log R)}\left(\prod_{j=1}^n h_{j,s}\right)^{\frac{1}{n}}\abs{\Scal_{s+1}^*}^{\frac{1}{p}},
    \end{align*}
    and for each $1\leq k\leq s-1$,
    \begin{align}\label{main}
        Q_{k,R}\lesssim (\log R)^{O(\log\log R)}R^{\frac{n-1}{2^{k+1}}-\frac{n+1}{2^kp}}\frac{\left(\prod_{j=1}^nU_{j,k}\right)^{\frac{1}{(n-1)p}-\frac{1}{np}}}{\prod_{l=k+1}^s\left(\prod_{j=1}^nU_{j,l}\right)^{\frac{1}{2n}-\frac{1}{(n-1)p}}}
        \frac{\left(\prod_{j=1}^n\beta_{j,s}\right)^{\frac{1}{(n-1)p}}}{\prod_{l=k}^{s-1}\left(\prod_{j=1}^n\beta_{j,l}\right)^{\frac{1}{2n}-\frac{1}{(n-1)p}}}.
    \end{align}
\end{cor}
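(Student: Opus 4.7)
The plan is to combine the upper bound for the numerator obtained by driving the pigeonholed decomposition all the way down to the terminal scale $k=s$ with the lower bound (\ref{r1}) on the denominator at scale $k$, and then eliminate every intermediate parameter by telescoping (\ref{r3}) and iterating (\ref{r2}). For Part (1), I first apply (\ref{r4}) with $k=s$ to replace $\norm{\prod_j F_j^{1/n}}_{L^p(S_1)}$ by $\norm{\prod_j {F_j^{(s)}}^{1/n}}_{L^p(\cup_{S_{s+1}\in\Scal_{s+1}^*}S_{s+1})}$. At the terminal scale the tubes $T_{j,s}$ have diameter $\sim 1$ and $\abs{\Theta_s(I_j)}\lesssim 1$, so (\ref{trivial}) gives the pointwise bound $\abs{F_j^{(s)}}\lesssim h_{j,s}$. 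Each $S_{s+1}\in\Scal_{s+1}^*$ has unit volume, so the support has measure $\sim \abs{\Scal_{s+1}^*}$, and Part (1) follows.

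For Part (2), dividing the Part (1) bound by (\ref{r1}) applied at scale $k$ gives
\[
Q_{k,R}\lesssim (\log R)^{O(\log\log R)}\, R^{-\frac{n+1}{2^k p}}\, \frac{\abs{\Scal_{s+1}^*}^{1/p}}{\abs{\Scal_k^{**}}^{1/p}}\cdot\Bigl(\prod_{j=1}^n \tfrac{h_{j,s}}{h_{j,k}}\Bigr)^{1/n}\cdot\Bigl(\prod_{j=1}^n M_{j,k}\Bigr)^{-\frac{1}{2n}}\Bigl(\prod_{j=1}^n U_{j,k}\Bigr)^{-\frac{1}{np}}.
\]
I would then telescope (\ref{r3}) from $l=k+1$ to $l=s$: the ratio $\prod_j M_{j,l-1}/\prod_j M_{j,l}$ telescopes to $\prod_j M_{j,k}/\prod_j M_{j,s}$, which, raised to the $1/n$ power and paired with the stand-alone $\bigl(\prod M_{j,k}\bigr)^{-1/2n}$, eliminates the $M_{j,k}$-dependence, while $\bigl(\prod M_{j,s}\bigr)^{-1/2n}\le 1$ is absorbed into the implicit constant. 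The geometric sum $n(n-1)\sum_{l=k+1}^s 2^{-(l+1)} = n(n-1)\bigl(2^{-(k+1)}-2^{-(s+1)}\bigr)$ produces, after the $1/n$ power, the factor $R^{(n-1)/2^{k+1}}$, since the tail $R^{(n-1)/2^{s+1}}=2^{(n-1)/2}$ is a constant when $R=2^{2^s}$. After reindexing $l\mapsto l-1$ on the $\beta$-product to convert $\prod_{l=k+1}^s\beta_{j,l-1}$ into $\prod_{l=k}^{s-1}\beta_{j,l}$, the same telescope delivers the denominators $\prod_{l=k}^{s-1}\bigl(\prod_j \beta_{j,l}\bigr)^{1/2n}$ and $\prod_{l=k+1}^s\bigl(\prod_j U_{j,l}\bigr)^{1/2n}$. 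Finally, iterating (\ref{r2}) from $l=k$ to $l=s$ (using $\Scal_l^{**}\subseteq \Scal_l^*$ at each step, which only costs an absorbable polylog factor) yields $\abs{\Scal_{s+1}^*}^{1/p}/\abs{\Scal_k^{**}}^{1/p}\lesssim \prod_{l=k}^s\bigl[(\prod_j\beta_{j,l})(\prod_j U_{j,l})\bigr]^{1/(n-1)p}$.

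To finish, I collect the net exponent of each $\beta_{j,l}$ and $U_{j,l}$ separately: $\beta_{j,s}$ appears only through the $\Scal$-iteration, giving the clean exponent $\tfrac{1}{(n-1)p}$, while for $k\le l\le s-1$ the two contributions combine to $\tfrac{1}{(n-1)p}-\tfrac{1}{2n}$; similarly $U_{j,k}$ picks up $\tfrac{1}{(n-1)p}-\tfrac{1}{np}$, and for $k+1\le l\le s$ the net exponent is $\tfrac{1}{(n-1)p}-\tfrac{1}{2n}$. Rewriting the latter as a negative exponent gives exactly the form of (\ref{main}). The main obstacle is not analytic—every ingredient is already packaged into (\ref{r1})--(\ref{r4}) and (\ref{trivial})—but purely combinatorial: one has to (i) telescope the $M_{j,l}$'s cleanly, (ii) handle the $\beta$-reindexing inherited from the shift in (\ref{r3}), (iii) keep the $\Scal_l^{**}\subseteq\Scal_l^*$ comparisons polylogarithmic, and (iv) verify that the geometric tail $R^{(n-1)/2^{s+1}}$ really is a universal constant.
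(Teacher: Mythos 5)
Your proposal is correct and follows essentially the same route as the paper: apply (\ref{r4}) at $k=s$ with (\ref{trivial}) and unit-volume terminal cubes for Part (1), then divide by (\ref{r1}), telescope (\ref{r3}), iterate (\ref{r2}), and collect exponents. The only minor imprecision is in your treatment of the $\Scal_l^{**}$ versus $\Scal_l^*$ mismatch in the (\ref{r2}) iteration: the paper telescopes $\abs{\Scal_{s+1}^*}/\abs{\Scal_k^{**}} \leq \prod_{l=k}^s \abs{\Scal_{l+1}^*}/\abs{\Scal_l^{**}}$ by noting that the inserted intermediate factors $\abs{\Scal_l^*}/\abs{\Scal_l^{**}} \geq 1$ only make the product larger (so no extra loss is incurred at all, since the proof of the counting proposition actually gives the bound with $\abs{\Scal_l^{**}}$ rather than $\abs{\Scal_l^*}$ in the denominator), whereas you invoke an unnecessary ``absorbable polylog factor''; this does not affect validity but slightly overcomplicates the bookkeeping.
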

\begin{proof}
    Recall that $s=O(\log\log R)$. Thus, if $C=O(1)$, then $C^s\lessapprox 1$.

    The first inequality in the corollary immediately follows from the estimates (\ref{r4}) with $k=s$ and (\ref{trivial}). Combining it with (\ref{r1}), we get:
    \begin{align*}
        Q_{k,R}\lesssim (\log R)^{O(\log\log R)}R^{-\frac{n+1}{2^kp}}\left(\frac{\prod_{j=1}^n h_{j,s}}{\prod_{j=1}^n h_{j,k}}\right)^{\frac{1}{n}}\left(\frac{\abs{\Scal_{s+1}^*}}{\abs{\Scal_k^{**}}}\right)^{\frac{1}{p}}
        \left(\prod_{j=1}^nM_{j,k}\right)^{-\frac{1}{2n}}\left(\prod_{j=1}^nU_{j,k}\right)^{-\frac{1}{np}}.
    \end{align*}
    On the other hand, from repeated applications of (\ref{r3}), we get:
    \begin{align*}
        \left(\frac{\prod_{j=1}^n h_{j,s}}{\prod_{j=1}^n h_{j,k}}\right)^{\frac{1}{n}}
        &= \prod_{l=k+1}^s\left(\frac{\prod_{j=1}^n h_{j,l}}{\prod_{j=1}^n h_{j,l-1}}\right)^{\frac{1}{n}}\\
        &\lesssim
        \prod_{l=k+1}^s\left[
        R^{\frac{n-1}{2^{l+1}}}
        \frac{1}{\left(\prod_{j=1}^n\beta_{j,l-1}\right)^{\frac{1}{2n}}} \frac{1}{\left(\prod_{j=1}^n U_{j,l}\right)^{\frac{1}{2n}}}\left(\frac{\prod_{j=1}^nM_{j,l-1}}{\prod_{j=1}^nM_{j,l}}\right)^{\frac{1}{2n}}
        \right]\\
        &\leq R^{\frac{n-1}{2^{k+1}}}\left(\frac{\prod_{j=1}^nM_{j,k}}{\prod_{j=1}^nM_{j,s}}\right)^{\frac{1}{2n}}
        \cdot
        \prod_{l=k}^{s-1}\frac{1}{\left(\prod_{j=1}^n\beta_{j,l}\right)^{\frac{1}{2n}}}
        \cdot
        \prod_{l=k+1}^s\frac{1}{\left(\prod_{j=1}^nU_{j,l}\right)^{\frac{1}{2n}}}\\
        (M_{j,s}\geq 1)&\leq 
        R^{\frac{n-1}{2^{k+1}}}\left(\prod_{j=1}^nM_{j,k}\right)^{\frac{1}{2n}}
        \cdot
        \prod_{l=k}^{s-1}\frac{1}{\left(\prod_{j=1}^n\beta_{j,l}\right)^{\frac{1}{2n}}}
        \cdot
        \prod_{l=k+1}^s\frac{1}{\left(\prod_{j=1}^nU_{j,l}\right)^{\frac{1}{2n}}}.
    \end{align*}
    Also, applying (\ref{r2}) many times and using $\abs{\Scal_{l+1}^*}\geq \abs{\Scal_{l+2}^{**}}$($k\leq l\leq s-2$), we get:
    \begin{align*}
        \left(\frac{\abs{\Scal_{s+1}^*}}{\abs{\Scal_k^{**}}}\right)^{\frac{1}{p}}
        \leq
        \prod_{l=k}^s \left(\frac{\abs{\Scal_{l+1}^*}}{\abs{\Scal_l^{**}}}\right)^{\frac{1}{p}}
        \lesssim        \prod_{l=k}^s\left(\prod_{j=1}^n\beta_{j,l}\right)^{\frac{1}{(n-1)p}}\cdot\prod_{l=k}^s\left(\prod_{j=1}^nU_{j,l}\right)^{\frac{1}{(n-1)p}}.
    \end{align*}
    Combine all the estimates to conclude.
\end{proof}

\section{Bootstrapping}\label{ch 5 sec 3}

The arguments mirrors the bootstrapping arguments in Section \ref{ch 3 sec 5}. We are interested in getting an $O(R^{\e})$ upper bound for $Q_{1,R}$. Although it is easy to see that just taking $k=1$ in (\ref{main}) can't provide such a bound, an elementary analysis will reveal that at least one of the terms $Q_{k,R}$ has to be sufficiently small. This will turn out to be enough to close the argument as $Q_{1,R}$ can be related to $Q_{k,R}$ via parabolic rescaling.

\begin{prop}
    For each $1\leq k\leq s-1$, we have
    \begin{align}\label{rescaled main}
        Q_{1,R}
        \lesssim
        (\log R)^{O(\log\log R)}&R^{\frac{n-1}{2^{k+1}}-\frac{n+1}{2^kp}}
        \on{D}(R^{1-2^{1-k}})\cdot\nonumber\\
        &\frac{\left(\prod_{j=1}^nU_{j,k}\right)^{\frac{1}{(n-1)p}-\frac{1}{np}}}{\prod_{l=k+1}^s\left(\prod_{j=1}^nU_{j,l}\right)^{\frac{1}{2n}-\frac{1}{(n-1)p}}}
        \frac{\left(\prod_{j=1}^n\beta_{j,s}\right)^{\frac{1}{(n-1)p}}}{\prod_{l=k}^{s-1}\left(\prod_{j=1}^n\beta_{j,l}\right)^{\frac{1}{2n}-\frac{1}{(n-1)p}}}.
    \end{align}
\end{prop}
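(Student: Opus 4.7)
The plan is to deduce this bound directly from the previous estimate \eqref{main} for $Q_{k,R}$ via a single application of parabolic rescaling at each scale-$k$ cap. The numerators of $Q_{1,R}$ and $Q_{k,R}$ coincide (both are $\norm{\prod_j F_j^{1/n}}_{L^p(S_1)}$), so the task reduces to relating the two denominators, namely comparing $\ell^2$-sums of $\norm{\Pcal_{\theta_{j,k}} F_j}_{L^p}$ over coarse caps $\theta_{j,k}$ (base $R^{-2^{-k}}$, height $R^{-2^{-k+1}}$) to the corresponding sums over the finest caps $\theta_{j,1}$ (base $R^{-1/2}$, height $R^{-1}$).

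The key observation is that each $\theta_{j,k}$ sits inside $\NN_{\tau_{j,k}}(\sigma)$, where $\tau_{j,k}$ is the base cube of $\theta_{j,k}$ (a cube of side $\sigma^{1/2} = R^{-2^{-k}}$) and $\sigma = R^{-2^{-k+1}}$. Since $\Pcal_{\theta_{j,k}} F_j$ has Fourier support in $\theta_{j,k} \subseteq \NN_{\tau_{j,k}}(\sigma)$, I would apply Proposition \ref{prop parabolic rescaling 1} with $\delta = R^{-1}$, yielding
\[
\norm{\Pcal_{\theta_{j,k}} F_j}_{L^p(\R^n)} \lesssim \on{D}\!\left(\tfrac{\delta}{\sigma},p\right)\left(\sum_{\theta_{j,1} \subseteq \theta_{j,k}} \norm{\Pcal_{\theta_{j,1}} F_j}_{L^p(\R^n)}^2\right)^{1/2},
\]
with $\delta/\sigma = R^{-(1-2^{1-k})}$. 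In the notation of this chapter, this rescaled decoupling constant is precisely $\on{D}(R^{1-2^{1-k}})$.

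Next, I would square both sides, sum over $\theta_{j,k} \in \Theta_k(I_j)$, use the disjointness of the subdivisions $\{\theta_{j,1} : \theta_{j,1} \subseteq \theta_{j,k}\}$, take the $1/(2n)$-th power, and multiply over $j = 1, \ldots, n$. The result is the inequality between denominators
\[
\prod_{j=1}^n \left(\sum_{\theta_{j,k} \in \Theta_k(I_j)} \norm{\Pcal_{\theta_{j,k}} F_j}_{L^p}^2\right)^{\!1/(2n)} \lesssim \on{D}(R^{1-2^{1-k}}) \prod_{j=1}^n \left(\sum_{\theta_{j,1} \in \Theta_1(I_j)} \norm{\Pcal_{\theta_{j,1}} F_j}_{L^p}^2\right)^{\!1/(2n)},
\]
which, after dividing the common numerator $\norm{\prod_j F_j^{1/n}}_{L^p(S_1)}$ by these equivalent quantities, gives $Q_{1,R} \lesssim \on{D}(R^{1-2^{1-k}}) \, Q_{k,R}$. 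Plugging in \eqref{main} then yields \eqref{rescaled main}.

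The proof is essentially bookkeeping; there is no serious obstacle. The only mild technicality is that $Q_{k,R}$ is defined using local $L^p$ norms on $S_1 = [-R,R]^n$, whereas Proposition \ref{prop parabolic rescaling 1} is stated globally. This is harmless in view of the global-local-weighted equivalence (Proposition \ref{equiv} and the comments for the multilinear constant in Section \ref{ch 2 sec 4}), so both sides of the comparison above can be interpreted in the localized sense without affecting constants. All other factors in \eqref{main} — the $\delta$-power $R^{\frac{n-1}{2^{k+1}} - \frac{n+1}{2^k p}}$, the products over $U_{j,l}$ and $\beta_{j,l}$, and the $(\log R)^{O(\log\log R)}$ loss — carry through verbatim to \eqref{rescaled main}.
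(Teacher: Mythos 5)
Your proposal is correct and takes essentially the same approach as the paper: the paper's own (very terse) proof also writes $Q_{1,R}$ as $Q_{k,R}$ times the ratio of the two denominators, bounds each factor of that ratio by $\on{D}(R^{1-2^{1-k}})$ via parabolic rescaling, and substitutes the earlier estimate for $Q_{k,R}$. Your added remarks on the local versus global form of the norms and the disjointness of the subdivisions are the right technical points to keep in mind, and they match the paper's own (implicit) conventions in this chapter.
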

\begin{proof}
    \begin{align*}
        Q_{1,R}\leq Q_{k,R}\cdot
        \prod_{j=1}^n\left(\frac{\sum_{\theta_{j,k}\in\Theta_k(I_j)}\norm{\Pcal_{\theta_{j,k}}F_j}_{L^p(S_1)}^2}{\sum_{\theta_{j,1}\in\Theta_1(I_j)}\norm{\Pcal_{\theta_{j,1}}F_j}_{L^p(S_1)}^2}\right)^{\frac{1}{2n}}.
    \end{align*}
    Use parabolic rescaling to bound each factor and conclude.
\end{proof}

Denote $\frac{1}{(n-1)p}-\frac{1}{np}$ by $A$, $\frac{1}{2n}-\frac{1}{(n-1)p}$ by $B$.
Note that in (\ref{rescaled main}), when $A=B$, we have $p=\frac{2(n+1)}{n-1}$, and when $B=0$, we have $p=\frac{2n}{n-1}$. The two indexes are historically important, and we will use them to divide the range of $p$ to discuss case by case.

Letting $y_l\defeq \prod_{j=1}^nU_{j,l}$, we can rewrite (\ref{rescaled main}) as:
\begin{align}\label{simple main}
    Q_{1,R}
        \lesssim
        (\log R)^{O(\log\log R)} R^{\frac{n-1}{2^{k+1}}-\frac{n+1}{2^kp}}
        \on{D}(R^{1-2^{1-k}})\cdot\frac{y_k^A}{\prod_{l=k+1}^s y_l^B}
        \cdot\frac{\left(\prod_{j=1}^n\beta_{j,s}\right)^{\frac{1}{(n-1)p}}}{\prod_{l=k}^{s-1}\left(\prod_{j=1}^n\beta_{j,l}\right)^B}.
\end{align}

First, assume $p\geq\frac{2n}{n-1}$. In this case, $B\geq 0$. So using the fact that in denominator $\beta_{j,l}\geq 1$ and $\beta_{j,s}\lesssim 1$, (\ref{simple main}) becomes
\begin{align*}
    Q_{1,R}
        \lesssim
        (\log R)^{O(\log\log R)} R^{\frac{n-1}{2^{k+1}}-\frac{n+1}{2^kp}}
        \on{D}(R^{1-2^{1-k}})\cdot\frac{y_k^A}{\prod_{l=k+1}^s y_l^B}.
\end{align*}

We want to specify $C_0\geq0$(as small as possible) such that for all $\e>0$ there must exist some $k\leq N(\e)$ satisfying
\begin{align}\label{key}
    \frac{y_k^A}{\prod_{l=k+1}^s y_l^B}
    \lesssim_\e
    R^{(C_0+\e)\cdot 2^{-k}}.
\end{align}

If this is true, then combining it with the multilinear-to-linear equivalence yields
\begin{align*}
    \on{D}(R)\lesssim_{\e,\delta}R^\delta\max_{k\leq N(\e)} R^{\frac{n-1}{2^{k+1}}-\frac{n+1}{2^kp}+(C_0+\e)\cdot 2^{-k}}\on{D}(R^{1-2^{1-k}})
\end{align*}
for all $\e>0$.

Now let $\Sigma$ be the set of all $\sigma>0$ such that $\on{D}(R)\lesssim R^\sigma$. Let $\sigma_0\defeq\inf\Sigma$. The previous relation implies that for each $\e,\delta>0$,
\begin{align*}
    \sigma\in\Sigma\Longrightarrow
    \max_{k\leq N(\e)}\left(\sigma(1-2^{1-k})+
    \frac{n-1}{2^{k+1}}-\frac{n+1}{2^kp}+(C_0+\e)\cdot 2^{-k}+\delta\right)\in\Sigma.
\end{align*}
In particular, letting $\sigma\rightarrow\sigma_0$ and $\delta\rightarrow0$, we get that
\begin{align*}
    \sigma_0\leq 
    \max_{k\leq N(\e)}\left(\sigma_0(1-2^{1-k})+
    \frac{n-1}{2^{k+1}}-\frac{n+1}{2^kp}+(C_0+\e)\cdot 2^{-k}\right).
\end{align*}
This forces $\sigma_0\leq \frac{n-1}{4}-\frac{n+1}{2p}+\frac{C_0}{2}$, once we test the preceding inequality with small enough $\e$, which proves the decoupling theorem.

So the problem of bounding the decoupling constant $\on{D}(R)$ comes down to proving the key estimate (\ref{key}), which means there exists at least one \textit{good scale}.

Suppose (\ref{key}) is false, then there exists some $\e_0>0$ such that
\begin{align*}
    y_k^A\geq C_{\e_0}R^{(C_0+\e_0)\cdot2^{-k}}\prod_{l=k+1}^s y_l^B
\end{align*}
for all $k\geq1$. We iterate the above relation many times:
\begin{align*}
    y_1
    &\geq C_{\e_0}^{\frac{1}{A}}R^{(C_0+\e_0)\cdot\frac{2^{-1}}{A}}\prod_{l=2}^sy_l^{\frac{B}{A}}\\
    &\geq C_{\e_0}^{\frac{1}{A}}R^{(C_0+\e_0)\cdot\frac{2^{-1}}{A}}\cdot
    C_{\e_0}^{\frac{1}{A}\cdot\frac{B}{A}}R^{(C_0+\e_0)\cdot\frac{2^{-2}}{A}\cdot\frac{B}{A}}\left(\prod_{l=3}^s y_l\right)^{\frac{B}{A}+\left(\frac{B}{A}\right)^2}\\
    &= C_{\e_0}^{\frac{1}{A}\cdot\left(1+\frac{B}{A}\right)}
    R^{(C_0+\e_0)\left(\frac{2^{-1}}{A}+\frac{2^{-2}}{A}\cdot\frac{B}{A}\right)}\left(\prod_{l=3}^s y_l\right) ^{\frac{B}{A}\cdot\left(1+\frac{B}{A}\right)}\\
    &\geq C_{\e_0}^{\frac{1}{A}\cdot\left(1+\frac{B}{A}\right)}
    R^{(C_0+\e_0)\left(\frac{2^{-1}}{A}+\frac{2^{-2}}{A}\cdot\frac{B}{A}\right)}
    \cdot 
    C_{\e_0}^{\frac{1}{A}\cdot\frac{B}{A}\cdot\left(1+\frac{B}{A}\right)}
    R^{(C_0+\e_0)\cdot\frac{2^{-3}}{A}\cdot\frac{B}{A}\cdot\left(1+\frac{B}{A}\right)}
    \left(\prod_{l=4}^s y_l\right)^{\frac{B}{A}\cdot\left(1+\frac{B}{A}\right)+\left(\frac{B}{A}\right)^2\cdot\left(1+\frac{B}{A}\right)}\\
    &= C_{\e_0}^{\frac{1}{A}\cdot\left(1+\frac{B}{A}\right)^2}
    R^{(C_0+\e_0)\left[\frac{2^{-1}}{A}+\frac{2^{-2}}{A}\cdot\frac{B}{A}+\frac{2^{-3}}{A}\cdot\frac{B}{A}\cdot\left(1+\frac{B}{A}\right)\right]}
    \left(\prod_{l=4}^s y_l\right)^{\frac{B}{A}\cdot\left(1+\frac{B}{A}\right)^2}\\
    &\geq C_{\e_0}^{\frac{1}{A}\cdot\left(1+\frac{B}{A}\right)^3}
    R^{(C_0+\e_0)\left[\frac{2^{-1}}{A}+\frac{2^{-2}}{A}\cdot\frac{B}{A}+\frac{2^{-3}}{A}\cdot\frac{B}{A}\cdot\left(1+\frac{B}{A}\right)+\frac{2^{-4}}{A}\cdot\frac{B}{A}\cdot\left(1+\frac{B}{A}\right)^2\right]}
    \left(\prod_{l=5}^s y_l\right)^{\frac{B}{A}\cdot\left(1+\frac{B}{A}\right)^3}\\
    &\geq \cdots\\
    &\geq C_{\e_0}^{\frac{1}{A}\cdot\left(1+\frac{B}{A}\right)^{N-1}}
    R^{(C_0+\e_0)\left[\frac{2^{-1}}{A}+\frac{2^{-2}}{A}\cdot\frac{B}{A}+\frac{2^{-3}}{A}\cdot\frac{B}{A}\cdot\left(1+\frac{B}{A}\right)+\cdots+\frac{2^{-N}}{A}\cdot\frac{B}{A}\cdot\left(1+\frac{B}{A}\right)^{N-2}\right]}
    \left(\prod_{l=N+1}^s y_l\right)^{\frac{B}{A}\cdot\left(1+\frac{B}{A}\right)^{N-1}}\\
    &\geq C_{\e_0}^{\frac{1}{A}\cdot\left(1+\frac{B}{A}\right)^{N-1}}
    R^{(C_0+\e_0)\frac{2^{-2}}{A}\left[2+\frac{B}{A}\cdot\left(1+2^{-1}\left(1+\frac{B}{A}\right)+2^{-2}\left(1+\frac{B}{A}\right)^2+\cdots+2^{-(N-2)}\left(1+\frac{B}{A}\right)^{N-2}\right)\right]}
\end{align*}
where $N<s$ is a large positive integer, and in the last step we use the basic fact that $y_l\geq 1$ for all $N+1\leq l
\leq s$. Note that $y_1\leq R^{\frac{n(n-1)}{2}}$. So if we take $R$ to be very large (and so $s$ is also very large), then we must have the condition:
\begin{align*}
    \frac{n(n-1)}{2}\geq 
    (C_0+\e_0)\frac{2^{-2}}{A}\left[2+\frac{B}{A}\cdot\left(1+\left(\frac{1+\frac{B}{A}}{2}\right)+\left(\frac{1+\frac{B}{A}}{2}\right)^2+\cdots+\left(\frac{1+\frac{B}{A}}{2}\right)^{N-2}\right)\right].
\end{align*}
Remember that our goal is to find a $C_0$ which results in a contradiction.

If $\frac{B}{A}=1$, i.e., $p=\frac{2(n+1)}{n-1}$, then $\left(1+\frac{B}{A}\right)/2=1$. The condition above becomes $\frac{n(n-1)}{2}\geq (C_0+\e_0)\frac{N+1}{4A}$. Let $N\rightarrow\infty$, this is always impossible for all $C_0\geq0$ as $\e_0>0$. Take $C_0=0$, then $\sigma_0\leq 0\Longrightarrow \sigma_0=0$.

If $\frac{B}{A}<1$, i.e., $\frac{2n}{n-1}\leq p<\frac{2(n+1)}{n-1}$, then $\left(1+\frac{B}{A}\right)/2<1$. The condition becomes 
\begin{align*}
    \frac{n(n-1)}{2}\geq 
    (C_0+\e_0)\frac{2^{-2}}{A}\left[2+\frac{B}{A}\cdot\frac{1-\left(\frac{1+\frac{B}{A}}{2}\right)^{N-1}}{1-\left(\frac{1+\frac{B}{A}}{2}\right)}\right].
\end{align*}
Let $N\rightarrow\infty$, we get $\frac{n(n-1)}{2}\geq (C_0+\e_0)\cdot\frac{1}{2(A-B)}$. This is always impossible if we take $C_0=n(n-1)(A-B)=\frac{n+1}{p}-\frac{n-1}{2}$. Therefore $\sigma_0\leq 0\Longrightarrow \sigma_0=0$.

\begin{rmk}
    In fact, when $p=\frac{2n}{n-1}$, $B=0$, so a bunch of terms in the above arguments totally vanish, which means that we directly come to the final expression without any iteration.
\end{rmk}

If $\frac{B}{A}>1$, i.e., $\frac{2(n+1)}{n-1}< p \leq \infty$, then $\left(1+\frac{B}{A}\right)/2<1$. The condition becomes
\begin{align*}
    \frac{n(n-1)}{2}\geq 
    (C_0+\e_0)\frac{2^{-2}}{A}\left[2+\frac{B}{A}\cdot\frac{\left(\frac{1+\frac{B}{A}}{2}\right)^{N-1}-1}{\left(\frac{1+\frac{B}{A}}{2}\right)-1}\right].
\end{align*}
Let $N\rightarrow\infty$, this is always impossible for all $C_0\geq0$ as $\e_0>0$. Take $C_0=0$, then $\sigma_0\leq \frac{n-1}{4}-\frac{n+1}{2p}$.

Unfortunately, when $2\leq p<\frac{2n}{n-1}$, the argument seems not enough to close the induction. We left the details in this case to the reader. It's interesting to see if Guth's proof can be extended to this regime, though we currently don't know how to do this. Anyway, when $2\leq p <\frac{2n}{n-1}$, we can still interpolate with the $p=2$ case.

\appendix
\chapter{Wave Packet Decomposition}\label{appendix 1}
For the reader's convenience, we record two formulations of the wave packet decomposition here with detailed proofs, which may be hard to find in the literature. Our main references for this appendix are \cite[Chapter 2]{demeter_fourier_2020} and \cite[Section 3]{bourgain_proof_2015}. We content ourselves with the most standard case of $\mathbb{P}^{n-1}$, as arguments presented here can be easily adapted to other submanifolds of $\R^n$.

\section{First Formulation}
For the first wave packet decomposition, let $f$ be a smooth function supported on $[-1,1]^{n-1}$. We will study the wave packet decomposition for the extension operator, which is defined by:
\begin{align*}
    Ef(\overline{x},x_n)\defeq\int_{\R^{n-1}}f(\xi)e(\overline{x}\cdot\xi+x_n\abs{\xi}^2) \dd\xi
\end{align*}
at some fixed scale $R\gg1$.

Let $\gamma:[-1, 1]^{n-1}\rightarrow[0,\infty)$ be a smooth bump function vanishing near $\partial [-1,1]^{n-1}$ and satisfying the following \textit{partition of unity} condition:
\begin{align}\label{partition}
    \sum_{l\in\Z^{n-1}}\gamma(\xi-l)^2\equiv 1.
\end{align}
For an explicit construction of $\gamma$, we can take an odd smooth real-valued function $\Theta$ on $\R$ such that $\Theta(\xi) = \frac{\pi}{4}$ for $\xi\geq \frac{1}{6}$ and that $\Theta$ is strictly increasing on the interval $\left[-\frac{1}{6},\frac{1}{6}\right]$. We set
\begin{align*}
    \alpha(\xi) \defeq \sin\left(\Theta(\xi)+\frac{\pi}{4}\right),\quad\beta(\xi) \defeq \cos\left(\Theta(\xi)+\frac{\pi}{4}\right).
 \end{align*}
Then $\alpha(\xi)^2 + \beta(\xi)^2 \equiv 1$ and $\alpha(-\xi) = \beta(\xi)$. Define
\begin{align*}
    \psi(\xi) \defeq \alpha\left(\xi+\frac{1}{2}\right)\cdot \beta\left(\xi-\frac{1}{2}\right).
\end{align*}
Then one can directly see that $\psi$ is smooth, $\psi = 0$ outside $\left[-\frac{2}{3},\frac{2}{3}\right]$, $\psi = 1$ over $\left[-\frac{1}{3},\frac{1}{3}\right]$, and $\sum_{l\in\Z}\psi(\xi-l)^2 \equiv 1$ (it's helpful to draw the graph of $\psi$). We can then construct $\gamma$ using the tensor product:
\begin{align*}
    \gamma(\xi) = \gamma(\xi_1,\dots,\xi_{n-1}) \defeq \prod_{j=1}^{n-1} \psi(\xi_j).
\end{align*}
One can directly check that such $\gamma$ satisfies (\ref{partition}), from which we can easily deduce that $\norm{\gamma}_2 = 1$:
\begin{align*}
    1 &= \int_{[0,1]^{n-1}} \sum_{l\in\Z^{n-1}}\gamma(\xi-l)^2 \dd \xi\\
    &= \sum_{l\in\Z^{n-1}} \int_{[0,1]^{n-1}} \gamma(\xi-l)^2 \dd \xi\\
    &= \sum_{l\in\Z^{n-1}} \int_{[0,1]^{n-1}-l} \gamma(\xi)^2 \dd \xi\\
    &= \int_{\R^{n-1}}\gamma(\xi)^2 \dd \xi = \norm{\gamma}_2.
\end{align*}

Now we can first partition $f$ at scale $R^{-\frac{1}{2}}$ as
\begin{align}\label{frequency decomposition}
    f(\xi) = \sum_{\abs{l}\lesssim R^{\frac{1}{2}}} f(\xi)\gamma(R^{\frac{1}{2}}\xi -l)^2
\end{align}
where $\gamma(R^{\frac{1}{2}}\xi -l)$ is supported on $\omega_l \defeq \prod_{i=1}^{n-1}[R^{-\frac{1}{2}}(l_i-1), R^{-\frac{1}{2}}(l_i+1)]$ for $l = (l_i)_{i=1}^{n-1}\in \Z^{n-1}$, and $\abs{l} \defeq \sum_{i=1}^{n-1}\abs{l_i}\lesssim R^{\frac{1}{2}}$ comes from the support assumption of $f$.

Let $\Omega_R$ denote the collection of the $2 R^{-1/2}$-cubes $\omega_l$ on the frequency side, and let $\mathcal{Q}_R$ denote the collection of $\frac{1}{2}R^{1/2}$-cubes $q_k:= \prod_{i=1}^{n-1}[\frac{1}{2}R^{\frac{1}{2}}(k_i - \frac{1}{2}), \frac{1}{2}R^{\frac{1}{2}}(k_i + \frac{1}{2})]$ ($k = (k_i)_{i=1}^{n-1}\in \Z^n$) on the spatial side, which forms a tiling of $\R^{n-1}$. For any cube $\omega\in\Omega_R$ or $q\in\mathcal{Q}_R$, let $c_\omega$ or $c_q$ denote their center, respectively. Note that $c_\omega$ ranges over $R^{-\frac{1}{2}}\Z^{n-1}$, and $c_q$ ranges over $\frac{1}{2}R^{\frac{1}{2}}\Z^{n-1}$.

In this notation, we can rewrite (\ref{frequency decomposition}) as
\begin{align*}
    f(\xi) = \sum_{\omega\in \Omega_R} f(\xi)\gamma(R^{\frac{1}{2}}(\xi - c_\omega))^2.
\end{align*}

Now we further expand each factor $f(\xi)\gamma(R^{\frac{1}{2}}(\xi -c_\omega))$ above into Fourier series on $\omega$ as
\begin{align*}
    f(\xi)\gamma(R^{\frac{1}{2}}(\xi - c_\omega)) = \sum_{q\in \mathcal{Q}_R} \left(\frac{R^{\frac{1}{2}}}{2}\right)^{n-1}
    \inner{f(\xi)\gamma(R^{\frac{1}{2}}(\xi - c_\omega)), e(-c_q\cdot\xi)}e(c_q\cdot\xi)
\end{align*}
where $\inner{\cdot,\cdot}$ denote the complex inner product in $L_\xi^2(\omega)$.

Putting things together we get 
\begin{align*}
    f(\xi)
    & = \sum_{\omega\in \Omega_R} 
    \gamma(R^{\frac{1}{2}}(\xi - c_\omega))
    \sum_{q\in \mathcal{Q}_R} \left(\frac{R^{\frac{1}{2}}}{2}\right)^{n-1}
    \inner{f(\xi)\gamma(R^{\frac{1}{2}}(\xi - c_\omega)), e(-c_q\cdot\xi)} e(-c_q\cdot\xi)\\
    & = \sum_{\omega\in \Omega_R} 
    \sum_{q\in \mathcal{Q}_R} \frac{1}{2^{n-1}}
    \inner{f(\xi)R^{\frac{n-1}{4}}\gamma(R^{\frac{1}{2}}(\xi - c_\omega)), e(-c_q\cdot(\xi - c_\omega))} \gamma_{q, \omega}(\xi)\\
    & = \sum_{\omega\in \Omega_R} 
    \sum_{q\in \mathcal{Q}_R} \frac{1}{2^{n-1}} \inner{f, \gamma_{q, \omega}} \gamma_{q, \omega}
    \stepcounter{equation}\tag{\theequation}\label{time-frequency decomposition}
\end{align*}
where $\gamma_{q, \omega}(\xi) \defeq R^{\frac{n-1}{4}} \gamma(R^{\frac{1}{2}}(\xi - c_\omega)) e(-c_q\cdot(\xi-c_\omega))$.

So far, we have decomposed $f$ into the form $\sum_{\omega\in \Omega_R} 
    \sum_{q\in \mathcal{Q}_R} a_{q,\omega} \gamma_{q,\omega}$, where $a_{q,\omega}$ are coefficients, and $\gamma_{q,\omega}$ is a modulated bump adapted to $\omega$ with $\norm{\gamma_{q,\omega}}_{L^2}\equiv \norm{\gamma}_{L^2} = 1$.

Now we investigate the properties of $E\gamma_{q,\omega}$:
\begin{align*}
    E\gamma_{q,\omega}(\overline{x},x_n) 
    & = \int_{\R^{n-1}}\gamma_{q,\omega}(\xi) e(\overline{x}\cdot\xi+x_n\abs{\xi}^2) \dd\xi\\
    & = \int_{\R^{n-1}}R^{\frac{n-1}{4}} \gamma(R^{\frac{1}{2}}(\xi - c_\omega)) e(-c_q\cdot(\xi-c_\omega))
    e(\overline{x}\cdot\xi+x_n\abs{\xi}^2) \dd\xi.
\end{align*}
By changing variables $\eta = R^{\frac{1}{2}}(\xi - c_\omega)$, we can rewrite it as
\begin{align}\label{oscillatory integral}
    E\gamma_{q,\omega}(\overline{x},x_n)
    = R^{-\frac{n-1}{4}}e(\overline{x}\cdot c_\omega + x_n\cdot\abs{c_\omega}^2)
    \int_{\R^{n-1}}\gamma(\eta)e(\varphi_{x,q,\omega}(\eta)) \dd\eta
\end{align}
where
\begin{align*}
    \varphi_{x,q,\omega}(\eta) \defeq \eta\cdot\frac{(\overline{x}-c_q) + 2c_\omega x_n}{R^{\frac{1}{2}}} + \abs{\eta}^2\cdot\frac{x_n}{R}.
\end{align*}

This motivates our definition of tubes and wave packets.

\begin{defn}[Tubes]
Let $T_{q,\omega}$ be the spatial tube with direction $(-2c_\omega,1)$\footnote{This is the normal vector of $\mathbb{P}^{n-1}$ at the point $(c_\omega, 
\abs{c_\omega}^2)$.} in $\R^n$ given by:
\begin{align*}
    T_{q,\omega}\defeq\{(\overline{x},x_n)\in\R^n\mid \abs{(\overline{x}-c_q)+2c_\omega x_n}\leq R^{1/2}, \abs{x_n}\leq R\}.
\end{align*}
For $M>0$, we can also define the dilations of tubes by:
\begin{align*}
    MT_{q,\omega}\defeq\{(\overline{x},x_n)\in\R^n\mid \abs{(\overline{x}-c_q)+2c_\omega x_n}\leq MR^{1/2}, \abs{x_n}\leq R\}.
\end{align*}

We denote the collection of all such tubes by $\T_R$ or just $\T$ when the scale $R$ is clear.
\end{defn}

\begin{defn}[Wave packets]
For each tube $T=T_{q,\omega}$, define $\gamma_T$ by:
\begin{align*}
    \gamma_T(\xi)\defeq R^{\frac{n-1}{4}}\gamma(R^{1/2}(\xi-c_\omega))e(c_q\cdot(c_\omega-\xi))
\end{align*}
Define $\phi_T\defeq E\gamma_T$, which we call a wave packet.
\end{defn}
\begin{rmk}
    Note that in (\ref{oscillatory integral}), if $x = (\overline{x},x_n) \in \frac{1}{10}T_{q,\omega}$ and $x_n\leq \frac{1}{10}R$, then $\abs{\varphi_{x,q,\omega}(\eta)}\leq \frac{1}{5}$, since only those $\eta\in [-1,1]^{n-1}$ will contribute to the integral. This means $e(\varphi_{x,q,\omega})\sim 1$, so $E\gamma_{q,\omega}$ can be viewed as essentially constant inside $T_{q,\omega}$. On the other hand, when $x$ is far from $T$, then the oscillatory nature of $e(\varphi_{x,q,\omega})$ will cause much cancellation, and so we expect fast decay outside $T_{q,\omega}$. This is why we can roughly regard $E\gamma_{q,\omega}(x)$ as $R^{-\frac{n-1}{4}}e(x\cdot(c_\omega,\abs{c_\omega}^2))\1_{T_{q,\omega}}$. The proposition below makes this idea rigorous.
\end{rmk}

\begin{prop}[Wave packet decomposition at scale $R$]
There is a decomposition $f=\sum_{T\in\T_R}f_T$ with each $f_T$ supported on a cube $\omega_T\in\Omega_R$. We will write $Ef_T= a_T\phi_T$ with $a_T\in\C$ so that:
\begin{align*}
    Ef = \sum_{T\in\T_R} a_T\phi_T.
\end{align*}
Then $a_T$ and $\phi_T$ enjoy the following properties:
\begin{enumerate}[label=(\roman*)]
    \item (Fourier support) We have:
    \begin{align*}
        \on{supp}(\widehat{\phi_T}) \subseteq\{(\xi,\abs{\xi}^2)\mid \xi\in\omega_T\}.
    \end{align*}
    \item ($L^2$-orthogonality I) For each $\omega\in\Omega_R$:
    \begin{align*}
        \sum_{\substack{T\in\T_R\\ \omega_T=\omega}}\abs{a_T}^2 = \norm{f_\omega}_2^2
    \end{align*}
    for some $f_\omega$ being a smooth truncation of $f$ supported on $\omega$ with:
    \begin{align*}
        \sum_{\omega\in\Omega_R} \norm{f_\omega}_2^2 = \norm{f}_2^2.
    \end{align*}
    \item ($L^2$-orthogonality II) We have:
    \begin{align*}
        \sum_{T\in\T_R}\abs{a_T}^2 = \norm{f}_2^2.
    \end{align*}
    \item ($L^2$-orthogonality III) We have:
    \begin{align*}
        \norm{f}_2^2 = \sum_{T\in\T_R}\norm{f_T}_2^2.
    \end{align*}
    \item (Global $L^\infty$-control) We have:
    \begin{align*}
        \norm{\phi_T}_{L^\infty(\R^n)}\lesssim R^{-\frac{n-1}{4}}.
    \end{align*}
    \item (Rapid decay outside $T$) For all $M, N\geq 1$:
    \begin{align*}
        \norm{\phi_T}_{L^\infty((\R^{n-1}\times[-R,R])\setminus MT)}\lesssim_N R^{-\frac{n-1}{4}}M^{-N}.
    \end{align*}
    In fact, when $\abs{x_n}\leq R$ we have:
    \begin{align*}
        \abs{\phi_T(x)}\lesssim_N R^{-\frac{n-1}{4}}\left(1+\frac{\abs{\overline{x} - (c_q - 2c_\omega x_n)}}{R^{\frac{1}{2}}}\right)^{-N}.
    \end{align*}
\end{enumerate}
\end{prop}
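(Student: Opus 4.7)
The plan is to unpack the decomposition $f = \sum_T f_T$ already written down in (\ref{time-frequency decomposition}). I would define $f_T \defeq \frac{1}{2^{n-1}}\langle f, \gamma_T\rangle\,\gamma_T$ for $T = T_{q,\omega}$, which automatically forces $Ef_T = a_T\,\phi_T$ with $a_T$ proportional to $\langle f, \gamma_T\rangle$. Property (i) is then immediate: $Eg$ is the inverse space-time Fourier transform of a measure supported on the paraboloid $\{(\xi,|\xi|^2):\xi\in\R^{n-1}\}$, and since $\mathrm{supp}(\gamma_T)\subseteq\omega_T$, the Fourier support of $\phi_T$ lies on the corresponding paraboloid slice.

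For the orthogonality identities (ii)--(iv), I would combine Parseval for Fourier series on $\omega$ with the partition of unity (\ref{partition}). For each fixed $\omega$, the characters $\{e(-c_q\cdot\xi)\}_q$ form an orthogonal basis of $L^2(\omega)$, so Parseval applied to $f(\xi)\gamma(R^{1/2}(\xi-c_\omega))$ identifies $\sum_{T:\omega_T=\omega}|a_T|^2$ (up to an explicit normalization constant) with $\int |f|^2\,\gamma(R^{1/2}(\xi-c_\omega))^2\,d\xi$. This yields (ii) once $f_\omega$ is defined to be the corresponding $\gamma$-truncation of $f$. Summing over $\omega$ and invoking $\sum_\omega \gamma(R^{1/2}(\xi-c_\omega))^2 \equiv 1$ collapses the weight and produces (iii); (iv) follows from the same computation using $\|\gamma_T\|_{L^2} = \|\gamma\|_{L^2} = 1$ after the change of variables $\eta = R^{1/2}(\xi-c_\omega)$.

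The pointwise bound (v) follows immediately from $|\phi_T(x)| \leq \|\gamma_T\|_{L^1} \lesssim R^{-(n-1)/4}$ via the same change of variables. For the decay estimate (vi), I would apply iterated non-stationary phase to the oscillatory integral representation (\ref{oscillatory integral}). Its gradient
$$\nabla_\eta \varphi_{x,q,\omega}(\eta) \;=\; \frac{(\overline{x}-c_q)+2c_\omega x_n}{R^{1/2}} + 2\eta\,\frac{x_n}{R}$$
has quadratic part of size $\lesssim |x_n|/R \leq 1$ on $\mathrm{supp}(\gamma)$ whenever $|x_n|\leq R$, so once the linear part exceeds a fixed universal constant, $|\nabla_\eta\varphi|$ is comparable to $|\overline{x}-(c_q-2c_\omega x_n)|/R^{1/2}$. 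All higher-order derivatives of $\varphi$ are constant multiples of $x_n/R = O(1)$, uniformly in $x,q,\omega$, so integrating by parts $N$ times yields the claimed decay
$$|\phi_T(x)| \lesssim_N R^{-(n-1)/4}\left(1+\tfrac{|\overline{x}-(c_q-2c_\omega x_n)|}{R^{1/2}}\right)^{-N}$$
for $|x_n|\leq R$. The $L^\infty$ bound outside $MT$ is an immediate specialization.

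The main obstacle is not any of the individual steps but rather the careful bookkeeping of normalization constants through the frame/Plancherel argument so that (ii)--(iv) come out with exactly the stated constants, especially since neighboring $\omega$'s have $\gamma$-supports that overlap slightly. The non-stationary phase estimate itself is routine because $\varphi$ is a quadratic polynomial in $\eta$ with uniformly bounded coefficients, so the standard non-stationary phase lemma applies with constants independent of $x,q,\omega$.
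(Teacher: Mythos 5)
Your proposal matches the paper's proof essentially step by step: the same choice $f_T = \tfrac{1}{2^{n-1}}\langle f,\gamma_T\rangle\gamma_T$, the same use of Parseval on each $\omega$ plus the partition-of-unity identity $\sum_l\gamma(\cdot-l)^2\equiv 1$ for (ii)--(iv), the same $L^1$ bound for (v), and the same non-stationary phase estimate applied to the oscillatory integral (\ref{oscillatory integral}) for (vi). One small remark: the hedge ``up to an explicit normalization constant'' in your treatment of (ii) is unnecessary --- the factor $\tfrac{1}{2^{n-1}}$ in $a_T$ is chosen precisely so that Parseval gives an exact equality with $\|f_\omega\|_2^2$ (the overlap of neighboring $\omega$'s is already handled exactly by the partition of unity), so the stated identities hold as written without any extra constant to track.
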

\begin{proof}
    In view of (\ref{time-frequency decomposition}), for each tube $T = T_{q,\omega}\in \T_R$, let $a_T \defeq \frac{1}{2^{n-1}} \inner{f, \gamma_T}$, $\phi_T \defeq E\gamma_T$, then $f_T = a_T\gamma_T$, $f = \sum_{T\in\T_R}a_T\gamma_T$, $E f_T = a_T \phi_T$, $E f = \sum_{T\in \T_R}a_T\phi_T$. Now $f_T$ is clearly supported on the cube $\omega_T = \omega\in \Omega_R$.
    \begin{enumerate}[label=(\roman*)]
        \item View $\phi_T$ as the inverse Fourier transform of a measure  supported on $\{(\xi,\abs{\xi}^2)\mid \xi\in\omega_T\}$, then use the generalized Fourier inversion theorem for tempered distributions.
        \item Let $f_\omega(\xi) \defeq f(\xi)\gamma(R^{\frac{1}{2}}(\xi -c_\omega))$. Then $\on{supp}f_\omega \subseteq \omega$, and by the partition of unity property of $\gamma$, we have
        \begin{align*}
            \sum_{\omega\in\Omega_R} \norm{f_\omega}_2^2 
            & = \sum_{\omega\in\Omega_R}\int_{\R^{n-1}}\abs{f(\xi)}^2\gamma(R^{\frac{1}{2}}(\xi-\omega))^2 \dd\xi\\
            & = \int_{\R^{n-1}}\abs{f(\xi)}^2\sum_{\omega\in\Omega_R}\gamma(R^{\frac{1}{2}}(\xi-\omega))^2 \dd\xi
            \\
            & = \int_{\R^{n-1}}\abs{f(\xi)}^2 \dd\xi = \norm{f}_2^2.
        \end{align*}
        Besides, using Parseval's identity in $L_\xi^2(\omega)$, we obtain
        \begin{align*}
            \sum_{\substack{T\in\T_R\\ \omega_T=\omega}}\abs{a_T}^2
            & = 
            \sum_{\substack{T\in\T_R\\ \omega_T=\omega}} \abs{\frac{1}{2^{n-1}} \inner{f,\gamma_T}}^2\\
            & = 
            \sum_{q\in \mathcal{Q}_R} \abs{ \left(\frac{R^{\frac{1}{2}}}{2}\right)^{n-1}\inner{f(\xi) \gamma(R^{\frac{1}{2}}(\xi -c_\omega), e(-c_q\cdot(\cdot - c_\omega))} }^2\\
            & = \norm{f(\cdot) \gamma(R^{\frac{1}{2}}(\cdot -c_\omega)}_{L^2(\xi)}^2\\
            & = \norm{f_\omega}_2^2.
        \end{align*}
        \item By (ii) we immediately have
        \begin{align*}
            \sum_{T\in\T_R}\abs{a_T}^2 = \sum_{\omega\in \Omega_R}\sum_{\substack{T\in\T_R\\ \omega_T=\omega}}\abs{a_T}^2 = \sum_{\omega\in \Omega_R} \norm{f_\omega}_2^2 = \norm{f}_2^2.
        \end{align*}
        \item Since $\gamma_T$ is $L^2$ normalized, we have
        \begin{align*}
            \norm{f_T}_2 = \norm{a_T\gamma_T}_2 = \abs{a_T}\norm{\gamma_T}_2 = \abs{a_T}.
        \end{align*}
        So (iv) is a direct corollary of (iii).
        \item Let $T = T_{q,\omega}$. By (\ref{oscillatory integral}), for all $x = (\overline{x},x_n)$, we have
        \begin{align*}
            \abs{\phi_T(x)} = \abs{E\gamma_T(x)} &= \abs{R^{-\frac{n-1}{4}}e(x\cdot(c_\omega,\abs{c_\omega}))\int_{\R^{n-1}}\gamma(\eta)e(\varphi_{x,q,\omega}(\eta))\dd\eta}\\
            & \leq R^{-\frac{n-1}{4}}\int_{\R^{n-1}}\abs{\gamma(\eta)}\dd \eta \lesssim R^{-\frac{n-1}{4}}
        \end{align*}
        since $\gamma$ is fixed.
        \item Let $T = T_{q,\omega}$. Still by (\ref{oscillatory integral}), we have
        \begin{align*}
            \abs{\phi_T(x)}\leq R^{-\frac{n-1}{4}}\abs{\int_{\R^{n-1}}\gamma(\eta)e(\varphi_{x,q,\omega}(\eta))\dd \eta}.
        \end{align*}
        By nonstationary phase, whenever $\abs{\overline{x} - (c_q - 2c_\omega x_n)}\geq 4 R^{\frac{1}{2}}$ and $\abs{x_n}\leq R$, we have for all $N\in\N$:
        \begin{align*}
            \abs{\int_{\R^{n-1}}\gamma(\eta)e(\varphi_{x,q,\omega}(\eta))\dd \eta} \lesssim_N 
            \left(\frac{\abs{\overline{x} - (c_q - 2c_\omega x_n)}}{R^{\frac{1}{2}}}\right)^{-2N}.
        \end{align*}
        Combined with (v), this completes the proof.
    \end{enumerate}
\end{proof}

\section{Second Formulation}
Another way to perform the wave packet decomposition is to work with rectangular boxes and their dual boxes directly, instead of explicitly working with the extension operator. In other words, here we start with a small neighborhood of $\mathbb{P}^{n-1}$ instead of $\mathbb{P}^{n-1}$ itself.

Indeed, we can cover the vertical $R^{-1}$ neighborhood of $\mathbb{P}^{n-1}$ with finitely overlapping boxes $B$ of dimensions $\sim(R^{-\frac{1}{2}}, R^{-\frac{1}{2}},\dots, R^{-1})$, and then form a partition of unity adapted to the family of boxes $B$. So we only need to focus on wave packet decomposition for a single box $B$.
\begin{defn}
Two rectangular boxes $B_1,B_2\subseteq\R^n$ with side lengths $(l_1^{(1)},...,l_n^{(1)})$ and $(l_1^{(2)},...,l_n^{(2)})$ respectively are called \textit{dual} to one another if $l_i^{(1)}l_i^{(2)}=1$ and the corresponding axes are parallel for all $1\leq i\leq n$.
\end{defn}
Fix the weight function:
\begin{align*}
    \chi(x)\defeq(1+\abs{x})^{-100n}.
\end{align*}
For a rectangular box $B\subseteq\R^n$ and affine function $A_B$ mapping $B$ to $[-1,1]^n$, define $\chi_B\defeq\chi\circ A_B$.

We can then define wave packets in the following way:
\begin{prop}[Alternative wave packet decomposition, {\cite[Exercise 2.7]{demeter_fourier_2020}}]\label{second formulation}
Let $B\subseteq\R^n$ be a rectangular box and let $\mathcal{T}_B$ be a tiling of $\R^n$ by rectangular boxes $T$ which are dual to $B$. For each $T$ there exists a Schwartz function $W_T$, called a ``wave packet'', satisfying the following properties:
\begin{enumerate}[label=(\roman*)]
    \item (Fourier support) We have:
    \begin{align*}
        \on{supp}(\widehat{W_T})\subseteq 2B.
    \end{align*}
    \item (Rapid decay outside $T$) For all $M\geq 1$:
    \begin{align*}
        \abs{W_T}\lesssim_M\frac{1}{\abs{T}^{1/2}}(\chi_T)^M.
    \end{align*}
    \item (Almost $L^2$-orthogonality I) For all $w_T\in\C$:
    \begin{align*}
        \norm{\sum_{T\in\mathcal{T}_B}w_TW_T}_2\sim\norm{w_T}_{\ell^2}.
    \end{align*}
    \item (Almost $L^2$-orthogonality II) If $\abs{w_T}\sim\lambda$ for all $T\in\mathcal{T}_B'\subseteq\mathcal{T}_B$, then for all $1\leq p\leq\infty$ we have:
    \begin{align*}
        \norm{\sum_{T\in\mathcal{T}_B'}w_TW_T}_p
        \sim \lambda\left(\sum_{T\in\mathcal{T}_B'}\norm{W_T}_p^p\right)^{1/p}.
    \end{align*}
    \item (Wave packet decomposition) For all $F$ with $\on{supp}(\widehat{F})\subseteq B$ we have the wave packet decomposition 
    \begin{align*}
        F=\sum_{T\in\mathcal{T}_B}\inner{F,W_T}W_T
    \end{align*}
    which satisfies
    \begin{align*}
        \norm{\sum_{\substack{T\in\mathcal{T}_B\\\abs{\inner{F,W_T}}\sim\lambda}}\inner{F,W_T}W_T}_p\lesssim\norm{F}_p
    \end{align*}
    for all $\lambda>0$ and $1\leq p\leq\infty$. Here $\inner{\cdot,\cdot}$ denotes the complex inner product.
\end{enumerate}
\begin{proof}
    Since all the results are invariant under $L^2$ normalized affine transforms ($f(x) \mapsto \det(A)^{\frac{1}{2}}f(Ax+b)$, where $A\in GL(n)$, $b\in \R^n$), we can without loss of generality assume that $B = \left[-\frac{1}{2},\frac{1}{2}\right]^n$, then $2B = \left[-1,1\right]^n$. Besides, by translation-modulation symmetry, we can further assume that $\TT_B = \left\{\prod_{i=1}^n\left[k_i-\frac{1}{2},k_i+\frac{1}{2}\right]\,|\,k = (k_i)_{i=1}^n\in \Z^n\right\}$. 
    
    Let $\eta_B$ be any nonnegative smooth bump satisfying $\1_B\leq \eta_B\leq \1_{2B}$. For each $T \in \TT_B$, define $W_T(x) \defeq \eta_B^\vee(x-k)$ when $T = \prod_{i=1}^n\left[k_i-\frac{1}{2},k_i+\frac{1}{2}\right]$ centered at $k\in \Z^n$.
    \begin{enumerate}[label=(\roman*)]
        \item  $\widehat{W_T}(\xi) = e(-k\cdot\xi)\eta_B(\xi)$ is clearly supported on $2B$.
        \item This comes from the fact that $\eta_B^\vee(\cdot-k)$ is a Schwartz function  adapted to $T$. Note that by our previous reduction, there is no scaling factor here as $\abs{T}=1$.
        \item For simplicity, from now on, if $T$ is centered at $k\in \Z^n$, we let $T_k$ denote $T$, let $W_k$ denote $W_T$, and $w_k$ denote $w_T$. By the Plancherel theorem,
        \begin{align*}
            \norm{\sum_{T\in\mathcal{T}_B}w_TW_T}_2^2 
            &= \int_{\R^n}\abs{\sum_{T\in\mathcal{T}_B}w_TW_T(x)}^2\dd x\\
            &= \int_{\R^n}\abs{\sum_{T\in\mathcal{T}_B}w_T\widehat{W_T}(\xi)}^2\dd \xi\\
            &= \int_{\R^n}\abs{\sum_{k\in\Z^n}w_k e(-k\cdot\xi)\eta_B(\xi)}^2\dd \xi\\
            &= \int_{\R^n}\abs{\sum_{k\in\Z^n}w_k e(-k\cdot\xi)}^2 \eta_B(\xi)^2\dd \xi.
        \end{align*}
        Now since $\eta_B^2\geq \1_B$, we have the lower bound
        \begin{align*}
        \norm{\sum_{T\in\mathcal{T}_B}w_TW_T}_2^2\geq \int_{\1_B}\abs{\sum_{k\in\Z^n}w_k e(-k\cdot\xi)}^2\dd \xi = \norm{w_T}_{\ell^2}
        \end{align*}
        by Parseval's theorem.
        
        On the other hand, since $\eta_B^2\leq \1_{2B}$ and $2B$ can be partitioned into $2^n$ translated versions of $B$, we have the upper bound
        \begin{align*}
            \norm{\sum_{T\in\mathcal{T}_B}w_TW_T}_2^2\leq \int_{\1_{2B}}\abs{\sum_{k\in\Z^n}w_k e(-k\cdot\xi)}^2\dd \xi = 2^n\int_{\1_B}\abs{\sum_{k\in\Z^n}w_k e(-k\cdot\xi)}^2\dd \xi = 2^n\norm{w_T}_{\ell^2}
        \end{align*}
        by periodicity of $e(-k\cdot\xi)$ and Parseval's theorem.

        Therefore, we can conclude that:
        \begin{align*}
            \norm{\sum_{T\in\mathcal{T}_B}w_TW_T}_2\sim\norm{w_T}_{\ell^2}.
        \end{align*}
        \item In view of (iii), we shall assume $\norm{w_T}_{\ell^2}<\infty$ (which is always satisfied in practice) so that the summation $\sum_{T\in\TT_B}$ makes sense. Thus if $\abs{w_T}\sim\lambda$ ($\lambda>0$) for all $T\in\mathcal{T}_B'\subseteq\mathcal{T}_B$, then $\abs{\TT_{B}'}<\infty$, so the summation $\sum_{T\in\TT_{B'}}$ always makes sense.

        Let $f \defeq \sum_{T\in\TT_{B}'}w_TW_T$. Then by virtue of (iii), we have the $L^2$ estimate:
        \begin{align}\label{L^2}
            \norm{f}_{2}\sim\norm{w_T}_{\ell^2(\TT_B')}\sim \lambda\abs{\TT_B'}^{\frac{1}{2}}.
        \end{align}
        It's helpful to keep in mind that for any $1\leq p\leq \infty$, we have $\norm{W_T}_p = \norm{\eta_B^\vee}_p\sim 1$ uniformly in $T$. We will repeatedly use this fact below.
        
        By the triangle inequality, we have the $L^1$ bound:
        \begin{align}\label{L^1}
            \norm{f}_1\leq \sum_{T\in\TT_B'}\norm{w_TW_T}_1
            \sim \lambda\sum_{T\in\TT_B'}\norm{W_T}_1\sim \lambda\abs{\TT_B'}.
        \end{align}

        By (ii) with $M=1$, we have $\abs{W_T}\lesssim \chi_T$. And we can use this to bound the $L^\infty$ norm:
        \begin{align*}
            \norm{f}_{\infty}\leq \norm{\sum_{T\in\TT_B'}\abs{w_T}\abs{W_T}}_\infty
            \sim \lambda \norm{\sum_{T\in\TT_B'}\abs{W_T}}_\infty
            \lesssim \lambda\norm{\sum_{T\in\TT_B'}\chi_T}_\infty.
        \end{align*}
        Note that for any $x\in\R^n$,
        \begin{align*}
            \sum_{T\in\TT_B'}\chi_T(x) = \sum_{k\in\Z^n}\chi_{T_k}(x) = \sum_{k\in\Z^n}\frac{1}{(1+\abs{x-k})^{100n}}
            \lesssim \int_{\R^n}\frac{1}{(1+\abs{x})^{100n}}\dd x\leq C<\infty.
        \end{align*}
        So we have the $L^\infty$ bound:
        \begin{align}\label{L^infty}
            \norm{f}_\infty\lesssim \lambda.
        \end{align}
        
        Now for $2\leq p\leq \infty$, applying Hölder's inequality we have:
        \begin{align*}
            \norm{f}_2^{\frac{2(p-1)}{p}}\norm{f}_1^{\frac{2-p}{p}}\leq \norm{f}_p \leq \norm{f}_1^{\frac{1}{p}}\norm{f}_\infty^{\frac{1}{p'}}.
        \end{align*}
        Note that $\frac{2-p}{p}\leq 0$, so we can plug (\ref{L^2}), (\ref{L^1}), (\ref{L^infty}) in to get
        \begin{align*}
            \norm{f}_p \sim \lambda\abs{\TT_B'}^{\frac{1}{p}}.
        \end{align*}
        Similarly, for $1\leq p\leq 2$, by Hölder's inequality we have:
        \begin{align*}
            \norm{f}_2^{\frac{2}{p}}\norm{f}_\infty^{1-\frac{2}{p}}\leq \norm{f}_p \leq \norm{f}_1^{\frac{1}{p}}\norm{f}_\infty^{\frac{1}{p'}}.
        \end{align*}
        Note that $1-\frac{2}{p}\leq 0$, so we can again plug (\ref{L^2}), (\ref{L^1}), (\ref{L^infty}) in to get
        \begin{align*}
            \norm{f}_p \sim \lambda\abs{\TT_B'}^{\frac{1}{p}}.
        \end{align*}
        Therefore, for all $1\leq p\leq \infty$, we have $\norm{f}_p \sim \lambda\abs{\TT_B'}^{\frac{1}{p}}$.

        On the other hand, it's easy to see that
        \begin{align*}
            \lambda\left(\sum_{T\in\mathcal{T}_B'}\norm{W_T}_p^p\right)^{1/p} \sim \lambda \left(\sum_{T\in\mathcal{T}_B'}1\right)^{1/p} = \lambda\abs{\TT_B'}^{\frac{1}{p}}.
        \end{align*}
        So (iv) holds true.
        \item Since $\eta_B\equiv 1$ on $B$, we have $\widehat{F} = \widehat{F}\eta_B^2$. Expanding $\widehat{F}\eta_B$ into a Fourier series over $B$ we get:
        \begin{align*}
            \widehat{F} &= \widehat{F}\eta_B^2\\
            &= \sum_{k\in\Z^n}\inner{\widehat{F}\eta_B, e(-k\cdot)}e(-k\cdot)\eta_B\\
            &= \sum_{k\in\Z^n}\inner{\widehat{F}, \widehat{W_{T_k}}}\widehat{W_{T_k}}\\
            &= \sum_{k\in\Z^n}\inner{F, W_{T_k}}\widehat{W_{T_k}}
        \end{align*}
        where we used the Plancherel theorem in the last step. Applying the inverse Fourier transform on both sides yields the wave packet decomposition
        \begin{align*}
            F=\sum_{T\in\mathcal{T}_B}\inner{F,W_T}W_T.
        \end{align*}
        Regard $\inner{F,W_T}$ as the coefficients $w_T$, and let $\TT_\lambda$ be the set of all $T\in\TT_B$ with $\abs{w_T}\sim\lambda$. Now it remains to prove
        \begin{align*}
        \norm{\sum_{T\in\TT_\lambda}w_TW_T}_p\lesssim\norm{F}_p.
    \end{align*}
     By (iv), we have 
    \begin{align}\label{height lambda}
        \norm{\sum_{T\in\TT_\lambda}w_TW_T}_p\sim \lambda \abs{\TT_\lambda}^{\frac{1}{p}}.
    \end{align}
    A key observation is that
    \begin{align*}
        \lambda\sim \abs{w_T} = \abs{\inner{F,W_T}} &\leq \int \abs{F}\abs{W_T}\\
        &\lesssim \int \abs{F}\chi_T = \int \abs{F}\chi_T^\frac{1}{p}\chi_T^{\frac{1}{p'}}
        \leq \left(\int \abs{F}^p\chi_T\right)^{\frac{1}{p}}\left(\int\chi_T\right)^{\frac{1}{p'}}
    \end{align*}
    by Hölder's inequality. So for all $T\in\TT_\lambda$ we uniformly have
    \begin{align}\label{lambda bound}
        \lambda\lesssim
        \left(\int \abs{F}^p\chi_T\right)^{\frac{1}{p}}
    \end{align}
    as $\norm{\chi_T}_1\lesssim1$.

    Inserting (\ref{lambda bound}) into (\ref{height lambda}), we get
    \begin{align*}
        \norm{\sum_{T\in\TT_\lambda}w_TW_T}_p^p &\lesssim \lambda^p \abs{\TT_\lambda}\\
        &\lesssim \sum_{T\in\TT_\lambda} \int \abs{F}^p\chi_T\\
        &= \int \abs{F}^p\sum_{T\in\TT_\lambda}\chi_T\\
        &\lesssim \int \abs{F}^p = \norm{F}_p^p.
    \end{align*}
    Here we've used the fact that
    \begin{align*}
        \sum_{T\in\TT_\lambda}\chi_T\lesssim1
    \end{align*}
    as observed in the proof of (iv).
    \end{enumerate}
\end{proof}
\begin{rmk}\footnote{This proof was 
explained to the last author in a different context by Philip Gressman.}
    Part (iv) and (v) in Proposition \ref{second formulation} mainly focus on the case when the coefficients $\abs{w_T}\sim\lambda$, but are sufficient for many applications, in which we carry out some dyadic pigeonholing procedure.
    
    Indeed, there is another more powerful way to derive the estimate in part (v), for which the assumption $\abs{w_T}\sim\lambda$ is unnecessary. Consider the linear operator
    \begin{align*}
        \mathcal{P}: L^1 + L^\infty &\rightarrow L^1 + L^\infty\\
        F &\mapsto \sum_{T\in\mathcal{T}_B''}\inner{F,W_T}W_T \eqdef \sum_{T\in\mathcal{T}_B''}w_TW_T.
    \end{align*}
    where $\mathcal{T}_B''$ is an arbitrary subset of $\mathcal{T}_B$.

    By part (iii) in Proposition \ref{second formulation}, we know that $\mathcal{P}$ is bounded with \begin{align*}
        \norm{\mathcal{P}F}_2 \sim \norm{w_T}_{\ell^2(\TT_B'')}\leq 
        \norm{w_T}_{\ell^2(\TT_B)} \sim \norm{F}_2.
    \end{align*}
    On the other hand, we have the $L^\infty$ bound
    \begin{align*}
        \norm{\mathcal{P}F}_{\infty}
        &\leq \norm{\sum_{T\in\mathcal{T}_B''}\abs{\inner{F,W_T}} \abs{W_T} }_{\infty}\\
        &\leq    \sup_{T\in\TT_B''}\abs{\inner{F,W_T}}\cdot \norm{\sum_{T\in\TT_B''}\abs{W_T}}_\infty\\ 
        &\lesssim \norm{F}_\infty \norm{W_T}_1\cdot \norm{\sum_{T\in\TT_B''}\chi_T}_\infty\\
        &\lesssim \norm{F}_\infty.
    \end{align*}
    So by the Riesz-Thorin interpolation theorem, we get
    \begin{align*}
        \norm{\mathcal{P}F}_p\lesssim \norm{F}_p
    \end{align*}
    for $2\leq p\leq \infty$.

    Now we turn to the case of $1\leq p\leq 2$. Although it might be hard to directly deduce $L^1$ boundedness of $\mathcal{P}$, we can instead adopt a duality argument. First note that for any $G\in L^p$ ($1\leq p\leq 2$) and $F\in L^{p'}$ ($2\leq p'\leq \infty$), we have
    \begin{align*}
        \inner{\mathcal{P}G, F} &= \sum_{T\in\TT_B''}\inner{G,W_T}\inner{W_T,F}\\
        &= \overline{ \sum_{T\in\TT_B''}\inner{F,W_T}\inner{W_T,G} }\\
        &= \overline{\inner{\mathcal{P}F,G}}\\
        &= \inner{G,\mathcal{P}F}.
    \end{align*}
    Therefore, by the dual characterization of $L^p$ spaces, we obtain
    \begin{align*}
        \norm{\mathcal{P}G}_p &= \sup_{\substack{F\in L^{p'}\\\norm{F}_{p'}\leq1}}
        \abs{\inner{\mathcal{P}G, F}} = \sup_{\substack{F\in L^{p'}\\\norm{F}_{p'}\leq1}}\abs{\inner{G,\mathcal{P}F}}\\
        &\leq \sup_{\substack{F\in L^{p'}\\\norm{F}_{p'}\leq1}}\norm{G}_{p}\norm{\mathcal{P}F}_{p'}
        \lesssim \sup_{\substack{F\in L^{p'}\\\norm{F}_{p'}\leq1}}\norm{G}_{p}\norm{F}_{p'} = \norm{G}_{p}.
    \end{align*}
    This proves the $L^p$ boundedness of $\mathcal{P}$ when $1\leq p\leq 2$.
\end{rmk}
\end{prop}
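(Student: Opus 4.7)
The plan is to exploit the affine symmetries of the claim to reduce to a single canonical configuration, where everything can be written down explicitly, and then verify the five properties one by one. Since both sides of every claimed estimate are invariant under the $L^2$-normalized affine change of variables $f(x)\mapsto |\det A|^{1/2}f(Ax+b)$, I would first assume without loss of generality that $B=[-\frac12,\frac12]^n$ and that $\mathcal{T}_B$ is the unit integer tiling $\{T_k\}_{k\in\Z^n}$ with $T_k=k+[-\frac12,\frac12]^n$. Fix any smooth bump $\eta_B$ with $\mathbbm{1}_B\le\eta_B\le\mathbbm{1}_{2B}$, and define $W_k(x)\defeq \eta_B^\vee(x-k)$. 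Properties (i) and (ii) are then immediate: $\widehat{W_k}(\xi)=e(-k\cdot\xi)\eta_B(\xi)$ is supported in $2B$, and Schwartz decay of $\eta_B^\vee$ gives the pointwise bound $|W_k(x)|\lesssim_M (\chi_{T_k}(x))^M$.

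For (iii), I would apply Plancherel to write
\[
\Bigl\|\sum_k w_k W_k\Bigr\|_2^2=\int_{\R^n}\Bigl|\sum_k w_k e(-k\cdot\xi)\Bigr|^2\eta_B(\xi)^2\,\dd\xi,
\]
then sandwich $\eta_B^2$ between $\mathbbm{1}_B$ (which gives the lower bound $\|w\|_{\ell^2}^2$ by Parseval on the unit cube) and $\mathbbm{1}_{2B}$ (which gives the upper bound $\le 2^n\|w\|_{\ell^2}^2$ by periodicity of $e(-k\cdot\xi)$ and Parseval again). Property (iv) follows by three-point interpolation: the $L^2$ bound from (iii) gives $\|f\|_2\sim\lambda|\mathcal{T}_B'|^{1/2}$, the triangle inequality plus $\|W_T\|_1\sim 1$ gives $\|f\|_1\lesssim\lambda|\mathcal{T}_B'|$, and the pointwise bound $\sum_T \chi_T\lesssim 1$ (a Riemann-sum estimate) gives $\|f\|_\infty\lesssim\lambda$. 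Pairing the sharp endpoint bounds with H\"older's inequality in both directions pins down $\|f\|_p\sim\lambda|\mathcal{T}_B'|^{1/p}$ for all $1\le p\le\infty$.

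Finally, for the decomposition in (v), since $\eta_B\equiv 1$ on $B$ and $\on{supp}(\widehat F)\subseteq B$, we have $\widehat F=\widehat F\,\eta_B^2$; expanding $\widehat F\,\eta_B$ in the Fourier series on $B$ and using Plancherel to rewrite the coefficients as $\langle F,W_{T_k}\rangle$ gives the desired identity. For the dyadic bound with $|w_T|\sim\lambda$, I would combine the sharp $L^p$ estimate from (iv) with the pointwise inequality
\[
\lambda\sim|\langle F,W_T\rangle|\lesssim\int|F|\chi_T\le\Bigl(\int|F|^p\chi_T\Bigr)^{1/p}\Bigl(\int\chi_T\Bigr)^{1/p'}
\]
from H\"older. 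Raising to the $p$-th power, summing over $T\in\mathcal{T}_\lambda$, and invoking $\sum_T\chi_T\lesssim 1$ once more yields the claimed $\|\sum_{T\in\mathcal{T}_\lambda}w_T W_T\|_p\lesssim\|F\|_p$.

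The main obstacle I anticipate is not any single step but the bookkeeping in (iv): one has to be careful that the comparability $\|f\|_p\sim\lambda|\mathcal{T}_B'|^{1/p}$ really follows in both directions from only the $L^2$-$L^1$-$L^\infty$ endpoint data, and this splits into the two sub-ranges $1\le p\le 2$ and $2\le p\le\infty$ depending on which pair of H\"older inequalities produces the sharp bound. Once this is handled cleanly, (v) drops out by the pigeonholing argument above, and the more general pigeonhole-free bound stated in the remark follows from Riesz--Thorin interpolation applied to the linear operator $F\mapsto\sum_{T\in\mathcal{T}_B''}\langle F,W_T\rangle W_T$, combined with a duality argument to cover $1\le p\le 2$.
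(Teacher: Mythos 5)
Your proposal matches the paper's proof essentially step for step: the same reduction to $B=[-\tfrac12,\tfrac12]^n$ with the integer tiling, the same choice $W_k=\eta_B^\vee(\cdot-k)$ for a bump $\mathbbm{1}_B\leq\eta_B\leq\mathbbm{1}_{2B}$, the same Plancherel sandwich for (iii), the same $L^2$--$L^1$--$L^\infty$ three-point interpolation for (iv), and the same Fourier-series identity plus H\"older pigeonholing for (v), with Riesz--Thorin and duality as the optional strengthening. No meaningful deviation from the paper's argument.
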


\chapter{Lower Bound for \texorpdfstring{$\on{D}(1,p)$}{D(1,p)} in \texorpdfstring{$\R^2$}{R2}}\label{appendix 2}
Following \cite[Exercise 10.12]{demeter_fourier_2020}, we test some examples and perform some explicit computations to show that $\on{D}(1,6)>1$ in $\R^2$. A very similar argument can be used to prove $\on{D}(1,4)>1$ in $\R^3$. We actually have the following more general result:

\begin{prop}\label{D(1,p)_R2}
For each pair $\mathcal{S}=\{S_1, S_2\}$ of disjoint open sets in $\R^2$, we have
\begin{align*}
    \on{D}(\mathcal{S},6)>1.
\end{align*}
In particular, $\on{D}(1,6)>1$.
\end{prop}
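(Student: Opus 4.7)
The plan is to exhibit an explicit pair of Schwartz functions witnessing strict inequality in~(\ref{def decoupling}). Since $S_1, S_2$ are non-empty open sets, pick $\xi_i \in S_i$ and choose $\epsilon > 0$ small enough that (i) $B(\xi_i, \epsilon) \subseteq S_i$ for $i = 1, 2$, and (ii) the four balls $B((3-k)\xi_1 + k\xi_2, 3\epsilon)$, $k = 0, 1, 2, 3$, are pairwise disjoint. Both conditions are met once $\epsilon < |\xi_1 - \xi_2|/7$, since the consecutive centers $3\xi_1, 2\xi_1+\xi_2, \xi_1+2\xi_2, 3\xi_2$ are exactly $|\xi_1-\xi_2|$ apart. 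Now fix any non-zero Schwartz function $f$ with $\on{supp}(\widehat{f}) \subseteq B(0, \epsilon)$ and set $F_i(x) \defeq e(\xi_i \cdot x) f(x)$, so that $\on{supp}(\widehat{F_i}) \subseteq B(\xi_i, \epsilon) \subseteq S_i$ and $|F_i(x)| = |f(x)|$ pointwise.

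The key step exploits that $p = 6$ is an even integer, so $\norm{G}_6^6 = \norm{G^3}_2^2$ for any complex-valued $G$. Expanding the cube and applying Plancherel,
\begin{align*}
    \norm{F_1 + F_2}_6^6 = \norm{(F_1+F_2)^3}_2^2 = \norm{\sum_{k=0}^{3} \binom{3}{k} F_1^{3-k} F_2^k}_2^2.
\end{align*}
Each summand $F_1^{3-k} F_2^k$ has Fourier support inside $B((3-k)\xi_1 + k\xi_2, 3\epsilon)$, and by construction these four balls are pairwise disjoint, so the cross terms vanish under Plancherel and
\begin{align*}
    \norm{F_1 + F_2}_6^6 = \sum_{k=0}^{3} \binom{3}{k}^2 \norm{F_1^{3-k} F_2^k}_2^2 = (1 + 9 + 9 + 1)\norm{f}_6^6 = 20\norm{f}_6^6,
\end{align*}
where I used $|F_1^{3-k} F_2^k| = |f|^3$ everywhere.

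On the other hand $\norm{F_i}_6 = \norm{f}_6$, so $(\norm{F_1}_6^2 + \norm{F_2}_6^2)^3 = 8\norm{f}_6^6$, and comparing the two sides of the decoupling inequality gives the explicit lower bound
\begin{align*}
    \on{D}(\mathcal{S}, 6) \geq (20/8)^{1/6} = (5/2)^{1/6} > 1.
\end{align*}
The mechanism behind strict inequality is the gap between the \emph{squared} binomial coefficients $\binom{3}{k}^2$ produced by the Plancherel expansion and the unsquared ones that would arise from expanding $(a+b)^3$ on the right-hand side; at the cross terms ($k = 1, 2$) this is the gap $9 > 3$. No step poses any real obstacle---the only thing to verify carefully is the simultaneous choice of $\epsilon$ above, which is immediate from the openness of $S_1, S_2$ and $\xi_1 \neq \xi_2$. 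The same multinomial-plus-Plancherel argument yields $\on{D}(\mathcal{S}, 2k) > 1$ for every integer $k \geq 2$.
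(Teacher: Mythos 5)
Your proof is correct and rests on the same core mechanism as the paper's: write $\|G\|_6^6 = \|G^3\|_2^2$, expand the cube, observe that the pieces have pairwise disjoint Fourier supports so that Plancherel converts the left side into a sum of \emph{squared} multinomial coefficients, and compare with the right side. The difference is in the choice of test function. The paper follows Demeter's Exercise 10.12 and places three bumps — two in $S_1$ (at $\xi_1, \xi_2$) and one in $S_2$ (at $\xi_3$) — which produces ten disjointly supported pieces, and the computation yields the lower bound $93^{1/6}/(20^{1/3}+1)^{1/2} \approx 1.104$. You instead place a single bump in each $S_i$; the expansion of $(F_1+F_2)^3$ has only four pieces, the arithmetic is $\binom{3}{0}^2 + \binom{3}{1}^2 + \binom{3}{2}^2 + \binom{3}{3}^2 = \binom{6}{3} = 20$, and the resulting lower bound $(5/2)^{1/6} \approx 1.165$ is both cleaner and slightly better. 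Your construction is the more economical one — it uses the minimal data (one frequency per set), the Vandermonde identity makes the count transparent, and the same argument scales directly to show $\on{D}(\mathcal{S},2k) \geq \left(\binom{2k}{k}/2^k\right)^{1/(2k)} > 1$ for all integers $k\geq 2$, exactly as you note. One small imprecision: you claim both conditions (i) and (ii) follow from $\epsilon < |\xi_1-\xi_2|/7$; in fact that threshold only ensures disjointness of the four balls in (ii), while the inclusions $B(\xi_i,\epsilon)\subseteq S_i$ in (i) need $\epsilon$ smaller than the distances from $\xi_i$ to $\partial S_i$, which openness guarantees but which have nothing to do with $|\xi_1-\xi_2|$. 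Taking $\epsilon$ to be the minimum of all three quantities fixes this, so the gap is purely expository.
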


\begin{proof}
Fix a non-zero, smooth function $\psi$ compactly supported near the origin. Let $\xi_1, \xi_2\in S_1$ and $\xi_3\in S_2$. We will test decoupling with a function $F_\e$ satisfying:
\begin{align*}
    \widehat{F_\e}(\xi)
    =\psi\left(\frac{\xi-\xi_1}{\e}\right)
    +\psi\left(\frac{\xi-\xi_2}{\e}\right)
    +\psi\left(\frac{\xi-\xi_3}{\e}\right)
\end{align*}
and let $\e\rightarrow 0$. Intuitively, we consider a function whose frequency is essentially concentrated around three separate points.

First, we introduce some notation. Let $\psi_\e$ denote the rescaled version of $\psi$, i.e. $\psi_\e(\xi)\defeq\psi(\xi/\e)$. Let $\delta_\xi$ denote the Dirac-$\delta$ function at a point $\xi$, i.e. the tempered distribution supported at $\xi$ satisfying $\delta_\xi(f)=f(\xi)$. Let $G^{\ast n}$ denote the convolution of $n$ $G$'s, i.e. $G\ast \cdots\ast G$.

Then, using this nottation, we compute the $L^6$-norm of $F_\e$ using the Plancherel theorem:
\begin{align*}
    \norm{F_\e}_{L^6(\R^2)}^6
    &=\norm{F_\e^3}_{L^2(\R^2)}^2
    =\norm{\widehat{F_\e}^{\ast 3}}_{L^2(\R^2)}^2\\
    &=\norm{(\psi_\e\ast\delta_{\xi_1}+\psi_\e\ast\delta_{\xi_2}+\psi_\e\ast\delta_{\xi_3})^{\ast 3}}_{L^2(\R^2)}^2\\
    &=\norm{\psi_\e^{\ast 3}\ast(\delta_{\xi_1}+\delta_{\xi_2}+\delta_{\xi_3})^{\ast 3}}_{L^2(\R^2)}^2\\
    &=\lVert\psi_\e^{\ast 3}\ast [(\delta_{3\xi_1}+\delta_{3\xi_2}+\delta_{3\xi_3})+\\
    &\qquad 3(\delta_{2\xi_1+\xi_2}+\delta_{\xi_1+2\xi_2}+\delta_{2\xi_1+\xi_3}+\delta_{\xi_1+2\xi_3}+\delta_{2\xi_2+\xi_3}+\delta_{\xi_2+2\xi_3})+6\delta_{\xi_1+\xi_2+\xi_3}]\rVert_{L^2(\R^2)}^2
\end{align*}

Note that if we take $\e$ to be very small, then $\on{supp}\widehat{F_\e}\subset S_1\cup S_2$, and $\on{supp}\psi_\e^{\ast 3}$ will also be very small. And if we additionally assume that the $10$ points $3\xi_1, 3\xi_2, 3\xi_3, 2\xi_1+\xi_2, \xi_1+2\xi_2, 2\xi_1+\xi_3, \xi_1+2\xi_3, 2\xi_2+\xi_3, \xi_2+2\xi_3, \xi_1+\xi_2+\xi_3$ are different from each other (which can always be done by perturbation), then each term inside the above $L^2$-norm has disjoint support. Therefore, we can continue our computation as follows:
\begin{align*}
    \norm{F_\e}_{L^6(\R^2)}^6
    =\,\,&\norm{\psi_\e^{\ast 3}\ast\delta_{3\xi_1}}_{L^2(\R^2)}^2
    +\norm{\psi_\e^{\ast 3}\ast\delta_{3\xi_2}}_{L^2(\R^2)}^2
    +\norm{\psi_\e^{\ast 3}\ast\delta_{3\xi_3}}_{L^2(\R^2)}^2\\
    &+\norm{\psi_\e^{\ast 3}\ast 3\delta_{2\xi_1+\xi_2}}_{L^2(\R^2)}^2
    +\norm{\psi_\e^{\ast 3}\ast 3\delta_{\xi_1+2\xi_2}}_{L^2(\R^2)}^2\\
    &+\norm{\psi_\e^{\ast 3}\ast 3\delta_{2\xi_1+\xi_3}}_{L^2(\R^2)}^2
    +\norm{\psi_\e^{\ast 3}\ast 3\delta_{\xi_1+2\xi_3}}_{L^2(\R^2)}^2\\
    &+\norm{\psi_\e^{\ast 3}\ast 3\delta_{2\xi_2+\xi_3}}_{L^2(\R^2)}^2
    +\norm{\psi_\e^{\ast 3}\ast 3\delta_{\xi_2+2\xi_3}}_{L^2(\R^2)}^2\\
    &+\norm{\psi_\e^{\ast 3}\ast 6\delta_{\xi_1+\xi_2+\xi_3}}_{L^2(\R^2)}^2\\
    =\,\,&\norm{\psi_\e^{\ast 3}}_{L^2(\R^2)}^2\cdot(1^2+1^2+1^2+3^2+3^2+3^2+3^2+3^2+3^2+6^2)\\
    =\,\,&93\norm{\psi_\e^{\ast 3}}_{L^2(\R^2)}^2
\end{align*}

Now we compute the right hand side of the expression in the definition of $\on{D}(\mathcal{S},6)$. The method is the same, so we just present the key steps. Let $F_\e^{(1)}$ satisfy $\widehat{F_\e^{(1)}}=\psi(\frac{\xi-\xi_1}{\e})+\psi(\frac{\xi-\xi_2}{\e})$ (the Fourier projection of $F_\e$ onto $S_1$), and $F_\e^{(2)}$ satisfy $\widehat{F_\e^{(2)}}=\psi(\frac{\xi-\xi_3}{\e})$ (the Fourier projection of $F_\e$ onto $S_2$).

\begin{align*}
    \norm{F_\e^{(1)}}_{L^6(\R^2)}^6
    &=\norm{(\psi_\e\ast\delta_{\xi_1}+\psi_\e\ast\delta_{\xi_2})^{\ast 3}}_{L^2(\R^2)}^2\\
    &=\norm{\psi_\e^{\ast 3}\ast[(\delta_{3\xi_1}+\delta_{3\xi_2})+3(\delta_{2\xi_1+\xi_2}+\delta_{\xi_1+2\xi_2})]}_{L^2(\R^2)}^2\\
    &=\norm{\psi_\e^{\ast 3}}_{L^2(\R^2)}^2\cdot(1^2+1^2+3^2+3^2) \\
    &=20\norm{\psi_\e^{\ast 3}}_{L^2(\R^2)}^2\\
    \norm{F_\e^{(2)}}_{L^6(\R^2)}^6&=\norm{(\psi_\e\ast \delta_{\xi_3})^{\ast 3}}_{L^2(\R^2)}^2
    =\norm{\psi_\e^{\ast 3}\ast\delta_{3\xi_3}}_{L^2(\R^2)}^2
    =\norm{\psi_\e^{\ast 3}}_{L^2(\R^2)}^2
\end{align*}
Plugging this all into the decoupling inequality:
\begin{align*}
    \norm{F_\e}_{L^6(\R^2)}\leq \on{D}(\mathcal{S},6)\left(\norm{F_\e^{(1)}}_{L^6(\R^2)}^2+\norm{F_\e^{(2)}}_{L^6(\R^2)}^2\right)^{1/2}
\end{align*}
and eliminating the factor $\norm{\psi_\e^{\ast 3}}_{L^2(\R^2)}$($\neq0$) on both sides immediately yields:
\begin{align*}
    93^{1/6}\leq \on{D}(\mathcal{S},6)(20^{1/3}+1)^{1/2}
\end{align*}
But $93^{1/6}/(20^{1/3}+1)^{1/2}\approx1.1044$. Thus $\on{D}(\mathcal{S},6)>1$.
\end{proof}

\printbibliography

\end{document}